          \newtheorem{theorem}{Theorem}[section]
      \newtheorem{proposition}[theorem]{Proposition}
      \newtheorem{corollary}[theorem]{Corollary}
      \newtheorem{lemma}[theorem]{Lemma}
      \newtheorem{remark}[theorem]{Remark}
      \newcommand{\BB}{{\mathbb B}}
      \newcommand{\CC}{{\mathbb C}}
      \newcommand{\NN}{{\mathbb N}}
      \newcommand{\DD}{{\mathbb D}}
      \newcommand{\RR}{{\mathbb R}}
      \newcommand{\FF}{{\mathbb F}}
      \newcommand{\TT}{{\mathbb T}}
      \newcommand{\cA}{{\mathcal A}}
      \newcommand{\cB}{{\mathcal B}}
      \newcommand{\cC}{{\mathcal C}}
      \newcommand{\cD}{{\mathcal D}}
      \newcommand{\cE}{{\mathcal E}}
      \newcommand{\cH}{{\mathcal H}}
      \newcommand{\cK}{{\mathcal K}}
      \newcommand{\cP}{{\mathcal P}}
      \newcommand{\cR}{{\mathcal R}}
      \newcommand{\cS}{{\mathcal S}}
      \newcommand{\cY}{{\mathcal Y}}
      \newcommand{\cX}{{\mathcal X}}
      \newdimen\expt
      \def\boxit#1{\setbox0\hbox{$\displaystyle{#1}$}
            \hbox{\lower.4\expt
       \hbox{\lower3\expt\hbox{\lower\dp0
            \hbox{\vbox{\hrule height.4\expt
       \hbox{\vrule width.4\expt\hskip3\expt
            \vbox{\vskip3\expt\box0\vskip2\expt}%
       \hskip3\expt\vrule width.4\expt}\hrule height.4\expt}}}}}}
\begin{document}
       \pagestyle{myheadings}
      \markboth{ Gelu Popescu}{ Hyperbolic  geometry
       on  noncommutative balls   }

      %\pagestyle{plain}
      %\begin{flushright}
       % \it Date of this draft: \today
      %\end{flushright}
      %\bigskip

      \title [   Hyperbolic  geometry
       on  noncommutative balls ]
{       Hyperbolic  geometry
       on  noncommutative balls
       }
        \author{Gelu Popescu}

     % \date{\today}
\date{October 22, 2009 (revised version)}
      \thanks{Research supported in part by an NSF grant}
      \subjclass[2000]{Primary: 46L52;  46T25;  47A20; Secondary:  32Q45; 47A56}
      \keywords{Noncommutative  hyperbolic geometry; Noncommutative function
      theory;
      Harnack part;  Hyperbolic distance; Carath\' eodory distance; Free holomorphic function;
      Free pluriharmonic function;
         Fock
      space; Creation operator;
       Joint operator radius; Joint numerical radius; Joint spectral radius;
        von Neumann inequality; Schwarz lemma.
        }

      \address{Department of Mathematics, The University of Texas
      at San Antonio,  San Antonio, TX 78249, USA}
      \email{\tt gelu.popescu@utsa.edu}

\begin{abstract} In this paper, we study the noncommutative balls
$$
\cC_\rho:=\{(X_1,\ldots, X_n)\in  B(\cH)^n: \
\omega_\rho(X_1,\ldots, X_n)\leq 1\},\qquad \rho\in (0,\infty],
$$
where $\omega_\rho$ is the joint operator radius for $n$-tuples of
bounded linear operators on a Hilbert space. In particular,
$\omega_1$ is the operator norm, $\omega_2$ is the   joint numerical
radius, and $\omega_\infty$ is the joint spectral radius.

We introduce a  Harnack type  equivalence relation on $\cC_\rho$,
$\rho>0$,  and use it to define a hyperbolic distance $\delta_\rho$
on the Harnack parts (equivalence classes) of $\cC_\rho$. We prove
that the open ball
$$
[\cC_\rho]_{<1}:=\{(X_1,\ldots, X_n)\in  B(\cH)^n: \
\omega_\rho(X_1,\ldots, X_n)<1\},\qquad \rho>0,
$$
is the Harnack part containing  $0$  and
 obtain a concrete formula for the hyperbolic distance, in terms of the
reconstruction operator associated with the right creation operators
on the full Fock space with $n$ generators. Moreover, we show that
the $\delta_\rho$-topology and the usual operator norm topology
coincide on  $[\cC_\rho]_{<1}$.  While the open ball
$[\cC_\rho]_{<1}$  is not a complete metric space with respect to
the operator norm topology,  we prove that it is a complete metric
space with respect to the hyperbolic metric $\delta_\rho$. In the
particular case when $\rho=1$ and $\cH=\CC$, the hyperbolic metric
$\delta_\rho$ coincides with the Poincar\' e-Bergman distance on the
open unit ball of $\CC^n$.

We introduce a Carath\' eodory type metric on $[\cC_\infty]_{<1} $,
the set of all $n$-tuples of operators with joint spectral radius
strictly less then $1$,  by setting
$$
d_K(A,B)=\sup_p \|\Re  p(A)-\Re p(B)\|,\qquad A,B\in
[\cC_\infty]_{<1},
$$
where the supremum is taken over all noncommutative polynomials with
matrix-valued coefficients $p\in \CC[X_1,\ldots, X_n]\otimes M_{m}$,
$m\in \NN$, with $\Re p(0)=I$ and $\Re p(X)\geq 0$ for all $X\in
\cC_1$. We obtain a concrete formula for $d_K$
  in terms of the free pluriharmonic kernel on  the noncommutative ball $[\cC_\infty]_{<1}$.
We also  prove that the metric $d_K$ is complete on
$[\cC_\infty]_{<1}$ and its topology coincides with  the operator
norm topology.

We   provide mapping theorems,   von Neumann inequalities, and
Schwarz type lemmas  for free holomorphic functions on
noncommutative balls, with respect to the hyperbolic metric
$\delta_\rho$, the Carath\' eodory  metric $d_K$,   and the joint
operator radius $\omega_\rho$, $\rho\in (0,\infty]$.
\end{abstract}

      \maketitle

\section*{Contents}
{\it

\quad Introduction

\begin{enumerate}
\item[1.]  The noncommutative ball $\cC_\rho$ and  a free pluriharmonic  functional
calculus
   \item[2.]    Harnack domination on noncommutative balls
   \item[3.]    Hyperbolic metric on Harnack parts of the
   noncommutative ball $\cC_\rho$
   \item[4.] Mapping theorems for free holomorphic functions on noncommutative balls
 \item[5.]    Carath\' eodory metric on the open  noncommutative ball
 $[\cC_\infty]_{<1}$ and Lipschitz  mappings
\item[6.]    Three metric topologies on Harnack parts of $\cC_\rho$
\item[7.]  Harnack domination and hyperbolic  metric for
$\rho$-contractions $($case $n=1)$
   \end{enumerate}

\quad References

}

\bigskip

\bigskip

\section*{Introduction}

  In \cite{Po-unitary}, we provided a
    generalization of the Sz.-Nagy--Foia\c s theory of $\rho$-contractions
    (see \cite{Sz1},
    \cite{SzF}, \cite{SzF-book}),
    to the multivariable setting.     An $n$-tuple
    $(T_1,\ldots, T_n)\in B(\cH)^n$ of bounded linear operators
    acting on a Hilbert space $\cH$  belongs to the class
    $\cC_\rho$,
    $\rho>0$,
    if there is a Hilbert space $\cK\supseteq \cH$ and some isometries
     $V_i\in B(\cK)$,
    \ $i=1,\ldots, n$, with orthogonal ranges such that
    $$T_\alpha =\rho P_\cH V_\alpha |_\cH\quad \text{ for any }
    \alpha\in \FF_n^+\backslash \{g_0\},
    $$
    where
   $P_\cH$ is the
orthogonal projection of $\cK$ onto $\cH$. Here,   $\FF_n^+$ stands
for
     the unital free semigroup on $n$ generators
      $g_1,\dots,g_n$, and the identity $g_0$, while
     $T_\alpha :=  T_{i_1}T_{i_2}\cdots T_{i_k}$
if $\alpha=g_{i_1}g_{i_2}\cdots g_{i_k}\in \FF_n^+$ and
$T_{g_0}:=I_\cH$, the identity on $\cH$.

According  to  the  theory of  row contractions (see \cite{SzF-book}
for the case  $n=1$, and \cite{Fr}, \cite{Bu}, \cite{Po-models},
\cite{Po-isometric}, \cite{Po-charact}, for $n\geq 2$)  we have
$$
\cC_1=[B(\cH)^n]_1^-:=\left\{(X_1,\ldots, X_n)\in B(\cH)^n:\ \|X_1
X_1^*+\cdots +X_nX_n^* \|^{1/2} \leq 1\right\}.
$$
    The results in  \cite{Po-unitary} (see Section 4)  can be seen as the unification of
     the theory of isometric dilations for row contractions \cite{Sz1},
      \cite{SzF-book},
      \cite{Fr}, \cite{Bu}, \cite{Po-models}, \cite{Po-isometric},
      \cite{Po-charact}
      (which corresponds to the case $\rho=1$)
     and Berger type dilations    for $n$-tuples $(T_1,\ldots, T_n)$ with the joint numerical
     radius $w(T_1,\ldots, T_n)\leq 1$ (which corresponds to the case $\rho=2$).

    Following the classical case (\cite{Ho1}, \cite{W}), we defined
    the {\it joint operator radius}
     $\omega_\rho: B(\cH)^{n}\to [0,\infty)$, \ $\rho>0$, by setting
     $$
     \omega_\rho(T_1,\ldots, T_n):=\inf\left\{t>0:\
     \left(\frac{1} {t}T_1,\ldots, \frac{1} {t}T_n\right)\in \cC_\rho\right\}
     $$
     and
     $\omega_\infty(T_1,\ldots, T_n):=\lim\limits_{\rho\to\infty}\omega_\rho(T_1,\ldots, T_n)$.
     In particular,  $\omega_1(T_1,\ldots, T_n)$ coincides with the norm of the row
     operator $[T_1 \,\cdots \, T_n]$, $\omega_2(T_1,\ldots, T_n)$ coincides with
      the {\it joint numerical radius}  $w(T_1,\ldots, T_n)$, and $\omega_\infty(T_1,\ldots,
      T_n)$ is equal to the   ({\it algebraic})  {\it joint spectral radius}
      (see \cite{Bu}, \cite{M})
       $$
r(T_1,\ldots, T_n):=\lim_{k\to \infty}\left\|\sum_{|\alpha|=k}
T_\alpha T_\alpha^*\right\|^{1/2k},
$$
where the length of $\alpha\in \FF_n^+$ is defined by $|\alpha|:=0$
if $\alpha=g_0$ and  by $|\alpha|:=k$ if
 $\alpha=g_{i_1}\cdots g_{i_k}$ and $i_1,\ldots, i_k\in \{1,\ldots, n\}$.
     In \cite{Po-unitary}, we considered  basic properties of the joint
      operator radius  $\omega_\rho $ and  we extended
       to the (noncommutative and commutative) multivariable setting
     several classical results obtained  by Sz.-Nagy and Foia\c s, Halmos,
       Berger and Stampfli, Holbrook, Paulsen, Badea and Cassier, and  others
     (see \cite{BC}, \cite{B},    \cite{BS}, \cite{BS1},  \cite{Ha}, \cite{Ha1},
     \cite{Ho1}, \cite{Ho2}, \cite{K}, \cite{Pa-book}, \cite{Pe}, \cite{SzF},
       and \cite{W}).

In \cite{Po-hyperbolic}, we introduced a hyperbolic   metric
$\delta$ on the open noncommutative ball $ [B(\cH)^n]_1$, which
turned out to be
   a noncommutative extension  of the Poincar\'
  e-Bergman (\cite{Be}) metric    on the open unit ball
$\BB_n:=\{ z\in \CC^n:\ \|z\|_2<1\}.$   We proved that $\delta$ is
invariant under the action of  the group $Aut([B(\cH)^n]_1)$ of all
free holomorphic automorphisms of $[B(\cH)^n]_1$, and  showed that
the $\delta$-topology and the usual operator norm topology coincide
on $[B(\cH)^n]_1$.  Moreover, we proved that   $[B(\cH)^n]_1$ is a
complete metric space with respect to the hyperbolic metric and
obtained an explicit formula for $\delta$ in terms of the
reconstruction operator.  A Schwarz-Pick lemma for  bounded free
holomorphic functions on $[B(\cH)^n]_1$, with respect to the
hyperbolic metric, was also obtained. In \cite{Po-hyperbolic2},  we
continued to study   the noncommutative hyperbolic geometry on the
unit ball of $B(\cH)^n$, its connections with multivariable dilation
theory, and its implications
 to noncommutative function theory.
The results from \cite{Po-hyperbolic} and \cite{Po-hyperbolic2} make
connections between noncommutative function theory (see
\cite{Po-poisson}, \cite{Po-holomorphic}, \cite{Po-automorphism},
\cite{Po-pluriharmonic}) and   classical results in hyperbolic
complex analysis  (see \cite{Ko1}, \cite{Ko2}, \cite{Kr},
\cite{Ru}, \cite{Zhu}).

The present paper is an attempt to extend the results
\cite{Po-hyperbolic} concerning the noncommutative hyperbolic
geometry of the unit ball  $[B(\cH)^n]_1$ to the more  general
setting  of \cite{Po-unitary}. We study the noncommutative balls
$$
[\cC_\rho]_{<1}=\left\{(X_1,\ldots, X_n)\in B(\cH)^n:\
\omega_\rho(X_1,\ldots, X_n)< 1\right\},\qquad \rho\in (0,\infty],
$$
and the Harnach parts of $\cC_\rho$, $\rho>0$,  as metric spaces
with respect to a hyperbolic (resp. Carath\'eodory) type metric that
will be introduced.  We provide mapping theorems for free
holomorphic functions on these noncommutative balls, extending
  classical results from complex analysis and hyperbolic geometry.

In Section 1, we consider  some preliminaries on  free holomorphic
(resp. pluriharmonic) functions on the open unit ball
$[B(\cH)^n]_1$, and present several characterizations for the
$n$-tuples of operators of class $\cC_\rho$, $\rho\in (0,\infty)$.
We introduce a free pluriharmonic functional calculus for the class
$\cC_\rho$ and show that a von Neumann type inequality characterizes
this class. In particular, we prove that an $n$-tuple of operators
$(T_1,\ldots, T_n)\in B(\cH)^n$ is of class $\cC_\rho$ if and only
if
$$ \|p(T_1,\ldots, T_n)\|\leq\|\rho p(S_1,\ldots, S_n) +(1-\rho) p(0)\| $$
   for any noncommutative  polynomial with matrix-valued coefficients  $p\in \CC[Z_1,\ldots,
Z_n]\otimes M_m$, $m\in \NN$,  where $S_1,\ldots, S_n$ are the left
creation operators on the full Fock space with $n$ generators.

 In Section 2,
 we  introduce  a preorder relation $\overset{H}{\prec} $
on the class $\cC_\rho$. If  $A:=(A_1,\ldots, A_n)$ and
$B:=(B_1,\ldots, B_n)$ are  in the class $\cC_\rho\subset B(\cH)^n$,
we say that $A$ is Harnack dominated by $B$ (denote
$A\overset{H}{\prec}\, B$) if there exists $c>0$ such that
$$\Re p(A_1,\ldots, A_n)+ (\rho-1)\Re p(0)\leq c^2 \left[\Re p(B_1,\ldots, B_n)+
 (\rho-1)\Re p(0)\right] $$
 for any noncommutative polynomial with
matrix-valued coefficients $p\in \CC[X_1,\ldots, X_n]\otimes M_{m}$,
$m\in \NN$, such that  $\Re p(X):=\frac{1}{2}[p(X)^*+p(X)]\geq 0$
for any $X\in [B(\cK)^n]_1$, where $\cK$ is  an infinite dimensional
Hilbert space.
 When we
want to emphasize the constant $c$, we write
$A\overset{H}{{\underset{c}\prec}}\, B$. We provide
    several   characterizations  for the Harnack domination
    on the noncommutative ball $\cC_\rho$ (see Theorem \ref{equivalent}), and
    determine   the  set of all elements in  $\cC_\rho$ which
are Harnack dominated  by $0$.  The results of this section will
play
    a major role in the next sections.

The relation $\overset{H}{\prec} $  induces an equivalence relation
$\overset{H}\sim$ on  the class $\cC_\rho$. More precisely,  two
$n$-tuples $A$ and $B$ are Harnack equivalent (and denote
$A\overset{H}{\sim}\, B$) if and only if there exists $c>  1$ such
that $A\overset{H}{{\underset{c}\prec}}\, B$ and
$B\overset{H}{{\underset{c}\prec}}\, A$ (in this case we denote
$A\overset{H}{{\underset{c}\sim}}\, B$). The equivalence classes
with respect to $\overset{H}\sim$ are called Harnack parts of
$\cC_\rho$. In Section 3, we  provide a Harnack type double
inequality   for positive free pluriharmonic functions on the
noncommutative ball $\cC_\rho$ and use it to prove that  the Harnack
part of $\cC_\rho$ which contains $0$ coincides  with the open
noncommutative ball
$$
[\cC_\rho]_{<1}:=\{(X_1,\ldots, X_n)\in B(\cH)^n:\
\omega_\rho(X_1,\ldots, X_n)<1\}.
$$
We introduce  a  hyperbolic metric $\delta_\rho:\Delta\times \Delta
\to \RR^+$ on any Harnack part $\Delta$ of $\cC_\rho$, by setting
\begin{equation*}
 \delta_\rho(A,B):=\ln \inf\left\{ c > 1: \
A\,\overset{H}{{\underset{c}\sim}}\, B   \right\},\qquad A,B\in
\Delta.
\end{equation*}
A concrete formula for the hyperbolic distance on any
  Harnack  part of $\cC_\rho$ is obtained.
   When $\Delta=[\cC_\rho]_{<1}$,  we prove that
\begin{equation*}
\delta_\rho(A,B)=\ln \max \left\{ \left\|C_{\rho,A} C_{\rho,B}^{-1}
\right\|,
  \left\|C_{\rho,B} C_{\rho,A}^{-1} \right\|\right\},\qquad A, B\in
  [\cC_\rho]_{<1},
\end{equation*}
where \begin{equation*} \begin{split} C_{\rho,X}&:=
\Delta_{\rho,X}(I-R_X)^{-1},\\
\Delta_{\rho,X}&:=  \left[\rho I+(1-\rho)(R_{X}^*+ R_{X})+
(\rho-2)R_{X}^* R_{X}\right]^{1/2},
\end{split}
\end{equation*}
 and $R_X:=X_1^*\otimes R_1+\cdots + X_n^*\otimes R_n$
 is the {\it reconstruction operator} associated with the right
creation operators $R_1,\ldots, R_n$ on the full Fock space with $n$
generators, and $X:=(X_1,\ldots, X_n)\in [\cC_\rho]_{<1}$.
We recall that the reconstruction  operator  has played an important role in  noncommutative multivariable operator theory. It
appeared  as a building block in the characteristic function associated to a row contraction (see \cite{Po-charact},  \cite{Po-charact-inv}) and also as a quantized variable (associated with the $n$-tuple $X$)  in the noncommutative  Cauchy, Poisson, and Berezin  transform, respectively (see \cite{Po-poisson}, \cite{Po-holomorphic}, \cite{Po-pluriharmonic}, \cite{Po-unitary}).

In Section 4, we study the stability of the ball $\cC_\rho$ under
contractive free holomorphic functions and provide mapping theorems,
von Neumann inequalities, and Schwarz type lemmas,
 with respect
to the hyperbolic metric $\delta_\rho$ and the operator radius
$\omega_\rho$, $\rho\in (0,\infty]$.

Let   $f:=(f_1,\ldots, f_m)$ be  a contractive  free holomorphic
function with $\|f(0)\|<1$ such that the boundary functions
$\widetilde f_1,\ldots, \widetilde f_m$ are in the noncommutative
disc algebra $\cA_n$ (see \cite{Po-von}, \cite{Po-disc}).  If an
$n$-tuple  of operators $(T_1,\ldots, T_n)\in B(\cH)^n$  is of class
$\cC_\rho$, $\rho>0$, then we prove that, under the free
pluriharmonic  functional calculus, the $m$-tuple $f(T_1,\ldots,
T_n) \in B(\cH)^m$ is of class $\cC_{\rho_f}$, where $\rho_f>0$ is
given in terms of $\rho$ and $f(0)$.

 One of the main results of this section is  the
following {\it spectral von Neumann inequality} for $n$-tuples of
operators. If
$f:=(f_1,\ldots, f_m)$  satisfies the conditions above and
 $(T_1,\ldots,
T_n)\in B(\cH)^n$  has the joint spectral radius $r(T_1,\ldots,
T_n)<1$, then $ r(f(T_1,\ldots, T_n))<1. $

 If, in addition, $f(0)=0$ and
  $\delta_\rho :\Delta\times \Delta\to [0,\infty)$
 is the hyperbolic metric on  a Harnack part $\Delta$ of $\cC_\rho$,
 then we prove that
$$
\delta_{\rho}(f(A),f(B))\leq  \delta_\rho(A,B),\qquad A,B\in \Delta.
$$
In particular, this holds when $\Delta$ is the open ball
$[\cC_\rho]_{<1}$. Moreover, in this setting,  we   show that
$$
\omega_{\rho}(f(T_1,\ldots, T_n))<1,\qquad (T_1,\ldots, T_n)\in
[\cC_\rho]_{<1},
$$
for any $\rho>0$. The general  case when $f(0)\neq 0$ is also
discussed.

In Section 5, we introduce a Carath\' eodory type metric on
 the set of all $n$-tuples of operators with
joint spectral radius strictly less then 1, i.e.,
$$[\cC_\infty]_{<1}:=\{(X_1,\ldots, X_n)\in  B(\cH)^n: \
r(X_1,\ldots, X_n)<1\},
$$
by setting
$$
d_K(A,B)=\sup_p \|\Re  p(A)-\Re p(B)\|,
$$
where the supremum is taken over all noncommutative polynomials with
matrix-valued coefficients $p\in \CC[X_1,\ldots, X_n]\otimes M_{m}$,
$m\in \NN$, with $\Re p(0)=I$ and $\Re p(X) \geq 0$ for all $X\in
[B(\cK)^n]_1$.

 We obtain
a concrete formula for $d_K$
  in terms of the free pluriharmonic kernel on the open unit ball
  $[\cC_\infty]_{<1}$. More precisely, we show that
$$
d_K(A,B)=\|P(A,R)-P(B,R)\|,\qquad A,B\in   [\cC_\infty]_{<1},
$$
where $$ P(X,R) := \sum_{k=1}^\infty \sum_{|\alpha|=k} X_{
\alpha}\otimes R_{\tilde\alpha}^* +\rho I\otimes I+
  \sum_{k=1}^\infty \sum_{|\alpha|=k} X_{\alpha}^* \otimes R_{\tilde
  \alpha},\qquad X\in   [\cC_\infty]_{<1},
$$
and  $\tilde \alpha$ is the reverse of $\alpha\in \FF_n^+$.  This is
used to prove that the metric $d_K$ is complete on
$[\cC_\infty]_{<1}$ and its topology coincides with the operator
norm topology.  We also prove that if  $f:=(f_1,\ldots, f_m)$ is a
contractive  free holomorphic function with $\|f(0)\|<1$ such that
the boundary functions $\widetilde f_1,\ldots, \widetilde f_m$ are
in the noncommutative disc algebra $\cA_n$, then
$$
d_K(f(A),f(B))\leq \frac{1+\|f(0)\|}{1-\|f(0)\|} d_K(A,B),\qquad
A,B\in [\cC_\infty]_{<1}.
$$
  As a consequence, we deduce that the map
$$
[\cC_\infty]_{<1}\ni (X_1,\ldots, X_n)\mapsto f(X_1,\ldots, X_n)\in
[\cC_\infty]_{<1}
$$
is continuous in the operator norm topology.

In Section 6, we compare the hyperbolic metric $\delta_\rho$ with
the Carath\' eodory metric $d_K$, and the operator metric,
respectively, on Harnack parts of the unit ball $\cC_\rho$,
$\rho>0$. In particular, we  prove that  the hyperbolic metric
$\delta_\rho$ is complete on the open unit unit ball
$[\cC_\rho]_{<1}$, while the other two metrics, mentioned above, are
not complete. On the other hand, we show  the
$\delta_\rho$-topology, the $d_K$-topology, and the operator norm
topology coincide on $[\cC_\rho]_{<1}$.

In Section 7,  we consider the single variable case ($n=1$) and show
that  our Harnack domination  for  $\rho$-contractions is equivalent
to the one introduced and studied  by G. Cassier and N. Suciu in
\cite{CaSu} and \cite{CaSu2}. Consequently, we recover some of their
results and, moreover, we obtain some results which seem to be new
even in the single variable case.

 Finally, we want to acknowledge
that we were  influenced in writing this paper by the work of  C. Foia\c s (\cite{Fo}), I. Suciu
(\cite{Su2}), and
 G. Cassier and N. Suciu (\cite{CaSu},  \cite{CaSu2}) concerning the  Harnack domination  and  the hyperbolic distance between two $\rho$-contractions.
It will be interesting to see to which extent the results of this
paper, concerning the hyperbolic geometry on  noncommutative balls,
can be extended to the Hardy algebras of Muhly and Solel (see
\cite{MuSo1}, \cite{MuSo2}, \cite{MuSo3}).

\bigskip

\section{ The noncommutative ball $\cC_\rho$ and  a free pluriharmonic  functional
calculus}

In this section,  we consider  some preliminaries on  free
holomorphic (resp. pluriharmonic) functions on the unit ball
$[B(\cH)^n]_1$, and several characterizations for the $n$-tuples of
operators of class $\cC_\rho$.  We introduce a free pluriharmonic
functional calculus for the class $\cC_\rho$ and show that a von
Neumann type inequality characterizes  the class $\cC_\rho$.

 Let $H_n$ be an $n$-dimensional complex  Hilbert space with
orthonormal
      basis
      $e_1$, $e_2$, $\dots,e_n$, where $n=1,2,\dots$, or $n=\infty$.
       The full Fock space  of $H_n$  is defined by
      $$F^2(H_n):=\CC1\oplus \bigoplus_{k\geq 1} H_n^{\otimes k},$$
      where  $H_n^{\otimes k}$ is the (Hilbert)
      tensor product of $k$ copies of $H_n$.
      We define the left  (resp.~right) creation
      operators  $S_i$ (resp.~$R_i$), $i=1,\ldots,n$, acting on the full
       Fock space  $F^2(H_n)$  by
      setting
      $$
       S_i\varphi:=e_i\otimes\varphi, \qquad  \varphi\in F^2(H_n),
      $$
       (resp.~$
       R_i\varphi:=\varphi\otimes e_i, \quad  \varphi\in F^2(H_n)
      $).
We recall that the noncommutative disc algebra $\cA_n$
(resp.~$\cR_n$) is the norm closed algebra generated by the left
(resp.~right) creation operators and the identity. The
noncommutative analytic Toeplitz algebra $F_n^\infty$
(resp.~$\cR_n^\infty$)
 is the  weakly
closed version of $\cA_n$ (resp.~$\cR_n$). These algebras were
introduced in \cite{Po-von} in connection with a  von Neumann type
inequality \cite{vN}, as noncommutative analogues of the disc
algebra $A(\DD)$ and the Hardy space $H^\infty(\DD)$. For more
information on theses  noncommutative algebras we refer the reader
to
 \cite{Po-multi}, \cite{Po-funct}, \cite{Po-analytic},
\cite{Po-disc},  \cite{D},  and the references therein.

 Let $\cH$ be a Hilbert space  and let $B(\cH)$ be the algebra of all bounded linear
  operators on $\cH$.
We  identify $M_m(B(\cH))$, the set of $m\times m$ matrices with
entries from $B(\cH)$, with
$B( \cH^{(m)})$, where $\cH^{(m)}$ is the direct sum of $m$ copies
of $\cH$.
 If $\cX$ is an operator space, i.e., a closed subspace of
$B(\cH)$, we consider $M_m(\cX)$ as a subspace of $M_m(B(\cH))$ with
the induced norm.
Let $\cX, \cY$ be operator spaces and $u:\cX\to \cY$ be a linear
map. Define the map
$u_m:M_m(\cX)\to M_m(\cY)$ by
$$
u_m ([x_{ij}]):=[u(x_{ij})].
$$
We say that $u$ is completely bounded  if
$$
\|u\|_{cb}:=\sup_{m\ge1}\|u_m\|<\infty.
$$
If $\|u\|_{cb}\leq1$
(resp. $u_m$ is an isometry for any $m\geq1$) then $u$ is completely
contractive (resp. isometric), and if $u_m$ is positive for all $m$,
then $u$ is called
 completely positive. For basic results concerning  completely bounded maps
 and operator spaces we refer to \cite{Pa-book}, \cite{Pi}, and \cite{ER}.

A few more notations and definitions are necessary. If $\omega,
\gamma\in \FF_n^+$, we say that  $\omega
>_{l}\gamma$ if there is $\sigma\in
\FF_n^+\backslash\{g_0\}$ such that $\omega= \gamma \sigma$ and set
$\omega\backslash_l \gamma:=\sigma$.
 We denote by
$\tilde\alpha$  the reverse of $\alpha\in \FF_n^+$, i.e.,
  $\tilde \alpha= g_{i_k}\cdots g_{i_1}$ if
   $\alpha=g_{i_1}\cdots g_{i_k}\in\FF_n^+$.
An operator-valued  positive semidefinite kernel on the free
semigroup $\FF_n^+$ is a map
$ K:\FF_n^+\times\FF_n^+\to B(\cH) $ with the property that for each
$k\in\NN$, for each choice of vectors
 $h_1,\dots,h_k$ in $\cH$, and
$\sigma_1,\dots,\sigma_k$ in $\FF_n^+$, the inequality
$$
\sum\limits_{i,j=1}^k\langle K(\sigma_i,\sigma_j)h_j,h_i\rangle\ge 0
$$
holds. Such a kernel is called multi-Toeplitz if it has the
following properties:
$K(\alpha,\alpha)=I_\cH$  for  any $\alpha\in \FF_n^+$,
and
$$
K(\sigma,\omega)=
\begin{cases}K(g_0,\omega \backslash_l \sigma)\ &\text{ if } \omega>_l\sigma  \\
 K(\sigma\backslash_l\omega,g_0) \ &\text{ if }\sigma>_l\omega  \\
0 &\text { otherwise. } \end{cases}
$$

 An  $n$-tuple of operators $(T_1,\ldots, T_n)$, \ $T_i\in B(\cH)$,
   belongs to the
   class $\cC_\rho$, $\rho>0$, if there exist a Hilbert
  space $\cK\supseteq \cH$
 and isometries  $V_i\in B(\cK)$, $i=1,\ldots, n$, with orthogonal
  ranges,  such that
  \begin{equation*}
  T_\alpha =\rho P_\cH V_\alpha |_\cH,\qquad
\alpha\in \FF_n^+\backslash\{g_0\},
\end{equation*}
where
  $P_\cH$ is the orthogonal projection of $\cK$ onto $\cH$. If
  $\cK=\cK_T:=\bigvee_{\alpha\in \FF_n^+}
V_\alpha \cH$, then  the $n$-tuple $(V_1,\ldots, V_n)$ is the
minimal isometric dilation of $(T_1,\ldots, T_n)$, which is unique
up to an isomorphism. Note that if $(T_1,\ldots, T_n)\in \cC_\rho$,
then
   the   joint spectral radius  $r (T_1,\ldots, T_n)\leq
1$, where
$$
r(T_1,\ldots, T_n):=\lim_{k\to \infty}\left\|\sum_{|\alpha|=k}
T_\alpha T_\alpha^*\right\|^{1/2k}.
$$
 We recall (see Corollary 1.36 from \cite{Po-unitary}) that
$\bigcup\limits_{\rho>0}\cC_\rho$ is dense (in the operator norm
topology) in the set of all $n$-tuples of operators with joint
spectral radius $r (T_1,\ldots, T_n)\leq 1$. Moreover, any $n$-tuple
of operators with $r(T_1,\ldots, T_n)<1$ is of class $\cC_\rho$ for
some $\rho>0$. We should add that (see Theorem 5.9 from \cite{Po-similarity})
$(T_1,\ldots, T_n)\in B(\cH)^n$ has the joint spectral radius
$r(T_1,\ldots, T_n)<1$ if and only if  it is uniformly stable, i.e.,
$\|\sum_{|\alpha|=k} T_\alpha T_\alpha^*\|\to 0$, as $k\to\infty$.

Since  the joint spectral radius of $n$-tuples of operators plays an
important role in the present paper, we recall (see \cite{Bu},
\cite{M}) some of its properties. The joint right spectrum
  $\sigma_r(T_1,\ldots, T_n)$ of an   $n$-tuple
 $(T_1,\ldots, T_n)$ of operators
   in $B(\cH)$ is the set of all $n$-tuples
    $(\lambda_1,\ldots, \lambda_n)$  of complex numbers such that the
     right ideal of $B(\cH)$  generated by the operators
     $\lambda_1I-T_1,\ldots, \lambda_nI-T_n$ does
      not contain the identity operator.
      We know that  $\sigma_r(T_1,\ldots, T_n)$ is included in the
      closed ball of $\CC^n$ of radius $r(T_1,\ldots, T_n)$.

 If  we assume that $T_1,\ldots, T_n\in B(\cH)$ are mutually commuting operators and
  $\cB$ is a closed subalgebra of $B(\cH)$ containing $T_1,\ldots, T_n$, and the
  identity,
   then  the Harte spectrum $\sigma(T_1,\ldots, T_n)$ is the set of all
 $(\lambda_1,\ldots, \lambda_n)\in \CC^n$ such that
 $$(\lambda_1I-T_1)X_1+\cdots +(\lambda_n I-T_n)X_n\neq I
 $$
 for all $X_1,\ldots, X_n\in \cB$. In this case, we have
 $$
 r(T_1,\ldots, T_n)=\max\{\|(\lambda_1,\ldots, \lambda_n)\|_2:\
 (\lambda_1,\ldots, \lambda_n)\in \sigma(T_1,\ldots, T_n)\}.
 $$
According to \cite{M}, the latter  formula   remains true if the
Harte spectrum  is replaced by   the Taylor's spectrum for commuting
operators.

According to Theorem 4.1 from   \cite{Po-posi} and  Theorems 1.34 and 1.39 from \cite{Po-unitary}, we have
       the following  characterizations
    for the  $n$-tuples of operators of class
     $\cC_\rho$.
We denote by $\CC[Z_1,\ldots, Z_n]$ the set of all noncommutative
     polynomials  in $n$ noncommuting indeterminates.

\begin{theorem}\label{ro-contr}
Let $T_1,\ldots, T_n\in B(\cH)$ and let $\cS\subset C^*(S_1,\ldots,
S_n)$
  be the operator system defined by
  \begin{equation*}
  \cS:=\{p(S_1,\ldots, S_n)+q(S_1,\ldots, S_n)^*:\ p,q\in \CC[Z_1,\ldots, Z_n]\}.
  \end{equation*}
 Then the following statements are equivalent:
 \begin{enumerate}
 \item[(i)] $(T_1,\ldots, T_n)\in \cC_\rho.$

\item[(ii)]
   The map $\Psi:\cS\to B(\cH)$ defined by
 \begin{equation*}
 \begin{split}
 \Psi\left(p(S_1,\ldots, S_n)+q(S_1,\ldots, S_n)^*\right):=
 p(T_1,\ldots, T_n)&+q(T_1,\ldots,T_n)^*\\
&+ (\rho-1)(p(0)+\overline{q(0)})I
 \end{split}
 \end{equation*}
 is completely positive.
 \item[(iii)] The joint spectral radius $r(T_1\ldots, T_n)\leq 1$ and
  the $\rho$-pluriharmonic kernel
    defined by
  \begin{equation*}
 P_\rho(rT,R) := \sum_{k=1}^\infty \sum_{|\alpha|=k}r^{|\alpha|}
   T_{  \alpha}\otimes R_{\tilde\alpha}^* +\rho I\otimes I+
  \sum_{k=1}^\infty \sum_{|\alpha|=k}r^{|\alpha|}
  T_{\alpha}^* \otimes R_{\tilde \alpha}
  \end{equation*}
  is positive for any $0<r<1$, where the convergence is in the operator norm topology.
 \item[(iv)] The spectral radius $r(T_1,\ldots, T_n)\leq 1$ and
 $$
 \rho I\otimes I+(1-\rho)r\sum_{i=1}^n ( T_i\otimes R_i^*+  T_i^*\otimes R_i)+
 (\rho-2)r^2\left( \sum_{i=1}^n T_iT_i^*\otimes I\right)\geq 0
 $$
 for any $0<r<1$.
 \item[(v)]
 The multi-Toeplitz kernel $K_{ \rho, T}:\FF_n^+\times \FF_n^+\to
B(\cH)$ defined
    by
   $$
   K_{ \rho, T}(\alpha, \beta):=
   \begin{cases}
  \frac {1} {\rho}   T_{\beta\backslash_l \alpha}
   &\text{ if } \beta>_l\alpha\\
   I  &\text{ if } \alpha=\beta\\
   \frac {1} {\rho} (T_{\alpha\backslash_l \beta})^*
     &\text{ if } \alpha>_l\beta\\
    0\quad &\text{ otherwise}
   \end{cases}
   $$
is positive semidefinite.
\end{enumerate}
\end{theorem}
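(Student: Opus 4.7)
The plan is a cyclic proof (i)$\Rightarrow$(ii)$\Rightarrow$(iii)$\Rightarrow$(iv)$\Rightarrow$(v)$\Rightarrow$(i), with each step carried out on the full Fock space via the noncommutative Poisson transform and the multi-Toeplitz kernel machinery developed in \cite{Po-posi} and \cite{Po-unitary}.

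For (i)$\Rightarrow$(ii), I take a minimal isometric $\rho$-dilation $(V_1,\ldots,V_n)$ on $\cK\supseteq\cH$ and set $\widetilde\Psi:=\frac{1}{\rho}\Psi$. A direct monomial expansion using $T_\alpha=\rho P_\cH V_\alpha|_\cH$ for $|\alpha|\ge 1$ together with $T_{g_0}=I$ yields
\[
\widetilde\Psi\bigl(p(S)+q(S)^*\bigr)=P_\cH\bigl[p(V)+q(V)^*\bigr]\big|_\cH,
\]
where the correction $(\rho-1)(p(0)+\overline{q(0)})I$ built into $\Psi$ is exactly what compensates the discrepancy at the identity of $\FF_n^+$. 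Because $C^*(S_1,\ldots,S_n)$ is universal for $n$ isometries with orthogonal ranges, $S_i\mapsto V_i$ extends to a $*$-homomorphism on $\cS$, so $p(S)+q(S)^*\mapsto p(V)+q(V)^*$ is completely positive; compressing to $\cH$ and multiplying back by $\rho$ gives the complete positivity of $\Psi$.

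For (ii)$\Rightarrow$(iii) I apply $\Psi\otimes\mathrm{id}$ to the standard noncommutative Poisson kernel $P_1(rS,R)\in\cS\otimes\cR_n^\infty$, which is positive for each $0<r<1$ since $(rS_1,\ldots,rS_n)$ is a strict row contraction. A bookkeeping calculation gives $(\Psi\otimes\mathrm{id})(P_1(rS,R))=P_\rho(rT,R)$, so this kernel is positive; letting $r\uparrow 1$ together with the multi-Toeplitz structure forces $r(T_1,\ldots,T_n)\le 1$. The implication (iii)$\Rightarrow$(iv) is then obtained by isolating the terms of length $\le 1$ in $P_\rho(rT,R)$ and compressing, using $\sum_iR_iR_i^*\le I$; and (iv)$\Rightarrow$(v) follows by recognizing the multi-Toeplitz kernel $K_{\rho,T}$ as the Fourier coefficient data of $P_\rho(rT,R)$, with positivity of one encoded in the positivity of the other via tensor test vectors $\xi=\sum_\alpha r^{|\alpha|}h_\alpha\otimes e_{\tilde\alpha}$.

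The main obstacle is (v)$\Rightarrow$(i), which invokes the noncommutative Naimark-type dilation theorem of \cite{Po-posi} for positive semidefinite multi-Toeplitz kernels on $\FF_n^+$: positivity of $K_{\rho,T}$ yields isometries $V_1,\ldots,V_n$ with orthogonal ranges on some $\cK\supseteq\cH$ satisfying $K_{\rho,T}(\alpha,\beta)=P_\cH V_\alpha^* V_\beta|_\cH$; specializing to $\alpha=g_0$ and $|\beta|\ge 1$ gives $T_\beta=\rho P_\cH V_\beta|_\cH$, which is (i). As a shortcut, (ii)$\Rightarrow$(i) can be obtained directly from Stinespring applied to the unital completely positive map $\widetilde\Psi=\frac{1}{\rho}\Psi$ extended to $C^*(S_1,\ldots,S_n)$: the generators $V_i:=\pi(S_i)$ of the minimal Stinespring representation $\pi$ are then the desired $\rho$-dilating isometries.
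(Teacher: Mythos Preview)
The paper does not supply a proof of this theorem; it quotes the statement from \cite{Po-posi} (Theorem~4.1) and \cite{Po-unitary} (Theorems~1.34 and 1.39). Your cycle is a reasonable reconstruction of those arguments, and the steps (i)$\Rightarrow$(ii) via compression of the $*$-representation on the dilation space, (ii)$\Rightarrow$(iii) via $(\Psi\otimes\mathrm{id})\bigl[P_1(rS,R)\bigr]=P_\rho(rT,R)$, and (v)$\Rightarrow$(i) via the Naimark-type dilation of \cite{Po-posi} are all correct in outline.

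The gap is at (iii)$\Rightarrow$(iv). ``Isolating the terms of length $\le 1$ in $P_\rho(rT,R)$ and compressing, using $\sum_iR_iR_i^*\le I$'' cannot produce the inequality in (iv): the degree-one piece of $P_\rho(rT,R)$ carries coefficient $1$, not $(1-\rho)$, and there is no term $(\rho-2)r^2\sum_iT_iT_i^*\otimes I$ present anywhere in the series for you to isolate by compression. The actual link between (iii) and (iv) is the factorization (used later in the paper as relation \eqref{facto})
\[
P_\rho(rT,R)=(I-R_{rT}^*)^{-1}\Bigl[\rho I+(1-\rho)(R_{rT}^*+R_{rT})+(\rho-2)R_{rT}^*R_{rT}\Bigr](I-R_{rT})^{-1},
\]
where $R_{rT}^*R_{rT}=r^2\sum_iT_iT_i^*\otimes I$ because $R_i^*R_j=\delta_{ij}I$. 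The hypothesis $r(T)\le 1$ makes $I-R_{rT}$ invertible, so positivity of $P_\rho(rT,R)$ is \emph{equivalent} to positivity of the bracket, which is exactly (iv). Your (iv)$\Rightarrow$(v) inherits the same defect: the test vectors $\sum_\alpha r^{|\alpha|}h_\alpha\otimes e_{\tilde\alpha}$ extract the $K_{\rho,T}$ data from the full kernel $P_\rho(rT,R)$, i.e.\ from (iii), not from the bracketed operator in (iv); so you must first pass (iv)$\Rightarrow$(iii) through the same factorization before running the test-vector computation.
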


Consider  $1\leq m<n$ and let $(R_1',\ldots,R_m')$  and
$(R_1,\ldots, R_n)$ be the right creation operators on $F^2(H_m)$
and  $F^2(H_n)$, respectively. According to the Wold type
decomposition for isometries with orthogonal ranges
\cite{Po-isometric}, the $m$-tuple $(R_1,\ldots, R_m)$ is unitarily
equivalent to $(R_1'\otimes I_\cE,\ldots,R_m'\otimes I_\cE)$, where
$\cE$ is equal to $F^2(H_n)\ominus F^2(H_m)$. Consequently, using
Theorem \ref{ro-contr}, one can easily deduce the following result.

\begin{corollary} \label{T,0}
Let $\rho>0$, $1\leq m<n$,  and  consider an $m$-tuple $(T_1,\ldots,
T_m)\in B(\cH)^m$ and  its extension   $(T_1,\ldots, T_m,0,\ldots,
0) \in B(\cH)^n$. Then the following statements hold:
\begin{enumerate}
\item[(i)]
$(T_1,\ldots, T_m)\in \cC_\rho$  if and only if $ (T_1,\ldots,
T_m,0,\ldots, 0)\in \cC_\rho$;
\item[(ii)]
$\omega_\rho(T_1,\ldots, T_m)=\omega_\rho (T_1,\ldots, T_m,0,\ldots,
0));$
\item[(iii)] $r(T_1,\ldots, T_m)=r (T_1,\ldots, T_m,0,\ldots,
0)$.
\end{enumerate}
\end{corollary}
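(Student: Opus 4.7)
The plan is to reduce every statement to Theorem \ref{ro-contr}(iv) and invoke the Wold-type identification recalled just before the corollary, namely that $(R_1,\ldots,R_m)$ on $F^2(H_n)$ is unitarily equivalent to $(R_1'\otimes I_\cE,\ldots,R_m'\otimes I_\cE)$ on $F^2(H_m)\otimes \cE$, with $\cE=F^2(H_n)\ominus F^2(H_m)$.

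I would dispatch (iii) first. Since $T_{m+1}=\cdots=T_n=0$, we have $T_\alpha=0$ for every $\alpha\in\FF_n^+$ containing some generator $g_i$ with $i>m$. Hence
\begin{equation*}
\sum_{\substack{\alpha\in\FF_n^+\\ |\alpha|=k}} T_\alpha T_\alpha^* \;=\; \sum_{\substack{\alpha\in\FF_m^+\\ |\alpha|=k}} T_\alpha T_\alpha^*
\end{equation*}
for every $k\ge 0$; taking $(2k)$th roots of the norms and passing to the limit yields $r(T_1,\ldots,T_m)=r(T_1,\ldots,T_m,0,\ldots,0)$.

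For (i), I would write condition (iv) of Theorem \ref{ro-contr} for the extended tuple $(T_1,\ldots,T_m,0,\ldots,0)$. Because the trailing entries vanish, the inequality on $\cH\otimes F^2(H_n)$ collapses to
\begin{equation*}
\rho I\otimes I + (1-\rho)r\sum_{i=1}^m\bigl(T_i\otimes R_i^*+T_i^*\otimes R_i\bigr) + (\rho-2)r^2\Bigl(\sum_{i=1}^m T_iT_i^*\Bigr)\otimes I \;\ge\; 0,
\end{equation*}
with $R_1,\ldots,R_m$ the right creation operators on $F^2(H_n)$. Under the unitary identification $\cH\otimes F^2(H_n)\simeq(\cH\otimes F^2(H_m))\otimes\cE$, the left-hand side transforms into $Y\otimes I_\cE$, where $Y$ is the analogous expression built with $R_1',\ldots,R_m'$ on $\cH\otimes F^2(H_m)$. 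Since $Y\otimes I_\cE\ge 0$ if and only if $Y\ge 0$ (test on simple tensors $\xi\otimes e$ with $e\in\cE$ a unit vector), and since the spectral radius hypothesis in (iv) is already shared by the two tuples by (iii), condition (iv) holds for the extension precisely when it holds for $(T_1,\ldots,T_m)$, giving (i). For (ii), scaling by $1/t$ preserves the trailing zeros, so by (i) the two sets $\{t>0:\,\text{scaled tuple}\in\cC_\rho\}$ coincide, and the infima defining $\omega_\rho$ agree.

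The entire argument is bookkeeping with the tensor identification from the Wold decomposition; there is no real obstacle. The only point deserving a moment of care is the ampliation equivalence $Y\ge 0\Leftrightarrow Y\otimes I_\cE\ge 0$, which is immediate but crucial for both directions of (i).
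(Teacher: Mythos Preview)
Your proof is correct and follows exactly the approach the paper indicates: the paper merely states that the Wold-type identification $(R_1,\ldots,R_m)\simeq(R_1'\otimes I_\cE,\ldots,R_m'\otimes I_\cE)$ together with Theorem~\ref{ro-contr} yields the result, and you have supplied precisely those details, choosing part (iv) of Theorem~\ref{ro-contr} as the working characterization. The direct verification of (iii) and the scaling argument for (ii) are the natural complements.
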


  Throughout this paper, we assume that
 $\cE$ is a  separable Hilbert space.
We recall \cite{Po-holomorphic} that a  mapping
$F:[B(\cH)^n]_{1}\to B( \cH)\bar \otimes_{min}B(\cE)$ is called
  {\it free
holomorphic function} on  $[B(\cH)^n]_{1}$  with coefficients in
$B(\cE)$ if there exist $A_{(\alpha)}\in B(\cE)$, $\alpha\in
\FF_n^+$, such that $\limsup_{k\to \infty} \left\| \sum_{|\alpha|=k}
A_{(\alpha)}^* A_{(\alpha)}\right\|^{1/2k}\leq 1$ and
$$
F(X_1,\ldots, X_n)=\sum\limits_{k=0}^\infty \sum\limits_{|\alpha|=k}
X_\alpha\otimes  A_{(\alpha)},
$$
where the series converges in the operator  norm topology  for any
$(X_1,\ldots, X_n)$ in the open unit ball $
[B(\cH)^n]_{1}:=\{(X_1,\ldots, X_n):\ \|X_1X_1^*+\cdots+
X_nX_n\|<1\}$. The set of all free holomorphic functions on
$[B(\cH)^n]_1$ with coefficients in $B(\cE)$ is denoted by $H_{\bf
ball}(B(\cE))$.
Let $H_{\bf ball}^\infty(B(\cE))$  denote the set of  all elements
$F$ in $H_{\bf ball}(B(\cE))$  such that
$$
\|F\|_\infty:=\sup  \|F(X_1,\ldots, X_n)\|<\infty,
$$
where the supremum is taken over all $n$-tuples  of operators
$(X_1,\ldots, X_n)\in [B(\cH)^n]_1$ and any Hilbert space $\cH$.
According to \cite{Po-holomorphic} and \cite{Po-pluriharmonic},
$H_{\bf ball}^\infty(B(\cE))$ can be identified to the operator
algebra $ F_n^\infty\bar \otimes B(\cE)$ (the weakly closed algebra
generated by the spatial tensor product), via the noncommutative
Poisson transform. Due to the fact that a free holomorphic function
is uniquely determined by its representation on an infinite
dimensional Hilbert space, we  identify, throughout this paper,  a
free holomorphic function   with its representation on a   separable
infinite dimensional Hilbert space.

We say that a map $u:[B(\cH)^n]_1\to B(\cH)\bar\otimes_{min} B(\cE)$
is a self-adjoint {\it free pluriharmonic function} on
$[B(\cH)^n]_1$ if $u=\Re f:=\frac{1}{2}(f^*+f)$ for some free
holomorphic function $f$. A free pluriharmonic function on
$[B(\cH)^n]_1$ has the form $H:=H_1+ iH_2$, where $H_1,H_2$ are
self-adjoint free pluriharmonic  functions on $[B(\cH)^n]_1$. We
recall \cite{Po-pluriharmonic} that  if
$$f(Z_1,\ldots, Z_n)=\sum_{k=1}^\infty \sum_{|\alpha|=k}
Z_\alpha^*\otimes B_{(\alpha)} +
    I\otimes A_{(0)}+\sum_{k=1}^\infty \sum_{|\alpha|=k}   Z_\alpha\otimes A_{(\alpha)}$$
  is a free  pluriharmonic  function on  $[B(\cH)^n]_1$ with
  coefficients in $B(\cE)$
   and  $(T_1,\ldots, T_n)\in B(\cH)^n$ is any $n$-tuple of operators
   with joint spectral radius
$r(T_1,\ldots, T_n)<1$, then $ f(T_1,\ldots, T_n) $ is  a bounded
linear operator, where the corresponding series converge in norm.
Moreover $\lim_{r\to 1} f(rT_1,\ldots, rT_n)=f(T_1,\ldots T_n)$ in
the  operator norm  topology. We refer to \cite{Po-pluriharmonic}
for more results  on free pluriharmonic functions.

We denote by $Har_{\bf ball}^c(B(\cE))$ the set of all
  free pluriharmonic functions on $[B(\cH)^n]_1$ with operator-valued
  coefficients in $B(\cE)$, which
 have continuous extensions   (in the operator norm topology) to
the closed ball $[B(\cH)^n]_1^-$. We assume that $\cH$ is an
infinite dimensional Hilbert space. According to Theorem 4.1 from
\cite{Po-pluriharmonic}, we can identify $Har_{\bf ball}^c(B(\cE))$
with  the operator space  $\overline{\cA_n(\cE)^*+
\cA_n(\cE)}^{\|\cdot\|}$, where $\cA_n(\cE):=\cA_n\bar\otimes_{min}
B(\cE)$ and $\cA_n$ is the noncommutative disc algebra. More
precisely, if $u:[B(\cH)^n]_1\to  B( \cH)\bar \otimes_{min}B(\cE)$,
then
 the following statements are equivalent:
\begin{enumerate}
\item[(a)] $u$ is a free pluriharmonic function on $[B(\cH)^n]_1$ which
 has a continuous extension  (in the operator norm topology) to
the closed ball $[B(\cH)^n]_1^-$;
\item[(b)]
there exists $f\in \overline{\cA_n(\cE)^*+\cA_n(\cE)}^{\|\cdot\|}$
such that $u(X)=(P_X\otimes \text{\rm id})(f)$ for $X\in
[B(\cH)^n]_1$, where $ P_X$ is the noncommutative Poisson transform
at $X$;
\item[(c)] $u$ is a free pluriharmonic function on $[B(\cH)^n]_1$
such that \ $u(rS_1,\ldots, rS_n)$ converges in the operator norm
topology, as $r\to 1$.
\end{enumerate}
In this case, $  f=\lim\limits_{r\to 1}u(rS_1,\ldots, rS_n),$ where
the convergence is in the operator norm. Moreover, the map $ \Phi:
Har_{\bf ball}^c(B(\cE))\to
\overline{\cA_n(\cE)^*+\cA_n(\cE)}^{\|\cdot\|}\quad \text{ defined
by } \quad \Phi(u):=f $ is a  completely   isometric isomorphism of
operator spaces. We call  $ f$  the {\it model boundary function} of
$u$.

Now, we introduce  a  free pluriharmonic functional calculus for the
class $\cC_\rho$.

\begin{theorem}\label{funct-calc} Let $T:=(T_1,\ldots, T_n)\in B(\cH)^n$
be of class  $\cC_\rho$, and let $u\in Har_{\bf ball}^c(B(\cE))$
have the  standard representation
$$u(X_1,\ldots, X_n)=\sum_{k=1}^\infty \sum_{|\alpha|=k}
X_\alpha^*\otimes B_{(\alpha)} +
    I\otimes A_{(0)}+\sum_{k=1}^\infty \sum_{|\alpha|=k}
      X_\alpha\otimes A_{(\alpha)},\qquad X\in [B(\cH)^n]_1,$$
      for some $A_{(\alpha)}, B_{(\alpha)}\in B(\cE)$,
where the series  converge in the operator norm  topology. Then
$$
u(T_1,\ldots, T_n):=\lim_{r\to 1} u(rT_1,\ldots, rT_1)
$$
exists in the operator norm and
$$
\| u(T_1,\ldots, T_n)\|\leq \|\rho u +(1-\rho) u(0)\|_\infty.
$$
\end{theorem}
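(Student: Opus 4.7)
The plan is to compare $u$ evaluated at $T$ with the auxiliary pluriharmonic function $g := \rho u + (1-\rho) u(0)$ evaluated at the minimal isometric dilation $V = (V_1,\ldots,V_n)$ of $T$ on a larger Hilbert space $\cK \supseteq \cH$. The dilation relation $T_\alpha = \rho P_\cH V_\alpha|_\cH$ for $|\alpha|\geq 1$ (together with $P_\cH V_{g_0}|_\cH = I_\cH$) is tailored so that the factor $\rho$ absorbs the dilation constant on monomials of positive length, while the additive correction $(1-\rho)u(0)$ absorbs the mismatch on the constant term $I\otimes A_{(0)}$.

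First I would fix $s\in(0,1)$ and observe that $r(sT)\leq s<1$, so by the free pluriharmonic functional calculus recalled just before the statement, the series defining $u(sT)$ converges in operator norm. At the same time $sV$ lies in the \emph{open} ball $[B(\cK)^n]_1$, since the $V_i$ have orthogonal ranges, so $g(sV)$ is also given by a norm-convergent series. Substituting $T_\alpha = \rho P_\cH V_\alpha|_\cH$ termwise into the series for $u(sT)$, pulling the $\rho$ outside on monomials of positive length, and handling the constant term separately, I expect the compression identity
\[
u(sT) \;=\; (P_\cH \otimes I_\cE)\, g(sV)\,\big|_{\cH \otimes \cE},
\qquad g := \rho u + (1-\rho)\, u(0),
\]
to fall out directly. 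Since $g\in Har_{\bf ball}^c(B(\cE))$ as well, it has a continuous extension in operator norm to the closed ball, and $sV \to V$ in norm as $s\to 1^-$; hence $g(sV)\to g(V)$, and the identity forces $u(sT)$ to converge in operator norm to $(P_\cH\otimes I_\cE)\, g(V)\,|_{\cH\otimes\cE}$. This is the desired $u(T)=\lim_{r\to 1} u(rT_1,\ldots,rT_n)$.

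For the norm estimate, compression by $P_\cH\otimes I_\cE$ is contractive, giving $\|u(T)\|\leq \|g(V)\|$; and since $sV\in[B(\cK)^n]_1$ for every $s<1$, the continuity of $g$ at $V$ yields $\|g(V)\| = \lim_{s\to 1^-}\|g(sV)\| \leq \|g\|_\infty = \|\rho u + (1-\rho)u(0)\|_\infty$. The main obstacle I anticipate is a careful verification of the compression identity: the dilation formula $T_\alpha = \rho P_\cH V_\alpha|_\cH$ fails at $\alpha = g_0$, so the constant term of $u$ must be bookkept separately, and this is precisely the mismatch that the additive constant $(1-\rho)u(0)$ in the definition of $g$ is designed to correct. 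Once this is organized cleanly, both the existence of $u(T)$ in operator norm and the stated inequality follow from the continuity of $g$ on the closed ball.
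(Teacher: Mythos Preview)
Your proposal is correct and follows essentially the same route as the paper: both use the minimal isometric dilation $V$ of $T$, establish the compression identity (the paper writes it as $u(rT)=\rho(P_\cH\otimes I)u(rV)|_{\cH\otimes\cE}+(1-\rho)u(0)$, which is equivalent to your $u(sT)=(P_\cH\otimes I_\cE)g(sV)|_{\cH\otimes\cE}$), then use membership in $Har_{\bf ball}^c(B(\cE))$ to pass to the limit $r\to 1$ and bound the norm. Your explicit packaging via $g=\rho u+(1-\rho)u(0)$ is a clean way to handle both the convergence and the inequality in one stroke; the paper instead writes the identity with $u$ and then invokes the noncommutative von Neumann inequality at the end, but the content is the same.
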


\begin{proof}
 Since
$T:=(T_1,\ldots, T_n)\in B(\cH)^n$ is an $n$-tuple  of class
$\cC_\rho$, there is a minimal isometric dilation $V:=(V_1,\ldots,
V_n)$ of $T$ on a Hilbert space $\cK_T\supseteq \cH$, satisfying the
following properties: $V_i^*V_j=\delta_{ij} I$ for $i,j=1,\ldots,n$,
and
 $T_\alpha =\rho P_\cH V_\alpha |_{\cH}$  for any  $
    \alpha\in \FF_n^+\backslash \{g_0\}$, and
     $\cK_T=\bigvee_{\alpha\in \FF_n^+} V_\alpha \cH$.
      Taking into account that $u\in Har_{\bf ball}^c(B(\cE))$, we have
$$u(rV_1,\ldots, rV_n)=\sum_{k=1}^\infty \sum_{|\alpha|=k}
r^{|\alpha|}V_\alpha^*\otimes B_{(\alpha)} +
    I\otimes A_{(0)}+\sum_{k=1}^\infty \sum_{|\alpha|=k}
      r^{|\alpha|}V_\alpha\otimes A_{(\alpha)},
      $$
where the convergence is in the operator norm. Hence, and due to the
fact that
$$\sum_{|\alpha|=k} r^{|\alpha|}T_\alpha^*\otimes B_{(\alpha)}
=\rho (P_\cH\otimes I)\left(\sum_{|\alpha|=k}
r^{|\alpha|}V_\alpha^*\otimes B_{(\alpha)}\right)|_{\cH\otimes
\cE},\qquad k=1,2,\ldots,
$$
 we deduce that
\begin{equation*}
\begin{split}
u(rT_1,\ldots, rT_n)&:= \sum_{k=1}^\infty \sum_{|\alpha|=k}
r^{|\alpha|}T_\alpha^*\otimes B_{(\alpha)} +
    I\otimes A_{(0)}+\sum_{k=1}^\infty \sum_{|\alpha|=k}
      r^{|\alpha|}T_\alpha\otimes A_{(\alpha)}\\
      &=\rho (P_\cH\otimes I)u(rV_1,\ldots, rV_n)|_{\cH\otimes \cE}
      -(\rho-1)u(0).
      \end{split}
      \end{equation*}
 exists  in the operator norm topology.
 Now, taking into account that  $ \lim_{r\to 1}
u(rV_1,\ldots, rV_1) $ exists in the  operator norm,  we deduce that
$ \lim_{r\to 1} u(rT_1,\ldots, rT_1)$ exists in the same topology.
Consequently, we can define
$$
u(T_1,\ldots, T_n):=\lim_{r\to 1} u(rT_1,\ldots, rT_1).
$$
 Using the considerations above, and the noncommutative von Neumann inequality,
  we obtain
$$\|u(T_1,\ldots, T_n)\|\leq\|\rho u +(1-\rho) u(0)\|_\infty \leq  (\rho+ |\rho-1|) \| u\|_\infty
$$
for any $(T_1,\ldots, T_n)\in \cC_\rho$.
\end{proof}

We will refer to the map
$$Har_{\bf ball}^c(B(\cE)) \ni  u \mapsto
u(T_1,\ldots, T_n)\in B(\cH)\bar \otimes _{min} B(\cE) $$
 as the
free pluriharmonic functional calculus for the class $\cC_\rho$.
Since there is a completely   isometric isomorphism of operator
spaces $\overline{\cA_n(\cE)^*+\cA_n(\cE)}^{\|\cdot\|} \ni f\mapsto
u\in Har_{\bf ball}^c(B(\cE))$,  given by $u=(P_X\otimes \text{\rm
id})(f)$ for $X\in [B(\cH)^n]_1$,
 we  also use the notation $ f(T_1,\ldots, T_n)$ for
$u(T_1,\ldots, T_n)$.

Now, we show that the von Neumann type inequality of Theorem
\ref{funct-calc} characterizes the class $\cC_\rho$. Denote
$$\cP(S_1,\ldots, S_n):=\{p(S_1,\ldots, S_n) :\ p\in \CC[Z_1,\ldots,
Z_n]\},
$$
where $S_1,\ldots, S_n$ are the left creation operators on the full
Fock space $F^2(H_n)$.
\begin{theorem}\label{new-caract}
Let $T:=(T_1,\ldots, T_n)\in B(\cH)^n$ be an $n$-tuple of operators.
Then the following statements are equivalent:

\begin{enumerate}
\item[(i)]
$T$  is of class $\cC_\rho$;
\item[(ii)] the von Neumann type inequality
$$ \|p(T_1,\ldots, T_n)\|\leq\|\rho p(S_1,\ldots, S_n) +(1-\rho) p(0)\| $$
holds  for any noncommutative  polynomial  $p\in \CC[Z_1,\ldots,
Z_n]\otimes M_m$, $m\in \NN$;
\item[(iii)]
the map $\Psi_T:\cA_n\to B(\cH)$ defined by
 \begin{equation*}
 \Psi_T\left(q(S_1,\ldots, S_n)\right):=
 \frac{1}{\rho}q(T_1,\ldots, T_n)+
 \left(1-\frac{1}{\rho}\right)q(0)I, \qquad  q(S_1,\ldots, S_n)\in \cP(S_1,\ldots,
 S_n),
 \end{equation*}
 is completely contractive.

\end{enumerate}
\end{theorem}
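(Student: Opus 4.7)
The plan is to establish the implications (i) $\Rightarrow$ (ii) $\Rightarrow$ (iii) $\Rightarrow$ (i), so that the von Neumann inequality and the complete contractivity of $\Psi_T$ are both seen to be equivalent formulations of membership in $\cC_\rho$. The first two implications are essentially bookkeeping: the first follows directly from the free pluriharmonic functional calculus established in Theorem \ref{funct-calc}, and the second is an algebraic substitution. The substantive step is (iii) $\Rightarrow$ (i), which will rely on a Paulsen-type extension argument combined with Theorem \ref{ro-contr}(ii).

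For (i) $\Rightarrow$ (ii), I would take a matrix polynomial $p \in \CC[Z_1,\ldots,Z_n] \otimes M_m$, view it as a free pluriharmonic (in fact holomorphic) function $u$ on $[B(\cH)^n]_1$ with coefficients in $B(\CC^m) = M_m$, and note that $u \in Har_{\bf ball}^c(M_m)$. Applying Theorem \ref{funct-calc} yields $\|p(T_1,\ldots,T_n)\| \leq \|\rho p + (1-\rho)p(0)\|_\infty$, and the noncommutative von Neumann inequality (together with the maximum principle on the noncommutative ball) identifies $\|\rho p + (1-\rho)p(0)\|_\infty$ with $\|\rho p(S_1,\ldots,S_n) + (1-\rho)p(0)\|$. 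For (ii) $\Rightarrow$ (iii), given a matrix polynomial $q(S_1,\ldots,S_n)$, I would set $p := \tfrac{1}{\rho} q + (1 - \tfrac{1}{\rho})q(0)$ and check directly that $p(T) = \Psi_T(q(S))$ while $\rho p(S) + (1-\rho)p(0) = q(S)$; the inequality in (ii) then gives $\|\Psi_T(q(S))\| \leq \|q(S)\|$, and the same calculation at the level of matrices over $\cA_n$ delivers complete contractivity.

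The core of the argument is (iii) $\Rightarrow$ (i). Observing that $\Psi_T$ is \emph{unital} (since $\Psi_T(I) = \tfrac{1}{\rho} I + (1 - \tfrac{1}{\rho}) I = I$) and completely contractive on $\cA_n$, I would invoke the standard Paulsen/Arveson-style extension for unital completely contractive maps on a unital operator algebra: the formula
\begin{equation*}
 \tilde\Psi_T\bigl(p(S_1,\ldots,S_n) + q(S_1,\ldots,S_n)^*\bigr) := \Psi_T(p(S_1,\ldots,S_n)) + \Psi_T(q(S_1,\ldots,S_n))^*
\end{equation*}
is well-defined on the operator system $\cS = \cA_n + \cA_n^*$ and yields a unital completely positive map. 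Multiplying by $\rho$ and expanding using the definition of $\Psi_T$ one finds
\begin{equation*}
 \rho\,\tilde\Psi_T\bigl(p(S) + q(S)^*\bigr) = p(T) + q(T)^* + (\rho-1)\bigl(p(0) + \overline{q(0)}\bigr)I,
\end{equation*}
which is exactly the map $\Psi$ of Theorem \ref{ro-contr}(ii). Hence $\Psi$ is completely positive, and Theorem \ref{ro-contr}(ii) $\Rightarrow$ (i) gives $(T_1,\ldots,T_n) \in \cC_\rho$.

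The likely obstacle is justifying that the Paulsen extension $\tilde\Psi_T$ is well-defined and completely positive on $\cS$: one must verify that the decomposition $p(S) + q(S)^*$ is essentially unique modulo scalars and that the resulting map is genuinely c.p., which is where the matrix-coefficient version of the von Neumann inequality in (ii) is crucial. Once this is in hand, everything else reduces to the algebraic identifications above and a direct appeal to Theorem \ref{ro-contr}.
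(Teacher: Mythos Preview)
Your proposal is correct and follows essentially the same route as the paper: (i)$\Rightarrow$(ii) via Theorem~\ref{funct-calc}, (ii)$\Rightarrow$(iii) via the substitution $p=\tfrac{1}{\rho}q+(1-\tfrac{1}{\rho})q(0)$, and (iii)$\Rightarrow$(i) by extending the unital completely contractive $\Psi_T$ to a completely positive map on $\cA_n^*+\cA_n$ and identifying it with the map in Theorem~\ref{ro-contr}(ii). The only cosmetic difference is that the paper phrases the extension step as ``Arveson's extension theorem'' whereas you invoke the Paulsen-type result for unital c.c.\ maps on a unital algebra; these amount to the same thing here, and your remark about well-definedness of the decomposition $p(S)+q(S)^*$ modulo scalars is a standard point that the paper leaves implicit.
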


\begin{proof} The implication $(i)\implies (ii)$ follows, in
particular, from Theorem \ref{funct-calc}. To prove the implication
$(ii)\implies (iii)$, note that setting $p:=\frac{1}{\rho}q+
 \left(1-\frac{1}{\rho}\right)q(0)I$, where $q\in \CC[Z_1,\ldots, Z_n]\otimes M_m$,
$m\in \NN$,
 we have
 \begin{equation*}
 \begin{split}
\left\|\Psi_T\left(q(S_1,\ldots,
S_n)\right)\right\|&=\|p(T_1,\ldots, T_n)\|\\
&\leq\|\rho p(S_1,\ldots, S_n) +(1-\rho) p(0)\|\\
&=\|q(S_1,\ldots, S_n)\|,
\end{split}
\end{equation*}
which proves  that $\Psi_T$ is completely contractive on the set of
all polynomials $\cP(S_1,\ldots, S_n)$ and, consequently,  extends
uniquely to a completely contractive map on the noncommutative disc
algebra $\cA_n$. It remains to prove that $(iii)\implies (i)$. Due
to Arveson's  extension theorem, item (iii) implies the existence of
a unique completely positive  extension $\widetilde{\Psi}_T:\cA_n^*+
\cA_n\to B(\cH)$ of $ \Psi_T$. Note that
$$
\widetilde\Psi_T(r(S_1,\ldots, S_n)+q(S_1,\ldots, S_n)^*)=
\frac{1}{\rho}(r(T_1,\ldots, T_n)+q(T_1,\ldots,T_n)^*)+
 \left(1-\frac{1}{\rho}\right)(r(0)+\overline{q(0)}I
 $$
for any polynomials $r(S_1,\ldots, S_n)$ and $q(S_1,\ldots, S_n)$ in
$\cP(S_1,\ldots, S_n)$. Applying Theorem \ref{ro-contr}  (the
equivalence $(i)\leftrightarrow (ii)$), we complete the proof.
\end{proof}

\bigskip

\bigskip

\section{Harnack   domination on  noncommutative balls}

 We introduce a
preorder relation $\overset{H}{\prec} $ on the noncommutative ball
$\cC_\rho$, $\rho\in (0,\infty)$,  and provide
    several   characterizations. We determine   the elements of  $\cC_\rho$ which
are Harnack dominated  by $0$. These results will play a crucial
role in the next sections.

First, we consider  some preliminaries on noncommutative Poisson
transforms.
 Let $C^*(S_1,\ldots, S_n)$ be the Cuntz-Toeplitz
$C^*$-algebra generated by the left creation operators (see
\cite{Cu}). The noncommutative Poisson transform at
 $T:=(T_1,\ldots, T_n)\in [B(\cH)^n]_1^-$ is the unital completely contractive  linear map
 $P_T:C^*(S_1,\ldots, S_n)\to B(\cH)$ defined by
 \begin{equation*}
 P_T[f]:=\lim_{r\to 1} K_{rT}^* (I_\cH \otimes f)K_{rT}, \qquad f\in C^*(S_1,\ldots,
 S_n),
\end{equation*}
 where the limit exists in the operator  norm topology of $B(\cH)$.
Here, the noncommutative Poisson  kernel $ K_{rT} :\cH\to
\overline{\Delta_{rT}\cH} \otimes  F^2(H_n)$, $ 0< r\leq 1$, is
defined by
\begin{equation*}
K_{rT}h:= \sum_{k=0}^\infty \sum_{|\alpha|=k} r^{|\alpha|}
\Delta_{rT} T_\alpha^*h\otimes  e_\alpha,\qquad h\in \cH,
\end{equation*}
where $\{e_\alpha\}_{\alpha\in \FF_n^+}$ is the orthonormal basis
for $F^2(H_n)$, defined by
  $e_\alpha:=
e_{i_1}\otimes\cdots \otimes  e_{i_k}$  if $\alpha=g_{i_1}\cdots
g_{i_k}\in \FF_n^+$ and $e_{g_0}:=1$, and
$\Delta_{rT}:=(I_\cH-r^2T_1T_1^*-\cdots -r^2 T_nT_n^*)^{1/2}$.
 We
recall that
 $
 P_T[S_\alpha S_\beta^*]=T_\alpha T_\beta^*$, $ \alpha,\beta\in \FF_n^+.
 $
 When $T:=(T_1,\ldots, T_n)$  is a pure row contraction, i.e.,
 $ \text{\rm SOT-}\lim\limits_{k\to\infty} \sum_{|\alpha|=k}
T_\alpha T_\alpha^*=0$, then
   we have $$P_T[f]=K_T^*(I_{\cD_{T}}\otimes f)K_T, \qquad f\in C^*(S_1,\ldots,
 S_n)\ \text{ or } \ f\in F_n^\infty,
   $$
   where $\cD_T:=\overline{\Delta_T \cH}$. We refer to \cite{Po-poisson}, \cite{Po-curvature}, and
\cite{Po-unitary} for more on noncommutative Poisson transforms on
$C^*$-algebras generated by isometries.

 A free pluriharmonic function $u$  on $[B(\cK)^n]_1$  with operator
 valued coefficients  is called positive, and denote $u\geq 0$,  if
$u(X_1,\ldots,X_n)\geq 0$  for any $(X_1,\ldots, X_n)\in
[B(\cK)^n]_1$, where $\cK$ is an infinite dimensional Hilbert space.
We mention that it is enough to assume that  the positivity
condition holds for  any finite dimensional Hilbert space $\cK$.
Indeed, for each $m\in \NN$, consider $R^{(m)}:=(R_1^{(m)},\ldots,
R_n^{(m)})$, where $R_i^{(m)}$  is the compression of the right
creation operator $R_i$ to the subspace $\cP_m:=\text{\rm
 span}\,\{e_\alpha:\ \alpha\in \FF_n^+, |\alpha|\leq m\}$ of $F^2(H_n)$.
 We recall from \cite{Po-pluriharmonic} the following result.
\begin{lemma}\label{L1}
\label{positive} Let $u$ be a free pluriharmonic function on
$[B(\cK)^n]_1$ with operator-valued coefficients.  Then
$u(X_1,\ldots, X_n)\geq 0$ for any  $(X_1,\ldots, X_n)\in
[B(\cK)^n]_1$ if and only if  $u(R_1^{(m)},\ldots, R_n^{(m)})\geq 0$
for any $m\in \NN$.
\end{lemma}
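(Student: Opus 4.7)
The ``only if'' direction is straightforward. Assuming $u\ge 0$ on $[B(\cK)^n]_1$ for an infinite-dimensional $\cK$, I would isometrically embed $\cP_m$ into $\cK$ and extend $R_i^{(m)}$ by zero. For any $0<r<1$, the tuple $rR^{(m)}$ satisfies $\|\sum_i rR_i^{(m)}(rR_i^{(m)})^*\|\le r^2<1$, so it lies in $[B(\cK)^n]_1$ and hence $u(rR^{(m)})\ge 0$. Since $R_\alpha^{(m)}=0$ whenever $|\alpha|>m$, the expression $u(rR^{(m)})$ is a polynomial in $r$, and letting $r\to 1^-$ in the operator norm yields $u(R^{(m)})\ge 0$.

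For the converse I would proceed in three steps, pushing positivity successively from $R^{(m)}$ to $rR^{(m)}$, to $rR$, and finally to any $X\in[B(\cK)^n]_1$. For the first step, on $B(\cP_m)$ the gauge unitary $U_\theta^{(m)}$, acting as $e^{ik\theta}$ on $H_n^{\otimes k}\cap\cP_m$, satisfies $U_\theta^{(m)} R_i^{(m)} U_\theta^{(m)*} = e^{i\theta}R_i^{(m)}$. Averaging the $*$-automorphism $X\mapsto U_\theta^{(m)} X U_\theta^{(m)*}$ against the classical Poisson kernel $\frac{1-r^2}{1-2r\cos\theta+r^2}$ produces a unital completely positive map $T_r^{(m)}$ on $B(\cP_m)$ with $T_r^{(m)}(R_\alpha^{(m)}) = r^{|\alpha|}R_\alpha^{(m)}$ and $T_r^{(m)}(R_\alpha^{(m)*}) = r^{|\alpha|}R_\alpha^{(m)*}$. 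Applying $T_r^{(m)}\otimes\mathrm{id}_{B(\cE)}$ to $u(R^{(m)})\ge 0$ then yields $u(rR^{(m)})\ge 0$ for every $0<r<1$.

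For the second step, a direct computation gives $R_\alpha^{(m)} = P_{\cP_m} R_\alpha|_{\cP_m}$ for every $\alpha\in\FF_n^+$, and since the series for $u(rR)$ converges in operator norm when $r<1$, it follows that $u(rR^{(m)}) = (P_{\cP_m}\otimes I_\cE)\,u(rR)\,(P_{\cP_m}\otimes I_\cE)$ on $\cP_m\otimes\cE$. Extending by zero to $F^2(H_n)\otimes\cE$, these compressions converge in the weak operator topology to $u(rR)$ as $m\to\infty$ because $P_{\cP_m}\to I$ strongly and $u(rR)$ is bounded; positivity survives the limit, so $u(rR)\ge 0$ for every $0<r<1$. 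For the third step, the noncommutative Poisson transform (in the right-creation variant, obtained from the standard left-creation version via the flip $e_\alpha\mapsto e_{\tilde\alpha}$) provides, for any $X\in[B(\cK)^n]_1$, a unital completely positive map sending $R_\alpha\mapsto X_\alpha$ and $R_\alpha^*\mapsto X_\alpha^*$. Applying it to $u(rR)\ge 0$ gives $u(rX)\ge 0$, and by continuity of $u$ at $X$ on the open ball, $u(rX)\to u(X)$ in norm as $r\to 1^-$, so $u(X)\ge 0$.

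The main obstacle is the bookkeeping in the second step: one must verify that $R_\alpha^{(m)}=P_{\cP_m}R_\alpha|_{\cP_m}$ holds for every $\alpha$, including those for which the naive iterated composition would truncate at intermediate stages, and then promote this identification from polynomial partial sums to the full norm-convergent series defining $u(rR)$. Once this is settled, steps one and three are routine completely-positive-map arguments.
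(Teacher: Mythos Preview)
Your argument is correct. The paper does not actually prove this lemma; it is simply recalled from \cite{Po-pluriharmonic}, so there is no proof in the present paper to compare against.

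Your only stated concern, the bookkeeping in step two, dissolves once one notes that $\cP_m$ is invariant under each $R_i^*$ (since $R_i^*$ decreases word length); co-invariance immediately gives $P_{\cP_m}R_\alpha|_{\cP_m}=(P_{\cP_m}R_{i_1}|_{\cP_m})\cdots(P_{\cP_m}R_{i_k}|_{\cP_m})=R_\alpha^{(m)}$ for every $\alpha=g_{i_1}\cdots g_{i_k}$, with no intermediate truncation issue. The remaining steps---gauge/Poisson averaging to pass from $R^{(m)}$ to $rR^{(m)}$, the WOT limit in $m$ to reach $u(rR)\ge 0$, and the noncommutative Poisson transform (transported to the right creation operators via the flip unitary) to reach arbitrary $X$ in the open ball---are standard and correctly executed.
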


   Let
$A:=(A_1,\ldots, A_n)$ and $B:=(B_1,\ldots, B_n)$ be  $n$-tuples of
operators in $\cC_\rho\subset B(\cH)^n$. We say that $A$ is Harnack
dominated by $B$, and denote $A\overset{H}{\prec}\, B$, if there
exists $c>0$ such that
$$\Re p(A_1,\ldots, A_n)+ (\rho-1)\Re p(0)\leq c^2 \left[\Re p(B_1,\ldots, B_n)+
 (\rho-1)\Re p(0)\right] $$
 for any noncommutative polynomial with
matrix-valued coefficients $p\in \CC[X_1,\ldots, X_n]\otimes M_{m}$,
$m\in \NN$, such that $\Re p\geq 0$.
 When we
want to emphasize the constant $c$, we write
$A\overset{H}{{\underset{c}\prec}}\, B$.

According to Theorem \ref{funct-calc}, we can associate with each
$n$-tuple $T:=(T_1,\ldots, T_n)\in \cC_\rho$ the completely positive
map $\varphi_T: \overline{\cA_n^*+ \cA_n}^{\|\cdot\|}\to B(\cH)$
defined by
\begin{equation}
\label{var}
 \varphi_T(g):=\frac{1}{\rho} g(T_1,\ldots,
T_n)+\left(1-\frac{1}{\rho}\right) g(0),
\end{equation}
where $g(T_1,\ldots, T_n)$ is defined by the free pluriharmonic
functional calculus for the class $\cC_\rho$.

Now, we present  several  characterizations for the  Harnack
domination in $ \cC_\rho$.

\begin{theorem}
\label{equivalent} Let $A:=(A_1,\ldots, A_n)\in B(\cH)^n$ and
$B:=(B_1,\ldots, B_n)\in B(\cH)^n$ be in  the class $ \cC_\rho$ and
let $c>0$. Then the following statements are equivalent:
\begin{enumerate}
\item[(i)]
$A\overset{H}{{\underset{c}\prec}}\, B$;

\item[(ii)]
$P_\rho( rA, R)\leq c^2 P_\rho(rB, R)$ for any $r\in [0,1)$, where
$P_\rho(X,R)$ is the  multi-Toeplitz  kernel  associated with $X\in
\cC_\rho$;

\item[(iii)]
$u(rA_1,\ldots, rA_n)+(\rho-1)u(0)\leq c^2 \left[u(rB_1,\ldots,
rB_n)+ (\rho-1)u(0)\right]$ for any positive free pluriharmonic
function $u$ on $[B(\cH)^n]_1$   with operator-valued coefficients
and any $r\in [0,1)$;

 \item[(iv)]
$K_{\rho,A}\leq c^2 K_{\rho, B}$, where $K_{\rho,X}$ is the
multi-Toeplitz kernel associated with $X\in \cC_\rho$;
\item[(v)]
$  c^2{\varphi}_B-{\varphi}_A$   is a completely positive linear map
on the operator space $\overline{\cA_n^*+ \cA_n}^{\|\cdot\|}$, where
$\varphi_A$, $\varphi_B$ are the c.p. maps associated with $A$ and
$B$, respectively.
\item[(vi)]
there is an operator $L_{B,A}\in B(\cK_B, \cK_A)$ with
$\|L_{B,A}\|\leq c$ such that $L_{B,A}|_\cH=I_\cH$ and
\begin{equation*}
 L_{B,A} W_i=V_iL_{B,A},\qquad i=1,\ldots,n,
\end{equation*}
where $(V_1,\ldots, V_n)$ on $\cK_A\supset \cH$ and $(W_1,\ldots,
W_n)$ on $\cK_A\supset \cH$ are the minimal isometric dilations of
$A$ and $B$, respectively.
\end{enumerate}
\end{theorem}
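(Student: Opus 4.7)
My plan is to split the six conditions into a \emph{pluriharmonic / completely positive} cluster $(i)\Leftrightarrow(iii)\Leftrightarrow(v)$ and a \emph{kernel} cluster $(ii)\Leftrightarrow(iv)$, link the two through the correspondence between positive multi-Toeplitz kernels and completely positive maps on $\overline{\cA_n^*+\cA_n}^{\|\cdot\|}$ coming out of Theorem \ref{ro-contr}, and then derive the dilation statement $(vi)$ from $(v)$ by an Arveson extension / Stinespring / Radon--Nikodym argument. Throughout I shall use the key identity $\varphi_T(g)=P_\cH\, g(V_T)|_\cH$, valid for every $T\in\cC_\rho$ with minimal isometric dilation $V_T$ on $\cK_T$, which is immediate from the definition of $\varphi_T$ and from the computation $g(rT)=\rho P_\cH g(rV_T)|_\cH-(\rho-1)g(0)$ established inside the proof of Theorem \ref{funct-calc}.

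For the first cluster, $(iii)\Rightarrow(i)$ is immediate on specializing $u=\Re p$ for a matrix polynomial with $\Re p\geq 0$ and letting $r\to 1$ via the functional calculus of Theorem \ref{funct-calc}. The converse $(i)\Rightarrow(iii)$ is an approximation: a positive free pluriharmonic $u$ with matrix coefficients, after rescaling by $r<1$, extends continuously to $[B(\cH)^n]_1^-$, and hence by the identification stated before Theorem \ref{funct-calc} corresponds to a positive element of $M_m\bigl(\overline{\cA_n^*+\cA_n}^{\|\cdot\|}\bigr)$ that is the norm limit of Ces\`aro sums $\Re p_N$ of matrix polynomials with $\Re p_N\geq 0$ on $[B(\cK)^n]_1$; applying $(i)$ to each $p_N$ and passing to the limit gives the inequality of $(iii)$. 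The equivalence $(iii)\Leftrightarrow(v)$ is then a direct rewrite via the identity
$$\rho\bigl[c^2\varphi_B(g)-\varphi_A(g)\bigr]=c^2\bigl[g(B)+(\rho-1)g(0)\bigr]-\bigl[g(A)+(\rho-1)g(0)\bigr],$$
so that complete positivity of $c^2\varphi_B-\varphi_A$ is exactly the matricial form of $(iii)$.

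For the second cluster, $(ii)\Leftrightarrow(iv)$ follows from the standard Abel-summation identity that rewrites the multi-Toeplitz quadratic form $\sum_{\alpha,\beta}\langle K_{\rho,T}(\alpha,\beta)h_\beta,h_\alpha\rangle$ as $\tfrac{1}{\rho}\lim_{r\to 1}\langle P_\rho(rT,R)\xi,\xi\rangle$ with $\xi=\sum h_\alpha\otimes e_{\tilde\alpha}$. To bridge the two clusters I note that $\rho\varphi_T(S_\alpha S_\beta^*)$ equals $K_{\rho,T}(\beta,\alpha)$ on $<_l$-comparable pairs and vanishes otherwise, so the kernel inequality $K_{\rho,A}\leq c^2 K_{\rho,B}$ is precisely the statement that $c^2\varphi_B-\varphi_A$ sends every finite matrix $[S_\alpha S_\beta^*]_{|\alpha|,|\beta|\leq N}$ to a positive operator; a matrix-valued density argument on $\overline{\cA_n^*+\cA_n}^{\|\cdot\|}$ (coupled with Lemma \ref{L1}) upgrades this finite pointwise positivity to complete positivity, yielding $(iv)\Leftrightarrow(v)$.

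Finally, for $(v)\Leftrightarrow(vi)$, I extend $\varphi_A$ and $\varphi_B$ to completely positive maps on the Cuntz--Toeplitz $C^*$-algebra $C^*(S_1,\ldots,S_n)$ by Arveson's theorem; since each $\varphi_T$ is unital with $\varphi_T(S_i)=\tfrac{1}{\rho}T_i$, their minimal Stinespring dilations implement precisely the minimal isometric dilations $(V_i)$ on $\cK_A$ and $(W_i)$ on $\cK_B$. The cp-order inequality $\varphi_A\leq c^2\varphi_B$ then produces, via Arveson's Radon--Nikodym theorem (equivalently, the multivariable intertwining-lifting theorem), an operator $L_{B,A}:\cK_B\to\cK_A$ with $\|L_{B,A}\|\leq c$, $L_{B,A}|_\cH=I_\cH$, and $L_{B,A}W_i=V_iL_{B,A}$. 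Conversely, writing $L:=L_{B,A}$ and $J_A,J_B$ for the inclusions of $\cH$, the intertwining $Lg(W)=g(V)L$ yields $L^*L\cdot g(W)=g(W)\cdot L^*L$ for self-adjoint $g$; hence $L^*g(V)L=L^*L\,g(W)\leq c^2 g(W)$ whenever $g(W)\geq 0$, and compressing to $\cH$ via $LJ_B=J_A$ gives $\varphi_A(g)\leq c^2\varphi_B(g)$ (matricially, this is $(v)$). The main obstacle I anticipate is keeping the $\rho$-scaling and the additive $(\rho-1)g(0)$ correction consistent throughout the kernel/cp-map dictionary, and justifying the complete-positivity upgrade in the kernel-to-cluster link for matrix-valued symbols.
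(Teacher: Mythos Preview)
Your cluster $(i)\Leftrightarrow(iii)\Leftrightarrow(v)$ and the Abel-summation link $(ii)\Leftrightarrow(iv)$ are fine and close to the paper, but both the bridge $(iv)\Leftrightarrow(v)$ and the dilation step $(v)\Leftrightarrow(vi)$ contain real gaps.

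\textbf{The bridge.} The matrix $[S_\alpha S_\beta^*]_{|\alpha|,|\beta|\le N}$ is \emph{not} in $M_N(\cA_n^*+\cA_n)$: when $\alpha$ and $\beta$ are not $<_l$-comparable, $S_\alpha S_\beta^*$ is a nonzero element of $C^*(S_1,\ldots,S_n)$ lying outside $\cA_n^*+\cA_n$, so $c^2\varphi_B-\varphi_A$ cannot be applied to it. The correct test matrix has entries $h_{\alpha\beta}$ equal to $S_{\beta\backslash_l\alpha}$, $I$, $S_{\alpha\backslash_l\beta}^*$, or $0$ according to the comparability of $\alpha,\beta$; this matrix is exactly $[K_{1,S}(\alpha,\beta)]$, which \emph{is} positive in $M_N(\cA_n^*+\cA_n)$ and does give $(v)\Rightarrow(iv)$. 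The reverse $(iv)\Rightarrow(v)$, however, does not follow from positivity on these particular matrices by ``density'' alone: you need to hit every positive element of $M_m(\overline{\cA_n^*+\cA_n})$. The paper closes this gap differently, proving $(iv)\Rightarrow(ii)$ by invoking the representation of positive multi-Toeplitz kernels by cp maps on $C^*(S_1,\ldots,S_n)$ (Theorem~3.1 of \cite{Po-posi}), and $(ii)\Rightarrow(iii)$ by representing an arbitrary positive free pluriharmonic $u$ via a cp map $\nu:\cR_n^*+\cR_n\to B(\cE)$ and applying $\text{id}\otimes\nu$ to the kernel inequality.

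\textbf{The dilation step.} For $(v)\Rightarrow(vi)$, Arveson's Radon--Nikodym theorem requires the cp-order $\Phi_A\le c^2\Phi_B$ on the full $C^*$-algebra, but you only have it on the operator system; extending $\varphi_A,\varphi_B$ separately (by Arveson or via the minimal dilations) need not preserve the inequality. For $(vi)\Rightarrow(v)$, your claim that $L^*L$ commutes with $g(W)$ is false: from $LW_i=V_iL$ you only get $W_i^*L^*=L^*V_i^*$, so the intertwining is one-sided and $L^*Lq(W)=L^*q(V)L$ carries no commutation with $q(W)$. The paper bypasses all of this by handling $(iv)\Leftrightarrow(vi)$ directly: one computes
\[
\Bigl\|\sum_{|\alpha|\le m}V_\alpha h_\alpha\Bigr\|^2
=\bigl\langle[K_{\rho,A}(\beta,\alpha)]_{|\alpha|,|\beta|\le m}\mathbf{h},\mathbf{h}\bigr\rangle,
\]
and similarly for $W$, so that the kernel inequality is \emph{literally} the boundedness (with norm $\le c$) of the densely defined operator $L_{B,A}\bigl(\sum W_\alpha h_\alpha\bigr):=\sum V_\alpha h_\alpha$. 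This elementary construction replaces your Stinespring/Radon--Nikodym machinery entirely.
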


\begin{proof}

First we  prove that $(i)\implies (ii)$. Since
 $R_\alpha^{(m)}=0$ for any $\alpha\in \FF_n^+$ with $|\alpha|\geq
 m+1$, we have
 $$
P_\rho( rX, R^{(m)})=\sum_{1\leq |\alpha|\leq m} r^{|\alpha|}
X_\alpha^*\otimes R_{\widetilde\alpha}^{(m)} +\rho I\otimes I
+\sum_{1\leq |\alpha|\leq m} r^{|\alpha|} X_\alpha \otimes
{R_{\widetilde\alpha}^{(m)}}^*.
$$
 Since  $X\mapsto P_1(X,R)$ is a positive free
pluriharmonic
 function on $[B(\cH)^n]_1$, with coefficients in $B(F^2(H_n))$, so is
 the map
 $$ X\mapsto P_1(rX,R^{(m)})=(I\otimes P_{\cP_m})P_1(rX,R)|_{\cH\otimes
 \cP_m}
 $$
for any $r\in [0,1)$. If $A\overset{H}{{\underset{c}\prec}}\, B$,
then we have
$$
P_1(rA, R^{(m)})+(\rho-1)P_1(0,R^{(m)})\leq c^2\left[P_1(rB,
R^{(m)})+(\rho-1)P_1(0,R^{(m)})\right]
$$
for any $m=1,2,\ldots.$
   Using  Lemma \ref{L1}, we deduce that
$$
P_1(rA, R )+(\rho-1) I\leq c^2\left[P_1(rB, R )+(\rho-1) I\right]
$$
 for any $r\in [0,1)$.  Since $P_\rho(rY,R)=P_1(rY,R)+(\rho-1)I$ for
 any $n$-tuple $Y\in B(\cH)^n$ with spectral radius $r(Y)\leq 1$ and $r\in [0,1)$, we deduce
 item (ii).

  To prove the implication $(ii)\implies (iii)$, assume that
 condition (ii) holds and let $u$ be a positive free pluriharmonic
 function  on $[B(\cH)^n]_1$ with coefficients in $B(\cE)$ of the form
$$u(Z_1,\ldots, Z_n)=\sum_{k=1}^\infty \sum_{|\alpha|=k}
Z_\alpha^*\otimes C_{(\alpha)}^* +
    I\otimes C_{(0)}+\sum_{k=1}^\infty \sum_{|\alpha|=k}   Z_\alpha\otimes C_{(\alpha)}.
    $$
 It is well-known (see e.g. \cite{Pa-book}) that if $\cS\subseteq
B(F^2(H_n))$\ is an operator system and  $\mu: \cS\to B(\cK)$ is a
completely bounded map, then there exists a completely bounded
linear map
$$\widetilde
\mu:=\mu\otimes \text{\rm id} : \cS \bar\otimes_{min} B(\cH)\to
B(\cK)\bar\otimes_{min} B(\cH)
$$
such that $ \widetilde \mu(f\otimes Y):= \mu(f)\otimes Y$ for $f\in
\cS$ and  $Y\in B(\cH)$. Moreover, $\|\widetilde
\mu\|_{cb}=\|\mu\|_{cb}$ and, if $\mu$ is completely positive, then
so is $\widetilde \mu$.

  Using Corollary 5.5
 from \cite{Po-pluriharmonic}, we find  a completely positive
 linear map $\nu: \cR_n^* + \cR_n\to B(\cE)$ such that
 $\nu(R_{\tilde \alpha})=C_{(\alpha)}^*$ if $|\alpha|\geq 1$ and
 $\nu(I)=C_{(0)}$. Note that
 \begin{equation*}
 \begin{split}
 ( \text{\rm id}\otimes \nu)&[c^2P_\rho(rB,R)-P_\rho(rA,R)] \\
&=( \text{\rm id}\otimes \nu)\left\{\sum_{k=1}^\infty
\sum_{|\alpha|=k} r^{|\alpha|} (c^2B_\alpha-A_\alpha)\otimes
R_{\tilde \alpha}^*+ \rho(c^2-1)I\otimes I+\sum_{k=1}^\infty
\sum_{|\alpha|=k}(c^2B_\alpha^*-A_\alpha^*)\otimes R_{\tilde
\alpha}\right\}\\
&= \left\{\sum_{k=1}^\infty \sum_{|\alpha|=k} r^{|\alpha|}
(c^2B_\alpha-A_\alpha)\otimes C_{(\alpha)}+ \rho(c^2-1)I\otimes
C_{(0)}+\sum_{k=1}^\infty
\sum_{|\alpha|=k}(c^2B_\alpha^*-A_\alpha^*)\otimes C_{
(\alpha)}^*\right\}\\
&=c^2 \left[u(rB_1,\ldots, rB_n)+ (\rho-1)u(0)]-[u(rA_1,\ldots,
rA_n)+(\rho-1)u(0)\right].
 \end{split}
 \end{equation*}
  Hence,  and using the fact that
 $c^2P_\rho(rB, R)-P_\rho(rA, R)\geq 0$, we deduce that
 $$
  c^2 \left[u(rB_1,\ldots, rB_n)+ (\rho-1)u(0)]-[u(rA_1,\ldots,
rA_n)+(\rho-1)u(0)\right]\geq 0,
$$
which proves (iii).

Now, we prove the implication $(iii)\implies (v)$. Let $g\in
\left(\overline{\cA_n^*+ \cA_n}^{\|\cdot\|}\right)\otimes_{min}
M_{m}$ be positive. Then, according to  Theorem 4.1 from
\cite{Po-pluriharmonic},  the map defined by
  \begin{equation*}
  g(X):=(P_X\otimes \text{\rm id})[g],\qquad X\in [B(\cH)^n]_1,
  \end{equation*}
   is a positive  free
  pluriharmonic function. Condition (iii) implies
  $$
  g(rA_1,\ldots, rA_n)+(\rho-1)g(0)\leq c^2 \left[g(rB_1,\ldots,
rB_n)+ (\rho-1)g(0)\right]
$$ for   any $r\in
  [0,1)$.  Hence, and using   relation \eqref{var},  we get
$\rho \varphi_A(g_r)\leq c^2 \rho \varphi_B(g_r)$. Taking $r\to 1$,
we deduce item (v).

  To prove the implication
$(v)\implies (i)$, let $p\in \CC[X_1,\ldots, X_n]\otimes M_{m}$,
$m\in \NN$, be a noncommutative polynomial with matrix coefficients
such that $\text{\rm Re}\,p\geq 0$. Since
$$\rho \varphi_Y(p)=p(Y_1,\ldots, Y_n)+(\rho -1)p(0)$$ for any
$Y:=(Y_1,\ldots, Y_n)\in \cC_\rho$, it is clear that (v) implies
item (i).

We prove  now that $(ii)\implies (iv)$.
 We
recall that  $e_\alpha:= e_{i_1}\otimes\cdots \otimes e_{i_k}$  if
$\alpha=g_{i_1}\cdots g_{i_k}\in \FF_n^+$ and $e_{g_0}:=1$, and that
$\{e_\alpha\}_{\alpha\in \FF_n^+}$ is an orthonormal basis for the
full Fock space  $F^2(H_n)$.
 First,  we prove   that
  \begin{equation}\label{Ar}
  \left< P_\rho(X, rR)\left( \sum_{|\beta|\leq q}  h_\beta\otimes e_\beta \right),
   \sum_{|\gamma|\leq q}  h_\gamma\otimes e_\gamma\right>
   = \rho\sum_{|\beta|, |\gamma|\leq q}\left< K_{\rho, X,r}(\gamma,
   \beta)
    h_\beta, h_\gamma\right>,
  \end{equation}
  where
  the multi-Toeplitz kernel  $K_{\rho, X,r}:\FF_n^+\times \FF_n^+\to
  B(\cH)$, $r\in (0,1)$,
  is defined  by
   $$
   K_{\rho, X,r}(\alpha, \beta):=
   \begin{cases}
  \frac{1}{\rho}r^{|\beta\backslash_l \alpha|} X_{\beta\backslash_l \alpha}
   &\text{ if } \beta>_l\alpha\\
   I  &\text{ if } \alpha=\beta\\
    \frac{1}{\rho}r^{|\alpha\backslash_l \beta|}(X_{\alpha\backslash_l \beta})^*
     &\text{ if } \alpha>_l\beta\\
    0\quad &\text{ otherwise}.
   \end{cases}
   $$
   Note that   if $\{h_\beta\}_{|\beta|\leq q}\subset \cH$, then we
   have
    \begin{equation*}
    \begin{split}
    \left< \left(\rho I\otimes I+\sum_{k=1}^\infty \sum_{|\alpha|=k} X_{\alpha}^*\otimes r^k
     R_{\tilde\alpha}\right)
     \right.& \left.\left(\sum_{|\beta|\leq q}  h_\beta\otimes e_\beta \right),
   \sum_{|\gamma|\leq q} h_\gamma\otimes e_\gamma\right>\\
   &=
     \rho\sum_{|\beta|\leq q} \|h_\beta\|^2+
     \sum_{k=1}^\infty \sum_{|\alpha|=k}\left<\sum_{|\beta|\leq q}
      X_{\alpha}^*h_\beta\otimes r^k
     R_{\tilde\alpha} e_\beta,
    \sum_{|\gamma|\leq q}  h_\gamma\otimes e_\gamma\right> \\
    &=\rho\sum_{|\beta|\leq q} \|h_\beta\|^2++\sum_{|\alpha|\geq 1}\sum_{|\beta|, |\gamma|\leq q}
    r^{|\alpha|} \left< e_{\beta  {\alpha}}, e_\gamma\right>
    \left<X_{\alpha}^* h_\beta, h_\gamma\right>\\
    &=
    \rho\sum_{|\beta|\leq q} \|h_\beta\|^2+
    \sum_{ \gamma>\beta; ~|\beta|, |\gamma|\leq q}
    r^{|\gamma\backslash_l \beta|}
     \left<X_{\gamma\backslash_l \beta}^* h_\beta, h_\gamma\right>\\
     &=
     \sum_{\gamma\geq\beta; ~|\beta|, |\gamma|\leq
     q}\left<\rho K_{\rho, X,r}
      (\gamma, \beta)h_\beta, h_\gamma\right>.
    \end{split}
    \end{equation*}
   Now, taking into account that $K_{\rho,X,r}
      (\gamma, \beta)=K_{\rho, X,r}^*
      ( \beta, \gamma)$,  we deduce relation \eqref{Ar}.
   Therefore, the condition $P_\rho( rA, R)\leq c^2 P_\rho( rB,R)$, $r\in
   [0,1)$, implies
$$[K_{\rho, A,r}(\alpha,\beta)]_{|\alpha|, |\beta|\leq q}
  \leq c^2[K_{\rho, B,r}(\alpha,\beta)]_{|\alpha|, |\beta|\leq q} $$
  for any $0<r<1$
  and $q=0,1,\ldots$.
  Taking  $r\to 1$ in the latter inequality, we   obtain item (iv).

  Assume now that (iv) holds. Since $c^2K_{\rho,B}-K_{\rho,A}$ is a positive
  semidefinite multi-Toeplitz kernel, due to Theorem 3.1 from  \cite{Po-posi} (see also
  the proof of Theorem 5.2 from \cite{Po-pluriharmonic}), we find a
  completely positive linear map $\mu:C^*(S_1,\ldots, S_n)\to
  B(\cE)$ such that
  $$
  \mu(S_\alpha)=c^2K_{\rho, B}(g_0, \alpha)-K_{\rho,A}(g_0, \alpha)=\frac{1}{\rho}
  (c^2B_\alpha-A_\alpha)
  $$
  for any $\alpha \in \FF_n^+$ with $|\alpha|\geq 1$, and $\mu(I)=(c^2-1)I$.
   Since
   $$P(rS, R):=  \sum_{k=1}^\infty \sum_{|\alpha|=k}r^k
   S_{  \alpha}\otimes R_{\tilde\alpha}^* + I\otimes I+
  \sum_{k=1}^\infty \sum_{|\alpha|=k}r^k
  S_{\alpha}^* \otimes R_{\tilde \alpha}
  \geq 0 $$
  for $r\in
  [0,1)$, we deduce that
  \begin{equation*} \begin{split}
(\mu\otimes \text{\rm
 id})[P(rS, R)]&=\sum_{k=1}^\infty\sum_{|\alpha|=k}
 \frac{1}{\rho}r^{|\alpha|}[c^2 B_\alpha^*-A_\alpha^*]\otimes R_{\widetilde\alpha}
+(c^2-1) I\otimes I+\sum_{k=1}^\infty\sum_{|\alpha|=k}
 \frac{1}{\rho}r^{|\alpha|}[c^2 B_\alpha-A_\alpha]\otimes R_{\widetilde\alpha}^*\\
&=c^2P_\rho(rB, R)-P_\rho(rA, R)\geq 0,
  \end{split}
  \end{equation*}
which implies   (ii).

Let us prove that $(iv)\implies  (vi)$. Assume that (iv) holds. Then
we have $K_{\rho, A}\leq c^2 K_{\rho,B}$, where $K_{\rho,X}$ is the
multi-Toeplitz kernel associated with $X\in \cC_\rho$. Let
$V:=(V_1,\ldots, V_n)$ be the minimal isometric dilation of
$A:=(A_1,\ldots, A_n)$. Then $\cK_A=\bigvee_{\alpha\in \FF_n^+}
V_\alpha \cH$ and $\rho P_\cH V_\alpha|_{\cH}=A_\alpha$ for any
$|\alpha|\geq 1$. Similar properties hold if $W:=(W_1,\ldots, W_n)$
is  the minimal isometric dilation of  $B:=(B_1,\ldots, B_n)$.
Hence, and taking into account that $V_1,\ldots, V_n$ and
$W_1,\ldots, W_n$ are isometries with orthogonal ranges,
respectively,  we have
\begin{equation*}
\begin{split}
\left\|\sum_{|\alpha|\leq m} V_\alpha h_\alpha\right\|^2 &=
\sum_{\alpha>_l \beta, |\alpha|, |\beta|\leq
m}\left<V_{\alpha\backslash _l \beta} h_\alpha, h_\beta\right>
+\sum_{|\alpha|\leq m} \left<h_\alpha, h_\alpha\right> +
\sum_{\beta>_l \alpha, |\alpha|, |\beta|\leq
m}\left<V_{\beta\backslash _l \alpha}^* h_\alpha, h_\beta\right>
\\
&= \sum_{\alpha>_l \beta, |\alpha|, |\beta|\leq
m}\left<\frac{1}{\rho}A_{\alpha\backslash _l \beta} h_\alpha,
h_\beta\right> +\sum_{|\alpha|\leq m} \left<h_\alpha,
h_\alpha\right> + \sum_{\beta>_l \alpha, |\alpha|, |\beta|\leq
m}\left<\frac{1}{\rho}A_{\beta\backslash _l \alpha}^* h_\alpha,
h_\beta\right>
\\
&=\sum_{|\alpha|\leq m, |\beta|\leq m} \left<K_{\rho,A}(\beta,
\alpha) h_\alpha, h_\beta\right> =
\left<\left[K_{\rho,A}(\beta,\alpha)\right]_{|\alpha|, |\beta|\leq
m}{\bf h}_m, {\bf h}_m\right>
\end{split}
\end{equation*}
for any $m\in \NN$  and ${\bf h}_m:=\oplus_{|\alpha|\leq m} h_\alpha
\in \oplus_{|\alpha|\leq m} \cH_{\alpha}$, where each $\cH_{\alpha}$
is a copy of $\cH$. Similarly, we obtain
$$
\left\|\sum_{|\alpha|\leq m} W_\alpha h_\alpha\right\|^2  =
\left<\left[K_{\rho, B}(\beta, \alpha)\right]_{|\alpha|, |\beta|\leq
m}{\bf h}_m, {\bf h}_m\right>.
$$
Taking into account that  $K_{\rho,A}\leq c^2 K_{\rho, B}$, we
deduce that
$$
\left\|\sum_{|\alpha|\leq m} V_\alpha h_\alpha\right\|\leq c
\left\|\sum_{|\alpha|\leq m} W_\alpha h_\alpha\right\|.
$$
Therefore, we can define an  operator $L_{B,A}:\cK_B\to \cK_A$ by
setting
\begin{equation}
\label{LBA} L_{B,A}\left(\sum_{|\alpha|\leq m} W_\alpha
h_\alpha\right):=\sum_{|\alpha|\leq m} V_\alpha h_\alpha
\end{equation}
for any $m\in \NN$ and $h_\alpha\in \cH$, $\alpha\in \FF_n^+$. Note
that $L_{B,A}$ is a bounded  operator with $\|L_{B,A}\|\leq c$.
Since $L_{B,A}|_\cH=I_\cH$, we have $\|L_{B,A}\|\geq 1$. It is easy
to see that $L_{B,A} W_i=V_iL_{B,A}$ for $ i=1,\ldots,n$. Therefore
item (vi) holds.

Conversely, assume that there is an operator $L_{B,A}\in B(\cK_B,
\cK_A)$ with norm $\|L_{B,A}\|\leq c$  such that
$L_{B,A}|_\cH=I_\cH$ and $ L_{B,A} W_i=V_iL_{B,A}$, $ i=1,\ldots,n$.
Then, we deduce that $ L_{B,A}\left(\sum_{|\alpha|\leq m} W_\alpha
h_\alpha\right)=\sum_{|\alpha|\leq m} V_\alpha h_\alpha$ for any
$m\in \NN$ and $h_\alpha\in \cH$, $\alpha\in \FF_n^+$. The condition
$\|L_{B,A}\|\leq c$ implies
$$
\left\|\sum_{|\alpha|\leq m} V_\alpha h_\alpha\right\|^2 \leq
c^2\left\|\sum_{|\alpha|\leq m} W_\alpha h_\alpha\right\|^2,
$$
which is equivalent to the inequality
$$
\left<\left[K_{\rho, A}(\beta, \alpha)\right]_{|\alpha|, |\beta|\leq
m}{\bf h}_m, {\bf h}_m\right> \leq c^2 \left<\left[K_{\rho,
B}(\beta, \alpha)\right]_{|\alpha|, |\beta|\leq m}{\bf h}_m, {\bf
h}_m\right>
$$
for any $m\in \NN$  and ${\bf h}_m:=\oplus_{|\alpha|\leq m}
h_\alpha\in \oplus_{|\alpha| \leq m} \cH_{\alpha }$. Consequently,
we deduce item (iv). The proof is complete.
\end{proof}

A closer look at the proof of Theorem \ref{equivalent} reveals that
one can assume that $u(0)=I$ in part (iii), and one can also assume
that $\Re p(0)=I$ in the definition of the Harnack domination
$A\overset{H}{{\prec}}\, B$. We also remark  that, due to Theorem
\ref{funct-calc}, we can add an equivalence to Theorem
\ref{equivalent}, namely, $A\overset{H}{{\underset{c}\prec}}\,
 B$ if and only if
$$u(A_1,\ldots, A_n)+(\rho-1)u(0)\leq c^2 \left[u(B_1,\ldots,
B_n)+ (\rho-1)u(0)\right] $$
 for any positive free pluriharmonic
function $u\in  Har_{\bf ball}^c(B(\cE))$.

\begin{corollary}\label{LBA-inf} If  $A, B\in \cC_\rho$ and
 $A\overset{H}{{\prec}}\, B$, then
 \begin{equation*}
 \begin{split}
 \|L_{B,A}\|&=\inf\{c> 1:\ A\overset{H}{{\underset{c}\prec}}\,
 B\}\\
 &=\inf\{c> 1:\  P_\rho( rA, R)\leq c^2 P_\rho(rB, R)\quad \text{ for any }
\quad r\in [0,1)\}.
 \end{split}
 \end{equation*}
 Moreover,
 $A\overset{H}{{\prec}}\, B$ if and only if \
    $\sup_{r\in [0, 1)} \|L_{rA,rB}\|<\infty$.
In this case,
$$\|L_{A,B}\|=\sup_{r\in [0, 1)} \|L_{rA,rB}\|
$$
and  the mapping $r\mapsto  \|L_{rA,rB}\|$ is increasing on $[0,1)$.
\end{corollary}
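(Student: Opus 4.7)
The plan is to read everything off from Theorem \ref{equivalent}, whose proof actually establishes the equivalence of (i), (ii), (iv), (vi) \emph{with the same constant} $c$. Consequently the three subsets of $(1,\infty)$
\begin{equation*}
\{c>1: A\overset{H}{{\underset{c}\prec}}\, B\}, \quad \{c>1: P_\rho(rA,R)\le c^2 P_\rho(rB,R)\text{ for all } r\in[0,1)\}, \quad \{c>1: K_{\rho,A}\le c^2 K_{\rho,B}\}
\end{equation*}
all coincide, so share a common infimum. To finish the first equality I would invoke the construction \eqref{LBA}: the implication $(iv)\Rightarrow(vi)$ produces an $L_{B,A}$ with $\|L_{B,A}\|\le c$ whenever $K_{\rho,A}\le c^2 K_{\rho,B}$, while the reverse computation in $(vi)\Rightarrow(iv)$ yields the opposite direction. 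Since $\|L_{B,A}\|\ge 1$ (because $L_{B,A}|_{\cH}=I_{\cH}$), the infimum of the third set equals $\|L_{B,A}\|$.

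For the characterization of Harnack domination in terms of scaled operators, I would apply the first identity to the pair $(rA,rB)\in\cC_\rho$ for each fixed $r\in[0,1)$, obtaining
\begin{equation*}
\|L_{rB,rA}\|=\inf\bigl\{c>1:\ P_\rho(trA,R)\le c^2 P_\rho(trB,R)\ \text{for all}\ t\in[0,1)\bigr\}.
\end{equation*}
If $A\overset{H}{{\underset{c}\prec}}\, B$, i.e.\ $P_\rho(sA,R)\le c^2 P_\rho(sB,R)$ for every $s\in[0,1)$, then specializing $s=tr$ shows $\|L_{rB,rA}\|\le c$ uniformly in $r$, so $\sup_r\|L_{rB,rA}\|\le \|L_{B,A}\|<\infty$. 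Conversely, if $M:=\sup_{r\in[0,1)}\|L_{rB,rA}\|<\infty$, then $P_\rho(trA,R)\le M^2 P_\rho(trB,R)$ for all $r,t\in[0,1)$; because $\{tr:r,t\in[0,1)\}=[0,1)$, this is exactly $P_\rho(sA,R)\le M^2 P_\rho(sB,R)$ for all $s\in[0,1)$, giving $A\overset{H}{{\underset{M}\prec}}\, B$ and hence $\|L_{B,A}\|\le M$. Both bounds together yield the claimed equality.

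Monotonicity in $r$ then comes for free: for $0\le r_1<r_2<1$ write $r_1=\tau r_2$ with $\tau:=r_1/r_2<1$ and apply the preceding formula to the pair $(r_2 A,r_2 B)$, which still satisfies $r_2 A\overset{H}{\prec} r_2 B$; this gives
\begin{equation*}
\|L_{r_1 B,r_1 A}\|=\|L_{\tau r_2 B,\tau r_2 A\,}\|\le \sup_{\sigma\in[0,1)}\|L_{\sigma r_2 B,\sigma r_2 A}\|=\|L_{r_2 B,r_2 A}\|.
\end{equation*}
The only real bookkeeping issue is keeping the direction of $L_{B,A}$ consistent under the substitutions $s=tr$ and $r=\tau r_2$; once that is tracked, each step reduces to a rereading of Theorem \ref{equivalent} applied at the appropriate scale. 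I expect no deeper obstacle.
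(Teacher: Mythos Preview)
Your proposal is correct and follows essentially the same route as the paper: both arguments read the first identity directly off the constant-preserving equivalences in Theorem \ref{equivalent}, then apply that identity to the scaled pair $(rA,rB)$ and use that $\{tr:t,r\in[0,1)\}=[0,1)$ to obtain the sup formula and the monotonicity. Your remark about tracking the subscript order of $L_{B,A}$ is well taken---the paper's own statement and proof switch between $L_{B,A}$ and $L_{A,B}$ (and $L_{tA,tB}$) in the second part, which is a typo; your consistent use of $L_{rB,rA}$ is the intended reading.
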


\begin{proof} Assume that $A\overset{H}{{\prec}}\, B$.  Then, due to
Theorem \ref{equivalent}, $A\overset{H}{{\underset{c}\prec}}\, B$ if
and only if there is an operator $L_{B,A}\in B(\cK_B, \cK_A)$ with
$\|L_{B,A}\|\leq c$ such that $L_{B,A}|_\cH=I_\cH$ and
$
 L_{B,A} W_i=V_iL_{B,A}$ for  $ i=1,\ldots,n.
$ Consequently, taking $c=\|L_{B,A}\|$, we deduce that
$A\overset{H}{{\underset{\|L_{B,A}\|}\prec}}\, B$, which  is
equivalent to
$$
P_\rho(rA,R)\leq \|L_{B,A}\|^2 P_\rho(rB,R)
$$
for any $r\in [0,1)$. Hence, we have
$tA\overset{H}{{\underset{\|L_{B,A}\|}\prec}}\, tB$ for any $t\in
[0,1)$. Applying  again Theorem \ref{equivalent} to the operators
$tA$ and $tB$, we deduce that $\|L_{tA,tB}\|\leq \|L_{B,A}\|$.

Conversely, suppose that $c:=\sup_{r\in [0, 1)}
\|L_{rA,rB}\|<\infty$. Since $\|L_{rA,rB}\|\leq c$, Theorem
\ref{equivalent} implies $rA\overset{H}{{\underset{c}\prec}}\, rB$
for any $r\in [0,1)$ and, therefore,  $P_\rho(rtA,R)\leq c^2
P_\rho(rtB,R)$ for any $t,r\in [0,1)$. Hence,
$A\overset{H}{{\underset{c}\prec}}\, B$ and, consequently,
$\|L_{B,A}\|\leq c$. Therefore, $\|L_{A,B}\|=\sup_{r\in [0, 1)}
\|L_{rA,rB}\|$. The fact that $r\mapsto  \|L_{rA,rB}\|$ is an
increasing function on $[0,1)$ follows  from the latter relation.
This completes the proof.
\end{proof}

We remark that if $1\leq m<n$ and $u$ is a positive free
pluriharmonic function on $[B(\cK)^n]_1$,  then the map
$$
(X_1,\ldots, X_m)\mapsto u(X_1,\ldots, X_m, 0,\ldots, 0)
$$
is a positive free pluriharmonic function on $[B(\cK)^m]_1$.
Moreover, if $g$ is a positive free pluriharmonic function on
$[B(\cK)^m]_1$, then the map
$$
(X_1,\ldots, X_n)\mapsto g(X_1,\ldots, X_m, 0,\ldots, 0)
$$
is a positive free pluriharmonic function on $[B(\cK)^n]_1$.
Consequently, using Corollary \ref{T,0},  one can easily deduce the
following result.

\begin{corollary} \label{T,02}
Let $c>0$, $\rho>0$,  and $1\leq m<n$. Consider two  $n$-tuples $
(A_1,\ldots, A_m)\in B(\cH)^m$ and $ (B_1,\ldots, B_m)\in B(\cH)^m$
 in  the class $ \cC_\rho$ and  let $(A_1,\ldots, A_m,0,\ldots, 0)$
and $(B_1,\ldots, B_m,0,\ldots, 0)$   be their extensions in
$B(\cH)^n$, respectively.
   Then $(A_1,\ldots, A_m)\overset{H}{{\underset{c}\prec}}\, (B_1,\ldots,
   B_m)$ in $\cC_\rho\subset B(\cH)^m$  if and only if
   $$(A_1,\ldots,
A_m,0,\ldots, 0)\overset{H}{{\underset{c}\prec}}\,(B_1,\ldots,
B_m,0,\ldots, 0)\quad \text{ in }\ \cC_\rho\subset B(\cH)^n.
$$
\end{corollary}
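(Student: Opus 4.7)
The proof is essentially a routine application of the characterization in Theorem \ref{equivalent}, specifically (i) $\Leftrightarrow$ (iii), combined with the functorial remark immediately preceding the corollary. My plan is as follows.

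First, I would invoke Corollary \ref{T,0}(i) to confirm that $(A_1,\ldots,A_m,0,\ldots,0)$ and $(B_1,\ldots,B_m,0,\ldots,0)$ lie in $\cC_\rho\subset B(\cH)^n$ if and only if $(A_1,\ldots,A_m)$ and $(B_1,\ldots,B_m)$ lie in $\cC_\rho\subset B(\cH)^m$, so that both sides of the desired equivalence are well-posed. Next, I would translate both Harnack dominations into the positive-pluriharmonic form given by Theorem \ref{equivalent}(iii):
\begin{equation*}
h(rX)+(\rho-1)h(0)\leq c^2\bigl[h(rY)+(\rho-1)h(0)\bigr],\qquad r\in[0,1),
\end{equation*}
for every positive free pluriharmonic function $h$ on the appropriate ball.

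For the ($\Rightarrow$) direction, given any positive free pluriharmonic function $u$ on $[B(\cK)^n]_1$, I form its restriction $g(X_1,\ldots,X_m):=u(X_1,\ldots,X_m,0,\ldots,0)$, which is a positive free pluriharmonic function on $[B(\cK)^m]_1$ by the first half of the remark. Applying the $m$-variable hypothesis via Theorem \ref{equivalent}(iii) to $g$, and noting that $g(0)=u(0)$, $g(rA)=u(rA_1,\ldots,rA_m,0,\ldots,0)$, and similarly for $B$, gives precisely the inequality required by Theorem \ref{equivalent}(iii) for the extended $n$-tuples, with the same constant $c$. For the ($\Leftarrow$) direction, given any positive free pluriharmonic function $g$ on $[B(\cK)^m]_1$, I define $u(X_1,\ldots,X_n):=g(X_1,\ldots,X_m)$ (ignoring the last $n-m$ coordinates), which is a positive free pluriharmonic function on $[B(\cK)^n]_1$ by the second half of the remark. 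Applying the $n$-variable hypothesis to $u$ and specializing to the extensions recovers the inequality for $g$ with the same $c$, and Theorem \ref{equivalent}(iii) then yields the $m$-variable Harnack domination.

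There is no serious obstacle here: the entire technical weight has already been discharged by Theorem \ref{equivalent} and by the preceding remark that restriction in and inflation in the free-pluriharmonic coordinates both preserve positivity. The only point to check is the invariance of the constant $c$ under these transfers, which is immediate from the fact that $u(0)$, $g(0)$, and the pluriharmonic evaluations are preserved term-by-term under setting the trailing variables to $0$.
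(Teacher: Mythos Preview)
Your proposal is correct and follows essentially the same route as the paper: the paper states the corollary as an immediate consequence of Corollary \ref{T,0} together with the preceding remark that restriction and inflation of positive free pluriharmonic functions preserve positivity, and your use of Theorem \ref{equivalent}(iii) is precisely the characterization that matches that remark.
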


 We recall  (e.g.
\cite{Po-similarity})  that  if $(T_1,\ldots T_n)$ is an $n$-tuple
of operators, then the joint spectral radius $r(T_1,\ldots, T_n)<1$
if and only if $\lim_{k\to\infty} \left\|\sum_{|\alpha|=k} T_\alpha
T_\alpha^*\right\|=0$.

In  what follows, we characterize  the elements of  $\cC_\rho$ which
are Harnack dominated  by $0$.

\begin{theorem}
\label{A<0} Let $A:=(A_1,\ldots, A_n)$   be in $\cC_\rho$. Then
$A\overset{H}{{ \prec}}\, 0$ if and only if the joint spectral
radius  $r(A_1,\ldots, A_n)<1$.
\end{theorem}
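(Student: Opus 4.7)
My plan is to prove both directions using the equivalent formulations in Theorem~\ref{equivalent} and a similarity renormalization.

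For ($\Leftarrow$), if $r(A_1,\ldots,A_n)<1$ then $\|\sum_{|\alpha|=k}A_\alpha\otimes R_{\tilde\alpha}^*\|=\|\sum_{|\alpha|=k}A_\alpha A_\alpha^*\|^{1/2}$ (using that the $R_{\tilde\alpha}$'s have pairwise orthogonal ranges) decays geometrically, so the series defining $P_\rho(rA,R)$ converges absolutely in norm uniformly in $r\in[0,1]$. Combined with $P_\rho(rA,R)\ge 0$, this gives $P_\rho(rA,R)\le M\,I\otimes I = c^2\rho\,P_\rho(0,R)$ for $c^2:=M/\rho$, and Theorem~\ref{equivalent}(ii) yields $A\overset{H}{\underset{c}{\prec}}0$.

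For ($\Rightarrow$), assume $A\overset{H}{\prec}0$ with constant $c$ and apply Theorem~\ref{equivalent}(vi) with $B=0$. Identifying the minimal isometric dilation space $\cK_0$ with $\cH\otimes F^2(H_n)$, so that $W_i=I_\cH\otimes S_i$, this produces an operator $L:\cH\otimes F^2(H_n)\to\cK_A$ with $\|L\|\le c$, $L(h\otimes 1)=h$, and $L(I_\cH\otimes S_i)=V_iL$. Using $P_\cH V_\alpha^*|_\cH=\tfrac{1}{\rho}A_\alpha^*$ (which follows by taking adjoints in $A_\alpha=\rho P_\cH V_\alpha|_\cH$), direct computation of adjoints yields
\begin{equation*}
L^*h \;=\; h\otimes 1 + \sum_{|\alpha|\ge 1}\tfrac{1}{\rho}A_\alpha^*h\otimes e_\alpha,\qquad h\in\cH,
\end{equation*}
and the bound $\|L^*h\|^2\le c^2\|h\|^2$ then yields $\sum_{m\ge 1}M_m\le\rho^2(c^2-1)I$ in the strong operator topology, where $M_m:=\sum_{|\alpha|=m}A_\alpha A_\alpha^*$. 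Thus $Y:=I+\sum_{m\ge 1}M_m$ is a bounded positive operator with $I\le Y\le CI$, where $C:=1+\rho^2(c^2-1)$.

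The main obstacle is that this SOT-summability of $(M_m)$ does not by itself give the geometric decay needed for $r(A)<1$. I handle this via a similarity trick: the completely positive map $\Phi(X):=\sum_i A_iXA_i^*$ satisfies $\Phi^m(I)=M_m$ and, by normality on bounded monotone sequences, $\Phi(Y)=\sum_{m\ge 1}M_m=Y-I$. Setting $\widetilde A_i:=Y^{-1/2}A_iY^{1/2}$ (well-defined since $Y\ge I$), one computes
\begin{equation*}
\sum_{i=1}^n\widetilde A_i\widetilde A_i^* \;=\; Y^{-1/2}\Phi(Y)Y^{-1/2} \;=\; I-Y^{-1} \;\le\; \bigl(1-\tfrac{1}{C}\bigr)I,
\end{equation*}
so $(\widetilde A_1,\ldots,\widetilde A_n)$ is a strict row contraction. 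A standard iteration gives $\|\sum_{|\alpha|=k}\widetilde A_\alpha\widetilde A_\alpha^*\|\le(1-1/C)^k$, whence $r(\widetilde A)\le\sqrt{1-1/C}<1$. Since the joint spectral radius is invariant under simultaneous similarity, $r(A_1,\ldots,A_n)=r(\widetilde A_1,\ldots,\widetilde A_n)<1$.
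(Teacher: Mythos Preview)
Your proof is correct, and both directions take routes genuinely different from the paper's.

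For $(\Leftarrow)$, the paper uses the factorization $P_\rho(X,R)=(I-R_X^*)^{-1}\Delta_{\rho,X}^2(I-R_X)^{-1}$ and then handles the cases $\rho\geq 1$ and $\rho\in(0,1)$ separately via ad hoc operator inequalities on $\Delta_{\rho,rA}^2$. Your argument is shorter and more transparent: absolute convergence of the kernel series, uniformly in $r\in[0,1]$, together with positivity of $P_\rho(rA,R)$ (which is just membership in $\cC_\rho$), immediately gives the bound $P_\rho(rA,R)\leq c^2 P_\rho(0,R)$.

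For $(\Rightarrow)$, the approaches diverge sharply. The paper argues by contradiction: assuming $r(A)=1$, one uses $r(R_A)=r(A)$ to find $\lambda_0\in\TT$ in the approximate point spectrum of the reconstruction operator $R_A$, then tests the inequality derived from $P_\rho(rA,R)\leq c^2\rho I$ on approximate eigenvectors and lets $r\to 1$ to force an impossible estimate. Your argument is constructive and dilation-theoretic: from Theorem~\ref{equivalent}(vi) you compute $L^*|_\cH$ explicitly to obtain the summability $\sum_{m\geq 1}M_m\leq\rho^2(c^2-1)I$, and then invoke the Rota--Popescu similarity $Y:=I+\sum_m M_m$, $\widetilde A_i:=Y^{-1/2}A_iY^{1/2}$, to produce a \emph{strict} row contraction similar to $A$. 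The paper's route stays entirely inside the free Poisson-kernel framework it develops; your route bypasses the kernel analysis altogether and is arguably more robust, since it only needs SOT-summability of the defect sequence and similarity invariance of $r(\cdot)$ rather than any spectral information about $R_A$.
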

\begin{proof}
Note that the map $X\mapsto P_\rho(X,R)$ is a positive  free
pluriharmonic function on $ [B(\cH)^n]_1$ with coefficients in
 $B(F^2(H_n))$ and has the factorization
\begin{equation}\label{facto}
\begin{split}
P_\rho(X,R)&= (I-R_X)^{-1}+(\rho - 2)I+ (I-R_X^*)^{-1}\\
&=(I-R_X^*)^{-1}\left[ I-R_X+(\rho -2)(I-R_X^*)(I-R_X)+ I-R_X^*
\right](I-R_X)^{-1}\\
&= (I-R_X^*)^{-1}\left[  \rho I+(1-\rho)(R_X^*+ R_X)+ (\rho-2)R_X^*
R_X \right](I-R_X)^{-1},
\end{split}
\end{equation}
where $R_X:=X_1^*\otimes R_1+\cdots +X_n^*\otimes R_n$ is the
reconstruction operator associated with  the $n$-tuple
$X:=(X_1,\ldots, X_n)\in  [B(\cH)^n]_1$. We remark that,  due to the
fact that the spectral radius of $R_X$ is equal
 to the joint spectral radius $r(X_1,\ldots, X_n)$,
the factorization above  holds  for any $X\in \cC_\rho$ with
$r(X_1,\ldots, X_n)<1$.

Now, using
 Theorem \ref{equivalent} part (ii) and the  above-mentioned factorization,
 we deduce that $A\overset{H}{{ \prec}}\, 0$
if and only if  there exists $c>0$ such that
$$
(I-R_{rA}^*)^{-1}\left[  \rho I+(1-\rho)(R_{rA}^*+ R_{rA})+
(\rho-2)R_{rA}^* R_{rA} \right](I-R_{rA})^{-1}\leq \rho c^2I
$$
for any $r\in [0,1)$. Similar inequality holds if we replace the
right creation operators by the left creation operators. Then,
applying the noncommutative Poisson transform $ \text{\rm id}\otimes
P_{e^{i\theta}R }$, where $R:=(R_1,\ldots, R_n)$,   we obtain
 \begin{equation}\label{rho-ine}
 \rho I+(1-\rho)(e^{-i\theta}R_{rA}^*+ e^{i\theta}R_{rA})+
(\rho-2)R_{rA}^* R_{rA} \leq \rho
c^2(I-re^{-i\theta}R_A^*)(I-re^{i\theta}R_A)
\end{equation}
for any $r\in [0,1)$ and $\theta\in  \RR$.

On the other hand, since  $A:=(A_1,\ldots, A_n)\in \cC_\rho$, we
have $r(A_1,\ldots, A_n)\leq 1$. Suppose that $r(A_1,\ldots, A_n)=
1$. Taking into account that  $r(R_A)=r(A_1,\ldots, A_n)$, we can
find $\lambda_0\in \TT$ in the approximative spectrum of $R_A$.
Consequently, there is a sequence $\{h_m\}$ in $\cH\otimes F^2(H_n)$
such that $\|h_m\|=1$ and
\begin{equation} \label{assu} \lambda_0h_m-R_Ah_m \to 0\quad \text{
as } m\to \infty.
\end{equation}
In particular, relation \eqref{rho-ine} implies
\begin{equation}
\label{rho-ine2} \begin{split} \rho \|h_m\|^2&+ (1-\rho)
\left[\left<\lambda_0 R_{rA}^* h_m, h_m\right>+\left<\bar \lambda_0
R_{rA} h_m, h_m\right>\right]+(\rho-2)\|R_{rA} h_m\|^2\\
&\leq \rho c^2\|h_m-\bar \lambda_0 R_{rA}h_m\|^2
\end{split}
\end{equation}
for any $r\in (0,1)$ and $m\in \NN$. Note that due to \eqref{assu}
and the fact that $|\lambda_0|=1$, we have
$$
\left<\bar \lambda_0 R_{A} h_m, h_m\right>=\bar
\lambda_0\left<R_Ah_m-\lambda_0 h_m, h_m\right>+1\to 1,\quad \text{
as }\
 m\to \infty.
 $$
Since
\begin{equation*}
\begin{split}
\|h_m-\bar\lambda_0R_{rA} h_m\|&\leq \|h_m-\bar\lambda_0R_{A} h_m\|+
\|\bar \lambda_0(R_Ah_m-R_{rA}h_m)\|\\
&=\|\bar\lambda_0 h_m-R_A h_m\|+(1-r)\|R_A h_m\|
\end{split}
\end{equation*}
and due to the fact that $\|R_A h_m\|\to 1$ as $m\to\infty$, we
deduce that
$$
\limsup_{m\to\infty}\|h_m-\bar \lambda_0 R_{rA}h_m\|\leq 1-r
$$
for any $r\in (0,1)$. Now, since $R_{rA}=rR_A$ and taking
$m\to\infty$ in  relation \eqref{rho-ine2}, we obtain
$$
\rho+ 2(1-\rho)r+(\rho-2)r^2\leq c^2\rho(1-r)^2
$$
for any $r\in (0,1)$. Setting $r=1-\frac{1}{m}$, $m\geq 2$,
straightforward calculations imply $ 2m\leq \rho c^2-\rho+2$ for any
$m\in \NN$, which is a contradiction. Therefore, we  must have
$r(A_1,\ldots, A_n)< 1$.

Conversely, assume that  $A:=(A_1,\ldots, A_n)\in \cC_\rho$ has the
joint spectral radius  $r(A_1,\ldots, A_n)< 1$.  Since
$r(A_1,\ldots, A_n)=r(R_A)$, one can see that $ M:=\sup_{r\in (0,1)}
\|(I-rR_A)^{-1}\| $ exists and $M\geq 1$. Hence
 \begin{equation}
 \label{MRR}
M^2(I-R_{rA}^*)(I-R_{rA})\geq I\geq I-R_{rA}^*R_{rA}
 \end{equation}
for any $r\in (0,1)$. Now we consider the case $\rho\geq 1$. Note
that  relation \eqref{MRR} implies
$$
I-R_{rA}^*R_{rA} +(\rho-1)(I-R_{rA}^*)(I-R_{rA})\leq \rho
M^2(I-R_{rA}^*)(I-R_{rA}).
$$
The latter inequality is equivalent  to
$$
\rho I +(1-\rho)(R_{rA}^*+ R_{rA})+(\rho-2)R_{rA}^*R_{rA}\leq \rho
M^2 (I-R_{rA}^*)(I-R_{rA}),
$$
which, due to the factorization  \eqref{facto},  is equivalent to
$$
P_\rho(rA,R)\leq \rho M^2 =M^2 P_\rho (0,R)
$$
for any $r\in [0,1)$.  According to Theorem \ref{equivalent}, we
deduce that $A\overset{H}{{ \prec}}\, 0$.

Now, consider the case  when $\rho\in (0,1)$. Since $\|R_{rA}\|\leq
r\rho$ and $\delta-2<0$, we have
\begin{equation*}
\begin{split}
\rho I+(1-\rho)(R_{rA}^*+ R_{rA})+(\rho-2)R_{rA}^*R_{rA}&\leq \rho
I+(1-\rho)(R_{rA}^*+ R_{rA})\\
&\leq \rho I+2(1-\rho)r\rho\leq (3\rho-2\rho^2)I.
\end{split}
\end{equation*}
Using  again the factorization \eqref{facto}, we deduce that
$$
P_\rho(rA,R)\leq (3\rho-2\rho^2)(I-R_{rA}^*)^{-1}(I-R_{rA})^{-1}
$$
for any $r\in (0,1)$. Hence and using the fact that
$(I-R_{rA}^*)^{-1}(I-R_{rA})^{-1}\leq M^2 I$, we obtain
$$
P_\rho(rA,R)\leq (3-2\rho) M^2 P_\rho(0,R)
$$
for any $r\in (0,1)$. Using again Theorem \ref{equivalent}, we get
 $A\overset{H}{{ \prec}}\, 0$.
The proof is complete.
\end{proof}

We mention that in the particular case  when $n=1$ we can recover a
result obtained by Ando, Suciu, and Timotin   \cite{AST}, when
$\rho=1$, and by G. Cassier and N. Suciu \cite{CaSu}, when $\rho\neq
1$.

\bigskip

\section{ Hyperbolic metric  on  Harnack parts of the  noncommutative ball $\cC_\rho$}

The relation $\overset{H}{\prec} $ induces an equivalence relation
$\overset{H}\sim$ on  the class $\cC_\rho$. We  provide a Harnack
type double inequality for positive free pluriharmonic functions on
the noncommutative ball $\cC_\rho$ and use it to prove that the
Harnack part of $\cC_\rho$ which contains $0$ coincides  with the
open noncommutative ball $ [\cC_\rho]_{<1}.$  We introduce  a
hyperbolic metric
  on any Harnack part
  of $\cC_\rho$ and
 obtain a concrete formula in terms of
 the reconstruction operator.

   Since $\overset{H}{\prec} $ is a preorder
relation on $\cC_\rho$, it induces an equivalence relation
$\overset{H}\sim$ on $\cC_\rho$, which we call Harnack equivalence.
The equivalence classes with respect to $\overset{H}\sim$ are called
Harnack parts of $\cC_\rho$. Let $A:=(A_1,\ldots, A_n)$ and
$B:=(B_1,\ldots, B_n)$ be in $\cC_\rho$. We say  that $A$ and $B$
are Harnack equivalent (we denote $A\overset{H}{\sim}\, B$) if and
only if there exists $c\geq  1$ such that
\begin{equation*}
\begin{split}
 \frac{1}{c^2}\left[\Re p(B_1,\ldots, B_n)+(\rho-1)\Re p(0)
 \right]&\leq
\Re p(A_1,\ldots, A_n)+(\rho-1)\Re p(0)\\
&\leq c^2\left[ \Re p(B_1,\ldots, B_n)+(\rho-1)\Re p(0)\right]
\end{split}
\end{equation*}
for any noncommutative polynomial with matrix-valued coefficients
$p\in \CC[X_1,\ldots, X_n]\otimes M_{m}$, $m\in \NN$, such that $\Re
p(X)\geq 0$ for any $X\in [B(\cH)^n]_1$. We also use the notation
$A\overset{H}{{\underset{c}\sim}}\, B$  when
$A\overset{H}{{\underset{c}\prec}}\, B$ and
$B\overset{H}{{\underset{c}\prec}}\, A$. We remark that Theorem
\ref{equivalent} can be used to  provide several characterizations
for the Harnack parts of $\cC_\rho$.

The  first result is an extension of   Harnack   inequality to
positive free pluriharmonic functions on
 the noncommutative ball   $\cC_\rho$, $\rho>0$.

\begin{theorem}
\label{Harnack} If \,$u$ is a  positive free pluriharmonic function
on $[B(\cH)^n]_1$  with operator-valued  coefficients in $B(\cE)$
and $0\leq r<1$, then
$$
  u(0) \,\frac{1-r(2\rho-1)}{1+r}\leq u(rX_1,\ldots, rX_n) \leq  u(0)
   \,\frac{1+r(2\rho-1)}{1-r}
$$
  for any  $ (X_1,\ldots, X_n)\in \cC_\rho$.
\end{theorem}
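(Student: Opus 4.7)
My plan is to prove the Harnack inequalities in two stages: first I establish operator bounds on the $\rho$-pluriharmonic kernel $P_\rho(rX,R)$, then I deduce the bounds on $u(rX)$ by pushing them through a completely positive map associated to $u$.

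The key scalar estimate is
$$\frac{1-r}{1+r}\, I \;\leq\; P_1(rS,R) \;\leq\; \frac{1+r}{1-r}\, I, \qquad r\in[0,1).$$
For the upper bound, since $P_1(rS,R)\geq 0$ it suffices to estimate its operator norm via the triangle inequality applied to its defining series: for each $k\geq 1$ a direct computation gives $\|\sum_{|\alpha|=k}S_\alpha\otimes R_{\widetilde\alpha}^*\|^2=\|\sum_{|\alpha|=k}S_\alpha S_\alpha^*\otimes I\|\leq 1$, since $\sum_{|\alpha|=k}S_\alpha S_\alpha^*$ is a projection; summing in $k$ yields $\|P_1(rS,R)\|\leq 1+2\frac{r}{1-r}=\frac{1+r}{1-r}$. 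For the lower bound I use the factorization \eqref{facto} specialised to $\rho=1$,
$$P_1(rS,R)=(I-R_{rS}^*)^{-1}(I-R_{rS}^*R_{rS})(I-R_{rS})^{-1},$$
together with $\|R_{rS}\|\leq r$. The latter gives $I-R_{rS}^*R_{rS}\geq(1-r^2)I$ as well as $(I-R_{rS})(I-R_{rS}^*)\leq(1+r)^2 I$, hence $(I-R_{rS}^*)^{-1}(I-R_{rS})^{-1}\geq(1+r)^{-2}I$, and combining these yields $P_1(rS,R)\geq(1-r^2)(1+r)^{-2}I=\frac{1-r}{1+r}I$.

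To transfer these scalar bounds to $P_\rho(rX,R)$, I use the completely positive map $\Psi_X:\overline{\cA_n^*+\cA_n}^{\|\cdot\|}\to B(\cH)$ furnished by Theorem \ref{ro-contr}(ii), which satisfies $\Psi_X(I)=\rho I$ and $\Psi_X(S_\alpha)=X_\alpha$ for $|\alpha|\geq 1$. A term-by-term computation shows $(\Psi_X\otimes\mathrm{id})(P_1(rS,R))=P_\rho(rX,R)$; applying this completely positive map to the scalar bounds yields
$$\frac{1-r}{1+r}\,\rho\,I \;\leq\; P_\rho(rX,R) \;\leq\; \frac{1+r}{1-r}\,\rho\,I.$$

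Finally I pass from these operator inequalities to the claimed inequalities for $u(rX)$, arguing exactly as in the proof of Theorem \ref{equivalent}, $(ii)\Rightarrow(iii)$. By Corollary~5.5 of \cite{Po-pluriharmonic}, the positive free pluriharmonic function $u$ with coefficients $\{C_{(\alpha)}\}$ and $C_{(0)}=u(0)$ induces a completely positive map $\nu:\overline{\cR_n^*+\cR_n}^{\|\cdot\|}\to B(\cE)$ with $\nu(R_{\widetilde\alpha})=C_{(\alpha)}^*$ for $|\alpha|\geq 1$ and $\nu(I)=u(0)$; a direct calculation then gives $(\mathrm{id}\otimes\nu)(P_\rho(rX,R))=u(rX_1,\ldots,rX_n)+(\rho-1)u(0)$. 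Applying $\mathrm{id}\otimes\nu$ to the operator bounds above and simplifying via the identities $\frac{\rho(1\pm r)}{1\mp r}-(\rho-1)=\frac{1\pm r(2\rho-1)}{1\mp r}$ produces the desired Harnack double inequality. The main obstacle lies in the scalar lower bound, where the factorization must be combined with the two operator inequalities in the correct order so that the factor of $1-r^2$ from $I-R_{rS}^*R_{rS}$ and the factor $(1+r)^{-2}$ from inverting $(I-R_{rS})(I-R_{rS}^*)$ can be multiplied to yield $\frac{1-r}{1+r}$.
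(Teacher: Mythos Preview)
Your proof is correct and takes a genuinely different route from the paper's.

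The paper argues via dilation theory: it invokes the known $\rho=1$ Harnack inequality (Theorem~1.4 of \cite{Po-hyperbolic}) at the minimal isometric dilation $(V_1,\ldots,V_n)$ of $X$, then uses the identity
\[
u(rX_1,\ldots,rX_n)=\rho\,(P_\cH\otimes I_\cE)\,u(rV_1,\ldots,rV_n)\big|_{\cH\otimes\cE}+(1-\rho)\,u(0)
\]
to compress the resulting bounds back to $\cH\otimes\cE$ and shift by $(1-\rho)u(0)$.

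Your argument instead stays entirely at the level of completely positive maps and the universal kernel. You first establish the scalar bounds $\frac{1-r}{1+r}I\le P_1(rS,R)\le\frac{1+r}{1-r}I$ from scratch (rather than citing the $\rho=1$ case), then push them through the two cp maps $\Psi_X\otimes\mathrm{id}$ and $\mathrm{id}\otimes\nu$ that encode, respectively, the $\cC_\rho$ structure of $X$ and the positive pluriharmonic structure of $u$. This is precisely the mechanism of the implication $(ii)\Rightarrow(iii)$ in Theorem~\ref{equivalent}, which you correctly identify. The upshot is a more modular and self-contained argument that does not rely on the external reference \cite{Po-hyperbolic}; the paper's proof is shorter because it imports that result wholesale. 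Both routes arrive at the same algebraic simplification $\frac{\rho(1\pm r)}{1\mp r}-(\rho-1)=\frac{1\pm r(2\rho-1)}{1\mp r}$.
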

\begin{proof}
Let $$u(Z_1,\ldots, Z_n)=\sum_{k=1}^\infty \sum_{|\alpha|=k}
Z_\alpha^*\otimes A_{(\alpha)}^* +
    I\otimes A_{(0)}+\sum_{k=1}^\infty \sum_{|\alpha|=k}   Z_\alpha\otimes A_{(\alpha)}$$
  be  a positive  free  pluriharmonic  function on  $[B(\cH)^n]_1$ with
  coefficients in $B(\cE)$. According to  Theorem 1.4 from \cite{Po-hyperbolic}, for
  any $Y\in [B(\cH)^n]_1^-$ and $r\in [0,1)$, we have
  \begin{equation}
  \label{u-ine}
  u(0) \,\frac{1-r}{1+r}\leq u(rY_1,\ldots, rY_n) \leq  u(0)
   \,\frac{1+r}{1-r}.
\end{equation}
On the other hand, let $ (X_1,\ldots, X_n)\in \cC_\rho$ and let
$(V_1,\ldots, V_n)$ be the minimal  isometric dilation  of
$(X_1,\ldots, X_n)$ on  a  Hilbert space $\cK_T\supseteq \cH$. Since
$X_\alpha =\rho P_\cH V_\alpha |_{\cH}$  for any  $
    \alpha\in \FF_n^+\backslash \{g_0\}$, and using the free
    pluriharmonic functional calculus, we have

    \begin{equation*}
    \begin{split}
&u(rX_1,\ldots, rX_n)=\sum_{k=1}^\infty \sum_{|\alpha|=k}
r^{|\alpha|}X_\alpha^*\otimes A_{(\alpha)}^* +
    I\otimes A_{(0)}+\sum_{k=1}^\infty \sum_{|\alpha|=k}   r^{|\alpha|}X_\alpha\otimes A_{(\alpha)}
    \\
    &=\rho (P_\cH\otimes I_\cE) \left[\sum_{k=1}^\infty \sum_{|\alpha|=k}
r^{|\alpha|}V_\alpha^*\otimes A_{(\alpha)}^* \right]|_{\cH\otimes
\cE}+ I_\cH\otimes A_{(0)}+\rho (P_\cH\otimes I_\cE)
\left[\sum_{k=1}^\infty \sum_{|\alpha|=k}
     r^{|\alpha|}V_\alpha\otimes A_{(\alpha)}\right]|_{\cH\otimes\cE}\\
&=\rho (P_\cH\otimes I_\cE) u(rV_1,\ldots, rV_n)|_{\cH\otimes\cE}+
(1-\rho)u(0),
\end{split}
    \end{equation*}
where the convergence is  in the operator norm topology. Due to
\eqref{u-ine}, we have
$$
  u(0) \,\frac{1-r}{1+r}\leq u(rV_1,\ldots, V_n) \leq  u(0)
   \,\frac{1+r}{1-r}.
$$
 Consequently, we deduce that
$$
  u(0) \left[\frac {\rho(1-r)}{1+r}+(1-\rho)\right]\leq
   \rho (P_\cH\otimes I_\cE) u(rV_1,\ldots, rV_n)|_{\cH\otimes\cE}+
(1-\rho)u(0) \leq  u(0) \left[\frac{\rho(1+r)}{1-r}
+(1-\rho)\right].
$$
Since
$$u(rX_1,\ldots, rX_n)=\rho (P_\cH\otimes I_\cE) u(rV_1,\ldots,
rV_n)|_{\cH\otimes\cE}+ (1-\rho)u(0),
$$
the result follows.
\end{proof}

Now, we can determine the Harnack part of $\cC_\rho$ which contains
$0$.

\begin{theorem}
\label{foias2} Let $A:=(A_1,\ldots, A_n)$   be in $\cC_\rho$. Then
the following statements are equivalent:
\begin{enumerate}
\item[(i)] $\omega_\rho(A_1,\ldots, A_n)<1$;
\item[(ii)]
$A\overset{H}{\sim}\, 0$;
\item[(iii)]
 $r(A_1,\ldots, A_n)<1$ and $P_\rho(A, R)\geq aI$ for some constant
 $a>0$.
\end{enumerate}
\end{theorem}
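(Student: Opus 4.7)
My plan is to establish the cycle $(i) \Rightarrow (ii) \Rightarrow (iii) \Rightarrow (i)$.

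For $(i) \Rightarrow (ii)$, I will use the Harnack inequality of Theorem \ref{Harnack}. If $\omega_\rho(A) < 1$ I choose $s \in (\omega_\rho(A), 1)$ so that $X := (1/s)A \in \cC_\rho$. For any positive free pluriharmonic function $u$ on $[B(\cH)^n]_1$ and any $\tau \in [0,1)$, applying Theorem \ref{Harnack} to $X$ at radius $\tau s < 1$ yields
$$u(0)\,\frac{1 - \tau s(2\rho-1)}{1+\tau s} \leq u(\tau A) \leq u(0)\,\frac{1 + \tau s(2\rho-1)}{1-\tau s}.$$
Adding $(\rho-1)u(0)$ and collecting terms gives
$$\frac{\rho(1-\tau s)}{1+\tau s}\, u(0) \leq u(\tau A) + (\rho-1) u(0) \leq \frac{\rho(1+\tau s)}{1-\tau s}\, u(0),$$
and monotonicity in $\tau$ bounds both sides uniformly by $c^{\pm 2}\,\rho u(0)$ with $c := \sqrt{(1+s)/(1-s)} > 1$. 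Since $\rho u(0) = u(\tau\cdot 0) + (\rho-1) u(0)$, Theorem \ref{equivalent}(iii) yields $A \overset{H}{\underset{c}{\sim}} 0$.

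For $(ii) \Rightarrow (iii)$: the relation $A \overset{H}{\prec} 0$ forces $r(A_1,\ldots,A_n) < 1$ by Theorem \ref{A<0}. The reverse domination $0 \overset{H}{\underset{c}{\prec}} A$, combined with Theorem \ref{equivalent}(ii) and $P_\rho(0,R) = \rho I$, gives $\rho I \leq c^2 P_\rho(rA, R)$ for every $r \in [0,1)$. Because $r(A)<1$, the series defining $P_\rho(A,R)$ from Theorem \ref{ro-contr}(iii) converges absolutely in norm, and $P_\rho(rA,R) \to P_\rho(A,R)$ in norm as $r \to 1^-$; the inequality passes to the limit and yields $P_\rho(A,R) \geq (\rho/c^2)I$.

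For $(iii) \Rightarrow (i)$, I invoke the factorization established in the proof of Theorem \ref{A<0} (valid because $r(R_A) = r(A) < 1$):
$$P_\rho(A,R) = (I - R_A^*)^{-1}\, f(1)\,(I - R_A)^{-1}, \qquad f(s) := \rho I + (1-\rho) s (R_A^* + R_A) + (\rho-2) s^2 R_A^* R_A.$$
The hypothesis $P_\rho(A,R) \geq aI$ translates to $f(1) \geq a(I-R_A^*)(I-R_A)$; since $I - R_A$ is invertible, there is $\varepsilon>0$ with $(I-R_A^*)(I-R_A) \geq \varepsilon I$, and hence $f(1) \geq a\varepsilon I$. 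By norm-continuity of $s \mapsto f(s)$, we obtain $f(s) \geq (a\varepsilon/2)I > 0$ on some interval $[1, 1+\delta)$; combined with $f(r) \geq 0$ for $r \in (0,1)$ (from $A \in \cC_\rho$ via Theorem \ref{ro-contr}(iv)), we conclude $f(s) \geq 0$ for every $s \in (0, 1+\delta)$. Picking $t \in [\max(r(A), 1/(1+\delta)), 1)$ and $B := (1/t)A$, so that $R_B = (1/t)R_A$, the test of Theorem \ref{ro-contr}(iv) for $B \in \cC_\rho$ reduces to $f(r/t) \geq 0$ for $r \in (0,1)$, i.e.\ $f(s) \geq 0$ for $s \in (0, 1/t) \subseteq (0, 1+\delta)$, which holds, while $r(B) = r(A)/t \leq 1$ by the choice of $t$. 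Hence $B \in \cC_\rho$ and $\omega_\rho(A) \leq t < 1$. The main obstacle is precisely this perturbation step: one needs to upgrade the operator-valued lower bound $f(1) \geq a(I-R_A^*)(I-R_A)$ into a strict scalar bound $f(1) \geq a\varepsilon I$, and this upgrade uses the invertibility of $I - R_A$ guaranteed by $r(A) < 1$, so that continuity of $f$ provides the room to scale $A$ up by a factor $1/t > 1$ while staying in $\cC_\rho$.
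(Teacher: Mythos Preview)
Your proof is correct and follows essentially the same route as the paper: $(i)\Rightarrow(ii)$ via the Harnack inequality of Theorem~\ref{Harnack} applied to $(1/s)A\in\cC_\rho$, $(ii)\Rightarrow(iii)$ via Theorem~\ref{A<0} and the kernel inequality of Theorem~\ref{equivalent}(ii) passed to the limit $r\to1$, and $(iii)\Rightarrow(i)$ via a continuity perturbation near $1$ to scale $A$ up while remaining in $\cC_\rho$. The only cosmetic differences are that in $(i)\Rightarrow(ii)$ the paper appeals to the polynomial definition of Harnack domination at $A$ itself (so no uniform-in-$\tau$ step is needed), and in $(iii)\Rightarrow(i)$ the paper perturbs the full kernel $t\mapsto P_\rho(tA,R)$ directly rather than its factored middle piece $f(s)$; these are equivalent arguments.
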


\begin{proof}
First, we prove that $(i)\implies (ii)$.  Let $A:=(A_1,\ldots, A_n)$
be in $\cC_\rho$ and assume that $\omega_\rho(A)<1$. Then there is
$r_0\in (0,1)$ such that $\omega_\rho(\frac{1}{r_0}A)=\frac{1}{r_0}
\omega_\rho (A)<1$. Consequently, $\frac{1}{r_0}A\in \cC_\rho$.

 According to
Theorem \ref{Harnack}, we have
 $$
  \Re p(0) \,\frac{1-r_0(2\rho-1)}{1+r_0}\leq \Re p(A_1,\ldots, A_n) \leq  \Re p(0)
   \,\frac{1+r_0(2\rho-1)}{1-r_0}
$$
 for any noncommutative polynomial with
matrix-valued coefficients $p\in \CC[X_1,\ldots, X_n]\otimes M_{m}$,
$m\in \NN$, such that $\Re p\geq 0$ on $[B(\cH)^n]_1$. Hence, we
deduce that $A\overset{H}{\sim}\, 0$.

To prove that $(ii)\implies (iii)$, assume that
$A\overset{H}{\sim}\, 0$. Due to Theorem \ref{A<0}, we have
$r(A)<1$. Using now Theorem \ref{equivalent}, we deduce that there
exists $c>0$ such that
\begin{equation}
\label{Pro} P_\rho(rA,R)\geq \frac{1}{c^2}
P_\rho(0,R)=\frac{\rho}{c^2} I
\end{equation}
for any $r\in [0,1)$. Since $r(A)<1$, one can prove that $\lim_{r\to
1}P_\rho(rA,R)=P_\rho(A,R)$ in the operator norm topology.
Consequently, taking $r\to 1$ in  relation \eqref{Pro}, we obtain
 item (iii).

  It remains to show that $(iii)\implies (i)$.
Assume that  $r(A_1,\ldots, A_n)<1$ and $P_\rho(A, R)\geq aI$ for
some constant
 $a>0$. Note that there exists $t_0\in (0, 1)$ such that the map
 $$
 t\mapsto \left(I-\sum_{i=1}^n A_i^* \otimes tR_i\right)^{-1}+ (\rho-2)I+
 \left(I-\sum_{i=1}^n A_i \otimes tR_i^*\right)^{-1}
 $$
 is well-defined and continuous on $[0,1+t_0]$ in the operator norm
 topology. In particular,  there is $\epsilon_0\in (0, t_0)$ such
 that
 $$
 \|P_\rho(A,R)-P_\rho(A,tR)\|<\frac{a}{2}
 $$
 for any $t\in(1-\epsilon_0, 1+\epsilon_0)$.
 Consequently, if $\gamma_0\in (1,1+\epsilon_0)$, then
 $$
 P_\rho(\gamma_0 A,R)\geq
 P_\rho(A,R)-\|P_\rho(A,R)-P_\rho(\gamma_0A,R)\|I\geq \frac{a}{2}
 I>0.
 $$
Due to   Theorem \ref{ro-contr},  we have $\gamma_0 A\in
  \cC_\rho$, which implies $\omega(\gamma_0 A)\leq 1$. Therefore,
  $\omega(A)\leq \frac{1}{\gamma_0}<1$ and
     item (i)
holds. The proof is complete.
\end{proof}

We remark that, when $n=1$,  we recover a  result
  obtain by Foia\c s \cite{Fo} if $\rho=1$,  and by Cassier and Suciu  \cite{CaSu} if
  $\rho>0$.

Given $A,B\in \cC_\rho$, $\rho>0$,  in the same Harnack part of
$\cC_\rho$, i.e., $A\,\overset{H}{\sim}\, B$, we introduce
\begin{equation}
\label{La} \Lambda_\rho(A,B):=\inf\left\{ c > 1: \
A\,\overset{H}{{\underset{c}\sim}}\, B   \right\}.
\end{equation}
Note that, due to Theorem \ref{equivalent},
  $A\overset{H}{\sim}\, B$ if and only
if the operator $L_{B,A}$ is invertible. In this case,
$L_{B,A}^{-1}=L_{A,B}$ and
$$
\Lambda_\rho(A,B)= \max\left\{ \|L_{A,B}\|, \|L_{B,A}\|\right\}.
$$
To prove the latter equality,
  assume that
$A\overset{H}{{\underset{c}\sim}}\, B $ for some $c\geq 1$. Due to
the same theorem, we have $\|L_{B,A}\|\leq c$ and $\|L_{A,B}\|\leq
c$. Consequently,
\begin{equation}\label{Max}
\max\left\{ \|L_{A,B}\|, \|L_{B,A}\|\right\}\leq \inf\left\{ c\geq
1: \ A\overset{H}{{\underset{c}\sim}}\, B
\right\}=\Lambda_\rho(A,B).
\end{equation}
On the other hand, setting  $c_0:=\|L_{B,A}\|$ and
$c_0':=\|L_{A,B}\|$, Theorem \ref{equivalent} implies
$A\overset{H}{{\underset{c_0}\prec}}\, B$ and
$B\overset{H}{{\underset{c_0'}\prec}}\, A$. Hence, we deduce that
$A\overset{H}{{\underset{d}\sim}}\, B $, where
$d:=\max\{c_0,c_0'\}$. Consequently, $\Lambda_\rho(A,B)\leq d$,
which together with relation \eqref{Max} imply $\Lambda_\rho(A,B)=
\max\left\{ \|L_{A,B}\|, \|L_{B,A}\|\right\}$, which proves our
assertion.

Now, we can introduce  a  hyperbolic ({\it Poincar\'e-Bergman} type)
metric $\delta_\rho:\Delta\times \Delta \to \RR^+$ on any Harnack
part $\Delta$ of $\cC_\rho$, by setting
\begin{equation}
\label{hyperbolic}
 \delta_\rho(A,B):=\ln \Lambda_\rho(A,B),\qquad A,B\in \Delta.
\end{equation}
Due to our discussion above, we also have
 \begin{equation*}
\begin{split}
\delta_\rho(A,B)&=
  \ln \max \left\{ \left\| L_{A,B} \right\|,
  \left\| L_{A,B}^{-1}\right\|\right\}.
\end{split}
\end{equation*}

\begin{proposition}\label{delta}
  $\delta_\rho$ is a metric on  any Harnack
part of $\cC_\rho$.
\end{proposition}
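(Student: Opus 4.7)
The plan is to verify the three metric axioms for $\delta_\rho$ on a Harnack part $\Delta\subset\cC_\rho$, exploiting the characterizations of Harnack domination from Theorem~\ref{equivalent}. Symmetry is immediate from the definition: the condition $A\overset{H}{{\underset{c}\sim}}\, B$ is symmetric in $A$ and $B$, so $\Lambda_\rho(A,B)=\Lambda_\rho(B,A)$. For non-negativity, Theorem~\ref{equivalent}(vi) gives $L_{A,B}|_\cH=I_\cH$, whence $\|L_{A,B}\|\geq 1$ for every such intertwiner; this forces $\Lambda_\rho(A,B)\geq 1$ and hence $\delta_\rho(A,B)\geq 0$.

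For definiteness, suppose $\delta_\rho(A,B)=0$ and pick a sequence $c_k\downarrow 1$ with $A\overset{H}{{\underset{c_k}\sim}}\, B$. By Theorem~\ref{equivalent}(v), the maps $c_k^2\varphi_B-\varphi_A$ and $c_k^2\varphi_A-\varphi_B$ are completely positive on the operator system $\overline{\cA_n^*+\cA_n}^{\|\cdot\|}$. Since the cone of completely positive maps is closed in the point-norm topology, letting $k\to\infty$ shows that both $\varphi_B-\varphi_A$ and $\varphi_A-\varphi_B$ are completely positive, so $\varphi_A=\varphi_B$. Evaluating this equality at the left creation operators $S_i\in\cA_n$ and recalling from \eqref{var} that $\varphi_T(S_i)=\frac{1}{\rho}T_i$, we obtain $A_i=B_i$ for each $i$, i.e.\ $A=B$.

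For the triangle inequality, fix $A,B,C\in\Delta$ and pick any $c_1>\Lambda_\rho(A,C)$ and $c_2>\Lambda_\rho(C,B)$, so that $A\overset{H}{{\underset{c_1}\sim}}\, C$ and $C\overset{H}{{\underset{c_2}\sim}}\, B$. For every matrix-valued noncommutative polynomial $p$ with $\Re p\geq 0$, chaining the two defining inequalities yields
\begin{equation*}
\Re p(A)+(\rho-1)\Re p(0)\leq c_1^2\bigl[\Re p(C)+(\rho-1)\Re p(0)\bigr]\leq c_1^2c_2^2\bigl[\Re p(B)+(\rho-1)\Re p(0)\bigr],
\end{equation*}
and the reverse bound with $A$ and $B$ interchanged holds by the same argument. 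Hence $A\overset{H}{{\underset{c_1c_2}\sim}}\, B$, giving $\Lambda_\rho(A,B)\leq c_1c_2$; taking infima over $c_1,c_2$ and applying $\ln$ produces $\delta_\rho(A,B)\leq\delta_\rho(A,C)+\delta_\rho(C,B)$. The main delicate step is definiteness, where one must upgrade equality in the Harnack inequalities (which only constrain positive pluriharmonics) to equality of the operator tuples; this is handled cleanly by the CP-map characterization~(v) together with the explicit formula for $\varphi_T$.
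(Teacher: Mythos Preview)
Your proof is correct and follows essentially the same strategy as the paper's: verify the metric axioms directly from the characterizations in Theorem~\ref{equivalent}. The paper only sketches its argument, pointing to the $\rho$-pluriharmonic kernel characterization (part~(ii) of Theorem~\ref{equivalent}) rather than the completely positive map characterization (part~(v)) that you use for definiteness; had they written it out, one would take $c_k\downarrow 1$ in $P_\rho(rA,R)\leq c_k^2 P_\rho(rB,R)$ and its reverse to get $P_\rho(rA,R)=P_\rho(rB,R)$ for all $r\in[0,1)$, then read off $A_i=B_i$ from the coefficient of $R_i^*$. Your route through $\varphi_A=\varphi_B$ and evaluation at $S_i$ is equally clean and arguably more transparent, but this is a cosmetic difference rather than a genuinely different method.
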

\begin{proof} The proof is similar to that of  Proposition 2.2 from
\cite{Po-hyperbolic}, but uses  $\rho$-pluriharmonic kernels.
\end{proof}

We remark that, according to Theorem \ref{foias2}, the set
$$[\cC_\rho]_{<1}:=\{(X_1,\ldots, X_n)\in B(\cH)^n:\
\omega_\rho(X_1,\ldots, X_n)<1\}$$
 is the Harnack part of $\cC_\rho$
containing $0$.

In what follows we calculate the norm of $L_{Y,X}$ with  $X,Y\in
[\cC_\rho]_{<1}$,  in terms of the reconstruction operators.

\begin{theorem}\label{LCC}
If $X,Y\in [\cC_\rho]_{<1}$, then $
\|L_{Y,X}\|=\|C_{\rho,X}C_{\rho,Y}^{-1}\|, $ where
 \begin{equation*} \begin{split} C_{\rho,X}&:=
\Delta_{\rho,X} (I-R_X)^{-1},\\
\Delta_{\rho,X}&:=  \left[\rho I+(1-\rho)(R_{X}^*+ R_{X})+
(\rho-2)R_{X}^* R_{X}\right]^{1/2}.
\end{split}
\end{equation*}
Moreover, if $X,Y\in \cC_\rho$ is such that $X\overset{H}{\prec}\,
Y$, then \ $
\|L_{Y,X}\|=\sup_{r\in[0,1)}\|C_{\rho,rX}C_{\rho,rY}^{-1}\|. $
\end{theorem}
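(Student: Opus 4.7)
The plan is to combine Corollary \ref{LBA-inf}, the factorization
$$P_\rho(Z,R)=(I-R_Z^{*})^{-1}\Delta_{\rho,Z}^{2}(I-R_Z)^{-1}=C_{\rho,Z}^{*}C_{\rho,Z}$$
that is isolated inside the proof of Theorem \ref{A<0}, and a completely positive scaling argument on the $R$-side. For any $Z\in\cC_\rho$ and $r\in[0,1)$ one has $r(rZ)<1$, so the factorization gives $P_\rho(rZ,R)=C_{\rho,rZ}^{*}C_{\rho,rZ}$; when $Z\in[\cC_\rho]_{<1}$, Theorem \ref{foias2} supplies $P_\rho(Z,R)\ge aI>0$, so $\Delta_{\rho,Z}$ and hence $C_{\rho,Z}$ are boundedly invertible and the factorization extends to $r=1$. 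In particular $C_{\rho,rZ}$ is invertible for every $r\in[0,1]$ when $Z\in[\cC_\rho]_{<1}$.

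For the supremum formula (case $X\overset{H}{\prec}Y$ in $\cC_\rho$) I would start from Corollary \ref{LBA-inf} in the form $\|L_{Y,X}\|=\inf\{c:\,P_\rho(rX,R)\le c^{2}P_\rho(rY,R)\text{ for all }r\in[0,1)\}$. Since $rY\in[\cC_\rho]_{<1}$ for each such $r$, $C_{\rho,rY}$ is invertible, and the operator inequality $C_{\rho,rX}^{*}C_{\rho,rX}\le c^{2}C_{\rho,rY}^{*}C_{\rho,rY}$ is equivalent to the norm bound $\|C_{\rho,rX}C_{\rho,rY}^{-1}\|\le c$. Taking infima over $c$ and then suprema over $r$ produces $\|L_{Y,X}\|=\sup_{r\in[0,1)}\|C_{\rho,rX}C_{\rho,rY}^{-1}\|$.

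For the first identity (with $X,Y\in[\cC_\rho]_{<1}$, for which $X\overset{H}{\sim}Y$ by Theorem \ref{foias2}, so the previous step applies) I have to show the supremum equals the $r=1$ value. The bound $\sup_{r}\ge\|C_{\rho,X}C_{\rho,Y}^{-1}\|$ is norm continuity at $r=1$: since $r(X),r(Y)<1$, the maps $r\mapsto R_{rZ}$, $(I-R_{rZ})^{-1}$, $\Delta_{\rho,rZ}$ ($Z=X,Y$) and $r\mapsto C_{\rho,rY}^{-1}$ are operator-norm continuous at $r=1$, so $\|C_{\rho,rX}C_{\rho,rY}^{-1}\|\to\|C_{\rho,X}C_{\rho,Y}^{-1}\|$. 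For the reverse bound, set $c:=\|C_{\rho,X}C_{\rho,Y}^{-1}\|$, so the factorization at $r=1$ gives $P_\rho(X,R)\le c^{2}P_\rho(Y,R)$; I then propagate this inequality to every $r\in[0,1]$ by applying the UCP map $\mathrm{id}_{B(\cH)}\otimes\widetilde{P}_{rR}$, where $\widetilde{P}_{rR}:=P_{rR}\circ\mathrm{Ad}(U):C^{*}(R_1,\ldots,R_n)\to B(F^{2}(H_n))$, $P_{rR}$ is the standard noncommutative Poisson transform at the strict row contraction $(rR_1,\ldots,rR_n)$, and $U\in B(F^{2}(H_n))$ is the self-adjoint flip unitary $Ue_\alpha=e_{\widetilde\alpha}$ (which satisfies $UR_iU=S_i$, hence $\mathrm{Ad}(U)(R_\alpha)=S_\alpha$ and $\widetilde{P}_{rR}(R_\alpha)=r^{|\alpha|}R_\alpha$). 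This converts $P_\rho(X,R)\le c^{2}P_\rho(Y,R)$ into $P_\rho(rX,R)\le c^{2}P_\rho(rY,R)$ for every $r\in[0,1]$, whence $\|C_{\rho,rX}C_{\rho,rY}^{-1}\|\le c$ and therefore $\sup_{r}\le c$. Combining the two estimates yields $\|L_{Y,X}\|=\|C_{\rho,X}C_{\rho,Y}^{-1}\|$.

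The only genuinely delicate step is the construction and verification of the $R$-side Poisson transform $\widetilde{P}_{rR}$; every other ingredient — the factorization from Theorem \ref{A<0}, the invertibility of $C_{\rho,Z}$ on $[\cC_\rho]_{<1}$ from Theorem \ref{foias2}, and norm continuity of the resolvent $(I-rR_Z)^{-1}$ in $r$ — is already available from results established earlier in the paper.
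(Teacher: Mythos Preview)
Your proof is correct and follows essentially the same approach as the paper's: both rely on the factorization $P_\rho(Z,R)=C_{\rho,Z}^{*}C_{\rho,Z}$ from the proof of Theorem~\ref{A<0}, the invertibility of $C_{\rho,Z}$ on $[\cC_\rho]_{<1}$ via Theorem~\ref{foias2}, and the passage between the inequality at $r=1$ and the inequalities for all $r\in[0,1)$ by means of the noncommutative Poisson transform (the paper phrases this as first passing from $P_\rho(X,R)$ to $P_\rho(X,S)$ and then applying $\text{id}\otimes P_{rR}$, which is exactly your $\widetilde P_{rR}=P_{rR}\circ\mathrm{Ad}(U)$ spelled out). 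The only difference is organizational: the paper first establishes the equivalence ``$P_\rho(rX,R)\le c^2P_\rho(rY,R)$ for all $r$'' $\Leftrightarrow$ ``$P_\rho(X,R)\le c^2P_\rho(Y,R)$'' and then factorizes once at $r=1$, whereas you factorize at every $r$ to obtain the supremum formula first and then identify the supremum with the $r=1$ value; both routes use the same ingredients and yield the same result.
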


\begin{proof} Since $X,Y\in [\cC_\rho]_{<1}$,  Theorem \ref{foias2}
implies  $X\overset{H}{\sim}\, Y$, $r(X)<1$, and $r(Y)<1$.
   Let  $c> 1$ and  assume that
$P_\rho(rX,R)\leq c^2 P_\rho(rY,R)$ for any $r\in[0,1)$. Since
$r(X)<1$ and $r(Y)<1$, we can take the limit, as $r\to 1$, in the
operator norm topology, and obtain
 $ P_\rho(X,R)\leq c^2 P_\rho(Y,R)$. Conversely, if the latter inequality
 holds, then $P_\rho(X,S)\leq c^2 P_\rho(Y,S)$,
where $S:=(S_1,\ldots, S_n)$ is  the $n$-tuple of left creation
operators.  Applying the noncommutative Poisson transform $\text{\rm
id}\otimes P_{rR }$, $r\in [0,1)$, and taking into account that it
is a positive map, we deduce that $  P_\rho(rX,R)\leq c^2
P_\rho(rY,R) $ for any $r\in [0,1)$.

 Therefore, due to   Theorem
\ref{equivalent},  we have
\begin{equation}
\label{PPL}
 P_\rho(X,R)\leq c^2 P_\rho(Y,R) \quad \text{ if and only if }
\quad \|L_{Y,X}\|\leq c.
\end{equation}
 We
recall that the free pluriharmonic  kernel $P_\rho(X,R)$ with $X\in
[\cC_\rho]_{<1}$, has the factorization $P(X, R)=C_{\rho,X}^*
C_{\rho,X}$. Due to Theorem \ref{foias2}, $P_\rho(X,R)$ is
invertible and, consequently, so is
 $C_{\rho,X}$.     Consequently,
$$P_\rho(X,R)\leq c^2 P_\rho(Y,R)\quad \text{  if and only if }\quad
{C_{\rho,Y}^*}^{-1} C_{\rho,X}^*C_{\rho,X}C_{\rho,Y}^{-1}\leq c^2I.
$$
Setting $c_0:=\|C_{\rho,X}C_{\rho,Y}^{-1}\|$, we have
$P_\rho(X,R)\leq c_0^2 P_\rho(Y,R)$. Now, due to relation
\eqref{PPL}, we obtain
$$\|L_{Y,X}\|\leq c_0=\|C_{\rho,X}C_{\rho,Y}^{-1}\|.
$$
Setting  $c_0':=\|L_{Y,X}\|$ and using again \eqref{PPL}, we obtain
$P_\rho(X,R)\leq {c_0'}^2 P_\rho(Y,R)$. Hence, we deduce that
${C_{\rho,Y}^*}^{-1} C_{\rho,X}^*C_{\rho,X}C_{\rho,Y}^{-1}\leq
{c_0'}^2I$, which implies
$$\|C_{\rho,X}C_{\rho,Y}^{-1}\|\leq
c_0'=\|L_{Y,X}\|.
$$
Therefore, $\|L_{Y,X}\|=\|C_{\rho,X}C_{\rho,Y}^{-1}\|$. The last
part of the  theorem  is now obvious.
\end{proof}

Combining Theorem \ref{LCC} with the remarks preceding Proposition
\ref{delta},  we obtain a concrete formula for the hyperbolic metric
$\delta_\rho$ on $ [\cC_\rho]_{<1}$ in terms of the reconstruction
operator, which is the main result of this section.

\begin{theorem}\label{P-B} Let $\delta_\rho :[\cC_\rho]_{<1}\times [\cC_\rho]_{<1}\to [0,\infty)$
 be the hyperbolic metric.
  If    $X,Y\in [\cC_\rho]_{<1}$, then
\begin{equation*}
\delta_\rho(X,Y)=\ln \max \left\{ \left\|C_{\rho,X} C_{\rho,Y}^{-1}
\right\|,
  \left\|C_{\rho,Y} C_{\rho,X}^{-1} \right\|\right\},
\end{equation*}
where \begin{equation*} \begin{split} C_{\rho,X}&:=
\Delta_{\rho,X}(I-R_X)^{-1},\\
\Delta_{\rho,X}&:=  \left[\rho I+(1-\rho)(R_{X}^*+ R_{X})+
(\rho-2)R_{X}^* R_{X}\right]^{1/2},
\end{split}
\end{equation*}
 and $R_X:=X_1^*\otimes R_1+\cdots + X_n^*\otimes R_n$
 is the reconstruction operator associated with the right
creation operators $R_1,\ldots, R_n$ and $X:=(X_1,\ldots, X_n)\in
[\cC_\rho]_{<1}$.
\end{theorem}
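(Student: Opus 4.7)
The plan is to combine the two results immediately preceding the statement: the formula
\[
\delta_\rho(X,Y)=\ln \max\{\|L_{X,Y}\|,\|L_{Y,X}\|\}
\]
established in the discussion before Proposition \ref{delta}, together with Theorem \ref{LCC}, which identifies $\|L_{Y,X}\|$ with $\|C_{\rho,X}C_{\rho,Y}^{-1}\|$. The core work has already been done; Theorem \ref{P-B} is essentially the assembly of these pieces.

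First I would recall that Theorem \ref{foias2} gives $[\cC_\rho]_{<1}$ as the Harnack part containing $0$, so for any two $X,Y\in [\cC_\rho]_{<1}$ we automatically have $X\overset{H}{\sim} Y$. Hence both $L_{X,Y}$ and $L_{Y,X}$ are well-defined bounded invertible operators (with $L_{X,Y}=L_{Y,X}^{-1}$), and the definition \eqref{hyperbolic} of $\delta_\rho$ together with \eqref{La} yields
\[
\delta_\rho(X,Y)=\ln \Lambda_\rho(X,Y)=\ln \max\{\|L_{X,Y}\|,\|L_{Y,X}\|\},
\]
where the second equality was established by the chain of inequalities around \eqref{Max}.

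Next I would apply Theorem \ref{LCC} in two directions: exchanging the roles of $X$ and $Y$, it gives both
\[
\|L_{Y,X}\|=\|C_{\rho,X}C_{\rho,Y}^{-1}\|\qquad\text{and}\qquad \|L_{X,Y}\|=\|C_{\rho,Y}C_{\rho,X}^{-1}\|,
\]
using in each case that $X,Y\in [\cC_\rho]_{<1}$ forces the factors $\Delta_{\rho,X}, \Delta_{\rho,Y}, (I-R_X), (I-R_Y)$ to be invertible (the first two by $P_\rho(X,R), P_\rho(Y,R)\geq aI$ from Theorem \ref{foias2}(iii), and the last two since $r(R_X)=r(X_1,\ldots,X_n)<1$ and similarly for $Y$). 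Substituting these two expressions into the formula for $\delta_\rho$ yields exactly the stated identity.

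There is no real obstacle here beyond verifying that nothing degenerates: the only thing to check is that $C_{\rho,X}$ and $C_{\rho,Y}$ are indeed bounded and boundedly invertible on $\cH\otimes F^2(H_n)$, which follows from $X,Y\in [\cC_\rho]_{<1}$ via Theorem \ref{foias2} and the factorization $P_\rho(X,R)=C_{\rho,X}^*C_{\rho,X}$ recorded in the proof of Theorem \ref{LCC}. Once this invertibility is in place, the identity is immediate from the two inputs above.
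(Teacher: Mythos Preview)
Your proposal is correct and follows exactly the paper's own approach: the paper states Theorem \ref{P-B} as the immediate combination of Theorem \ref{LCC} with the formula $\delta_\rho(X,Y)=\ln\max\{\|L_{X,Y}\|,\|L_{Y,X}\|\}$ from the remarks preceding Proposition \ref{delta}. Your write-up in fact supplies more detail (the invocation of Theorem \ref{foias2} and the invertibility checks) than the paper itself does.
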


Using Theorem \ref{equivalent}, one can easily obtain the following
result. Since the proof is similar to that of Lemma 2.6  from
\cite{Po-hyperbolic}, we shall omit it.

\begin{lemma}
\label{OMr}
  Let   $X:=(X_1,\ldots, X_n)$ and $Y:=(Y_1,\ldots,
Y_n)$ be   in $\cC_\rho$. Then the following properties hold.
\begin{enumerate}
\item[(i)] $X\overset{H}{\sim}\, Y$ if and only if $rX\overset{H}{\sim}\,
rX$ for any $r\in [0,1)$ and  \  $\sup_{r\in [0,
1)}\Lambda_\rho(rX,rY)<\infty$. In this case,
$$
\Lambda_\rho(X,Y)=\sup_{r\in [0, 1)}\Lambda_\rho(rX,rY)\quad \text{
and } \quad \delta_\rho(X,Y)=\sup_{r\in [0, 1)}\delta_\rho(rX,rY).
$$
\item[(ii)]  If  $X\overset{H}{\sim}\, Y$, then the functions
 $r\mapsto \Lambda_\rho(rX,rY)$ and $r\mapsto
\delta_\rho(rX,rY)$ are increasing on $[0,1)$.
\end{enumerate}
\end{lemma}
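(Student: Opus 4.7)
The plan is to reduce everything to the characterization in Theorem \ref{equivalent}(ii): $A\,\overset{H}{{\underset{c}\prec}}\, B$ is equivalent to $P_\rho(rA,R)\le c^2P_\rho(rB,R)$ for every $r\in[0,1)$. The built-in radial parameter $r$ makes the forward direction of (i) a simple substitution. Indeed, if $X\,\overset{H}{{\underset{c}\sim}}\, Y$, then for any fixed $t\in[0,1)$ and any $s\in[0,1)$, setting $r=st\in[0,1)$ in the inequalities provided by Theorem \ref{equivalent}(ii) gives $P_\rho(s(tX),R)\le c^2P_\rho(s(tY),R)$ together with its reverse, so $tX\,\overset{H}{{\underset{c}\sim}}\, tY$. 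Taking the infimum over admissible $c$ yields $\sup_{r\in[0,1)}\Lambda_\rho(rX,rY)\le\Lambda_\rho(X,Y)$.

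For the converse of (i), assume $rX\,\overset{H}{\sim}\, rY$ for every $r\in[0,1)$ with $M:=\sup_{r}\Lambda_\rho(rX,rY)<\infty$. Fix $c>M$ and $t\in[0,1)$. Then $\Lambda_\rho(tX,tY)<c$, so Theorem \ref{equivalent}(ii) produces $P_\rho(stX,R)\le c^2P_\rho(stY,R)$ and the reverse for every $s\in[0,1)$. The only substantive issue is to pass to the limit $s\to 1^-$ in these operator inequalities. This is where the hypothesis $X,Y\in\cC_\rho$ is used: since $\omega_\rho(tX)\le t<1$ (and similarly for $tY$), the point $tX$ lies in $[\cC_\rho]_{<1}$, so Theorem \ref{foias2} gives $r(tX)<1$, which (exactly as in the proof of Theorem \ref{foias2}) forces the kernel series defining $P_\rho(\,\cdot\,,R)$ to converge in operator norm at the boundary, i.e.\ $\lim_{s\to 1^-}P_\rho(s(tX),R)=P_\rho(tX,R)$, and similarly for $tY$. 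Passing to this limit yields $P_\rho(tX,R)\le c^2P_\rho(tY,R)$ together with its reverse, for every $t\in[0,1)$, with the same constant $c$. Theorem \ref{equivalent}(ii) then gives $X\,\overset{H}{{\underset{c}\sim}}\, Y$, whence $\Lambda_\rho(X,Y)\le c$; letting $c\downarrow M$ closes the equality $\Lambda_\rho(X,Y)=M$, and taking logarithms delivers the corresponding identity for $\delta_\rho$.

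Part (ii) is then immediate. Given $0\le r_1<r_2<1$, the forward direction of (i) applied to the (still Harnack-equivalent) pair $(r_2X,r_2Y)$ gives $\Lambda_\rho(s\cdot r_2X,\,s\cdot r_2Y)\le\Lambda_\rho(r_2X,r_2Y)$ for every $s\in[0,1)$; choosing $s=r_1/r_2\in[0,1)$ yields $\Lambda_\rho(r_1X,r_1Y)\le\Lambda_\rho(r_2X,r_2Y)$, and the same monotonicity for $\delta_\rho$ by taking logarithms. The only genuine technical point in the whole argument is the radial continuity of $P_\rho(\,\cdot\,,R)$ exploited in the converse of (i); everything else is a mechanical translation through the Poisson-type characterization of Theorem \ref{equivalent}(ii).
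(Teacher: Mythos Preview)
Your proof is correct and follows the approach the paper indicates (via Theorem~\ref{equivalent}; compare the proof of Corollary~\ref{LBA-inf}, which is the one-sided version of this lemma). One small simplification: the limit $s\to1^-$ in the converse is unnecessary, since any $u\in[0,1)$ can already be written as $u=st$ with $s,t\in[0,1)$ (e.g.\ $s=t=\sqrt{u}$), so the inequalities $P_\rho(stX,R)\le c^2P_\rho(stY,R)$ for all $s,t\in[0,1)$ directly yield $P_\rho(uX,R)\le c^2P_\rho(uY,R)$ for all $u\in[0,1)$, and Theorem~\ref{equivalent}(ii) finishes without invoking radial continuity of the kernel.
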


Putting together Theorem \ref{P-B} and Lemma \ref{OMr}, we deduce
the following result.

\begin{theorem}\label{formula}
Let   $X:=(X_1,\ldots, X_n)$ and $Y:=(Y_1,\ldots, Y_n)$ be   in
$\cC_\rho$ such that $X\overset{H}{\sim}\, Y$. Then
  the metric $\delta_\rho$ satisfies the relation
\begin{equation*}
\begin{split}
\delta_\rho(X,Y)=\ln \max \left\{\sup_{r\in [0,1)}\left\|C_{\rho,
rX} C_{\rho,rY}^{-1} \right\|, \sup_{r\in [0,1)} \left\|C_{\rho,rY}
C_{\rho,rX}^{-1} \right\|\right\},
\end{split}
\end{equation*}
 where   $C_{\rho,X}:=\Delta_{\rho,X}(I-R_X)^{-1}$ and $R_X:=X_1^*\otimes
R_1+\cdots + X_n^*\otimes R_n$ is the reconstruction operator.
\end{theorem}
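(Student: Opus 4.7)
The plan is to combine Theorem \ref{P-B} (the formula for the hyperbolic distance on the open ball $[\cC_\rho]_{<1}$) with Lemma \ref{OMr} (which expresses $\delta_\rho(X,Y)$ as a supremum of $\delta_\rho(rX,rY)$ for $r\in[0,1)$). The key observation is that although $X,Y$ may lie in an arbitrary Harnack part of $\cC_\rho$ (possibly outside $[\cC_\rho]_{<1}$), the scaled tuples $rX$ and $rY$ always sit in $[\cC_\rho]_{<1}$ for every $r\in[0,1)$, so the explicit formula of Theorem \ref{P-B} is available at each level $r$.

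First I would verify that $rX,rY\in [\cC_\rho]_{<1}$ for $r\in[0,1)$: since $X,Y\in \cC_\rho$ implies $\omega_\rho(X)\leq 1$ and $\omega_\rho(Y)\leq 1$, we have $\omega_\rho(rX)=r\omega_\rho(X)<1$ and similarly for $Y$. By Theorem \ref{foias2}, both $rX$ and $rY$ belong to the Harnack part containing $0$, so $rX\,\overset{H}{\sim}\,rY$. Theorem \ref{P-B} then yields
\begin{equation*}
\delta_\rho(rX,rY)=\ln\max\left\{\|C_{\rho,rX}C_{\rho,rY}^{-1}\|,\ \|C_{\rho,rY}C_{\rho,rX}^{-1}\|\right\}
\end{equation*}
for each $r\in[0,1)$.

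Next, by the hypothesis $X\,\overset{H}{\sim}\,Y$ and Lemma \ref{OMr}(i),
\begin{equation*}
\delta_\rho(X,Y)=\sup_{r\in[0,1)}\delta_\rho(rX,rY).
\end{equation*}
Substituting the expression from Theorem \ref{P-B} into this supremum, and using $\sup_r\ln\phi(r)=\ln\sup_r\phi(r)$ (valid because $\Lambda_\rho(rX,rY)\geq 1$, so the quantity inside the logarithm is $\geq 1$) together with the elementary identity $\sup_r\max\{f(r),g(r)\}=\max\{\sup_r f(r),\sup_r g(r)\}$, I obtain the claimed formula
\begin{equation*}
\delta_\rho(X,Y)=\ln\max\left\{\sup_{r\in[0,1)}\|C_{\rho,rX}C_{\rho,rY}^{-1}\|,\ \sup_{r\in[0,1)}\|C_{\rho,rY}C_{\rho,rX}^{-1}\|\right\}.
\end{equation*}

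There is no serious obstacle: all the hard work was done in Theorem \ref{P-B} (which gives the explicit factorization $P_\rho(X,R)=C_{\rho,X}^*C_{\rho,X}$ and identifies $\|L_{Y,X}\|=\|C_{\rho,X}C_{\rho,Y}^{-1}\|$ on $[\cC_\rho]_{<1}$) and in Lemma \ref{OMr} (which reduces the general Harnack part to the open ball via scaling). The only point that merits a word of care is the interchange of $\ln$ with $\sup$ and the distribution of $\sup$ over $\max$, both of which are standard once monotonicity and the bound $\Lambda_\rho\geq 1$ are noted.
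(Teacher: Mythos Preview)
Your proposal is correct and follows exactly the paper's approach: the paper simply states that the result is obtained by ``putting together Theorem \ref{P-B} and Lemma \ref{OMr},'' and you have spelled out the straightforward details of that combination.
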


Using the Harnack type inequality of Theorem \ref{Harnack}, we
obtain an upper bound for the hyperbolic distance $\delta_\rho$ on
$[\cC_\rho]_{<1}$.  First, we need the following result.

\begin{proposition}\label{Harnack2}
Let $f$ be in the noncommutative disc algebra $\cA_n$  such that
$\Re f\geq 0$ and let $X:=(X_1,\ldots, X_n)\in \cC_\rho$ be with
$\omega_\rho(X)<1$. Then
$$
\rho\frac{1-\omega_\rho(X)}{1+\omega_\rho(X)}\Re f(0)\leq \Re
f(X_1,\ldots, X_n) +(\rho -1)\Re f(0)\leq
\rho\frac{1+\omega_\rho(X)}{1-\omega_\rho(X)}.
$$
\end{proposition}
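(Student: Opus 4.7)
The strategy is a clean reduction to the noncommutative Harnack inequality of Theorem~\ref{Harnack} by a one-parameter rescaling. Set $s := \omega_\rho(X_1,\ldots,X_n)$, which lies in $[0,1)$ by hypothesis, and first suppose $s>0$. Define $Y_i := \tfrac{1}{s} X_i$ for $i = 1,\ldots,n$; then $\omega_\rho(Y_1,\ldots,Y_n) = \tfrac{1}{s}\omega_\rho(X_1,\ldots,X_n) = 1$, so $(Y_1,\ldots,Y_n)\in \cC_\rho$.

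Since $f\in \cA_n$ with $\Re f \geq 0$, the function $u := \Re f$ is a positive, scalar-valued, free pluriharmonic function on $[B(\cH)^n]_1$ that extends continuously to the closed ball, i.e.\ $u \in Har_{\bf ball}^c(B(\CC))$. The free pluriharmonic functional calculus (Theorem~\ref{funct-calc}) applied to $Y\in \cC_\rho$ allows us to evaluate $u$ at $(rY_1,\ldots,rY_n)$ for $r\in [0,1)$, and Theorem~\ref{Harnack} yields
\begin{equation*}
  \Re f(0)\,\frac{1 - r(2\rho-1)}{1+r} \;\le\; \Re f(rY_1,\ldots,rY_n) \;\le\; \Re f(0)\,\frac{1 + r(2\rho-1)}{1-r}.
\end{equation*}
Choosing the admissible value $r = s \in [0,1)$ turns $rY$ into $X$, so this becomes a double inequality directly for $\Re f(X_1,\ldots,X_n)$.

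Adding $(\rho-1)\Re f(0)$ throughout and using the elementary identities
\begin{equation*}
  \frac{1 - s(2\rho-1)}{1+s} + (\rho - 1) \;=\; \rho\,\frac{1-s}{1+s}, \qquad
  \frac{1 + s(2\rho-1)}{1-s} + (\rho - 1) \;=\; \rho\,\frac{1+s}{1-s},
\end{equation*}
produces exactly the claimed double inequality. The degenerate case $s = 0$ is handled by the same argument with $\tfrac{1}{\epsilon}X\in \cC_\rho$ in place of $Y$ for arbitrary $\epsilon\in(0,1)$ and with $r=\epsilon$; letting $\epsilon\to 0^+$ yields the trivial conclusion $\Re f(X) = \Re f(0)$, consistent with the displayed bounds at $s=0$.

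I do not anticipate any real obstacle: the scaling $Y=\tfrac{1}{s}X$ is precisely what converts the statement about $\omega_\rho(X)<1$ into the radial Harnack statement of Theorem~\ref{Harnack}, and the only point to check is that $\Re f$ is admissible for the functional calculus of Theorem~\ref{funct-calc}, which is immediate from $f\in\cA_n$.
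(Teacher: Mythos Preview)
Your proof is correct and follows essentially the same approach as the paper: rescale by $s=\omega_\rho(X)$ to land in $\cC_\rho$, apply Theorem~\ref{Harnack} at radius $r=s$, and then add $(\rho-1)\Re f(0)$ to rewrite the bounds in the stated form. You are slightly more careful than the paper in separating out the degenerate case $s=0$ and in noting why $\Re f$ is admissible for the functional calculus, but the argument is otherwise identical.
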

\begin{proof} Let $r:=\omega_\rho (X)$ and define $Y:=\frac{1}{r} X$.
Since $\omega_\rho (Y)=\frac{1}{r}\omega_\rho(X)=1$, we deduce that
$Y\in \cC_\rho$. Applying Theorem \ref{Harnack} to $Y$, we obtain
$$
\frac{1-\omega_\rho(X)(2\rho-1)}{1+\omega_\rho(X)}\Re f(0)\leq \Re
f(X_1,\ldots, X_n)  \leq
\rho\frac{1+\omega_\rho(X)(2\rho-1)}{1-\omega_\rho(X)}.
$$
It is easy to see that the latter inequality is equivalent to the
one from the proposition.
\end{proof}

Now, we can deduce   the following  upper bound for the hyperbolic
distance on $ [\cC_\rho]_{<1}$.

\begin{corollary} \label{delta-ine} For any $X,Y\in [\cC_\rho]_{<1}$,
$$\delta_\rho(X,Y)\leq  \frac{1}{2} \ln
\frac{
(1+\omega_\rho(X))(1+\omega_\rho(Y))}{(1-\omega_\rho(X))(1-\omega_\rho(Y))}.
$$
\end{corollary}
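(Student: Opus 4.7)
The plan is to reduce the bound to the case $Y=0$ by invoking the triangle inequality for $\delta_\rho$ (which is a bona fide metric on the Harnack part $[\cC_\rho]_{<1}$ by Proposition \ref{delta} and Theorem \ref{foias2}), and then to estimate $\delta_\rho(X,0)$ directly from Proposition \ref{Harnack2}.

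First I would write
\begin{equation*}
\delta_\rho(X,Y)\leq \delta_\rho(X,0)+\delta_\rho(0,Y),\qquad X,Y\in[\cC_\rho]_{<1},
\end{equation*}
which is legitimate since $[\cC_\rho]_{<1}$ is a single Harnack part of $\cC_\rho$ (Theorem \ref{foias2}) and $\delta_\rho$ is a metric on it. Thus it suffices to establish
\begin{equation*}
\delta_\rho(X,0)\leq \tfrac12\ln\frac{1+\omega_\rho(X)}{1-\omega_\rho(X)},\qquad X\in[\cC_\rho]_{<1}.
\end{equation*}

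For this, I would apply Proposition \ref{Harnack2} to a polynomial $p\in\CC[X_1,\ldots,X_n]\otimes M_m$ with $\Re p\geq 0$ on $[B(\cH)^n]_1$ (the matrix-valued extension of Proposition \ref{Harnack2} follows by the same argument, applied to $p$ viewed as an element of $\cA_n\otimes M_m$; this is the only very minor technical point to acknowledge). Since $\Re p(0)+(\rho-1)\Re p(0)=\rho\,\Re p(0)$, Proposition \ref{Harnack2} rewrites as
\begin{equation*}
\frac{1-\omega_\rho(X)}{1+\omega_\rho(X)}\bigl[\rho\,\Re p(0)\bigr]\leq \Re p(X)+(\rho-1)\Re p(0)\leq \frac{1+\omega_\rho(X)}{1-\omega_\rho(X)}\bigl[\rho\,\Re p(0)\bigr],
\end{equation*}
which, comparing $X$ with $0$, is exactly the statement that $X\overset{H}{\underset{c}{\sim}}\,0$ with $c^2=\tfrac{1+\omega_\rho(X)}{1-\omega_\rho(X)}$. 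By the definition \eqref{La} of $\Lambda_\rho$ and \eqref{hyperbolic} of $\delta_\rho$, this gives
\begin{equation*}
\delta_\rho(X,0)=\ln\Lambda_\rho(X,0)\leq \tfrac12\ln\frac{1+\omega_\rho(X)}{1-\omega_\rho(X)}.
\end{equation*}

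Combining the last display (applied to both $X$ and $Y$) with the triangle inequality yields
\begin{equation*}
\delta_\rho(X,Y)\leq \tfrac12\ln\frac{1+\omega_\rho(X)}{1-\omega_\rho(X)}+\tfrac12\ln\frac{1+\omega_\rho(Y)}{1-\omega_\rho(Y)}=\tfrac12\ln\frac{(1+\omega_\rho(X))(1+\omega_\rho(Y))}{(1-\omega_\rho(X))(1-\omega_\rho(Y))},
\end{equation*}
which is the claimed estimate. There is no serious obstacle here — the one item worth spelling out cleanly is the matrix-valued version of Proposition \ref{Harnack2}, which follows by noting that $\Re p\geq 0$ for $p\in\CC[X_1,\ldots,X_n]\otimes M_m$ plugs into the same dilation/pluriharmonic-functional-calculus computation used in its proof, and that $\omega_\rho$ is unchanged under the $M_m$-amplification of the coefficients.
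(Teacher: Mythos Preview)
Your proof is correct and follows essentially the same route as the paper's: apply Proposition \ref{Harnack2} (in its immediate matrix-valued form, which the paper handles by invoking Theorem \ref{equivalent}) to obtain $\Lambda_\rho(X,0)\leq\bigl(\tfrac{1+\omega_\rho(X)}{1-\omega_\rho(X)}\bigr)^{1/2}$, then use the triangle inequality for $\delta_\rho$. Your remark that the operator-valued extension of Proposition \ref{Harnack2} is automatic from Theorem \ref{Harnack} is exactly right and is the only detail worth making explicit.
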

\begin{proof}
Using Theorem \ref{equivalent} and the inequality of Proposition
\ref{Harnack2}, we deduce that
 $$ \Lambda_\rho(X,0)\leq  \left(
 \frac{1+\omega_\rho(X)}{1-\omega_\rho(X)}\right)^{1/2}.
 $$
On the other hand, since $\delta_\rho$ is a metric on
$[\cC_\rho]_{<1}$, we have $\delta(X,Y)\leq
\delta(X,0)+\delta_\rho(Y,0)$. Taking into account that $\delta_\rho
(X, Y)=\ln \Lambda_\rho(X,Y)$, the result follows.
\end{proof}

We remark that when $\rho=1$, the inequality of Corollary
\ref{delta-ine} is sharper then the one obtained in Corollary 2.5
from \cite{Po-hyperbolic}.

 Using Corollary \ref{T,02}, on can easily obtain
 the following result.

\begin{corollary} \label{T,03}
Let  $\rho>0$,  and $1\leq m<n$. Consider two  $n$-tuples $
A:=(A_1,\ldots, A_m)\in B(\cH)^m$ and $ B:=(B_1,\ldots, B_m)\in
B(\cH)^m$
 in  the class $ \cC_\rho$ and  their extensions $\widetilde A:=(A_1,\ldots, A_m,0,\ldots, 0)$
and $\widetilde B:=(B_1,\ldots, B_m,0,\ldots, 0)$    in $B(\cH)^n$,
respectively.
   Then
   $$ A\overset{\, H}{{ \sim}}\,  B \quad \text{  if and only
   if }\quad
    \widetilde A\overset{\, H}{{ \sim}}\, \widetilde B.
$$
  Moreover, in this case,
$$
\delta_\rho(A,B)=\delta_\rho(\widetilde A, \widetilde B).
$$
\end{corollary}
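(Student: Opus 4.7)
The proposal is to derive Corollary \ref{T,03} as an immediate consequence of Corollary \ref{T,02}, which already handles the one-sided Harnack domination under zero-extension with the constant preserved. The key observation is that Harnack equivalence is defined in terms of two-sided Harnack domination with a common constant, and that the hyperbolic metric $\delta_\rho$ is defined as the logarithm of the infimum of such constants.

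First I would write out the definition: $A \overset{H}{\sim} B$ in $\cC_\rho \subset B(\cH)^m$ means there exists $c \ge 1$ with $A \overset{H}{\underset{c}{\prec}} B$ and $B \overset{H}{\underset{c}{\prec}} A$ simultaneously. Applying Corollary \ref{T,02} to each of these two dominations separately (with the same $c$) gives $\widetilde A \overset{H}{\underset{c}{\prec}} \widetilde B$ and $\widetilde B \overset{H}{\underset{c}{\prec}} \widetilde A$, which is precisely $\widetilde A \overset{H}{\underset{c}{\sim}} \widetilde B$ in $\cC_\rho \subset B(\cH)^n$. The converse direction is the same argument read backwards, since Corollary \ref{T,02} is an ``if and only if'' statement with the constant $c$ identical on both sides.

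For the equality of hyperbolic distances, I would observe that the previous step shows the equivalence
\begin{equation*}
A \overset{H}{\underset{c}{\sim}} B \iff \widetilde A \overset{H}{\underset{c}{\sim}} \widetilde B
\end{equation*}
holds for every $c > 1$. Consequently the two sets
\begin{equation*}
\{c > 1 : A \overset{H}{\underset{c}{\sim}} B\} \quad \text{and} \quad \{c > 1 : \widetilde A \overset{H}{\underset{c}{\sim}} \widetilde B\}
\end{equation*}
coincide, so their infima agree, and taking logarithms yields $\delta_\rho(A,B) = \delta_\rho(\widetilde A, \widetilde B)$ by the definition \eqref{hyperbolic} of the hyperbolic metric.

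There is no real obstacle here; the entire content has been absorbed into Corollary \ref{T,02}, whose proof in turn rested on the elementary observation that positive free pluriharmonic functions on $[B(\cK)^m]_1$ and on $[B(\cK)^n]_1$ correspond to each other naturally via the zero-extension, together with the characterization in Theorem \ref{equivalent}. Thus the proof of Corollary \ref{T,03} is essentially a bookkeeping step, and can be presented in a few lines.
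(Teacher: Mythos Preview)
Your proposal is correct and matches the paper's approach exactly: the paper states that the result follows easily from Corollary~\ref{T,02} and gives no further proof, and your argument spells out precisely this reduction. The only minor remark is that the paper's definition of $A\overset{H}{{\underset{c}\sim}}\, B$ already packages the two-sided domination with a common constant, so your unpacking and repacking of this definition is exactly the intended bookkeeping.
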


In what follows we provide a few properties for the map $\rho\mapsto
\delta_\rho(A,B)$.

\begin{lemma}
\label{ro-ro'}
 Let $A:=(A_1,\ldots, A_n)\in B(\cH)^n$ and
$B:=(B_1,\ldots, B_n)\in B(\cH)^n$ be in  the class $ \cC_\rho$ and
let $c>0$ and $0<\rho\leq \rho'$. Then the following statements
hold.
\begin{enumerate}
\item[(i)]
if $A\overset{H}{{\underset{c}\prec}}\, B$ in $\cC_\rho$, then
$A\overset{H}{{\underset{c}\prec}}\, B$ in $\cC_{\rho'}$;
\item[(ii)] if $A\,\overset{H}{{\underset{c}\sim}}\, B$  in
$\cC_\rho$, then  if $A\,\overset{H}{{\underset{c}\sim}}\, B$  in
$\cC_\rho$ and
$$ \delta_{\rho'}(A,B)\leq \delta_{\rho}(A,B).
$$
\end{enumerate}
\end{lemma}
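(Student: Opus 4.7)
The plan is to reduce both (i) and (ii) to the pluriharmonic-kernel characterization of Harnack domination from Theorem~\ref{equivalent}(ii), then exploit the simple additive identity
\[
P_{\rho'}(rX,R) \;=\; P_\rho(rX,R) + (\rho'-\rho)(I\otimes I),
\]
which follows at once from the formula for $P_\rho(rX,R)$. In particular, this identity shows that $\cC_\rho\subseteq \cC_{\rho'}$ whenever $\rho\le\rho'$, so the assertion that $A,B\in\cC_{\rho'}$ is automatic and the statements in the lemma are meaningful.

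For (i), assume $A\overset{H}{\underset{c}\prec}\,B$ in $\cC_\rho$. By Theorem~\ref{equivalent}(ii) this is equivalent to
\[
P_\rho(rA,R) \;\le\; c^2\,P_\rho(rB,R)\qquad\text{for all }r\in[0,1).
\]
The first key observation is that $c\ge 1$ automatically: specializing the defining inequality to the scalar polynomial $p\equiv I$ gives $\rho I\le c^2\rho I$, forcing $c\ge 1$ (this can also be read off from characterization (vi) of Theorem~\ref{equivalent}, since $L_{B,A}|_\cH=I_\cH$ implies $\|L_{B,A}\|\ge 1$). Using the additive identity twice, for $X=A$ and $X=B$, we compute
\begin{equation*}
\begin{split}
P_{\rho'}(rA,R) &= P_\rho(rA,R) + (\rho'-\rho)(I\otimes I) \\
&\le c^2 P_\rho(rB,R) + (\rho'-\rho)(I\otimes I) \\
&\le c^2 P_\rho(rB,R) + c^2(\rho'-\rho)(I\otimes I) \\
&= c^2\,P_{\rho'}(rB,R),
\end{split}
\end{equation*}
where the second inequality uses $c^2\ge 1$ together with $\rho'-\rho\ge 0$. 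Applying Theorem~\ref{equivalent}(ii) in the reverse direction, now within the class $\cC_{\rho'}$, yields $A\overset{H}{\underset{c}\prec}\,B$ in $\cC_{\rho'}$.

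For (ii), apply (i) to each direction of $A\overset{H}{\underset{c}\sim}\,B$ to conclude $A\overset{H}{\underset{c}\sim}\,B$ in $\cC_{\rho'}$. The distance inequality follows purely from the definition in \eqref{hyperbolic}: every $c>1$ realizing the Harnack equivalence in $\cC_\rho$ also realizes it in $\cC_{\rho'}$, so the infimum in the definition of $\Lambda_{\rho'}(A,B)$ is bounded above by that of $\Lambda_\rho(A,B)$, and taking logarithms gives $\delta_{\rho'}(A,B)\le\delta_\rho(A,B)$.

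There is no substantive obstacle: the proof reduces to the observation that $c\ge 1$ in any Harnack domination, combined with the trivial arithmetic of constants in the additive shift from $P_\rho$ to $P_{\rho'}$. The only care needed is to invoke Theorem~\ref{equivalent}(ii) correctly in both $\cC_\rho$ and $\cC_{\rho'}$, which is legitimate because $A,B\in\cC_\rho\subseteq \cC_{\rho'}$.
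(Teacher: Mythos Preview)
Your proof is correct and follows essentially the same approach as the paper: both rely on the observation that $c\ge 1$ and that passing from $\rho$ to $\rho'$ amounts to adding a nonnegative multiple of the identity, which is absorbed by $c^2\ge 1$. The only cosmetic difference is that the paper works directly with the defining polynomial inequality $\Re p(A)+(\rho-1)\Re p(0)\le c^2[\Re p(B)+(\rho-1)\Re p(0)]$ and replaces $\rho$ by $\rho'$ there, whereas you route through the equivalent kernel characterization of Theorem~\ref{equivalent}(ii); the arithmetic is identical.
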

\begin{proof} First recall that $\cC_\rho\subseteq \cC_{\rho'}$.
If $A\overset{H}{{\underset{c}\prec}}\, B$ in $\cC_\rho$, then
$$\Re p(A_1,\ldots, A_n)+ (\rho-1)\Re p(0)\leq c^2 \left[\Re p(B_1,\ldots, B_n)+
 (\rho-1)\Re p(0)\right] $$
 for any noncommutative polynomial with
matrix-valued coefficients $p\in \CC[X_1,\ldots, X_n]\otimes M_{m}$,
$m\in \NN$, such that $\Re p(X)\geq 0$ for any $X\in [B(\cH)^n]_1$.
Hence, $c\geq 1$ and, consequently,  the inequality above holds when
we replace $\rho$ with $\rho' \geq \rho$. This shows that
$A\overset{H}{{\underset{c}\prec}}\, B$ in $\cC_{\rho'}$. Part (ii)
is a clear consequence of (i) and the definition of the hyperbolic
metric.
\end{proof}

If $A:=(A_1,\ldots, A_n)\in B(\cH)^n$ is a nonzero $n$-tuple of
operators such that $A\in [C_\infty]_{<1}$, i.e., the joint spectral
radius $r(A)<1$, then
$$
\rho_A:=\inf \{\rho>0: \  A\in \cC_\rho \}>0.
$$
Indeed,   if $\rho,\rho'\in (0,\infty]$, $\rho\leq\rho'$,  then
$\cC_\rho\subseteq \cC_{\rho'}$ and, moreover, we have
$$
\omega_{\rho'}(A)\leq \omega_\rho(A),\quad r(A)=\lim_{\rho\to
\infty} \omega_\rho(A),\qquad A\in B(\cH)^n.
$$
Consequently, there exists $\rho>0$ such that $\omega_{\rho'}(A)<1$,
for any $\rho'\geq \rho$. Assume now that $\rho_A=0$. Then $T\in
\cC_\rho$, i.e.,  $\omega_\rho (A)\leq 1$ for any $\rho>0$. On the
other hand, we know that $\|A\|\leq \rho\omega_\rho(A)$. Taking
$\rho\to 0$, we deduce that $A=0$, which is a contradiction. This
proves our assertion.

Note that if $A,B\in [\cC_\infty]_{<1}$, then
$$
\rho_{A,B}:=\inf \{\rho>0: \  A, B \in \cC_\rho \}=\max\{\rho_A,
\rho_B\}.
$$

\begin{proposition}\label{invariant-curve} If  $A,B\in
[\cC_\infty]_{<1}$, then the map
$$
[\rho_{A,B},\infty)\ni \rho\mapsto \delta_\rho(A,B)\in \RR^+
$$
is continuous, decreasing, and
$$
\lim_{\rho\to\infty} \delta_\rho(A,B)=0.
$$
\end{proposition}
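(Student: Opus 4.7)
The strategy hinges on the algebraic identity
$$P_\rho(X, R) = \rho I + T(X), \qquad T(X) := R_X(I - R_X)^{-1} + R_X^*(I - R_X^*)^{-1},$$
valid for any $X \in [\cC_\infty]_{<1}$: since $r(R_X)=r(X)<1$, $(I - R_X)^{\pm 1}$ are bounded, and this is precisely the factorization displayed in the proof of Theorem \ref{A<0}. Thus, for each fixed $X$, the map $\rho \mapsto P_\rho(X, R)$ is affine in $\rho$, with slope $I$ and $\rho$-independent bounded intercept $T(X)$. This affine structure drives both the continuity and the limit claims.

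Monotonicity is immediate from Lemma \ref{ro-ro'}(ii). For continuity and the asymptotic, I would first reformulate the distance from Theorem \ref{P-B} entirely in terms of the pluriharmonic kernel. Indeed, for $X\in[\cC_\rho]_{<1}$, the operator $C_{\rho,X} = \Delta_{\rho,X}(I-R_X)^{-1}$ is invertible (both factors being invertible by Theorem \ref{foias2}) and satisfies $C_{\rho,X}^* C_{\rho,X} = P_\rho(X, R)$; taking polar decompositions and using unitary invariance of the operator norm yields
$$\|C_{\rho,X} C_{\rho,Y}^{-1}\|^{2} \;=\; \|P_\rho(Y,R)^{-1/2}\, P_\rho(X,R)\, P_\rho(Y,R)^{-1/2}\|,$$
so $\delta_\rho(A,B)$ is the logarithm of the maximum of two such norms (swap $A,B$).

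For continuity on $(\rho_{A,B},\infty)$: $\rho \mapsto P_\rho(X,R) = \rho I + T(X)$ is norm-continuous, and on any compact subinterval the operators $P_\rho(A,R)$ and $P_\rho(B,R)$ are uniformly bounded below (fix $\rho_0$ with $P_{\rho_0}(X,R)\ge a_0 I$, then $P_\rho(X,R) \ge a_0 I$ for all $\rho \ge \rho_0$ by the affine identity). The continuous functional calculus then makes $\rho \mapsto P_\rho(\cdot,R)^{-1/2}$ norm-continuous, and continuity of $\delta_\rho(A,B)$ follows. For the limit at infinity, expand
$$P_\rho(B,R)^{-1/2}\, P_\rho(A,R)\, P_\rho(B,R)^{-1/2} \;=\; I + P_\rho(B,R)^{-1/2}\bigl(T(A)-T(B)\bigr)P_\rho(B,R)^{-1/2};$$
since $P_\rho(B,R) \ge (\rho - \|T(B)\|)I$ for large $\rho$, we get $\|P_\rho(B,R)^{-1/2}\|^{2} \le (\rho - \|T(B)\|)^{-1}$, so the perturbation has norm $O(1/\rho)\to 0$. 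Hence $\|C_{\rho,A} C_{\rho,B}^{-1}\|\to 1$, and symmetrically for the other norm, giving $\delta_\rho(A,B)\to \ln 1 = 0$.

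The main obstacle I anticipate is the left endpoint $\rho=\rho_{A,B}$: there one of $\omega_\rho(A),\omega_\rho(B)$ may equal $1$, so $P_\rho(\cdot,R)$ can lose strict positivity and $C_{\rho,\cdot}$ can fail to be invertible, rendering the clean kernel formula unavailable. I would handle this by combining monotonicity (which already yields $\lim_{\rho\downarrow\rho_{A,B}}\delta_\rho(A,B)\le \delta_{\rho_{A,B}}(A,B)$) with the intertwining characterization in Theorem \ref{equivalent}(vi): a weak operator limit argument on the uniformly norm-bounded family $\{L_{B,A}^{(\rho)}\}_{\rho\downarrow\rho_{A,B}}$ should produce an intertwiner at $\rho_{A,B}$ whose norm bounds $e^{\delta_{\rho_{A,B}}(A,B)}$ by $\liminf_{\rho\downarrow\rho_{A,B}} e^{\delta_\rho(A,B)}$, giving the reverse inequality needed for continuity at the endpoint.
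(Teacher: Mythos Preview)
Your argument is correct and takes a genuinely different route from the paper's for the limit $\rho\to\infty$. The paper does not work with $\delta_\rho(A,B)$ directly: it first invokes the triangle inequality $\delta_\rho(A,B)\le \delta_\rho(A,0)+\delta_\rho(0,B)$ and then computes $\|C_{\rho,A}C_{\rho,0}^{-1}\|\to 1$ explicitly, using that $C_{\rho,0}=\sqrt{\rho}\,I$ and that $\rho^{-1}\Delta_{\rho,A}^2\to (I-R_A^*)(I-R_A)$, which cancels against $(I-R_A)^{-1}$. Your approach avoids the detour through the origin: the polar-decomposition identity $\|C_{\rho,A}C_{\rho,B}^{-1}\|^{2}=\|P_\rho(B,R)^{-1/2}P_\rho(A,R)P_\rho(B,R)^{-1/2}\|$ together with the affine decomposition $P_\rho(\cdot,R)=\rho I+T(\cdot)$ gives a clean $O(1/\rho)$ perturbation bound. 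What you gain is a quantitative rate and a computation that treats $A$ and $B$ symmetrically; what the paper's route buys is a slightly more elementary calculation that avoids the square-root identity. For continuity and monotonicity the two treatments are essentially equivalent (the paper simply cites Theorem~\ref{P-B} and Lemma~\ref{ro-ro'} without further comment).

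One caution on your endpoint sketch: the intertwiners $L_{B,A}^{(\rho)}$ of Theorem~\ref{equivalent}(vi) act between the minimal dilation spaces $\cK_B^{(\rho)}\to\cK_A^{(\rho)}$, and these spaces themselves vary with $\rho$, so a weak operator limit as $\rho\downarrow\rho_{A,B}$ is not immediately meaningful. If you want to rescue this idea you would need to realize all the dilations on a common space (or pass instead through the kernel inequality $P_\rho(rA,R)\le c^2 P_\rho(rB,R)$, which lives on a fixed space and is stable under $\rho\to\rho_{A,B}$). The paper, for its part, is silent on the endpoint altogether.
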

\begin{proof}
Using Theorem \ref{P-B} and Lemma \ref{ro-ro'}, one can easily
deduce that the map $\rho\mapsto \delta_\rho(A,B)$ is continuous and
decreasing. To prove the last part of the proposition, note that
since $\delta_\rho(A,B)\leq \delta_\rho(A,0) +\delta_\rho(0,B)$, it
is enough to show that $\lim_{\rho\to \infty}\delta_\rho(A,0)=0$. To
this end, note that Theorem \ref{P-B}, implies
\begin{equation}
\label{fo} \delta_\rho(A,0)=\ln \max \left\{ \left\|C_{\rho,A}
C_{\rho,0}^{-1} \right\|,
  \left\|C_{\rho,0} C_{\rho,A}^{-1} \right\|\right\},
\end{equation}
where \begin{equation*}
 \begin{split}
C_{\rho,A}C_{\rho,0}^{-1}
  =\frac{1}{\sqrt{\rho}}  \left[\rho I+(1-\rho)(R_{A}^*+ R_{A})+
(\rho-2)R_{A}^* R_{A}\right]^{1/2}(I-R_A)^{-1}.
\end{split}
\end{equation*}
Hence, we deduce that
\begin{equation*}
\begin{split}
\lim_{\rho\to \infty}\|C_{\rho,A}C_{\rho,0}^{-1}\| &=   \left\|
\left[ I-(R_{A}^*+ R_{A})+ R_{A}^*
R_{A}\right]^{1/2}(I-R_A)^{-1}\right\|\\
&=
 \left\|(I-R_A^*)^{-1}
\left[ I-(R_{A}^*+ R_{A})+ R_{A}^*
R_{A}\right](I-R_A)^{-1}\right\|\\
&=
 \left\|(I-R_A^*)^{-1}
( I-R_{A}^*)(I- R_{A})(I-R_A)^{-1}\right\|\\
&=1
\end{split}
\end{equation*}
Similarly,  we have  $\lim_{\rho\to
\infty}\|C_{\rho,0}C_{\rho,A}^{-1}\|=1$. Using now relation
\eqref{fo}, we complete the proof.
\end{proof}

\bigskip

\section{ Mapping theorems for free holomorphic functions on noncommutative balls}

In this section, we   provide mapping theorems, spectral von Neumann
inequalities,  and Schwarz type  results  for free holomorphic
functions on noncommutative balls, with respect to the hyperbolic
metric  and the operator radius $\omega_\rho$, $\rho\in (0,\infty]$.

First, we prove the following mapping theorem for the classes
$\cC_\rho$, $\rho>0$.

\begin{theorem}
\label{rho-f} Let $f:=(f_1,\ldots, f_m)$ be a contractive  free
holomorphic function with $\|f(0)\|<1$ such that the  boundary
functions $\widetilde f_1,\ldots, \widetilde f_m$ are in the
noncommutative disc algebra $\cA_n$. If \  $(T_1,\ldots, T_n)\in
B(\cH)^n$  is of  class $\cC_\rho$, $\rho>0$, then
$f(T_1,\ldots,T_n)$ is of class $\cC_{\rho_f}$, where
\begin{equation}
\label{rof} \rho_f:=\begin{cases}
1+(\rho-1)\frac{1-\|f(0)\|}{1+\|f(0)\|}&\quad
\text{if }\ \rho<1\\
\\
1+(\rho-1)\frac{1+\|f(0)\|}{1-\|f(0)\|}&\quad \text{if }\ \rho\geq
1.
\end{cases}
\end{equation}
\end{theorem}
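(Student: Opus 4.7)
The approach is to translate membership in $\cC_\rho$ into pluriharmonic positivity via Theorem \ref{ro-contr} and Theorem \ref{funct-calc}: proving $f(T_1,\ldots,T_n)\in \cC_{\rho_f}$ reduces to showing
\begin{equation*}
\Re q(f(T_1,\ldots,T_n))+(\rho_f-1)\,\Re q(0)\ \geq\ 0
\end{equation*}
for every matrix-valued noncommutative polynomial $q\in \CC[Y_1,\ldots,Y_m]\otimes M_k$ with $\Re q\geq 0$ on $[B(\cH)^m]_1^-$. The lifting device is $w:=q\circ f$: since $f$ is contractive and the boundary functions $\widetilde f_i$ lie in $\cA_n$, $w$ is a matrix-valued free pluriharmonic function on $[B(\cH)^n]_1$ with continuous extension to the closed ball and $\Re w\geq 0$ there. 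Because the free pluriharmonic functional calculus is compositional on polynomials and continuous by Theorem \ref{funct-calc}, it satisfies $q(f(T))=w(T)$.

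Applying $T\in \cC_\rho$ to the positive pluriharmonic $\Re w$ through the complete positivity characterization of Theorem \ref{ro-contr} (extended to matrix coefficients) gives
\begin{equation*}
\Re q(f(T))\ =\ \Re w(T)\ \geq\ -(\rho-1)\,\Re w(0)\ =\ -(\rho-1)\,\Re q(f(0)),
\end{equation*}
so it remains to check $(\rho_f-1)\,\Re q(0)\geq (\rho-1)\,\Re q(f(0))$. Because $\|f(0)\|<1$, Theorem \ref{Harnack} applied with $\rho=1$ at the row contraction $f(0)/\|f(0)\|$ (taking $r=\|f(0)\|$) supplies the classical Harnack estimate
\begin{equation*}
\Re q(0)\,\frac{1-\|f(0)\|}{1+\|f(0)\|}\ \leq\ \Re q(f(0))\ \leq\ \Re q(0)\,\frac{1+\|f(0)\|}{1-\|f(0)\|}.
\end{equation*}
For $\rho\geq 1$ the factor $\rho-1$ is nonnegative, so the upper bound applies and is saturated by $\rho_f=1+(\rho-1)\frac{1+\|f(0)\|}{1-\|f(0)\|}$; for $\rho<1$ the factor $\rho-1$ is negative, so the lower bound applies and is saturated by $\rho_f=1+(\rho-1)\frac{1-\|f(0)\|}{1+\|f(0)\|}$. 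In either case the scalar coefficient on $\Re q(0)$ in the resulting bound vanishes, yielding $\Re q(f(T))+(\rho_f-1)\Re q(0)\geq 0$.

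The main delicate point I anticipate is verifying that $w=q\circ f$ actually lies in the operator space $\overline{\cA_n(M_k)^*+\cA_n(M_k)}^{\|\cdot\|}$ to which the $\cC_\rho$ characterization applies, and that the composition identity $q(f(T))=(q\circ f)(T)$ is valid under the free pluriharmonic functional calculus; both are enabled precisely by the hypothesis $\widetilde f_i\in \cA_n$. Once this analytic input is in hand the remainder is a sign-by-sign comparison of the $\rho$-pluriharmonic inequality for $T$ with the $\rho=1$ Harnack bound at the interior point $f(0)$, and the formula \eqref{rof} is the unique value of $\rho_f$ that makes the two match sharply.
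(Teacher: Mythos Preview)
Your proposal is correct and follows essentially the same route as the paper's own proof: both reduce the claim to showing $\Re q(f(T))+(\rho_f-1)\Re q(0)\geq 0$ for every matrix-valued polynomial $q$ with $\Re q\geq 0$, obtain $\Re (q\circ f)(T)+(\rho-1)\Re q(f(0))\geq 0$ from the $\cC_\rho$ membership of $T$ via Theorems~\ref{ro-contr} and~\ref{funct-calc}, and then bound $(\rho-1)\Re q(f(0))$ against $(\rho_f-1)\Re q(0)$ using the Harnack double inequality at the interior point $f(0)$, splitting on the sign of $\rho-1$. The only cosmetic difference is that the paper cites the Harnack inequality from \cite{Po-hyperbolic} directly, whereas you recover it by specializing Theorem~\ref{Harnack} to $\rho=1$ at $X=f(0)/\|f(0)\|$ with $r=\|f(0)\|$; both yield the same bounds \eqref{Rep>0}.
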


\begin{proof}
Let  $p\in \CC[Z_1,\ldots, Z_m]\otimes M_k$, $k\in \NN$, be  such
that $\Re p\geq 0$ on the unit ball $[B(\cH)^m]_1$. This is
equivalent to $\Re p(S_1',\ldots S_m')\geq 0$, where $S_1',\ldots,
S_m'$ are the left creation operators on the full Fock space
$F^2(H_m)$. Applying the noncommutative Poisson transform
$P_{f(X_1,\ldots, X_n)}\otimes \text{\rm id}$, which is  a
completely positive linear map,  to the inequality $\Re
p(S_1',\ldots S_m')\geq 0$, we obtain
$$
\Re p(f(X_1,\ldots, X_n))\geq 0,\qquad X\in [B(\cH)^n]_1.
$$
Moreover, since the  boundary functions $\widetilde f_1,\ldots,
\widetilde f_m$ are in the noncommutative disc algebra $\cA_n$, we
deduce that the boundary function of the composition $p\circ f$ is
$p(\widetilde f_1,\ldots, \widetilde f_m)\in \cA_n\bar \otimes_{min}
M_k$.

Assume that $ (T_1,\ldots, T_n)\in\cC_\rho$. Using the  free
pluriharmonic   functional calculus of Theorem \ref{funct-calc} and
 Theorem \ref{ro-contr}, we deduce that
\begin{equation}
\label{Rep} \Re (p\circ f)(T_1,\ldots, T_n)+(\rho -1) \Re (p\circ
f)(0)\geq 0.
\end{equation}
On the other hand, according  to the Harnack type inequality of Theorem 1.4 from
\cite{Po-hyperbolic} applied to  the  positive  free pluriharmonic
function $\Re p$ at  the point $f(0)=(f_1(0),\ldots, f_m(0))$, we
have
\begin{equation}
\label{Rep>0}
 \Re p(0)\frac{1-\|f(0)\|}{1+\|f(0)\|}\leq \Re p(f(0))\leq
\Re p(0)\frac{1+\|f(0)\|}{1-\|f(0)\|}.
\end{equation}

Let $\gamma>0$ and note that
\begin{equation}\label{AB}
\Re p( f(T_1,\ldots,T_n)) +(\gamma-1)\Re p(0)=A+B,
\end{equation}
where
\begin{equation}
\begin{split}
A&:= \Re
p( f(T_1,\ldots,T_n)) +(\rho-1)p(f (0) ) \\
B&:=(\gamma-1)\Re p(0)-(\rho-1)p(f (0) ).
\end{split}
\end{equation}

Assume now that $\rho\geq 1$. Using the second inequality in
\eqref{Rep>0}, we obtain
\begin{equation*}
\begin{split}
B&\geq (\gamma -1)\Re p(0)-(\rho -1) \Re
p(0)\frac{1+\|f(0)\|}{1-\|f(0)\|}\\
&=\left[(\gamma -1)-(\rho -1) \frac{1+\|f(0)\|}{1-\|f(0)\|}\right]
\Re p(0),
\end{split}
\end{equation*}
which is positive if  $\gamma \geq 1+(\rho -1)
\frac{1+\|f(0)\|}{1-\|f(0)\|}.$ In this case, using relation
\eqref{AB} and \eqref{Rep}, we obtain
$$
\Re p(f(T_1,\ldots, T_n)) +(\gamma-1)\Re p(0)\geq 0
$$
for any $p\in \CC[Z_1,\ldots, Z_n]\otimes M_k$, $k\in \NN$, be  such
that $\Re p\geq 0$ on the unit ball $[B(\cH)^m]_1$. Applying Theorem
\ref{ro-contr}, we deduce that $ f(T_1,\ldots,T_n)\in \cC_\gamma$.
In  particular, we have $ f(T_1,\ldots,T_n)\in \cC_{\delta_f}$ where
$$\delta_f:=1+(\rho-1)\frac{1+\|f(0)\|}{1-\|f(0)\|}.
$$

Now, we consider the case $\rho\in (0,1)$. Using the first
inequality in  \eqref{Rep>0}, we obtain
\begin{equation*}
\begin{split}
B&\geq \left[(\gamma -1)-(\rho -1)
\frac{1-\|f(0)\|}{1+\|f(0)\|}\right] \Re p(0),
\end{split}
\end{equation*}
which is positive if  $\gamma \geq 1+(\rho -1)
\frac{1-\|f(0)\|}{1+\|f(0)\|}.$ As above,  using relations
\eqref{AB} and \eqref{Rep}, we obtain
$$
\Re p( f(T_1,\ldots,T_n)) +(\gamma-1)\Re p(0)\geq 0
$$
 for any $p\in \CC[Z_1,\ldots, Z_n]\otimes M_k$, $k\in \NN$, be  such that
$\Re p\geq 0$ on the unit ball $[B(\cH)^m]_1$.  Theorem
\ref{ro-contr} implies  $ f(T_1,\ldots,T_n)\in \cC_\gamma$. In
particular,  we have $ f(T_1,\ldots,T_n)\in \cC_{\delta_f}$ where
$$\delta_f:=1+(\rho-1)\frac{1-\|f(0)\|}{1+\|f(0)\|}.
$$
The proof is complete.
\end{proof}

Note  that under the conditions of  Theorem \ref{rho-f}, $\rho\leq
\rho_f$  and  $\rho=1\implies \rho_f=1$. Moreover,  if $\rho\neq 1$,
then $\rho_f=\rho$ if and only if $f(0)=0$. On can also show that
$\rho_f\leq 1$ if $\rho\leq 1$.

We   remark that, under the conditions of Theorem \ref{rho-f}, there
exists $T:=(T_1,\ldots,T_n)\in B(\cH)^n$ such that if $\rho>0$ is
the smallest positive number such that $(T_1,\ldots,T_n)\in
\cC_\rho$, then there exists a  free holomorphic function $f$ such
that $\rho_f$ is the smallest positive number  with the property
that $f(T_1,\ldots,T_n)\in \cC_{\rho_f}$. Indeed, if $n\leq m$, take
$f(X_1,\ldots, X_n)=(X_1,\ldots, X_n,0,\ldots,0)$ and use Corollary
\ref{T,02}. When $n>m$, take $f(X_1,\ldots, X_n)=(X_1,\ldots, X_m )$
and $T:=(T_1,\ldots,T_n,0,\ldots,0)$ with $(T_1,\ldots,T_n)\in
\cC_\rho$.

\begin{corollary}\label{Ome} Let $f:=(f_1,\ldots, f_m)$ be a bounded free
holomorphic function with $\|f(0)\|<\|f\|_\infty$ such that the
boundary functions $\widetilde f_1,\ldots, \widetilde f_m$ are in
the noncommutative disc algebra $\cA_n$. If  $(T_1,\ldots, T_n)\in
B(\cH)^n$  is of  class $\cC_\rho$, $\rho>0$, then
$$
\omega_{\rho_f}( f(T_1,\ldots,T_n))\leq \|f\|_\infty,
$$
 where $\rho_f$ is given by relation \eqref{rof}. In particular, if
$f(0)=0$ and $(T_1,\ldots, T_n)\in \cC_\rho$, then
$$
\omega_{\rho}(f(T_1,\ldots,T_n))\leq \|f\|_\infty.
$$
\end{corollary}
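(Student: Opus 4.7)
The plan is to reduce the statement to Theorem \ref{rho-f} by a simple normalization argument. Set $g:=f/\|f\|_\infty$. Since $\|f\|_\infty$ bounds $\|f(X)\|$ uniformly on $[B(\cH)^n]_1$ for any $\cH$, the $m$-tuple $g=(g_1,\ldots,g_m)$ is a contractive free holomorphic function with $\|g(0)\|=\|f(0)\|/\|f\|_\infty<1$; moreover each boundary function $\widetilde g_j=\widetilde f_j/\|f\|_\infty$ lies in $\cA_n$ because $\widetilde f_j$ does. Thus $g$ satisfies the hypotheses of Theorem \ref{rho-f}.

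Applying Theorem \ref{rho-f} to $g$, for any $(T_1,\ldots,T_n)\in\cC_\rho$ the $m$-tuple $g(T_1,\ldots,T_n)\in\cC_{\rho_g}$, where
\begin{equation*}
\rho_g=\begin{cases}
1+(\rho-1)\dfrac{1-\|g(0)\|}{1+\|g(0)\|} &\text{if } \rho<1,\\[2pt]
1+(\rho-1)\dfrac{1+\|g(0)\|}{1-\|g(0)\|} &\text{if } \rho\geq 1,
\end{cases}
\end{equation*}
which is exactly the quantity $\rho_f$ in the formulation (with the formula \eqref{rof} understood with $\|f(0)\|$ replaced by its normalized value $\|g(0)\|=\|f(0)\|/\|f\|_\infty$). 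By definition of the joint operator radius, $g(T_1,\ldots,T_n)\in\cC_{\rho_f}$ yields $\omega_{\rho_f}(g(T_1,\ldots,T_n))\leq 1$.

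Now I would invoke the positive homogeneity of $\omega_{\rho_f}$, which is immediate from its definition $\omega_{\rho_f}(X):=\inf\{t>0:\tfrac1t X\in\cC_{\rho_f}\}$. Indeed, $\omega_{\rho_f}(\lambda X)=\lambda\,\omega_{\rho_f}(X)$ for $\lambda>0$, and since $f(T_1,\ldots,T_n)=\|f\|_\infty\cdot g(T_1,\ldots,T_n)$ under the free pluriharmonic functional calculus of Theorem \ref{funct-calc}, we conclude
\begin{equation*}
\omega_{\rho_f}(f(T_1,\ldots,T_n))=\|f\|_\infty\,\omega_{\rho_f}(g(T_1,\ldots,T_n))\leq\|f\|_\infty.
\end{equation*}

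For the particular case $f(0)=0$, we have $\|g(0)\|=0$, and substitution in both branches of the piecewise formula collapses to $\rho_f=1+(\rho-1)\cdot 1=\rho$, giving $\omega_\rho(f(T_1,\ldots,T_n))\leq\|f\|_\infty$ as stated. There is really no hard step here; the only point requiring care is matching the exponent $\rho_f$ in the corollary with the $\rho_g$ produced by Theorem \ref{rho-f} applied to the normalized function $g=f/\|f\|_\infty$, together with the observation that the functional calculus respects the scalar multiplication $f=\|f\|_\infty g$.
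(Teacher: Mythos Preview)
Your argument is correct and is essentially identical to the paper's own proof: normalize by setting $g:=f/\|f\|_\infty$, apply Theorem \ref{rho-f} to $g$ to obtain $g(T)\in\cC_{\rho_g}$, and then use the positive homogeneity of $\omega_{\rho_g}$. Your remark that the $\rho_f$ in the statement must be read as $\rho_g$ (i.e., formula \eqref{rof} applied with $\|g(0)\|=\|f(0)\|/\|f\|_\infty$ in place of $\|f(0)\|$) is exactly right and matches what the paper does implicitly.
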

\begin{proof} Applying Theorem \ref{rho-f}  the function
$\frac{1}{\|f\|_\infty} f$, we deduce that $\frac{1}{\|f\|_\infty}
f(T_1,\ldots,T_n)$ is in the class $\cC_{\rho_f}$, which is
equivalent to $\omega_{\rho_f}\left( \frac{1}{\|f\|_\infty}
f(T_1,\ldots,T_n)\right)\leq 1$, and  the first inequality of the
theorem follows. Hence, and using the fact that $\rho_f=\rho$ when
$f(0)=0$,  we  complete the proof.
\end{proof}

A simple consequence of Corollary \ref{Ome} is the following power
inequality.
\begin{corollary} If  \ $(T_1,\ldots, T_n)\in B(\cH)^n$ is non-zero,
$\rho\in (0,\infty)$, and $k\geq 1$,  then
$$
\omega_\rho(T_\alpha:\ \alpha\in \FF_n^+, |\alpha|=k)\leq
\omega_\rho(T_1,\ldots, T_n).
$$
\end{corollary}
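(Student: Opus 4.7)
The plan is to apply Corollary~\ref{Ome} directly to the vector-valued free holomorphic polynomial
$$
f(X_1,\ldots,X_n):=(X_\alpha)_{\alpha\in\FF_n^+,\,|\alpha|=k},
$$
viewed as a map from $[B(\cH)^n]_1$ into $B(\cH)^{n^k}$. Because $k\geq 1$, $f(0)=0$, so the constant $\rho_f$ of Theorem~\ref{rho-f} collapses to $\rho$, which is the crucial point that keeps the output operator radius measured with the same parameter $\rho$. The boundary $f(S_1,\ldots,S_n)=(S_\alpha)_{|\alpha|=k}$ is a row of noncommutative polynomials, hence in the matrix-valued noncommutative disc algebra $\cA_n$.

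The first concrete step is the identification $\|f\|_\infty=1$. The relevant norm of $f(X)$ is the row norm $\|\sum_{|\alpha|=k}X_\alpha X_\alpha^*\|^{1/2}$. Introducing the completely positive ``transfer'' map $\Phi_X(A):=\sum_{i=1}^n X_iAX_i^*$, an easy induction on $k$ yields $\sum_{|\alpha|=k}X_\alpha X_\alpha^*=\Phi_X^k(I)$; combined with $\|\Phi_X\|=\|\Phi_X(I)\|=\|\sum_i X_iX_i^*\|$, this gives $\|f(X)\|\leq\|X\|_{\mathrm{row}}^k<1$ on the open ball, whence $\|f\|_\infty\leq 1$. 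Equality is realized on the scalar test point $X=(rI,0,\ldots,0)$ as $r\to 1^-$, where $\|f(X)\|=r^k\to 1$.

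Feeding this into Corollary~\ref{Ome}: for any $T\in\cC_\rho$,
$$
\omega_\rho\bigl((T_\alpha)_{|\alpha|=k}\bigr)=\omega_\rho(f(T))\leq\|f\|_\infty=1,
$$
i.e., the $n^k$-tuple $(T_\alpha)_{|\alpha|=k}$ again belongs to $\cC_\rho$. The general nonzero $T\in B(\cH)^n$ is then handled by normalization: the dilation estimate $\|T_i\|\leq\rho\,\omega_\rho(T)$ together with $T\neq 0$ forces $s:=\omega_\rho(T)>0$, and closedness of $\cC_\rho$ in the norm topology gives $T/s\in\cC_\rho$. Applying the previous step to $T/s$ and translating back via the scaling identity $\omega_\rho(\lambda Y)=|\lambda|\,\omega_\rho(Y)$ yields the desired inequality.

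The delicate, and essentially only substantive, point is the rescaling bookkeeping: since $(T/s)_\alpha=T_\alpha/s^k$ for $|\alpha|=k$, homogeneity directly produces $\omega_\rho(T_\alpha:|\alpha|=k)\leq s^k$. In the contractive regime $\omega_\rho(T)\leq 1$, which is the sharp case for the stated corollary, $s^k\leq s$ and the announced first-power bound $\omega_\rho(T_\alpha:|\alpha|=k)\leq\omega_\rho(T)$ falls out at once. I expect this scaling accounting to be the main thing to verify carefully; the remainder is a clean invocation of Corollary~\ref{Ome} with the polynomial $f$.
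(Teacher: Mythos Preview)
Your approach is essentially identical to the paper's: normalize $T$ by $s=\omega_\rho(T)$ and apply the second part of Corollary~\ref{Ome} to $T/s\in\cC_\rho$ with the polynomial map $f(X)=(X_\alpha)_{|\alpha|=k}$. Your verification that $\|f\|_\infty=1$ and that the boundary functions lie in $\cA_n$ is more explicit than the paper's, but the structure is the same.

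You are right to flag the rescaling bookkeeping. Both your argument and the paper's yield $\omega_\rho(T_\alpha:|\alpha|=k)\leq s^k=\omega_\rho(T)^k$, not $\omega_\rho(T)$. The first-power bound stated in the corollary is in fact \emph{false} without the extra hypothesis $\omega_\rho(T)\leq 1$: already for $n=1$, $\rho=1$, $T=2I$ one has $\omega_1(T^k)=2^k>2=\omega_1(T)$. So the printed statement appears to contain a typo; the intended inequality (matching the classical power inequality $w(T^k)\leq w(T)^k$) is almost certainly $\omega_\rho(T_\alpha:|\alpha|=k)\leq\omega_\rho(T_1,\ldots,T_n)^k$, which is exactly what you (and the paper's proof) establish. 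Your restriction to the regime $\omega_\rho(T)\leq 1$ correctly salvages the first-power form, but you should state plainly that this hypothesis is necessary rather than describing it as ``the sharp case.''
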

\begin{proof} Since $\|(T_1,\ldots, T_n)\|\leq \rho \omega_\rho(T_1,\ldots, T_n)$, we have
$\omega_\rho(T_1,\ldots, T_n)\neq 0$. Applying  the second part of
Corollary \ref{Ome} to the $n$-tuple  of operators
$\left(\frac{1}{\omega_\rho(T_1,\ldots, T_n)} T_1,\ldots,
\frac{1}{\omega_\rho(T_1,\ldots, T_n)} T_n\right)\in \cC_\rho$ and
to the free holomorphic function
$$
f(X_1,\ldots, X_n):=(X_\alpha:\ \alpha\in
\FF_n^+, |\alpha|=k),\qquad (X_1,\ldots, X_n)\in [B(\cH)^n]_1,
$$
 we complete the proof.
\end{proof}

\begin{theorem} \label{sup}
Let $f:=(f_1,\ldots, f_m)$ be a bounded free holomorphic function
with $\|f(0)\|<\|f\|_\infty$ such that the boundary functions
$\widetilde f_1,\ldots, \widetilde f_m$ are in the noncommutative
disc algebra $\cA_n$. Then, for each $r\in [0,1)$,
$$
\sup_{T\in \cC_\rho, \ \omega_\rho(T)\leq r}
\omega_{\rho_f}(f(T_1,\ldots, T_n))\leq \|f(rS_1,\ldots, rS_n)\|,
$$
where $S_1,\ldots, S_n$ are  the left creation operators.
\end{theorem}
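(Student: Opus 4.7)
The plan is to deduce Theorem \ref{sup} from Corollary \ref{Ome} applied to the rescaled free holomorphic function
$$
g(X_1,\ldots,X_n):=f(rX_1,\ldots,rX_n),\qquad (X_1,\ldots,X_n)\in [B(\cH)^n]_1,
$$
obtained by precomposing $f$ with the dilation $X\mapsto rX$.

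First I would verify the relevant properties of $g$. Since $r\in[0,1)$, $g$ is a bounded free holomorphic function on $[B(\cH)^n]_1$; its boundary function is $g(S_1,\ldots,S_n)=f(rS_1,\ldots,rS_n)$, whose components lie in $\cA_n$ because each $\widetilde f_i$ lies in $\cA_n$ and $r<1$. Under the noncommutative Poisson / von Neumann identification, $\|g\|_\infty=\|g(S_1,\ldots,S_n)\|=\|f(rS_1,\ldots,rS_n)\|$. Moreover $g(0)=f(0)$, and the strict inequality $\|g(0)\|<\|g\|_\infty$ (required by Corollary \ref{Ome}) holds whenever $r>0$ by testing $f(rS_1,\ldots,rS_n)$ on the vacuum vector tensored with a suitable unit vector, since the hypothesis $\|f(0)\|<\|f\|_\infty$ forces some coefficient $A_{(\alpha)}$ of $f$ with $|\alpha|\geq 1$ to be nonzero.

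Next, given $T:=(T_1,\ldots,T_n)\in \cC_\rho$ with $0<\omega_\rho(T)\leq r$, the rescaled $n$-tuple $T':=(1/r)T$ satisfies $\omega_\rho(T')\leq 1$, hence $T'\in\cC_\rho$. Applying Corollary \ref{Ome} to the pair $(g,T')$ yields
$$
\omega_{\rho_g}\bigl(g(T'_1,\ldots,T'_n)\bigr)\leq \|g\|_\infty=\|f(rS_1,\ldots,rS_n)\|,
$$
where $\rho_g$ is obtained from formula \eqref{rof} applied to $g$. Since $g(T')=f(rT')=f(T)$ and $\rho_g$ matches the $\rho_f$ appearing in the conclusion once one tracks how the normalization by $\|g\|_\infty$ enters \eqref{rof} (exactly as in the proof of Corollary \ref{Ome}), we obtain $\omega_{\rho_f}(f(T))\leq \|f(rS_1,\ldots,rS_n)\|$, and taking the supremum over admissible $T$ finishes the case $r>0$.

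The degenerate case $r=0$ is immediate: $\omega_\rho(T)=0$ together with $\|T\|\leq \rho\,\omega_\rho(T)$ forces $T=0$, so that $f(T)=f(0)$, and the inequality becomes $\omega_{\rho_f}(f(0))\leq \|f(0)\|$, which holds by a direct computation for the scalar $m$-tuple $f(0)$. The main obstacle I anticipate is careful bookkeeping: aligning the $\rho_f$ in the theorem statement with the $\rho_g$ produced by Corollary \ref{Ome}, because formula \eqref{rof} depends sensitively on the chosen normalization (by $\|g\|_\infty$ versus $\|f\|_\infty$); this mirrors the same notational convention already used in the proof of Corollary \ref{Ome}.
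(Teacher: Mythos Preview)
Your proposal is correct and follows essentially the same route as the paper: define $g(X)=f(rX)$ (the paper calls it $f_r$), apply Corollary~\ref{Ome} to $g$ and to the rescaled tuple $T'=(1/r)T\in\cC_\rho$, and use $g(T')=f(T)$ together with $\|g\|_\infty=\|f(rS_1,\ldots,rS_n)\|$. The paper disposes of the $\rho_g$ versus $\rho_f$ bookkeeping in one line---since $g(0)=f(0)$, formula~\eqref{rof} gives $\rho_g=\rho_f$---so your anticipated obstacle does not actually arise once you read~\eqref{rof} as depending only on $\|f(0)\|$; your extra care about the degenerate cases $r=0$ and $\omega_\rho(T)=0$, and about verifying $\|g(0)\|<\|g\|_\infty$, is not in the paper's proof but is harmless padding.
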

\begin{proof}
Consider the free holomorphic function $f_r$, defined by
$$f_r(X_1,\ldots, X_n):=f(rX_1,\ldots, rX_n),\qquad (X_1,\ldots,
X_n)\in [B(\cH)^n]_1
$$
and recall that $\|f_r\|_\infty=\|f(rS_1,\ldots, rS_n)\|$. Applying
Corollary \ref{Ome} to $f_r$, we have
 \begin{equation}
 \label{Oro}
\omega_{\rho_{f_r}}(f_r(A_1,\ldots,A_n))\leq \|f_r\|_\infty,\quad
(A_1,\ldots,A_n)\in \cC_\rho
\end{equation}
Since $f(0)=f_r(0)$, we have $\rho_{f}=\rho_{f_r}$. Consequently, if
we assume that $\omega_\rho(T_1,\ldots, T_n)\leq r<1$, then $
\left(\frac{1}{r} T_1,\ldots, \frac{1}{r}T_n\right)\in \cC_\rho$ and
inequality \eqref{Oro} implies
$$
\omega_{\rho_f}(f(T_1,\ldots,
T_n))=\omega_{\rho_f}\left(f_r\left(\frac{1}{r} T_1,\ldots,
\frac{1}{r}T_n\right)\right)\leq \|f_r\|_\infty,
$$
which completes the proof.
\end{proof}

\begin{corollary} Let $ (T_1,\ldots,
T_n)\in B(\cH)^n$ be such that $\omega_\rho(T_1,\ldots, T_n)<1$, and
let $f:=(f_1,\ldots, f_m)$ be a bounded free holomorphic function
with the following properties:
\begin{enumerate}
\item[(i)]
 the boundary functions $\widetilde
f_1,\ldots, \widetilde f_m$ are in the noncommutative disc algebra
$\cA_n$.
\item[(ii)]  $f_j$  has the standard
representation of the form
$$f_j(X_1,\ldots, X_n)=\sum_{|\alpha|\geq k} a_\alpha^{(j)}X_\alpha,\qquad
j=1,\ldots,m.
$$
\end{enumerate}
Then
$$
\omega_\rho (f (T_1,\ldots, T_n))\leq \omega_\rho(T_1,\ldots, T_n)^k
\|f\|_\infty.
$$
\end{corollary}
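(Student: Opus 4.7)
Set $r:=\omega_\rho(T_1,\ldots,T_n)\in[0,1)$. If $r=0$, then $T_1=\cdots=T_n=0$, so $f(T)=f(0)=0$ (because $k\geq 1$) and the inequality is trivial; so assume $r>0$, and also $f\not\equiv 0$ (else trivial). Since every $f_j$ starts from degree $k\geq 1$, we have $f(0)=0$, hence $\|f(0)\|=0<\|f\|_\infty$. Plugging $f(0)=0$ into the formula \eqref{rof} yields $\rho_f=\rho$ in both cases $\rho<1$ and $\rho\geq 1$. Theorem \ref{sup} applied with this particular $r$ therefore gives
\begin{equation*}
\omega_\rho\bigl(f(T_1,\ldots,T_n)\bigr)\;\leq\;\|f(rS_1,\ldots,rS_n)\|.
\end{equation*}
Thus the whole problem reduces to the noncommutative Schwarz-type estimate
\begin{equation*}
\|f(rS_1,\ldots,rS_n)\|\;\leq\;r^{k}\,\|f\|_\infty. \qquad (\star)
\end{equation*}

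To prove $(\star)$, I will slice along the complex ray $z\mapsto(zS_1,\ldots,zS_n)$ and invoke the classical Schwarz lemma in its operator-valued form. Define
\begin{equation*}
\phi:\DD\to B\bigl(F^{2}(H_n)\bigr)^{m},\qquad \phi(z):=f(zS_1,\ldots,zS_n).
\end{equation*}
For each $z\in\DD$, the $n$-tuple $(zS_1,\ldots,zS_n)$ lies in $[B(F^{2}(H_n))^{n}]_1$ since $\|\sum_{i}(zS_i)(zS_i)^{*}\|^{1/2}=|z|<1$; hence $\|\phi(z)\|\leq\|f\|_\infty$. Using the standard power series of $f_j$, the map $\phi$ is holomorphic on $\DD$ with expansion
\begin{equation*}
\phi(z)=\sum_{l\geq k} z^{l}\,\Phi_{l},\qquad \Phi_{l}:=\Bigl(\sum_{|\alpha|=l}a_{\alpha}^{(j)}S_{\alpha}\Bigr)_{\!j=1}^{m},
\end{equation*}
so it vanishes to order at least $k$ at $z=0$.

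For unit vectors $\xi\in F^{2}(H_n)$ and $\eta\in F^{2}(H_n)^{m}$, the scalar function $g(z):=\langle\phi(z)\xi,\eta\rangle$ is holomorphic on $\DD$, bounded by $\|f\|_\infty$, and has $g(0)=g'(0)=\cdots=g^{(k-1)}(0)=0$. The classical Schwarz lemma applied to $z\mapsto g(z)/z^{k}$ (extended holomorphically at the origin) yields $|g(z)|\leq\|f\|_\infty|z|^{k}$ on $\DD$. Taking the supremum over unit vectors gives $\|\phi(z)\|\leq\|f\|_\infty|z|^{k}$ for all $z\in\DD$; specializing to $z=r$ proves $(\star)$, and combining with the estimate from Theorem \ref{sup} completes the proof.

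\textbf{Main obstacle.} The genuinely noncommutative content is encapsulated in Theorem \ref{sup}; beyond that, the only nontrivial step is $(\star)$. Its mild subtlety is that the operator $f(rS_1,\ldots,rS_n)$ is a column of operators on a Fock space and one cannot just pull out a scalar factor $r^{k}$ from the series $\sum_{|\alpha|\geq k}a_\alpha r^{|\alpha|}S_\alpha$. The ray-restriction trick $z\mapsto f(zS_1,\ldots,zS_n)$ reduces the matter to a one-complex-variable Schwarz lemma in operator form, which is where all the work happens.
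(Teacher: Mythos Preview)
Your proof is correct and follows essentially the same route as the paper: reduce to Theorem~\ref{sup} (using $f(0)=0$ so that $\rho_f=\rho$) and then establish the Schwarz-type bound $\|f(rS_1,\ldots,rS_n)\|\leq r^{k}\|f\|_\infty$. The only difference is in this last step: the paper normalizes to $g:=f/\|f\|_\infty$ and invokes the multivariable noncommutative Schwarz lemma $\|g(X)\|\leq\|X\|^{k}$ from \cite{Po-holomorphic}, whereas you give a self-contained proof of the needed special case via the one-variable ray $z\mapsto(zS_1,\ldots,zS_n)$ and the classical Schwarz lemma --- a slightly more elementary but entirely equivalent argument.
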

\begin{proof} Consider  the  free holomorphic function
$g:=\frac{1}{\|f\|_\infty} f$. Note that $\|g\|_\infty=1$ and
$g(0)=0$. According to the Schwarz lemma for free holomorphic
functions (see Theorem 2.4 from \cite{Po-holomorphic}), we have
$$
\|g(X_1,\ldots, X_n)\|\leq \|(X_1,\ldots, X_n)\|^k,\qquad
(X_1,\ldots, X_n)\in [B(\cH)^n]_1.
$$
Denote $r:=\omega_\rho(T_1,\ldots, T_n)<1$, $\rho>0$, and consider
$$g_r(X_1,\ldots, X_n):=g(rX_1,\ldots, rX_n),\qquad (X_1,\ldots,
X_n)\in [B(\cH)^n]_1.
$$
Note that  the inequality above implies $\|g_r\|_\infty\leq r^k$.
 Applying now
Theorem \ref{sup} to $g$, and using the latter  inequality, we
obtain
$$ \omega_\rho
(g(T_1,\ldots, T_n))\leq \|g_r\|_\infty\leq
r^k=\omega_\rho(T_1,\ldots,T_n)^k.
$$
  Hence, the result follows.
\end{proof}

\begin{corollary} Let $(T_1,\ldots,
T_n)\in B(\cH)^n$ be such that $\omega_\rho(T_1,\ldots,T_n)<1$, and
let $f:[B(\cH)^n]_1\to B(\cH)$ be a free holomorphic function  with
$\Re f\leq I$ and having the standard representation
$$f(X_1,\ldots, X_n)=\sum_{|\alpha|\geq k} a_\alpha X_\alpha,\qquad
\text{ where } \ k\geq 1.
$$
 Then
$$
\omega_\rho (f(T_1,\ldots, T_n))\leq  \frac{2 \omega_\rho
(T_1,\ldots, T_n)^k}{1- \omega_\rho (T_1,\ldots, T_n)^k}.
$$
\end{corollary}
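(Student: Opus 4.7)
The plan is to apply a Cayley-type transform that converts the one-sided bound $\Re f\le I$ into a sup-norm bound, and then to reduce to the preceding corollary via a geometric series. Since $\Re f\le I$ on $[B(\cH)^n]_1$ forces $\Re(2I-f)\ge I$, the operator $2I-f$ is invertible at every point of the open ball, so
\[
g:=-f(2I-f)^{-1}=-(2I-f)^{-1}f
\]
defines a free holomorphic function on $[B(\cH)^n]_1$ (the two expressions agree because $f$ commutes with $2I-f$). Writing $F:=f(X)$, the identity $(2I-F)(2I-F^*)-FF^*=4(I-\Re F)\ge 0$ forces $\|g(X)\|\le 1$ for every $X$, so $\|g\|_\infty\le 1$. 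Because $(2I-f)^{-1}(0)=\tfrac12 I$ is invertible, $g$ vanishes at $0$ to the same order $\ge k$ as $f$, and algebraic inversion yields $f=-2g(I-g)^{-1}=-2\sum_{j\ge 1}g^j$ as free holomorphic functions.

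Fix $r\in(0,1)$ and put $g_r(X):=g(rX)$. Then $g_r$ is free holomorphic on a neighborhood of $[B(\cH)^n]_1^-$, vanishes at $0$ to order $\ge k$, and its boundary function $g_r(S_1,\ldots,S_n)$ lies in $\cA_n$. The Schwarz lemma for free holomorphic functions (Theorem 2.4 from \cite{Po-holomorphic}) applied to $g$ gives $\|g(Y)\|\le \|Y\|^k$ for $Y\in [B(\cH)^n]_1$, hence $\|g_r\|_\infty\le r^k$. Applying the preceding corollary to $g_r$ (in the single-component case $m=1$) yields
\[
\omega_\rho\bigl(g_r(T_1,\ldots,T_n)\bigr)\le \omega_\rho(T_1,\ldots,T_n)^k\,\|g_r\|_\infty\le \omega_\rho(T_1,\ldots,T_n)^k r^k =: t_r<1.
\]
Setting $A_r:=g_r(T_1,\ldots,T_n)$, the spectral radius satisfies $r(A_r)\le \omega_\rho(A_r)\le t_r<1$, so $\sum_{j\ge 1}A_r^j$ converges in operator norm and the functional identity above, evaluated at $rT$, gives $f(rT)=-2\sum_{j\ge 1}A_r^j$. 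Subadditivity of the single-operator radius together with the power inequality $\omega_\rho(A_r^j)\le \omega_\rho(A_r)^j$ (both classical for $\rho$-contractions, as in Sz.-Nagy--Foia\c s theory) then produce
\[
\omega_\rho(f(rT))\le 2\sum_{j\ge 1}\omega_\rho(A_r)^j\le \frac{2t_r}{1-t_r}.
\]

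Letting $r\uparrow 1$, the right-hand side tends monotonically to $2\omega_\rho(T)^k/(1-\omega_\rho(T)^k)$, and $f(rT)\to f(T)$ in operator norm because $r(T)<1$ and the series $-2\sum_{j\ge 1}g(T)^j$ converges in norm (the Cayley transform bound $\|g(T)\|\le \rho\omega_\rho(g(T))$ gives the uniform control needed to pass from $g_r$ to $g$). Since $\cC_\rho$ is norm-closed, $\omega_\rho$ is lower semicontinuous under norm convergence, and the claimed inequality follows. The main technical obstacles are, first, verifying in the second paragraph that $g_r$ meets the hypotheses of the previous corollary (boundary in $\cA_n$ and the Schwarz bound $\|g_r\|_\infty\le r^k$), and, second, justifying the norm-convergence $f(rT)\to f(T)$ as $r\to 1$; both reduce to the fact that the Cayley transform $g$ lies in the unit ball of $H^\infty$ on $[B(\cH)^n]_1$, which makes the relevant series behave as in the classical scalar case.
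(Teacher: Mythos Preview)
Your Cayley transform reduction is correct and does give $\|g_r\|_\infty\le r^k$ via the Schwarz lemma, and the power inequality $\omega_\rho(A_r^j)\le\omega_\rho(A_r)^j$ is indeed classical. But the argument breaks at the step
\[
\omega_\rho\!\Bigl(\sum_{j\ge 1}A_r^j\Bigr)\le \sum_{j\ge 1}\omega_\rho(A_r^j).
\]
The operator radius $\omega_\rho$ is subadditive only for $0<\rho\le 2$; for $\rho>2$ it is not a norm and the triangle inequality fails (this goes back to Holbrook \cite{Ho1}). So the proof as written covers only $\rho\le 2$, and the claim that subadditivity is ``classical for $\rho$-contractions'' is false for $\rho>2$.

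The fix is to sum at the level of the sup-norm instead of $\omega_\rho$. From $\|g_r\|_\infty\le r^k<1$ you get
\[
\|f_r\|_\infty=\bigl\|-2g_r(I-g_r)^{-1}\bigr\|_\infty\le \frac{2r^k}{1-r^k},
\]
because $\|\cdot\|_\infty$ \emph{is} a norm. This is exactly the Carath\'eodory bound the paper quotes from \cite{Po-holomorphic.II}; your Cayley transform simply gives an independent derivation of it. Now set $r:=\omega_\rho(T)$ and apply Theorem~\ref{sup} (since $f(0)=0$, one has $\rho_f=\rho$) to obtain
\[
\omega_\rho(f(T))\le \|f_r\|_\infty\le \frac{2\,\omega_\rho(T)^k}{1-\omega_\rho(T)^k}
\]
directly, with no limiting argument in $r$ needed. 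This is the paper's route: one $\omega_\rho$ estimate applied to a function whose sup-norm has already been controlled, rather than a series of $\omega_\rho$ estimates summed by a triangle inequality that is unavailable for $\rho>2$.
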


\begin{proof} According to the
 Carath\'eodory type result for free holomorhic functions (see Theorem 5.1 from
 \cite{Po-holomorphic.II}), we have
$$
  \|f(X_1,\ldots,X_n)\|\leq
\frac{2\|\sum_{|\beta|=k}X_\beta
X_\beta^*\|^{1/2}}{1-\|\sum_{|\beta|=k}X_\beta X_\beta^*\|^{1/2}},
\qquad (X_1,\ldots,X_n)\in [B(\cH)^n]_1.
$$
Hence, we deduce that  $\|f_r\|_\infty\leq \frac{2r^k}{1-r^k}$ for
any $r\in (0,1)$.  Setting $r:=\omega_\rho(T_1,\ldots, T_n)<1$,
$\rho>0$, and applying Theorem \ref{sup}, we obtain
$$
\omega_\rho (f(T_1,\ldots,T_n))\leq \|f_r\|_\infty\leq  \frac{2
\omega_\rho (T_1,\ldots, T_n)^k}{1- \omega_\rho (T_1,\ldots,
T_n)^k}.
$$
The proof is complete.
\end{proof}

\begin{lemma}\label{H<} Let $f:=(f_1,\ldots, f_m)$ be a
contractive  free holomorphic function with $\|f(0)\|<1$ such that
the  boundary functions $\widetilde f_1,\ldots, \widetilde f_m$ are
in the noncommutative disc algebra $\cA_n$.  Let $A:=(A_1,\ldots,
A_n)\in B(\cH)^n$ and $B:=(B_1,\ldots, B_n)\in B(\cH)^n$ be in  the
class $ \cC_\rho\subset B(\cH)^n$ and let $c\geq 1$. If
$A\overset{H}{{\underset{c}\prec}}\, B$, then $f(A)$ and  $f(B)$ are
in $\cC_{\rho_f}\subset  B(\cH)^m$  and
$f(A)\overset{H}{{\underset{c}\prec}}\, f(B)$, where $\rho_f$ is
given by relation \eqref{rof}.
\end{lemma}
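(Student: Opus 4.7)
The plan is to reduce the Harnack domination for $(f(A), f(B))$ in $\cC_{\rho_f}$ to the Harnack domination for $(A,B)$ in $\cC_\rho$ applied to the composed pluriharmonic function $\Re p \circ f$, and then to absorb the discrepancy between $(\rho-1)\Re p(f(0))$ and $(\rho_f-1)\Re p(0)$ using the Harnack inequality \eqref{Rep>0}.

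First, Theorem \ref{rho-f} applied separately to $A$ and $B$ gives $f(A), f(B)\in\cC_{\rho_f}$, so the assertion $f(A)\overset{H}{{\underset{c}\prec}}\, f(B)$ makes sense. Next, let $p\in\CC[Z_1,\ldots,Z_m]\otimes M_k$ satisfy $\Re p\geq 0$ on $[B(\cK)^m]_1$, and form $u:=\Re(p\circ f)=\Re p\circ f$. Since $p$ is a polynomial and $f$ is a contractive free holomorphic function with boundary functions $\widetilde f_1,\ldots,\widetilde f_m\in\cA_n$, the composition $p\circ f$ is a bounded free holomorphic function whose boundary function $p(\widetilde f_1,\ldots,\widetilde f_m)$ lies in $\cA_n\otimes M_k$. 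Hence $u\in Har_{\bf ball}^c(B(\cE))$ for a suitable $\cE$, and $u\geq 0$ because $f$ maps $[B(\cK)^n]_1$ into $[B(\cK)^m]_1$ and $\Re p\geq 0$ there. The compositional identity $(p\circ f)(T)=p(f(T))$ for $T\in\cC_\rho$ follows from Theorem \ref{funct-calc} together with the norm continuity of $p$ and the relation $\lim_{r\to 1}f(rT)=f(T)$.

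Applying the hypothesis $A\overset{H}{{\underset{c}\prec}}\, B$ via the equivalent form (the remark following Theorem \ref{equivalent}, using positive functions in $Har_{\bf ball}^c(B(\cE))$) to $u$, we obtain
\begin{equation}\label{eq:step1}
\Re p(f(A))+(\rho-1)\Re p(f(0))\leq c^2\left[\Re p(f(B))+(\rho-1)\Re p(f(0))\right].
\end{equation}
Set $X:=(\rho_f-1)\Re p(0)-(\rho-1)\Re p(f(0))$. Adding $X$ to both sides of \eqref{eq:step1} and using the elementary fact $X\leq c^2 X$ (which holds whenever $X\geq 0$ and $c\geq 1$) on the right-hand side, we arrive at
\begin{equation*}
\Re p(f(A))+(\rho_f-1)\Re p(0)\leq c^2\left[\Re p(f(B))+(\rho_f-1)\Re p(0)\right],
\end{equation*}
which is exactly $f(A)\overset{H}{{\underset{c}\prec}}\, f(B)$ in $\cC_{\rho_f}$.

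The main step, therefore, is the positivity $X\geq 0$, and this is where the precise formula \eqref{rof} for $\rho_f$ is used. If $\rho\geq 1$, then $\rho_f-1=(\rho-1)\frac{1+\|f(0)\|}{1-\|f(0)\|}$, and the upper Harnack bound $\Re p(f(0))\leq \Re p(0)\frac{1+\|f(0)\|}{1-\|f(0)\|}$ from \eqref{Rep>0} gives $(\rho-1)\Re p(f(0))\leq (\rho_f-1)\Re p(0)$. If $\rho<1$, then both $\rho-1$ and $\rho_f-1=(\rho-1)\frac{1-\|f(0)\|}{1+\|f(0)\|}$ are negative, and the lower Harnack bound $\Re p(f(0))\geq \Re p(0)\frac{1-\|f(0)\|}{1+\|f(0)\|}$ gives $(\rho-1)\Re p(f(0))\leq (\rho-1)\Re p(0)\frac{1-\|f(0)\|}{1+\|f(0)\|}=(\rho_f-1)\Re p(0)$. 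In both cases $X\geq 0$, completing the proof. The only subtle point is justifying the compositional identity $(p\circ f)(A)=p(f(A))$ and that $u\in Har_{\bf ball}^c(B(\cE))$; everything else is a direct consequence of Theorem \ref{equivalent} and the Harnack bounds \eqref{Rep>0}.
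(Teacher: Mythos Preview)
Your proof is correct and follows essentially the same approach as the paper: apply the Harnack hypothesis to the positive free pluriharmonic function $\Re(p\circ f)$ to obtain the inequality with $(\rho-1)\Re p(f(0))$, then use the Harnack bounds \eqref{Rep>0} to replace this by $(\rho_f-1)\Re p(0)$ via the two cases $\rho\ge 1$ and $\rho<1$. Your packaging of the correction term as $X:=(\rho_f-1)\Re p(0)-(\rho-1)\Re p(f(0))\ge 0$ together with $X\le c^2X$ is a clean way to write what the paper does by subtracting and then bounding $(c^2-1)(\rho-1)\Re p(f(0))$.
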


\begin{proof}
First, note that, due to Theorem \ref{rho-f},   $f(A),f(B)$ are in
$\cC_{\rho_f}$, where $\rho_f$ is given by relation \eqref{rof}. Let
$p\in \CC[Z_1,\ldots, Z_m]\otimes M_k$, $k\in \NN$, be  such that
$\Re p\geq 0$ on the unit ball $[B(\cH)^m]_1$. According to the
proof of Theorem \ref{rho-f}, the boundary function of the
composition $p\circ f$ is $p(\widetilde f_1,\ldots, \widetilde
f_m)\in \cA_n\bar \otimes_{min} M_k$ and $\Re (p\circ f)\geq 0$.
Using the free pluriharmonic functional calculus  for the class
$\cC_\rho$ and Theorem \ref{equivalent}, if $A,B$ are in $\cC_\rho$
and $A\overset{H}{{\underset{c}\prec}}\, B$, $c\geq 1$,  then
\begin{equation}
\label{Rep2} \Re (p\circ f)(A_1,\ldots, A_n)+(\rho-1)\Re (p\circ
f)(0) \leq c^2\left[ \Re (p\circ f)(B_1,\ldots, B_n)+(\rho-1)\Re
(p\circ f)(0)\right].
\end{equation}
Assume now  that $\rho\geq 1$. Due to the Harnack type inequality
\eqref{Rep>0}, the inequality \eqref{Rep2} implies
$$
 \Re (p\circ f)(A_1,\ldots, A_n)\leq c^2 \Re (p\circ f)(B_1,\ldots,
 B_n) + (c^2-1)(\rho-1)\Re p(0)\frac{1+\|f(0)\|}{1-\|f(0)\|},
 $$
 which is equivalent to
\begin{equation*}
 \Re (p\circ f)(A_1,\ldots, A_n)+(\rho_f-1)\Re (p\circ f)(0) \leq
c^2\left[ \Re (p\circ f)(B_1,\ldots, B_n)+(\rho_f-1)\Re (p\circ
f)(0)\right],
\end{equation*}
where  $\delta_f:=1+(\rho-1)\frac{1+\|f(0)\|}{1-\|f(0)\|}.$ Applying
Theorem \ref{equivalent}, we deduce that
$f(A)\overset{H}{{\underset{c}\prec}}\, f(B)$.

Now, we consider the case $\rho\in (0,1)$.
  The inequality \eqref{Rep2}  and the  Harnack type inequality
\eqref{Rep>0} imply
$$
 \Re (p\circ f)(A_1,\ldots, A_n)\leq c^2 \Re (p\circ f)(B_1,\ldots,
 B_n) + (c^2-1)(\rho-1)\Re p(0)\frac{1-\|f(0)\|}{1+\|f(0)\|}.
 $$
As above, we deduce that $f(A)\overset{H}{{\underset{c}\prec}}\,
f(B)$ in $\cC_{\rho_f}$, where
$\delta_f:=1+(\rho-1)\frac{1-\|f(0)\|}{1+\|f(0)\|}.$ This completes
the proof.
\end{proof}

\begin{theorem}\label{Schwarz-Hyp} Let $\delta_\rho
:\Delta\times \Delta\to [0,\infty)$
 be the hyperbolic metric on a Harnack part $\Delta$ of $\cC_\rho$, and
let $f:=(f_1,\ldots, f_m)$ be a contractive  free holomorphic
function with $\|f(0)\|<1$ such that the boundary functions
$\widetilde f_1,\ldots, \widetilde f_m$ are in the noncommutative
disc algebra $\cA_n$.Then
$$
\delta_{\rho_f}(f(A),f(B))\leq  \delta_\rho(A,B),\qquad A,B\in
\Delta,
$$
 where $\rho_f$ is given by relation \eqref{rof}.
\end{theorem}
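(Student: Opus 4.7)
The plan is to deduce this theorem as a direct corollary of the preceding Lemma \ref{H<}, exploiting the definition of $\delta_\rho$ as a logarithmic infimum over Harnack constants.

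First I would fix $A, B \in \Delta$ so that, by definition of a Harnack part, $A \overset{H}{\sim} B$ in $\cC_\rho$. Let
\[
\cI(A,B) := \bigl\{\,c > 1 : A \overset{H}{{\underset{c}\sim}}\, B \,\bigr\},
\]
so that $\delta_\rho(A,B) = \ln \inf \cI(A,B)$. For each $c \in \cI(A,B)$, the definition of $\overset{H}{{\underset{c}\sim}}$ gives simultaneously $A \overset{H}{{\underset{c}\prec}}\, B$ and $B \overset{H}{{\underset{c}\prec}}\, A$ in $\cC_\rho$.

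Next I would apply Lemma \ref{H<} twice: once to the relation $A \overset{H}{{\underset{c}\prec}}\, B$ and once to $B \overset{H}{{\underset{c}\prec}}\, A$. Since $f$ is a contractive free holomorphic function with $\|f(0)\|<1$ and boundary functions in $\cA_n$, the lemma yields $f(A), f(B) \in \cC_{\rho_f}$ together with
\[
f(A) \overset{H}{{\underset{c}\prec}}\, f(B) \quad \text{and} \quad f(B) \overset{H}{{\underset{c}\prec}}\, f(A)
\]
in $\cC_{\rho_f}$, with $\rho_f$ given by \eqref{rof}. Combining, we obtain $f(A) \overset{H}{{\underset{c}\sim}}\, f(B)$, which in particular shows that $f(A)$ and $f(B)$ lie in the same Harnack part of $\cC_{\rho_f}$, so $\delta_{\rho_f}(f(A), f(B))$ is well-defined.

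Finally I would observe that the previous step establishes the inclusion $\cI(A,B) \subseteq \cI(f(A), f(B))$ of Harnack constant sets, and hence
\[
\inf \cI(f(A),f(B)) \;\leq\; \inf \cI(A,B).
\]
Taking logarithms of both sides yields the desired inequality $\delta_{\rho_f}(f(A),f(B)) \leq \delta_\rho(A,B)$. There is essentially no obstacle here: the entire content of the result has been absorbed into Lemma \ref{H<}, and the present theorem is simply a repackaging of that lemma in terms of the hyperbolic metric.
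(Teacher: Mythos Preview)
Your proof is correct and follows essentially the same route as the paper: both apply Lemma~\ref{H<} to each of the two Harnack dominations comprising $A\overset{H}{{\underset{c}\sim}}\,B$, conclude $f(A)\overset{H}{{\underset{c}\sim}}\,f(B)$ in $\cC_{\rho_f}$, and then pass to the infimum defining $\delta_\rho$. Your version is slightly more explicit in spelling out the inclusion of constant sets and the well-definedness of $\delta_{\rho_f}(f(A),f(B))$, but the argument is the same.
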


\begin{proof} Let $A,B\in \Delta\subset \cC_\rho$, i.e., there is $c\geq 1$ such that
 $A\overset{H}{{\underset{c}\sim}}\, B$. According to Theorem \ref{rho-f} and  Lemma
 \ref{H<},  $f(A)$ and  $f(B)$ are
in $\cC_{\rho_f}$,  and $f(A)\overset{H}{{\underset{c}\sim}}\, f(B)$
in
 $\cC_{\rho_f}$, where $\rho_f$ is given by relation
\eqref{rof}. Hence and taking into account that
$$\delta_\rho(A,B):=\ln \inf\left\{ c > 1: \
A\,\overset{H}{{\underset{c}\sim}}\, B   \right\},\qquad A,B\in
\Delta,$$ we deduce that
$$
\delta_{\rho_f}(f(A),f(B))\leq  \delta_\rho(A,B),\qquad A,B\in
\Delta.
$$
 The proof is complete.
\end{proof}

Now, we can deduce the following Schwarz type result.

\begin{corollary}\label{f(0)} Let $\delta_\rho
:\Delta\times \Delta\to [0,\infty)$
 be the hyperbolic metric on  a Harnack part $\Delta$ of $\cC_\rho$, and
let $f:=(f_1,\ldots, f_m)$ be a contractive  free holomorphic
function with $f(0)=0$ such that the boundary functions $\widetilde
f_1,\ldots, \widetilde f_m$ are in the noncommutative disc algebra
$\cA_n$. Then
$$
\delta_{\rho}(f(A),f(B))\leq  \delta_\rho(A,B),\qquad A,B\in \Delta.
$$
\end{corollary}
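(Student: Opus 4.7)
The plan is to apply Theorem \ref{Schwarz-Hyp} directly, observing that the hypothesis $f(0)=0$ collapses the two cases of formula \eqref{rof} to the single value $\rho_f = \rho$, so the metric on the target side agrees with the metric on the source side.

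First I would examine the definition of $\rho_f$ in \eqref{rof}. Since $f(0)=0$, we have $\|f(0)\|=0$, and hence the ratios $\frac{1-\|f(0)\|}{1+\|f(0)\|}$ and $\frac{1+\|f(0)\|}{1-\|f(0)\|}$ are both equal to $1$. Consequently, regardless of whether $\rho<1$ or $\rho\geq 1$, we obtain
\begin{equation*}
\rho_f \;=\; 1+(\rho-1)\cdot 1 \;=\; \rho.
\end{equation*}

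Next, Theorem \ref{rho-f} guarantees that $f(A),f(B)\in \cC_{\rho_f}=\cC_\rho$ whenever $A,B\in \cC_\rho$, so it makes sense to measure $\delta_\rho(f(A),f(B))$. Then Theorem \ref{Schwarz-Hyp} applied to any pair $A,B$ in the Harnack part $\Delta$ of $\cC_\rho$ yields
\begin{equation*}
\delta_{\rho_f}(f(A),f(B)) \;\leq\; \delta_\rho(A,B).
\end{equation*}
Substituting $\rho_f=\rho$ gives the desired inequality
\begin{equation*}
\delta_{\rho}(f(A),f(B)) \;\leq\; \delta_\rho(A,B),\qquad A,B\in \Delta.
\end{equation*}

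There is no genuine obstacle here: the corollary is essentially a one-line specialization of the preceding Schwarz-Pick type theorem. The only mild subtlety is the implicit claim that $f(A)$ and $f(B)$ still lie in the \emph{same} Harnack part of $\cC_\rho$, so that the hyperbolic distance $\delta_\rho(f(A),f(B))$ is actually defined; this is provided by Lemma \ref{H<} (with $c$ taken to be any constant realizing $A\overset{H}{\underset{c}{\sim}} B$), which ensures $f(A)\overset{H}{\underset{c}{\sim}} f(B)$ in $\cC_{\rho_f}=\cC_\rho$.
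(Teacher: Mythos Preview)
Your proposal is correct and follows exactly the approach intended by the paper: the corollary is an immediate specialization of Theorem~\ref{Schwarz-Hyp}, using that $f(0)=0$ forces $\rho_f=\rho$ in~\eqref{rof}. Your remark about Lemma~\ref{H<} ensuring $f(A)\overset{H}{\sim} f(B)$ is a helpful clarification, though the paper leaves this implicit.
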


We recall that, due to Theorem \ref{foias2}, the open ball
$[\cC_\rho]_{<1}$ is the Harnack part of $\cC_\rho$ containing $0$.
Consequently, Theorem \ref{Schwarz-Hyp} and Corollary \ref{f(0)}
hold in  the particular case when $\Delta:=[\cC_\rho]_{<1}$.

Ky Fan \cite{KF1} showed that the von Neumann inequality \cite{vN}
is equivalent to the fact that if $T\in B(\cH)$ is a strict
contraction ($\|T\|<1$) and $f:\DD\to \DD$ is an analytic function,
then $\|f(T)\|<1$. A multivariable analogue of this result was
obtained in \cite{Po-holomorphic.II}. In what follows, we provide  a
spectral version of this result, when the norm is replaced by the
joint spectral radius.

\begin{theorem}\label{radius} Let $f:=(f_1,\ldots, f_m)$ be a
contractive  free holomorphic function with $\|f(0)\|<1$ such that
the  boundary functions $\widetilde f_1,\ldots, \widetilde f_m$ are
in the noncommutative disc algebra $\cA_n$. If $(T_1,\ldots, T_n)\in
B(\cH)^n$ and  the joint spectral radius $r(T_1,\ldots, T_n)<1$,
then
$$
r(f(T_1,\ldots, T_n))<1. $$
\end{theorem}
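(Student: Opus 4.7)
My plan is to combine the mapping theorem (Theorem \ref{rho-f}), the Harnack-contraction property for free holomorphic functions (Lemma \ref{H<}), and the characterization of the Harnack part of $\cC_\rho$ containing $0$ (Theorem \ref{foias2}). The goal is to exhibit $f(T_1,\ldots,T_n)$ inside the open ball $[\cC_{\rho_f}]_{<1}$, which will force $r(f(T))\le \omega_{\rho_f}(f(T))<1$ since $\omega_\rho$ dominates $r$ for every finite $\rho$.

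First, since $\omega_\rho$ is nonincreasing in $\rho$ with $\lim_{\rho\to\infty}\omega_\rho(T)=r(T)<1$, I can choose $\rho\ge 1$ large enough that $\omega_\rho(T)<1$, i.e.\ $T\in[\cC_\rho]_{<1}$. By Theorem \ref{foias2} this means $T\overset{H}{\sim}0$ in $\cC_\rho$, so there is some $c>1$ with $T\overset{H}{{\underset{c}\prec}}0$ and $0\overset{H}{{\underset{c}\prec}}T$. Applying Lemma \ref{H<} to each of these two dominations yields $f(T),\,f(0)\in\cC_{\rho_f}$ (with $\rho_f$ as in \eqref{rof}) together with $f(T)\overset{H}{{\underset{c}\sim}}f(0)$ in $\cC_{\rho_f}$.

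Next, I verify that $f(0)$ itself lies in $[\cC_{\rho_f}]_{<1}$. Because I chose $\rho\ge1$, formula \eqref{rof} forces $\rho_f\ge 1$; since $\omega_\rho$ is nonincreasing in $\rho$,
\[
\omega_{\rho_f}(f(0))\le \omega_1(f(0))=\|[f_1(0)I,\ldots,f_m(0)I]\|=\|f(0)\|<1,
\]
so Theorem \ref{foias2} again gives $f(0)\overset{H}{\sim}0$ in $\cC_{\rho_f}$. Transitivity of Harnack equivalence in $\cC_{\rho_f}$ then yields $f(T)\overset{H}{\sim}0$, and one last invocation of Theorem \ref{foias2} produces $\omega_{\rho_f}(f(T))<1$, whence $r(f(T))<1$ as required.

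The argument is essentially a Harnack-part chasing procedure, and the delicate point is that the Harnack part of $\cC_{\rho_f}$ containing $0$ also contains $f(0)$; this is precisely where the standing hypothesis $\|f(0)\|<1$ does its work (it also keeps $\rho_f$ finite, which is needed before one can even speak of $f(T)\in\cC_{\rho_f}$). I expect the main obstacle to be exactly this verification: one must know, in the right form, that small scalar tuples sit in the open ball of $\cC_{\rho_f}$, and that $f$ preserves the Harnack-domination constant $c$ when passing from $\cC_\rho$ to $\cC_{\rho_f}$—which is exactly what Lemma \ref{H<} provides. The argument degenerates in the borderline case $\|f(0)\|=1$, consistent with the hypothesis.
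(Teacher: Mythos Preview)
Your proof is correct and follows the same Harnack-part chasing idea as the paper, but takes a slightly different route. The paper's argument uses only the \emph{one-sided} domination: from $T\overset{H}{\prec}0$ it gets $f(T)\overset{H}{\prec}f(0)$ in $\cC_{\rho_f}$ via Lemma~\ref{H<}, then observes $r(f(0))\le\|f(0)\|<1$ and applies Theorem~\ref{A<0} twice (first to conclude $f(0)\overset{H}{\prec}0$, then to conclude $r(f(T))<1$). You instead track the full two-sided equivalence through Theorem~\ref{foias2}, which costs the extra verification $\omega_{\rho_f}(f(0))\le\omega_1(f(0))=\|f(0)\|<1$ (and hence the care to take $\rho\ge1$ so that $\rho_f\ge1$). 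The payoff of your route is a stronger conclusion: you actually obtain $\omega_{\rho_f}(f(T))<1$, which is the content of Theorem~\ref{Om<}, proved separately in the paper by a longer argument.
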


\begin{proof}
  Assume that
$ (T_1,\ldots, T_n)\in B(\cH)^n$ has the joint spectral radius
$r(T_1,\ldots, T_n)<1$. Taking into account that  $r(T_1,\ldots,
T_n)=\lim_{\rho\to \infty} \omega_\rho(T_1,\ldots, T_n)$,  we find
$\delta> 1$ such that $\omega_\rho(T_1,\ldots, T_n)<1$.  Therefore,
we have $T:=(T_1,\ldots, T_n)\in \cC_\rho$ and, due to Theorem
\ref{foias2}, the $n$-tuple $T$ is Harnack equivalent to $0$.
Consequently, $T\overset{H}{{\underset{c}\prec}}\, 0$ for some
constant $c\geq 1$. According to Theorem \ref{rho-f},  $f(T)$ and
$f(0)$ are in the class  $\cC_{\rho_f}$, where $\rho_f$ is given by
relation \eqref{rof}. On the other hand, Lemma \ref{H<} implies
$f(T)\overset{H}{{\underset{c}\prec}}\, f(0)$ in  $\cC_{\rho_f}$.
Since   $\|f(0)\|<1$, we have the joint spectral radius $r(f(0))<1$.
Applying Theorem \ref{A<0}, we deduce that
$f(0)\overset{H}{{\underset{c}\prec}}\, 0$ in $\cC_{\rho_f}$.
Therefore, we have $f(T)\overset{H}{{\underset{c}\prec}}\, 0$ in
$\cC_{\rho_f}$. Applying again Theorem \ref{A<0}, we have
$r(f(T))<1$. The proof is complete.
\end{proof}

An analogue of Theorem \ref{radius} for $n$-tuples of operators with
joint operator radius  $\omega_\rho(T_1,\ldots, T_n)<1$ is the
following.

\begin{theorem}\label{Om<} Let $f:=(f_1,\ldots, f_m)$ be a
contractive  free holomorphic function with $\|f(0)\|<1$ such that
the  boundary functions $\widetilde f_1,\ldots, \widetilde f_m$ are
in the noncommutative disc algebra $\cA_n$. If $(T_1,\ldots, T_n)\in
B(\cH)^n$ and $\omega_\rho(T_1,\ldots, T_n)<1$, then
$$
\omega_{\rho_f}(f(T_1,\ldots, T_n))<1, $$
 where $\rho_f$ is defined by relation \eqref{rof}. In particular,
 if $f(0)=0$, then \ $\omega_{\rho}(f(T_1,\ldots, T_n))<1$.
\end{theorem}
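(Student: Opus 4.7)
The plan is to mimic the structure of Theorem \ref{radius}, with the spectral-radius characterization of Theorem \ref{A<0} replaced by the joint-operator-radius characterization of Theorem \ref{foias2}. Since $\omega_\rho(T_1,\ldots,T_n) < 1$, Theorem \ref{foias2} gives $T \overset{H}{\sim}\, 0$ in $\cC_\rho$, so there exists $c \geq 1$ with $T \overset{H}{{\underset{c}\prec}}\, 0$ and $0 \overset{H}{{\underset{c}\prec}}\, T$. Applying Lemma \ref{H<} to each direction yields $f(T) \overset{H}{{\underset{c}\prec}}\, f(0)$ and $f(0) \overset{H}{{\underset{c}\prec}}\, f(T)$ in $\cC_{\rho_f}$, hence $f(T) \overset{H}{\sim}\, f(0)$ in $\cC_{\rho_f}$.

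It then suffices to establish that $f(0) \overset{H}{\sim}\, 0$ in $\cC_{\rho_f}$: together with the previous step and transitivity of Harnack equivalence this gives $f(T) \overset{H}{\sim}\, 0$ in $\cC_{\rho_f}$, and Theorem \ref{foias2} delivers $\omega_{\rho_f}(f(T_1,\ldots,T_n)) < 1$. By Theorem \ref{foias2} again, this reduced claim is equivalent to $\omega_{\rho_f}(f(0)) < 1$, where $f(0) = (f_1(0)I_\cH,\ldots, f_m(0)I_\cH)$ is a scalar $m$-tuple with $\|f(0)\|<1$.

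To verify $\omega_{\rho_f}(f(0)) < 1$, I would split on the sign of $\rho - 1$. When $\rho \geq 1$, relation \eqref{rof} gives $\rho_f \geq 1$, and the monotonicity $\omega_{\rho'}(\cdot) \leq \omega_\rho(\cdot)$ for $\rho' \geq \rho$ yields $\omega_{\rho_f}(f(0)) \leq \omega_1(f(0)) = \|f(0)\| < 1$. When $\rho < 1$ one has $\rho_f < 1$, and a direct calculation is required. Using that for a scalar tuple $\lambda$ the operator $R_\lambda + R_\lambda^*$ is self-adjoint with spectrum $[-2\|\lambda\|,2\|\lambda\|]$, the criterion of Theorem \ref{ro-contr}(iv) reduces to the scalar inequality $\rho_f + (1-\rho_f)rt + (\rho_f-2)r^2\|\lambda\|^2 \geq 0$ for $|t| \leq 2\|\lambda\|$ and $r \in (0,1)$. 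A short quadratic analysis (whose discriminant conveniently equals $1$) gives $\omega_{\rho_f}(\lambda) = \|\lambda\|(2-\rho_f)/\rho_f$ in this regime, so $\omega_{\rho_f}(f(0)) < 1$ is equivalent to $\rho_f > 2\|f(0)\|/(1+\|f(0)\|)$; substituting the definition \eqref{rof} simplifies the difference to $\rho(1-\|f(0)\|)/(1+\|f(0)\|) > 0$. The ``in particular'' addendum follows since $f(0) = 0$ forces $\rho_f = \rho$.

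The main obstacle is the $\rho < 1$ regime, in which $(2-\rho_f)/\rho_f > 1$ and one cannot bound $\omega_{\rho_f}(f(0))$ by the operator norm alone; the explicit quadratic computation is essential. Notably, the formula \eqref{rof} for $\rho_f$ is precisely calibrated so the final algebraic cancellation produces a strictly positive quantity.
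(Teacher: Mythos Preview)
Your proof is correct and takes a genuinely different route from the paper's argument. The paper uses only the one-sided domination $0\overset{H}{\prec}\,T$, pushes it forward via Lemma~\ref{H<} to $f(0)\overset{H}{\prec}\,f(T)$ in $\cC_{\rho_f}$, then passes to the kernel inequality $P_{\rho_f}(f(0),R)\leq c^2P_{\rho_f}(f(T),R)$ and bounds $P_{\rho_f}(f(0),R)$ from below by the Harnack inequality for $P_1(\cdot,R)$; this requires invoking Theorem~\ref{radius} separately to guarantee $r(f(T))<1$ so that $P_{\rho_f}(f(T),R)$ makes sense and Theorem~\ref{foias2}(iii) applies. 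You instead push the full equivalence $T\overset{H}{\sim}\,0$ forward to $f(T)\overset{H}{\sim}\,f(0)$, reduce to the purely scalar statement $\omega_{\rho_f}(f(0))<1$, and verify the latter by an explicit quadratic computation from the criterion of Theorem~\ref{ro-contr}(iv), thereby bypassing both Theorem~\ref{radius} and the kernel Harnack bound. Your route yields as a by-product the closed formula $\omega_{\rho'}(\lambda)=\|\lambda\|(2-\rho')/\rho'$ for scalar tuples when $\rho'\in(0,1)$; the paper's route stays entirely within the kernel machinery already developed and is shorter once that machinery is in place. It is worth noting that both arguments produce the same positive constant $\rho(1-\|f(0)\|)/(1+\|f(0)\|)$ in the $\rho<1$ case, so the calibration of $\rho_f$ in \eqref{rof} is visibly sharp from either perspective.
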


\begin{proof}If $T:=(T_1,\ldots, T_n)\in
B(\cH)^n$ and $\omega_\rho(T_1,\ldots, T_n)<1$, then $T\in
\cC_\rho$. According to Theorem \ref{foias2}, we have
$$r(T_1,\ldots,
T_n)<1\quad \text{ and } \quad P_\rho(T,R)\geq a I$$
 for some constant $a>0$. Applying
Theorem \ref{rho-f} and  Theorem \ref{radius}, we deduce that
$f(T)\in \cC_{\rho_f}$ and  $r(f(T))<1$.
Since $\omega_\rho(T)<1$, Theorem \ref{foias2} implies
$T\overset{H}{\sim}\, 0$.  In particular, we have
$0\overset{H}{{\underset{c}\prec}}\, T$ for some constant $c\geq 1$.
Applying Lemma \ref{H<}, we deduce that
$f(0)\overset{H}{{\underset{c}\prec}}\, f(T)$ in  $\cC_{\rho_f}$,
where $\rho_f$ is given by relation \eqref{rof}. Hence, and using
Theorem \ref{equivalent} (part (ii)), we get
$$
P_{\rho_f}(rf(0), R)\leq c^2 P_{\rho_f}(rf(T), R),\qquad r\in [0,1).
$$
Since $r(f(0))<1$ and  $r(f(T))<1$, the latter inequality implies
\begin{equation}
\label{Pro2}
 P_{\rho_f}(f(0), R)\leq c^2 P_{\rho_f}(f(T), R),\qquad
r\in [0,1).
\end{equation}

On the other hand,  since the mapping $X\mapsto P_1(X,R)$ is a
positive free pluriharmonic  function on $[B(\cH)^n]_1$, the Harnack
inequality  \eqref{u-ine} implies
$$
P_1(f(0),R)\geq P_1(0,R)\frac{1-\|f(0)\|}{1+\|f(0)\|}=
\frac{1-\|f(0)\|}{1+\|f(0)\|} I.
$$
Therefore, we have
\begin{equation*}
\begin{split}
P_{\rho_f}(f(0), R) &=P_1(f(0),R)+(\rho_f-1)I\\
&\geq \left(\rho_f -1 +\frac{1-\|f(0)\|}{1+\|f(0)\|}\right) I.
\end{split}
\end{equation*}
Since
\begin{equation*}
a:=\rho_f -1 +\frac{1-\|f(0)\|}{1+\|f(0)\|}
=\begin{cases}
\rho\frac{1-\|f(0)\|}{1+\|f(0)\|}&\quad
\text{if }\ \rho<1\\
\\
(\rho-1)\frac{1+\|f(0)\|}{1-\|f(0)\|}+\frac{1-\|f(0)\|}{1+\|f(0)\|}
&\quad \text{if }\ \rho\geq
1,
\end{cases}
\end{equation*}
we have $a>0$. Combining  the latter inequality with \eqref{Pro2} we
obtain
$$
P_{\rho_f}(f(T), R)\geq \frac{a}{c^2} I.
$$
Using again Theorem \ref{foias2}, we deduce that
$\omega_{\rho_f}(f(T))<1$.  The last part of the theorem follows
from Theorem \ref{rho-f}. This completes the proof.
\end{proof}

\begin{remark} If $m=1$,    all the results of this section remain true
when
 the  condition
 $\|f(0)\|<1$ is dropped  if  $f$ is
a nonconstant contractive free holomorphic function with boundary
function in the noncommutative algebra $\cA_n$.
\end{remark}

\bigskip

\section{Carath\' eodory  metric on  the open noncommutative ball
$[\cC_\infty]_{<1}$ and Lipschitz  mappings}

In this section, we introduce a Carath\' eodory type metric  $d_K$
on the open ball of all $n$-tuples of operators $(T_1,\ldots, T_n)$
with joint spectral radius $r(T_1,\ldots, T_n)<1$. We obtain a
concrete formula for $d_K$
  in terms of the free pluriharmonic kernel on the open unit ball
  $[\cC_\infty]_{<1}$. This is used to  prove that the metric $d_K$ is complete on
$[\cC_\infty]_{<1}$ and its topology coincides with  the operator
norm topology.

We need some notation. Consider the noncommutative balls
$$
[\cC_\rho]_{<1}:=\left\{ (X_1,\ldots, X_n)\in B(\cH)^n:\
\omega_\rho(X_1,\ldots, X_n)<1\right\}\quad  \text { for }\ \rho\in
(0,\infty],
$$
where $\omega_\infty (X_1,\ldots, X_n):=r(X_1,\ldots, X_n)$ is the
joint spectral radius of $(X_1,\ldots, X_n)$, and    set
$$
 [\cC_\rho]^{\prec\, 0}:=\cC_\rho\cap [\cC_\infty]_{<1} \quad  \text { for }\ \rho\in
(0,\infty).
$$
According to Theorem 1.35 from \cite{Po-unitary},   if $\rho,\rho'\in (0,\infty]$,
$\rho\leq\rho'$,  then $\cC_\rho\subseteq \cC_{\rho'}$ and,
moreover, we have
$$
\omega_{\rho'}(X)\leq \omega_\rho(X),\quad r(X)=\lim_{\rho\to
\infty} \omega_\rho(X),\qquad X\in B(\cH)^n.
$$
Consequently, we have
 $$
[\cC_\rho]^{\prec\, 0}\subseteq [\cC_{\rho'}]^{\prec\, 0},\quad
\quad [\cC_\rho]_{<1}\subseteq [\cC_{\rho'}]_{<1}.
$$
Due to Theorem \ref{A<0}  and  Theorem \ref{foias2},  one can easily
see that
$$\{X\in \cC_\rho:  X\overset{H}{\sim}\, 0\}=[\cC_\rho]_{<1}\subset [\cC_\rho]^{\prec\, 0}
=\left\{X\in  \cC_\rho:\
X\overset{H}{{ \prec}}\, 0\right\}
$$
for any $\rho\in (0,\infty)$.
Note  also that
 $$
 \bigcup_{\rho>0}
[\cC_\rho]_{<1}=\bigcup_{\rho>0} [\cC_\rho]^{\prec\, 0}=
[\cC_\infty]_{<1}.
$$
Indeed, if $X\in [\cC_\infty]_{<1}$, i.e., $r(X)<1$, then taking
into account that $r(X)=\lim_{\rho\to \infty} \omega_\rho(X)$, we
find $\rho>0$ such that $\omega_\rho(X)<1$. Thus $X\in
[\cC_\rho]_{<1}$, which proves our assertion. Note also  that
$\bigcup\limits_{\rho>0} [\cC_\rho]_{<1}$ is dense (in the norm
topology) in the set $\cC_\infty$  of all $n$-tuples of operators
$(T_1,\ldots, T_n)$  with joint spectral radius $r(T_1,\ldots,
T_n)\leq 1$.

 Now, we introduce  the map $d_K:  [\cC_\infty]_{<1}\times
  [\cC_\infty]_{<1}\to [0,\infty)$ by setting
\begin{equation}
\label{d_K0} d_K(A,B)=\sup_p \|\Re  p(A)-\Re p(B)\|,\qquad A,B\in
[\cC_\infty]_{<1},
\end{equation}
where the supremum is taken over all polynomials $p\in
\CC[X_1,\ldots, X_n]\otimes M_m$, $m\in \NN$, with $\Re p(0)=I$ and
$\Re p\geq 0$ on $[B(\cH)^n]_1$. In what follows we will prove that
$d_K$ is  a metric  and obtain a concrete formula
  in terms of the free pluriharmonic kernel on the open unit ball
  $[\cC_\infty]_{<1}$.

First, we need the following result.

\begin{lemma} \label{ine-P} Let  $G$ be  a free pluriharmonic
function on $[B(\cH)^n]_1$  with coefficients in $B(\cE)$, such that
$G(0)=I$ and $G\geq 0$. If $A,B\in  [\cC_\infty]_{<1}$, then
$$
\|G(A)-G(B)\|\leq \|P_1(A,R)-P_1(B,R)\|,
$$
where where $P_1(X,R)$ is the free pluriharmonic Poisson kernel
defined by
\begin{equation*}
 P_1(X,R) := \sum_{k=1}^\infty \sum_{|\alpha|=k}
   X_{  \alpha}\otimes R_{\tilde\alpha}^* +I\otimes I+
  \sum_{k=1}^\infty \sum_{|\alpha|=k}
  X_{\alpha}^* \otimes R_{\tilde \alpha}, \qquad X\in
  [\cC_\infty]_{<1},
  \end{equation*}
   and  the convergence is in the operator norm topology.
\end{lemma}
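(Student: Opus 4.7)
The plan is to realize $G$ as the image of the free pluriharmonic Poisson kernel under a completely positive unital map applied to the second tensor factor, and then exploit that such maps are completely contractive on the containing operator system.

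First, write $G$ in its standard representation $G(X)=\sum_{k=1}^\infty\sum_{|\alpha|=k}X_\alpha^*\otimes A_{(\alpha)}^* + I\otimes A_{(0)}+\sum_{k=1}^\infty\sum_{|\alpha|=k}X_\alpha\otimes A_{(\alpha)}$ with $A_{(\alpha)}\in B(\cE)$. The condition $G(0)=I$ gives $A_{(0)}=I$, and positivity of $G$ together with Corollary~5.5 of \cite{Po-pluriharmonic} yields a completely positive linear map $\nu:\overline{\cR_n^*+\cR_n}^{\|\cdot\|}\to B(\cE)$ with $\nu(R_{\widetilde\alpha})=A_{(\alpha)}^*$ for $|\alpha|\geq 1$ and $\nu(I)=A_{(0)}=I$; in particular, $\nu$ is unital, and hence completely contractive. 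By the tensor-extension fact already recorded in the proof of Theorem \ref{equivalent}, $\nu$ induces a completely positive (hence completely contractive, since unital) map $\widetilde\nu:=\mathrm{id}\otimes\nu$ on $B(\cH)\,\bar\otimes_{min}\,\overline{\cR_n^*+\cR_n}^{\|\cdot\|}$ into $B(\cH)\,\bar\otimes_{min}\,B(\cE)$.

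Next, I would verify the identity
\begin{equation*}
(\mathrm{id}\otimes\nu)[P_1(X,R)]=G(X),\qquad X\in[\cC_\infty]_{<1}.
\end{equation*}
Since $r(X)<1$, the reconstruction operator $R_X$ has spectral radius strictly less than $1$, so the defining series of $P_1(X,R)$ converge absolutely in the operator norm; applying $\mathrm{id}\otimes\nu$ term by term and using the definitions $\nu(R_{\widetilde\alpha})=A_{(\alpha)}^*$, $\nu(R_{\widetilde\alpha}^*)=A_{(\alpha)}$, and $\nu(I)=I$, the image is exactly the norm-convergent series for $G(X)$ guaranteed by the free pluriharmonic functional calculus on $[\cC_\infty]_{<1}$ recalled in Section~1.

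Finally, linearity gives
\begin{equation*}
G(A)-G(B)=(\mathrm{id}\otimes\nu)\bigl[P_1(A,R)-P_1(B,R)\bigr],
\end{equation*}
and since $\mathrm{id}\otimes\nu$ is completely contractive we conclude $\|G(A)-G(B)\|\le\|P_1(A,R)-P_1(B,R)\|$. The only nontrivial point is the justification of the tensor identity at $X\in[\cC_\infty]_{<1}$, where $\|X\|$ may exceed $1$: the argument works because $\mathrm{id}\otimes\nu$ respects norm-convergent sums in the first tensor leg, and the condition $r(X)<1$ supplies that convergence without any norm restriction. Everything else reduces to the already-established representation theorem for positive free pluriharmonic functions.
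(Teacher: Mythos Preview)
Your proof is correct and follows essentially the same route as the paper: represent $G$ via a unital completely positive map $\nu$ on $\cR_n^*+\cR_n$ (the paper cites Theorem~5.2 of \cite{Po-pluriharmonic}, you cite Corollary~5.5, but the content is the same), realize $G(X)=(\mathrm{id}\otimes\nu)[P_1(X,R)]$ for $X\in[\cC_\infty]_{<1}$, and conclude by complete contractivity of the unital c.p.\ map. Your explicit remark about why the term-by-term application of $\mathrm{id}\otimes\nu$ is legitimate when $r(X)<1$ but possibly $\|X\|\geq 1$ is a useful clarification that the paper leaves implicit.
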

\begin{proof} Since $G$ is a positive free pluriharmonic
function of $[B(\cH)^n]_1$  it  has a unique representation of the
form
$$G(X_1,\ldots, X_n)=\sum_{k=1}^\infty \sum_{|\alpha|=k}
X_\alpha^*\otimes A_{(\alpha)}^* +
    I\otimes I+\sum_{k=1}^\infty \sum_{|\alpha|=k}
      X_\alpha\otimes A_{(\alpha)},\qquad X\in [B(\cH)^n]_1,$$
      for some $A_{(\alpha)}\in B(\cE)$,
where the series  converge in the operator norm  topology. Applying
Theorem 5.2 from \cite{Po-pluriharmonic} to $G$, we  find a
completely positive linear map $\mu:\cR_n^*+ \cR_n\to B(\cE)$ with
$\mu(I)=I$ and $\mu(R_{\tilde \alpha}^*)=A_{(\alpha)}$ if
$|\alpha|\geq 1$.

 Since   $A,B\in [\cC_\rho]_{<1}$, we have  $r(A)<1$ and
$r(B)<1$. According  to the free pluriharmonic functional calculus,
$P_\rho(A,R)$, $P_\rho(B,R)$, $G(A)$, and $G(B)$ are well-defined
and the corresponding series converge  in the operator norm
topology. Consequently, we have
$$ G(A)=(\text{\rm id}\otimes \mu)(P_1(A, R))\quad   \text{ and
}\quad  G(A)=(\text{\rm id}\otimes \mu)(P_1(A, R)).$$
   Taking into account that $\mu$ is completely positive linear map
with $\mu(I)=I$, we have
\begin{equation*}
\begin{split}
\|G(A)-G(B)\|\leq \|\mu\|\|P_1(A,R)-P_1(B,R)\| = \|P_1(A,R)-P_1(B,
R)\|.
\end{split}
\end{equation*}
The proof is complete.
\end{proof}

  According to Lemma \ref{ine-P},  it makes sense
to define the map $d_K':  [\cC_\infty]_{<1}\times
  [\cC_\infty]_{<1}\to [0,\infty)$ by setting
$$d_K'(A,B):=\sup_u \|u(A)-u(B)\| <\infty,
$$
where the supremum is taken over all free pluriharmonic functions
$u$ on $[B(\cH)^n]_1$ with coefficients in $ B(\cE)$,
  such that $  u(0)=I$ and  $u\geq 0$.

 Using the the free pluriharmonic functional calculus for  for $n$-tuples of operators
 $ (T_1,\ldots, T_n)$ with the joint spectral radius $r(T_1,\ldots, T_n)<1$, one can extend
  Proposition 3.1 from  \cite{Po-hyperbolic} and show  that
for any $A,B\in   [\cC_\infty]_{<1} $,
$$d_K'(A,B)=d_K(A,B),
$$
where $d_K$ is defined by relation \eqref{d_K0}.
  Since  the proof is essentially the
same, we shall omit it.

\begin{proposition}\label{d_K}
 $d_K$  is a metric  on $ [\cC_\infty]_{<1}$
satisfying relation
 \begin{equation*}
 d_K
(A,B)=\|P_1(A,R)-P_1(B,R)\|,\qquad A,B\in   [\cC_\infty]_{<1}.
\end{equation*}
 In addition, the map \ $[0,1)\ni r\mapsto d_K(rA,rB)\in \RR^+$ \
is increasing and
$$
d_K(A,B)=\sup_{r\in [0,1)}d_K(rA,rB).
$$
\end{proposition}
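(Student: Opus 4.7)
The plan is to establish the formula $d_K(A,B)=\|P_1(A,R)-P_1(B,R)\|$ first, and then deduce the remaining assertions from it. I would begin by invoking the identification $d_K=d_K'$ noted just above the statement, which is proved by the same argument as Proposition~3.1 in \cite{Po-hyperbolic}, now using the free pluriharmonic functional calculus of Theorem~\ref{funct-calc}. Granted this identification, one direction of the formula is an immediate application of Lemma~\ref{ine-P}: every admissible $G$ in the supremum defining $d_K'$ satisfies $\|G(A)-G(B)\|\leq\|P_1(A,R)-P_1(B,R)\|$, so $d_K'(A,B)\leq\|P_1(A,R)-P_1(B,R)\|$. For the reverse inequality I would observe that $X\mapsto P_1(X,R)$ is itself a positive free pluriharmonic function on $[B(\cH)^n]_1$ with operator-valued coefficients in $B(F^2(H_n))$ and value $I$ at the origin, so plugging $u=P_1(\cdot,R)$ into the supremum defining $d_K'$ yields $d_K'(A,B)\geq\|P_1(A,R)-P_1(B,R)\|$.

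With the formula in hand, the metric axioms are routine: symmetry and the triangle inequality are inherited from the operator norm, and if $d_K(A,B)=0$ then $P_1(A,R)=P_1(B,R)$, so reading off the coefficient of each $R_{\tilde\alpha}^*$ in the norm-convergent series expansion gives $A_\alpha=B_\alpha$ for all $\alpha\in\FF_n^+$, and in particular $A=B$.

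For the monotonicity of $r\mapsto d_K(rA,rB)$, the key step is to construct, for each $t\in[0,1)$, a unital completely positive map
$$
\tau_t:C^*(R_1,\ldots,R_n)\to B(F^2(H_n)),\qquad \tau_t(R_\alpha)=t^{|\alpha|}R_\alpha\quad(\alpha\in\FF_n^+),
$$
which I would obtain from the noncommutative Poisson transform at the strict row contraction $tS=(tS_1,\ldots,tS_n)$ by conjugating with the standard flip unitary $U$ on $F^2(H_n)$ (recalling $US_iU^*=R_i$, so $P_{tS}(URe_\alpha U^*)$ produces the right scaling). For $0\leq s<r<1$, set $t=s/r$; using $(s/r)^{|\alpha|}r^{|\alpha|}=s^{|\alpha|}$, the map $\text{\rm id}\otimes\tau_{s/r}$ sends each term of the norm-convergent series for $P_1(rA,R)-P_1(rB,R)$ to the corresponding term of $P_1(sA,R)-P_1(sB,R)$, and complete contractivity then yields $d_K(sA,sB)\leq d_K(rA,rB)$.

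Finally, for the sup formula, monotonicity gives $\sup_{r\in[0,1)}d_K(rA,rB)=\lim_{r\to1^-}d_K(rA,rB)$. Since $A,B\in[\cC_\infty]_{<1}$ have joint spectral radii strictly less than~$1$, the factorization \eqref{facto} (together with the fact that the spectral radius of $R_X$ equals $r(X)$) shows that the series for $P_1(X,R)$ converges in operator norm at $X=A$ and $X=B$, and that $P_1(rA,R)\to P_1(A,R)$, $P_1(rB,R)\to P_1(B,R)$ in operator norm as $r\to1^-$; the limit is therefore $\|P_1(A,R)-P_1(B,R)\|=d_K(A,B)$. The chief obstacle in the program is the construction of $\tau_t$ and the verification that $\text{\rm id}\otimes\tau_{s/r}$ maps $P_1(rX,R)$ to $P_1(sX,R)$; the remaining steps then follow routinely from the formula for $d_K$.
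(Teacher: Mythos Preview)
Your proposal is correct and follows essentially the same route as the paper, which merely cites Lemma~\ref{ine-P} for the upper bound and defers the rest to Proposition~3.2 of \cite{Po-hyperbolic}. The one cosmetic difference is in the scaling step: elsewhere in the paper (e.g.\ the proof of Theorem~\ref{LCC}) the author first passes from $P_1(X,R)$ to $P_1(X,S)$ and then applies $\text{id}\otimes P_{tR}$, whereas you build $\tau_t$ on $C^*(R_1,\ldots,R_n)$ directly by conjugating $P_{tS}$ with the flip unitary; these are the same map, so nothing substantive changes.
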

\begin{proof} Using Lemma  \ref{ine-P} we  deduce that  $d_K(A,B)\leq
\|P_1(A,R)-P_1(B,R)\|$. The rest of the proof is similar to that of
Proposition 3.2 from \cite{Po-hyperbolic}, so we shall omit it.
\end{proof}

Now, we can prove the main result of this section.

\begin{theorem}
\label{ker-metric} Let $d_K$ be the Carath\' eodory  metric on $
[\cC_\infty]_{<1} $. Then the following statements hold:
\begin{enumerate}
\item[(i)]  the $d_K$-topology coincides with the norm topology on
$ [\cC_\infty]_{<1}$;
\item[(ii)]  $[\cC_\rho]^{\prec\, 0}$ is
 a $d_K$-closed subset of $ [\cC_\infty]_{<1}$ for any $\rho>0$;
\item[(iii)] the metric $d_K$ is complete on $[\cC_\infty]_{<1}$.
\end{enumerate}
\end{theorem}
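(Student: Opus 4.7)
The plan is to reduce parts (i) and (ii) to the formula $d_K(A,B)=\|P_1(A,R)-P_1(B,R)\|$ of Proposition~\ref{d_K} together with the factorization $P_1(X,R)=(I-R_X)^{-1}+(I-R_X^*)^{-1}-I$, which is valid throughout $[\cC_\infty]_{<1}$ since $r(X)<1$ makes $I-R_X$ invertible; the main obstacle will be part (iii), where one must show that the norm limit of a $d_K$-Cauchy sequence has joint spectral radius strictly less than $1$.

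For part (i) I prove both inclusions. If $A^{(k)}\to A$ in operator norm with $A,A^{(k)}\in[\cC_\infty]_{<1}$, then $R_{A^{(k)}}\to R_A$ in norm, and continuity of inversion at the invertible element $I-R_A$ gives $(I-R_{A^{(k)}})^{-1}\to(I-R_A)^{-1}$ in norm, hence $P_1(A^{(k)},R)\to P_1(A,R)$, i.e., $d_K(A^{(k)},A)\to 0$. For the converse, testing $P_1(X,R)$ against the vector $h\otimes e_i$ and pairing with $h'\otimes 1$, and using the vacuum-creation relations ($R_j^*e_i=\delta_{ij}\cdot 1$ and $R_{\tilde\alpha}^*e_i=0$ for $|\alpha|\geq 2$), a short computation yields $\langle P_1(X,R)(h\otimes e_i),h'\otimes 1\rangle=\langle X_ih,h'\rangle$, whence $\|X_i-Y_i\|\leq d_K(X,Y)$ for each $i$, so $d_K$-convergence forces norm convergence.

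For part (ii), by (i) it suffices to show that $[\cC_\rho]^{\prec\,0}=\cC_\rho\cap[\cC_\infty]_{<1}$ is norm-closed in $[\cC_\infty]_{<1}$. If $A^{(k)}\in[\cC_\rho]^{\prec\,0}$ converges in norm to $A\in[\cC_\infty]_{<1}$, Theorem~\ref{ro-contr}(iv) gives, for each $k$ and each $r\in(0,1)$, a positivity inequality satisfied by $A^{(k)}$; norm convergence preserves positivity, so the same inequality holds for $A$. Combined with $r(A)<1\leq 1$, a second application of Theorem~\ref{ro-contr}(iv) yields $A\in\cC_\rho$.

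The heart of the proof is part (iii). Given a $d_K$-Cauchy sequence $\{A^{(k)}\}\subset[\cC_\infty]_{<1}$, part (i) already provides a norm limit $A\in B(\cH)^n$, so the task is to show $r(A)<1$. A direct calculation using $R_{\tilde\alpha}^*(1)=0$ for $|\alpha|\geq 1$ and $R_{\tilde\alpha}(1)=e_\alpha$ gives $P_1(X,R)(h\otimes 1)=\sum_\alpha X_\alpha^*h\otimes e_\alpha$ (convergent in $\cH\otimes F^2(H_n)$) for every $X\in[\cC_\infty]_{<1}$. Hence the operators $\phi_k:\cH\to\cH\otimes F^2(H_n)$ defined by $\phi_k(h):=P_1(A^{(k)},R)(h\otimes 1)$ satisfy $\|\phi_k-\phi_l\|\leq d_K(A^{(k)},A^{(l)})$, so they are operator-norm Cauchy with limit $\phi$; matching Fourier coefficients block by block (using $A^{(k)}_\alpha\to A_\alpha$ in norm for each $\alpha$) gives $\phi(h)=\sum_\alpha A_\alpha^*h\otimes e_\alpha$. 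Letting $E_m$ denote the orthogonal projection of $\cH\otimes F^2(H_n)$ onto $\cH\otimes H_n^{\otimes m}$, we obtain $\|E_m\phi\|^2=\|\sum_{|\alpha|=m}A_\alpha A_\alpha^*\|$. For any $\epsilon>0$ the Cauchy condition furnishes $N$ with $\|\phi-\phi_N\|<\epsilon$, so $\|E_m\phi\|\leq\|E_m\phi_N\|+\epsilon$; since $r(A^{(N)})<1$ forces $\|\sum_{|\alpha|=m}A^{(N)}_\alpha A^{(N)*}_\alpha\|\to 0$ as $m\to\infty$ (Theorem~5.9 of \cite{Po-similarity}), we conclude $\limsup_m\|E_m\phi\|\leq\epsilon$. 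Arbitrariness of $\epsilon$ forces $\|\sum_{|\alpha|=m}A_\alpha A_\alpha^*\|\to 0$, whence $r(A)<1$ and $A\in[\cC_\infty]_{<1}$. Part (i) then yields $d_K(A^{(k)},A)\to 0$, completing the proof.
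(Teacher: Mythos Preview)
Your proof is correct, and for part (iii) it is genuinely different from the paper's argument.

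For part (i), the paper obtains the sharper row-norm estimate $\|A-B\|=\|R_A-R_B\|\leq d_K(A,B)$ via a Fourier-integral identity and the noncommutative von Neumann inequality, whereas you extract the individual entries $X_i$ by testing against $h\otimes e_i$ and $h'\otimes 1$; your weaker bound $\|X_i-Y_i\|\leq d_K(X,Y)$ suffices for the topological statement (with $n$ finite). For part (ii), you reduce directly to the norm-closedness of $\cC_\rho$ (via Theorem~\ref{ro-contr}(iv)), which is cleaner than what the paper does: the paper actually proves a stronger statement in its (ii), namely that any $d_K$-Cauchy sequence in $[\cC_\rho]^{\prec\,0}$ has its norm limit in $[\cC_\rho]^{\prec\,0}$, and it needs this stronger form as the key ingredient for its proof of (iii).

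The main divergence is in part (iii). The paper's route is structural: it uses the Harnack-domination machinery (Theorems~\ref{equivalent}, \ref{A<0}, \ref{foias2}) to show that a $d_K$-Cauchy sequence in $[\cC_\infty]_{<1}$ eventually lies inside a single $[\cC_{\rho_{m_0}}]^{\prec\,0}$, and then invokes its stronger version of (ii). Your route is direct and more elementary: you observe that $P_1(X,R)$ applied to the vacuum column $h\otimes 1$ is exactly the ``Poisson column'' $\sum_\alpha X_\alpha^*h\otimes e_\alpha$, pass to the operator-norm limit $\phi$, identify its Fourier blocks as $A_\alpha^*$, and deduce uniform stability of $A$ by a tail-decay estimate comparing $E_m\phi$ with $E_m\phi_N$. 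This bypasses the $\cC_\rho$-stratification entirely and gives a self-contained proof of completeness. The paper's approach, on the other hand, yields the extra structural information that $d_K$-Cauchy sequences are eventually trapped in a fixed class $\cC_{\rho_{m_0}}$, which is of independent interest.
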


\begin{proof} We recall  that
the free pluriharmonic Poisson kernel is given by
\begin{equation*}
 P_1(X,R) = \sum_{k=1}^\infty \sum_{|\alpha|=k}
   X_{  \alpha}\otimes R_{\tilde\alpha}^* + I\otimes I+
  \sum_{k=1}^\infty \sum_{|\alpha|=k}
  X_{\alpha}^* \otimes R_{\tilde \alpha}, \qquad X\in  [\cC_\infty]_{<1},
  \end{equation*}
   where the convergence is in the operator norm topology.
Let  $R_A:=A_1^*\otimes R_1+\cdots +A_n^*\otimes R_n$ be the
reconstruction operator.  Note that, due to the noncommutative von
Neumann inequality, we have
\begin{equation*}
\begin{split}
\|A-B\|&= \|R_A-R_B\|\\
&=\left\|\frac{1}{2\pi} \int_0^{2\pi} e^{it} [P_1(A,e^{it}
R)-P_1(B,e^{it} R)] dt\right\|\\
&\leq \sup_{t\in[0,2\pi]}\left\|P_1(A,e^{it} R)-P_1(B,e^{it}
R)\right\|\\
&\leq \|P_1(A,R)-P_1(B,R)\|.
\end{split}
\end{equation*}
Now,  Proposition  \ref{d_K} implies
\begin{equation}
\label{A-B} \|A-B\|\leq d_K(A,B), \qquad A,B\in
   [\cC_\infty]_{<1},
\end{equation}
which shows that the $d_K$-topology is stronger then the norm
topology on $ [\cC_\infty]_{<1}$. Conversely, to prove that the norm
topology on $ [\cC_\infty]_{<1}$ is stronger than the
$d_K$-topology, note that since $r(R_A)=r(A)<1$ and $r(R_B)=r(B)<1$,
the operators
 $I-R_A$ and $I-R_B$ are  invertible.
 Thus
\begin{equation*}
d_K(A,B)=\|P_1(A,R)-P_1(B,R)\|\leq 2\|(I-R_A)^{-1}-(I-R_B)^{-1}\|
\end{equation*}
 for any $A,B\in
 [\cC_\infty]_{<1}$.
Hence and due to the continuity of the maps $X\mapsto I-R_X$ on
$B(\cH)^n$ and $Y\mapsto Y^{-1}$ on the group of invertible elements
in $B(\cH\otimes F^2(H_n))$, in the operator norm topology, we
deduce our assertion. In conclusion,  the $d_K$-topology coincides
with the norm topology on $ [\cC_\infty]_{<1}$.

 Now, to prove (ii), let
$\{A^{(k)}:=(A_1^{(k)},\ldots, A_n^{(k)})\}_{k=1}^\infty$ be a
$d_K$-Cauchy sequence  in
 $[\cC_\rho]^{\prec\, 0}\subset \cC_\rho$. Due to inequality \eqref{A-B}, we  deduce that
  $\{A^{(k)}\}_{k=1}^\infty$ is a Cauchy
sequence in the norm topology of $B(\cH)^n$. Since $\cC_\rho$ is
closed in the operator norm topology,
 there exists $T:=(T_1,\ldots, T_n)$ in $\cC_\rho$ such that
$\|T-A^{(k)}\|\to 0$, as $k\to\infty$.

Now let us prove that the joint spectral radius $r(T)<1$. Since
$\{A^{(k)}\}_{k=1}^\infty$  is a $d_K$-Cauchy sequence, there exists
$k_0\in \NN$ such that $d_K(A^{(k)},A^{(k_0)})\leq 1$ for any $k\geq
k_0$. On the other hand, since $A^{(k_0)}\in
 [\cC_\rho]^{\prec\, 0}$,  i.e.,  $A^{(k_0)}
\overset{H}{{ \prec}}\, 0$,  Theorem \ref{equivalent} shows that
there is  $c\geq 1$ such that $P_\rho(rA^{(k_0)},R)\leq c^2\delta$
for any $r\in [0,1)$. Hence,  and due to the noncommutative von
Neumann inequality,  we deduce that
\begin{equation}
\label{ine-rho}
\begin{split}
P_\rho(rA^{(k)}, R)&\leq \left( \|P_\rho(rA^{(k)},
R)-P_\rho(rA^{(k_0)}, R)\|
+\|P_\rho(rA^{(k_0)}, R)\|\right)I\\
&\leq \left(  d_K(A^{(k)},  A^{(k_0)}) +\|P_\rho(rA^{(k_0)},
R)\|\right)I \leq (1+c^2\delta)I
\end{split}
\end{equation}
for any $k\geq k_0$ and $r\in [0,1)$.

We show now that $\lim_{k\to\infty}P_\rho(rA^{(k)}, R)=P_\rho(rT,
R)$ in  the operator norm topology. First, one can easily see that,
since $T,A^{(k)}\in \cC_\rho$, we have
\begin{equation*}
\sum_{|\alpha|=p} T_\alpha T_\alpha^*\leq \rho^2 I\quad \text{ and }
\quad  \sum_{|\alpha|=p}A^{(k)}_\alpha A^{(k)}_\alpha\leq \rho^2 I
\end{equation*}
for any $p,k=1,2,\ldots$. Given $\epsilon>0$ and $r\in (0,1)$,  let
$m\in \NN$ be such that $\sum_{p=m}^\infty \rho
r^p<\frac{\epsilon}{2}$. Note that
\begin{equation*}
\begin{split}
&\|P(rA^{(k)}, R)-P(rT,R)\|\\
&\leq 2\sum_{p=1}^{m-1}\left\|\sum_{|\alpha|=p}r^{|\alpha|}
(A_\alpha^{(k)}-T_\alpha)\otimes R_{\tilde \alpha}^*\right\| +
2\sum_{p=m}^{\infty}\left\|\sum_{|\alpha|=p}r^{|\alpha|}
A_\alpha^{(k)}\otimes R_{\tilde \alpha}^*\right\|   +
2\sum_{p=m}^{\infty}\left\|\sum_{|\alpha|=p}r^{|\alpha|}
T_\alpha\otimes R_{\tilde \alpha}^*\right\|\\
&= 2\left\|\sum_{p=1}^{m-1} r^p \sum_{|\alpha|=p}
(A_\alpha^{(k)}-T_\alpha)(A_\alpha^{(k)}-T_\alpha)^*\right\| +
2\left\|\sum_{p=1}^{m-1} r^p \sum_{|\alpha|=p}
A_\alpha^{(k)}{A_\alpha^{(k)}}^*\right\|  +2\left\|\sum_{p=1}^{m-1}
r^p \sum_{|\alpha|=p} T_\alpha {T_\alpha}^*\right\|\\
&\leq 2\left\|\sum_{p=1}^{m-1} r^p \sum_{|\alpha|=p}
(A_\alpha^{(k)}-T_\alpha)(A_\alpha^{(k)}-T_\alpha)^*\right\|
+\epsilon
\end{split}
\end{equation*}
for any $k=1,2,\ldots$. Since $A^{(k)}\to T$  in the norm topology,
  as $k\to\infty$, and using the results above, one can easily deduce that
   $\lim_{k\to\infty}P_\rho(rA^{(k)}, R)=P_\rho(rT,R)$ for each
   $r\in [0,1)$.
Now, taking $k\to\infty$  in  inequality \eqref{ine-rho}, we obtain
$P_\rho(rT, R)\leq (1+c^2 \delta)I$ for $r\in [0,1)$. Applying
Theorem \ref{equivalent}, we deduce that $T \overset{H}{{ \prec}}\,
0$. Now,  Theorem \ref{A<0} implies $r(T)<1$, which shows that $T$
is in $[\cC_\rho]^{\prec\, 0}$ and, therefore, in
$[\cC_\infty]_{<1}$, which  proves part (ii).

It remains to  prove part (iii). To this end, let
$\{A^{(k)}:=(A_1^{(k)},\ldots, A_n^{(k)})\}_{k=1}^\infty$ be a
$d_K$-Cauchy sequence  in
 $[\cC_\infty]_{<1}$. Given $\epsilon>0$, there exists $k_0\geq 1$
 such that  $d_K(A^{(k)}, A^{(j)})<\epsilon $ for any $k,j\geq k_0$.
 Then we have
 \begin{equation}
 \label{c}
 d_K(A^{(k)},0)\leq c:= d_K(A^{(k_0)},0)+\epsilon \quad  \text{ for
 any }\ k\geq k_0.
 \end{equation}
Hence, and due to the definition of $d_K$, we have
$\|u(A^{(k)})-u(0)\|\leq c$ and, consequently,
$$
u(A^{(k)})\leq (\|u(A^{(k)}-u(0)\|+1)I\leq (c+1)u(0)\quad \text{ for
any } \ k\geq k_0
$$
and  for any positive free pluriharmonic function $u$ on
$[B(\cH)^n]_1$ with coefficients in $B(\cE)$  such that $u(0)=I$.

Now,  for each $k\geq k_0$,  fix $\rho_k\geq 1$ such that
$A^{(k)}\in [\cC_{\rho_k}]^{\prec\, 0}$. Note that the inequality
above implies
$$
u(A^{(k)})+(\rho_k-1)u(0)\leq  \rho_k(c+1)u(0)
$$
for all $k\geq k_0$. Applying Theorem \ref{equivalent} and using
relation  \eqref{c}, we obtain
$$
\|L_{0,A^{(k)}}\|^2\leq d_K(A^{(k_0)},0)+\epsilon +1,\qquad k\geq
k_0.
$$
Consequently, we have
\begin{equation}
\label{epsi0} 1\leq \epsilon_0:=\sup_{k\geq k_0} \|L_{0,A^{(k)}}
\|^2<\infty.
\end{equation}
Since $\{A^{(k)}\}$ is a $d_K$-Cauchy sequence, there exists
$m_0\geq k_0$ such  that $d_K(A^{(m')},
A^{(m)})<\frac{1}{2\epsilon_0}$ for any $m,m'\geq m_0$. Using now
relation  \eqref{epsi0}, we obtain
\begin{equation}
\label{DL} d_K(A^{(m)},
A^{(m_0)})<\frac{1}{2\|L_{0,A^{(m_0)}}\|^2},\qquad k\geq m_0.
\end{equation}
Since $A^{(m_0)}\in [\cC_{\rho_{m_0}}]^{\prec\, 0}$, Theorem
\ref{A<0} implies $r(A^{(m_0)})<1$. On the other hand, since
$\lim_{\rho\to \infty}\omega_\rho(A^{(m_0)})=r(A^{(m_0)})<1$, there
exists $\rho_{m_0}>0$ such that $\omega_{\rho_{m_0}}(A^{(m_0)})<1$
for any $\rho\geq \rho_{m_0}$. We can assume that
\begin{equation}
\label{rho-LL} \rho_{m_0}\geq
\frac{\|L_{A^{(m_0)},0}\|^2}{\|L_{0,A^{(m_0)}}\|^2}.
\end{equation}
Using Proposition \ref{d_K} and relation \eqref{DL}, we deduce that
\begin{equation}
\label{PPL2}
 P_{\rho_{m_0}}(A^{(m_0)},R)\leq P_{\rho_{m_0}}(A^{(k)},R)
 +\frac{1}{2\|L_{0,A^{(k)}}\|^2} I,\qquad k\geq m_0.
\end{equation}
On the other hand, since $\omega_{\rho_{m_0}}(A^{(m_0)})<1$, Theorem
\ref{foias2} implies $A^{(m_0)}\overset{H}{\sim}\, 0$ in
$\cC_{\rho_{m_0}}$. Consequently, we have  $0\overset{H}{{ \prec}}\,
A^{(m_0)}$, which due to Theorem \ref{equivalent}, implies
\begin{equation*}
\rho_{m_0} I=P_{\rho_{m_0}}(0,R)\leq \|L_{A^{(m_0)}, 0}\|^2
P_{\rho_{m_0}}(A^{(m_0)},R).
\end{equation*}
Combining this with  relation \eqref{rho-LL}, we get
$$
P_{\rho_{m_0}}(A^{(m_0)},R)\geq \frac{1}{\|L_{0,A^{(m_0)}}\|^2}I.
$$
Hence, and due to \eqref{PPL2}, we have
$$
P_{\rho_{m_0}}(A^{(k)},R)\geq  \frac{1}{2\|L_{0,A^{(m_0)}}\|^2}I\geq
\frac  {1}{2\epsilon_0} I.
$$
Applying  Theorem \ref{foias2}, we deduce that
$A^{(k)}\overset{H}{\sim}\, 0$ and $A^{(k)}\in \cC_{\rho_{m_0}}$.
Therefore, $A^{(k)}\in [\cC_{\rho_{m_0}}]^{\prec\, 0}$ for all
$k\geq m_0$ and the sequence $\{A^{(k)}\}_{k\geq m_0}$ is a
$d_K$-Cauchy sequence  in $[\cC_{\rho_{m_0}}]^{\prec\, 0}$. Due to
part (ii), there exists $A\in [\cC_{\rho_{m_0}}]^{\prec\, 0}\subset
[\cC_\infty]_{<1}$ such that $d_K(A^{(k)}, A)\to 0$, as $k\to
\infty$, which proves that $d_K$ is a complete metric on
$[\cC_\infty]_{<1}$. The proof is complete.
\end{proof}

We can provide now a class of Lipschitz  functions  with respect to
the Carath\' eodory metric on $[\cC_\infty]_{<1}$.

\begin{theorem}\label{Lipsc} Let $f:=(f_1,\ldots, f_m)$ be a
contractive  free holomorphic function with $\|f(0)\|<1$ such that
the boundary functions $\widetilde f_1,\ldots, \widetilde f_m$ are
in the noncommutative disc algebra $\cA_n$. Then
$$
d_K(f(A),f(B))\leq \frac{1+\|f(0)\|}{1-\|f(0)\|} d_K(A,B)
$$
for any  $n$-tuples $A:=(A_1,\ldots, A_n)$ and $B:=(B_1,\ldots,
B_n)$ in $[\cC_\infty]_{<1}$.
\end{theorem}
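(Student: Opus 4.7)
The plan is to reduce the inequality to Lemma~\ref{ine-P} via composition with $f$, using Harnack's inequality to control the value of the composed function at the origin.

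Fix a noncommutative polynomial $p\in\CC[Z_1,\ldots,Z_m]\otimes M_k$, $k\in\NN$, with $\Re p(0)=I$ and $\Re p\geq 0$ on $[B(\cK)^m]_1$. Since $A,B\in[\cC_\infty]_{<1}$ have joint spectral radii strictly less than $1$, Theorem~\ref{radius} gives $r(f(A))<1$ and $r(f(B))<1$, so $\Re p(f(A))$ and $\Re p(f(B))$ are well-defined via the free pluriharmonic functional calculus. As in the proof of Theorem~\ref{rho-f}, the composition $u:=\Re p\circ f=\Re(p\circ f)$ is a positive free pluriharmonic function on $[B(\cH)^n]_1$ with coefficients in $M_k$, whose boundary function lies in the norm closure of $\cA_n(M_k)^*+\cA_n(M_k)$; moreover $u(X)=\Re p(f(X))$ for every $X\in[\cC_\infty]_{<1}$ by the composition rule for the functional calculus. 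Applying the Harnack inequality \eqref{Rep>0} to $\Re p$ at the point $f(0)$, together with $\Re p(0)=I$, yields
$$
u(0)=\Re p(f(0))\leq\frac{1+\|f(0)\|}{1-\|f(0)\|}\,I=:c\,I.
$$

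The key auxiliary step is to show that any positive free pluriharmonic function $v$ on $[B(\cH)^n]_1$ with coefficients in $M_k$ satisfying $v(0)\leq cI$ obeys
$$
\|v(A)-v(B)\|\leq c\,d_K(A,B).
$$
When $v(0)$ is invertible, the normalized function $\tilde v(X):=v(0)^{-1/2}v(X)v(0)^{-1/2}$ is positive free pluriharmonic with $\tilde v(0)=I$, so Lemma~\ref{ine-P} together with Proposition~\ref{d_K} gives $\|\tilde v(A)-\tilde v(B)\|\leq d_K(A,B)$; conjugating by $v(0)^{1/2}$ returns $\|v(A)-v(B)\|\leq\|v(0)\|\,d_K(A,B)\leq c\,d_K(A,B)$. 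If $v(0)$ is singular, apply the previous case to $v+\varepsilon I$ (still positive pluriharmonic, with $(v+\varepsilon I)(0)=v(0)+\varepsilon I$ invertible and the increment $v(A)-v(B)$ unchanged) and let $\varepsilon\to 0^+$.

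Taking $v=u$ produces $\|\Re p(f(A))-\Re p(f(B))\|\leq c\,d_K(A,B)$ for every admissible $p$; the supremum of the left-hand side over such $p$ is exactly $d_K(f(A),f(B))$, which gives the claim. The one place that will require care is the verification that $u=\Re p\circ f$ satisfies the hypotheses of Lemma~\ref{ine-P} (positivity on $[B(\cH)^n]_1$ plus the norm-convergent canonical series representation) and that the composition identity $u(X)=\Re p(f(X))$ passes cleanly through the functional calculus of Theorem~\ref{funct-calc}; both facts are essentially contained in the arguments already used to establish Theorem~\ref{rho-f}.
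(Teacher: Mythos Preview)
Your proposal is correct and follows essentially the same route as the paper: normalize the positive pluriharmonic function $\Re p\circ f$ by conjugating with $[\Re p(f(0))]^{-1/2}$, apply the definition of $d_K$ (equivalently Lemma~\ref{ine-P} and Proposition~\ref{d_K}) to the normalized function, and recover the Lipschitz constant from the upper Harnack bound on $\Re p(f(0))$. The only cosmetic difference is that the paper notes immediately, via the \emph{lower} bound in \eqref{Rep>0}, that $\Re p(f(0))\geq\frac{1-\|f(0)\|}{1+\|f(0)\|}I$ is already invertible, so your singular-$v(0)$ case and the $\varepsilon$-perturbation are unnecessary here.
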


\begin{proof} According to the maximum principle for free
holomorphic functions with operator-valued coefficients (see Proposition 5.2 from \cite{Po-automorphism}), the
condition $\|f(0)\|<1$ implies that $\|f(X)\|<1$, $X\in
[B(\cH)^n]_1$. If $u$ is a  free pluriharmonic function  on
$[B(\cH)^m]_1$, then Theorem 1.1 from \cite{Po-holomorphic.II} shows
that $u\circ f$ is a free pluriharmonic function on $[B(\cH)^n]_1$.
If, in addition, $u$ is positive, then $u\circ f$ is  also positive.

Assume now that $A$ and $B$ are in $[\cC_\infty]_{<1}$.  Due to
Theorem \ref{radius}, $f(A)$ and $f(B)$ are in $[\cC_\infty]_{<1}$.
Let  $p\in \CC[X_1,\ldots, X_m]\otimes M_k$, $k\in \NN$, be a
matrix-valued noncommutative polynomial with $\Re p(0)=I$ and $\Re
p\geq 0$ on $[B(\cH)^m]_1$. According  to the Harnack type
inequality  \eqref{Rep>0}, we have
\begin{equation*}
  \frac{1-\|f(0)\|}{1+\|f(0)\|}I\leq \Re p(f(0))\leq
 \frac{1+\|f(0)\|}{1-\|f(0)\|}I.
\end{equation*}
Since $\|f(0)\|<1$, we deduce that $\Re p(f(0))$ is a positive
invertible operator of the form $I_\cH\otimes A$ for some $A\in
M_k$. Define the mapping $h:[B(\cH)^n]_1\to
B(\cH)\bar\otimes_{min}M_k$ by setting
$$
h(X):=[\Re p(f(0))]^{-1/2} \Re p(f(X))[\Re p(f(0))]^{-1/2},\qquad
X\in [B(\cH)^n]_1.
$$
Note that $h$ is a positive free pluriharmonic function on $
[B(\cH)^n]_1$ with coefficients in $M_k$  with the property  that
$h(0)=I$. Now, using the above-mentioned Harnack type inequality, we
have
\begin{equation*}
\begin{split}
&\|\Re  p(f(A))-\Re p(f(B))\|\\
&\qquad \leq \|[\Re p(f(0))]^{1/2}\| \left\|[\Re
p(f(0))]^{-1/2}\left(\Re p(f(A))-\Re p(f(B))\right)[\Re
p(f(0))]^{1/2}\right\|\|[\Re p(f(0))]^{1/2}\|\\
&\qquad \leq \|[\Re p(f(0))]\|\|h(A)-h(B)\|\\
&\qquad \frac{1+\|f(0)\|}{1-\|f(0)\|} d_K(A,B).
\end{split}
\end{equation*}
Taking the supremum over all polynomials  $p\in \CC[X_1,\ldots,
X_m]\otimes M_k$, $k\in \NN$, with $\Re p(0)=I$ and $\Re p\geq 0$ on
$[B(\cH)^m]_1$, we obtain
$$
d_K(f(A),f(B))\leq \frac{1+\|f(0)\|}{1-\|f(0)\|} d_K(A,B),
$$
which completes the proof.
\end{proof}

\begin{corollary} Let $f:=(f_1,\ldots, f_m)$ be a
contractive  free holomorphic function with $f(0)=0$  such that the
boundary functions $\widetilde f_1,\ldots, \widetilde f_m$ are in
the noncommutative disc algebra $\cA_n$. Then
$$
d_K(f(A),f(B))\leq  d_K(A,B)
$$
for any $A,B\in [\cC_\infty]_{<1}$.
\end{corollary}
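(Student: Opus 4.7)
The plan is to derive this as an immediate specialization of the preceding theorem. Since $f$ is a contractive free holomorphic function with boundary functions in $\cA_n$ and $f(0)=0$, in particular $\|f(0)\|=0<1$, so the hypotheses of Theorem \ref{Lipsc} are satisfied. Applying that theorem directly yields
\begin{equation*}
d_K(f(A),f(B))\leq \frac{1+\|f(0)\|}{1-\|f(0)\|} d_K(A,B)
\end{equation*}
for all $A,B\in[\cC_\infty]_{<1}$. Substituting $\|f(0)\|=0$ gives the Lipschitz constant $\frac{1+0}{1-0}=1$, and the corollary follows.

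There is essentially no obstacle here: the entire content is the observation that the Lipschitz constant in Theorem \ref{Lipsc} degenerates to $1$ when $f$ fixes the origin, so no new estimate or construction is required. One might remark, for context, that this is the natural Schwarz-type statement matching the hyperbolic metric result in Corollary \ref{f(0)}: a free holomorphic self-map fixing the origin is non-expansive with respect to the invariant metric under consideration, here the Carath\'eodory metric $d_K$ on $[\cC_\infty]_{<1}$.
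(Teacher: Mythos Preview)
Your proof is correct and is exactly the intended argument: the paper states this corollary without proof because it is an immediate specialization of Theorem \ref{Lipsc} with $\|f(0)\|=0$, giving Lipschitz constant $1$.
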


We remark that, using Corollary \ref{T,0} and the remarks preceding
Corollary \ref{T,02}, one can easily obtain
 the following result, which  provides a simple example when
  the inequality of Theorem \ref{Lipsc} is   an equality.

\begin{corollary} \label{T,04}
 If $1\leq m<n$, let  $ A:=(A_1,\ldots, A_m)\in B(\cH)^m$ and $
B:=(B_1,\ldots, B_m)\in B(\cH)^m$ be
 in  $[\cC_\infty]_{<1}$ and   let  $\widetilde A:=(A_1,\ldots, A_m,0,\ldots, 0)$
and $\widetilde B:=(B_1,\ldots, B_m,0,\ldots, 0)$ be their
extensions    in $B(\cH)^n$, respectively.
   Then
$$
d_K(A,B)=d_K(\widetilde A, \widetilde B).
$$
\end{corollary}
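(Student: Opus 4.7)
The plan is to derive the equality from a natural bijective correspondence between the two classes of test functions appearing in the definitions of $d_K(A,B)$ and $d_K(\widetilde A,\widetilde B)$. First, by Corollary \ref{T,0}, the joint spectral radii satisfy $r(A)=r(\widetilde A)$ and $r(B)=r(\widetilde B)$, so $A,B\in[\cC_\infty]_{<1}$ in $B(\cH)^m$ if and only if $\widetilde A,\widetilde B\in[\cC_\infty]_{<1}$ in $B(\cH)^n$; in particular both quantities are well-defined. I will use the formulation
$$
d_K(X,Y)=\sup_u\|u(X)-u(Y)\|,
$$
where the supremum ranges over all positive free pluriharmonic functions $u$ on the appropriate noncommutative unit ball, with operator-valued coefficients and $u(0)=I$ (this is the equivalent characterization established right after the definition of $d_K$).

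To prove $d_K(A,B)\leq d_K(\widetilde A,\widetilde B)$, I would fix a positive free pluriharmonic function $u$ on $[B(\cK)^m]_1$ with $u(0)=I$ and extend it trivially by setting $g(X_1,\ldots,X_n):=u(X_1,\ldots,X_m)$. The remarks immediately preceding Corollary \ref{T,02} ensure that $g$ is a positive free pluriharmonic function on $[B(\cK)^n]_1$ with $g(0)=I$. Writing out the standard series for $u$ and $g$ and using that $\widetilde A_\alpha=0$ for every $\alpha\in\FF_n^+\setminus\FF_m^+$ (since $A_{m+1}=\cdots=A_n=0$), the series for $g(\widetilde A)$ collapses term-by-term onto the series for $u(A)$; convergence in operator norm of both sides is guaranteed by $r(A)<1$ via the free pluriharmonic functional calculus. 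Hence $g(\widetilde A)=u(A)$ and, analogously, $g(\widetilde B)=u(B)$, so
$$
\|u(A)-u(B)\|=\|g(\widetilde A)-g(\widetilde B)\|\leq d_K(\widetilde A,\widetilde B),
$$
and taking the supremum over $u$ yields the inequality.

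For the reverse inequality $d_K(\widetilde A,\widetilde B)\leq d_K(A,B)$, I would start with a positive free pluriharmonic function $\widetilde u$ on $[B(\cK)^n]_1$ with $\widetilde u(0)=I$ and define $u(X_1,\ldots,X_m):=\widetilde u(X_1,\ldots,X_m,0,\ldots,0)$. Again by the remarks preceding Corollary \ref{T,02}, $u$ is a positive free pluriharmonic function on $[B(\cK)^m]_1$ with $u(0)=I$. The same series-collapsing argument gives $\widetilde u(\widetilde A)=u(A)$ and $\widetilde u(\widetilde B)=u(B)$, whence $\|\widetilde u(\widetilde A)-\widetilde u(\widetilde B)\|\leq d_K(A,B)$; taking the supremum over $\widetilde u$ completes the proof.

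The only real content is the bookkeeping identity $\widetilde u(\widetilde A)=u(A)$, and this is essentially free once one has the norm-convergent functional calculus for $n$-tuples with joint spectral radius strictly less than one. No substantial obstacle arises; the corollary is in effect the statement that the variational description of $d_K$ is \emph{insensitive} to padding the argument with zero entries, because the bijection $u\leftrightarrow g$ (respectively $\widetilde u\leftrightarrow u$) matches the relevant test functions in the two dimensions exactly.
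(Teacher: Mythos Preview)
Your proof is correct and follows essentially the same route the paper indicates: invoke Corollary~\ref{T,0} for well-definedness and then use the remarks preceding Corollary~\ref{T,02} (the extension/restriction correspondence between positive free pluriharmonic functions in $m$ and $n$ variables) together with the $d_K'$-formulation to match test functions and obtain both inequalities. One small quibble: the two maps you use (trivial extension $u\mapsto g$ and restriction $\widetilde u\mapsto u$) are not mutual inverses, so calling this a ``bijective correspondence'' is imprecise---but your actual argument only needs that each direction dominates the other, and that is exactly what you show.
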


 According to Theorem \ref{ker-metric},    the $d_K$-topology coincides with the norm
topology on $ [\cC_\infty]_{<1}$. Due to Theorem \ref{Lipsc}, we
deduce the following result.

\begin{corollary}  Let $f:=(f_1,\ldots, f_m)$ be a
contractive  free holomorphic function with $\|f(0)\|<1$ such that
the boundary functions $\widetilde f_1,\ldots, \widetilde f_m$ are
in the noncommutative disc algebra $\cA_n$. Then the map
$$
[\cC_\infty]_{<1}\ni (T_1,\ldots, T_n)\mapsto f(T_1,\ldots, T_n)\in
[\cC_\infty]_{<1}
$$
is continuous in the operator norm topology, where
$[\cC_\infty]_{<1}$ is the corresponding ball in $B(\cH)^n$ and
$B(\cH)^m$, respectively.
\end{corollary}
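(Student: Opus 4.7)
The corollary is a near-immediate consequence of the two preceding results and essentially packages them together; there is no substantive obstacle. The plan is as follows.

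First, I would verify that the map is well-defined, i.e.\ that $f(T_1,\ldots,T_n)\in [\cC_\infty]_{<1}\subset B(\cH)^m$ whenever $(T_1,\ldots,T_n)\in[\cC_\infty]_{<1}\subset B(\cH)^n$. This is exactly the content of the spectral von Neumann inequality in Theorem \ref{radius}: under the stated hypotheses on $f$, the joint spectral radius $r(T_1,\ldots,T_n)<1$ implies $r(f(T_1,\ldots,T_n))<1$.

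Next, I would invoke Theorem \ref{Lipsc} to obtain the Lipschitz bound
\[
 d_K(f(A),f(B))\leq \frac{1+\|f(0)\|}{1-\|f(0)\|}\,d_K(A,B),\qquad A,B\in [\cC_\infty]_{<1}.
\]
Since $\|f(0)\|<1$, the constant $\frac{1+\|f(0)\|}{1-\|f(0)\|}$ is finite, so this inequality says the map $f$ is Lipschitz — hence in particular continuous — from $([\cC_\infty]_{<1},d_K)\subset B(\cH)^n$ to $([\cC_\infty]_{<1},d_K)\subset B(\cH)^m$.

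Finally, I would apply Theorem \ref{ker-metric}(i), which asserts that on $[\cC_\infty]_{<1}$ the $d_K$-topology coincides with the operator norm topology. Applying this both to the source ball (in $B(\cH)^n$) and to the target ball (in $B(\cH)^m$) converts the $d_K$-continuity established above into operator norm continuity, which is precisely the conclusion of the corollary. The only thing to remark is that Theorem \ref{ker-metric}(i) was proved in exactly the same form for both $n$- and $m$-tuples, so no additional argument is needed to handle the two distinct ambient spaces.
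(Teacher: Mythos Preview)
Your proposal is correct and follows essentially the same route as the paper, which deduces the corollary directly from Theorem~\ref{Lipsc} together with Theorem~\ref{ker-metric}(i); the well-definedness step you spell out via Theorem~\ref{radius} is already absorbed into the proof of Theorem~\ref{Lipsc}.
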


\bigskip

\section{Three metric topologies on  Harnack parts of $\cC_\rho$}

In this section we study the relation between the
$\delta_\rho$-topology, the $d_K$-topology, and the operator norm
topology on  Harnack parts of $\cC_\rho$. We prove that the
hyperbolic metric  $\delta_\rho$ is a complete metric on certain
Harnack parts of $\cC_\rho$, and that all the three  topologies
coincide on $[\cC_\rho]_{<1}$. In particular, we  prove that  the
hyperbolic metric $\delta_\rho$ is complete on the open unit unit
ball $[\cC_\rho]_{<1}$, while the other two metrics are not
complete.

First, we mention   another formula for the hyperbolic distance that
will be used to prove the main result of this section.
  If $f\in \cA_n\bar \otimes_{min} M_m$, $m\in
\NN$, then  we call  $\Re f$  strictly positive and denote $\Re f>0$
if there exists a constant $a>0$ such that $\Re \,f\geq aI$. We
remark that, in this case, if
 $(T_1,\ldots, T_n)\in \cC_\rho$, then, using the  functional calculus
 for the class $\cC_\rho$, we deduce that
 $$
 \Re f(T_1,\ldots, T_n)+(\rho
-1)\Re f(0)\geq \rho aI.
$$

The proof of the next result is similar to that of Proposition  3.5
from \cite{Po-hyperbolic}, but uses  the  functional calculus
 for the class $\cC_\rho$ and Theorem \ref{equivalent} of the
present paper. We shall omit it.

\begin{proposition}\label{delta-form}
Let   $A:=(A_1,\ldots, A_n)$ and $B:=(B_1,\ldots, B_n)$ be   in $
\cC_\rho$ such that $A\overset{H}{\sim}\, B$. Then

\begin{equation}
\label{de-sup} \delta_\rho(A,B)=\frac{1}{2}\sup\left|
\ln\frac{\left<[\Re f(A_1,\ldots, A_n)+(\rho -1)\Re
f(0)]x,x\right>}{\left<[\Re f(B_1,\ldots, B_n)+(\rho -1)\Re
f(0)]x,x\right>}\right|,
\end{equation}
where the supremum is taken over all $f\in \cA_n\otimes M_m$, $m\in
\NN$, with $\Re f>0$ and $x\in \cH\otimes \CC^m$ with $x\neq 0$.

\end{proposition}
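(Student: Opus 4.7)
The plan is to reduce the identity to Theorem \ref{equivalent} via the free pluriharmonic functional calculus for $\cC_\rho$, closely paralleling Proposition 3.5 of \cite{Po-hyperbolic}. The starting observation is that for any pair of strictly positive operators $P,Q$ on a Hilbert space $\cK$ and any $c>0$,
\begin{equation*}
\tfrac{1}{c^{2}}Q\leq P\leq c^{2}Q\quad\Longleftrightarrow\quad
\sup_{0\neq x\in\cK}\left|\ln\tfrac{\langle Px,x\rangle}{\langle Qx,x\rangle}\right|\leq 2\ln c.
\end{equation*}
I will apply this with $P:=\Re f(A)+(\rho-1)\Re f(0)$ and $Q:=\Re f(B)+(\rho-1)\Re f(0)$ acting on $\cH\otimes\CC^{m}$, which are \emph{strictly} positive by the functional calculus remark preceding the statement: if $\Re f\geq aI$ with $a>0$ and $(T_1,\ldots,T_n)\in\cC_\rho$, then $\Re f(T_1,\ldots,T_n)+(\rho-1)\Re f(0)\geq \rho aI$. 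So the ratio in the log is well-defined and both directions of the inequality make sense.

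Next I would fix $c>1$ and show that $A\overset{H}{\underset{c}{\sim}}B$ is equivalent to the two-sided inequality $\tfrac{1}{c^{2}}Q\leq P\leq c^{2}Q$ holding for \emph{every} matrix-valued $f\in\cA_{n}\otimes M_{m}$, $m\in\NN$, with $\Re f>0$. One direction uses the remark following Theorem \ref{equivalent} (strengthened by the functional calculus of Theorem \ref{funct-calc}): Harnack domination $A\overset{H}{\underset{c}{\prec}}B$ is equivalent to the inequality $\Re f(A)+(\rho-1)\Re f(0)\leq c^{2}[\Re f(B)+(\rho-1)\Re f(0)]$ for all $f$ in $\overline{\cA_n(\cE)^*+\cA_n(\cE)}^{\|\cdot\|}$ (equivalently, in $\cA_n(\cE)$) with $\Re f\geq 0$. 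Specializing to $f\in\cA_n\otimes M_m$ with $\Re f>0$ gives one containment; swapping $A$ and $B$ gives the other, so the two inequalities together encode $A\overset{H}{\underset{c}{\sim}}B$. For the reverse direction, I would pass from any matrix polynomial $p$ with $\Re p\geq 0$ to $p+\varepsilon I$ (which has $\Re(p+\varepsilon I)>0$), apply the hypothesis, and let $\varepsilon\downarrow 0$; this recovers the defining condition of Harnack equivalence.

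Combining these equivalences, $A\overset{H}{\underset{c}{\sim}}B$ holds iff
\begin{equation*}
\sup\left|\ln\frac{\langle[\Re f(A)+(\rho-1)\Re f(0)]x,x\rangle}{\langle[\Re f(B)+(\rho-1)\Re f(0)]x,x\rangle}\right|\leq 2\ln c,
\end{equation*}
where the supremum is over the class described in the proposition. Denote this supremum by $2S$. Then the infimum of such $c>1$ is exactly $e^{S}$, provided $S>0$; if $S=0$ the infimum is $1$. In either case,
\begin{equation*}
\delta_{\rho}(A,B)=\ln\inf\{c>1:A\overset{H}{\underset{c}{\sim}}B\}=S,
\end{equation*}
which is formula \eqref{de-sup}.

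The main technical obstacle is the passage between the polynomial class (with $\Re p\geq 0$) used to define the Harnack preorder and the strictly positive class in $\cA_n\otimes M_m$ used in the supremum. The $\varepsilon$-perturbation argument above handles one direction; the other direction relies on the already-established equivalence of Theorem \ref{equivalent} combined with the norm density of matrix polynomials in $\cA_n\otimes M_m$ and continuity of $\Re f(\cdot)$ under the functional calculus on $\cC_\rho$. Once this density and positivity bookkeeping is in place, the rest is the elementary operator-inequality $\Leftrightarrow$ quadratic-form equivalence used in the first paragraph.
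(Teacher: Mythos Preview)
Your proposal is correct and follows exactly the approach the paper indicates: the paper omits the proof and states it is ``similar to that of Proposition 3.5 from \cite{Po-hyperbolic}, but uses the functional calculus for the class $\cC_\rho$ and Theorem \ref{equivalent},'' which is precisely what you do. Your handling of the two technical passages---extending from polynomials with $\Re p\geq 0$ to $f\in\cA_n\otimes M_m$ with $\Re f>0$ via the remark after Theorem \ref{equivalent}, and going back via the $\varepsilon$-perturbation $p\mapsto p+\varepsilon I$---is correct and matches the intended argument.
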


We remark that, under the conditions of Proposition
\ref{delta-form}, one can  also prove that relation \eqref{de-sup}
holds if the supremum is taken over all noncommutative polynomials
  $f\in
\CC[X_1,\ldots, X_n]\otimes M_m$, $m\in \NN$, with $\Re  f> 0$, and
$x\in \cH\otimes \CC^m$ with $x\neq 0$.

The main result of this section is the following.

 \begin{theorem}
\label{topology} Let  $\delta_\rho$, $\rho>0$,  be the  hyperbolic
metric on
  a Harnack part $\Delta$ of
 $[\cC_\rho]^{\prec\, 0}$. Then  the following properties hold:
 \begin{enumerate}
 \item[(i)]
 $\delta_\rho$ is complete on
 $\Delta$;
\item[(ii)]
the $\delta_\rho$-topology is stronger then the $d_K$-topology on
$\Delta$;
\item[(iii)]
the $\delta_\rho$-topology, the $d_K$-topology, and the operator
norm topology coincide on  $[\cC_\rho]_{<1}$;
\item[(iv)] $[\cC_\rho]_{<1}$ is complete relative the  hyperbolic
metric, but not complete with respect to the  Carath\' eodory metric
$d_K$ and the operator metric.
\end{enumerate}
\end{theorem}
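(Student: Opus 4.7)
The plan is to establish the four parts in the order (ii), (i), (iii), (iv), since (i) builds on (ii) combined with the completeness in Theorem 5.2, and (iii) needs both (ii) and the explicit hyperbolic formula of Theorem 3.5. For (ii), Proposition 6.1 yields, for every matrix-valued $f$ with $\Re f>0$ (and by the perturbation $f\mapsto f+\epsilon$ for $\Re f\ge 0$), the self-adjoint estimate
$$\|\Re f(A)-\Re f(B)\|\le (e^{2\delta_\rho(A,B)}-1)\,\|\Re f(B)+(\rho-1)\Re f(0)\|.$$
Restricting to $f$ with $\Re f(0)=I$ and $\Re f\ge 0$, and using that $\Delta\subset[\cC_\rho]^{\prec 0}\subset[\cC_\infty]_{<1}$ together with Theorem 5.2 (so $d_K(B,0)<\infty$), one obtains the uniform bound $\|\Re f(B)+(\rho-1)I\|\le 1+|\rho-1|+d_K(B,0)$. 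Taking the sup over such $f$ produces $d_K(A,B)\le(e^{2\delta_\rho(A,B)}-1)(1+|\rho-1|+d_K(B,0))$, which proves (ii).

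For (i), I would take a $\delta_\rho$-Cauchy sequence $\{A^{(k)}\}$ in $\Delta$; by (ii) it is $d_K$-Cauchy, so Theorem 5.2 (parts (i) and (iii)) produces a norm-limit $A\in[\cC_\infty]_{<1}$, and norm-closedness of $\cC_\rho$ places $A$ in $\cC_\rho$. To put $A$ in $\Delta$, fix $c>1$ and $k_0$ with $A^{(k)}\overset{H}{\underset{c}\sim}A^{(j)}$ for $j,k\ge k_0$, and pass $j\to\infty$ in the two-sided Harnack inequality for matrix-valued polynomials $p$ with $\Re p\ge 0$ (using norm-continuity of $A\mapsto p(A)$); this yields $A^{(k)}\overset{H}{\underset{c}\sim}A$, so $A\in\Delta$. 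Replaying the same argument with $c=e^\epsilon$ and $k_0(\epsilon)$ gives $\delta_\rho(A^{(k)},A)\le\epsilon$, establishing $\delta_\rho$-completeness on $\Delta$.

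For (iii), Theorem 5.2(i) already identifies the $d_K$- and norm-topologies on $[\cC_\infty]_{<1}$, and (ii) gives that $\delta_\rho$ is finer than $d_K$. The missing implication — norm convergence $A^{(k)}\to A$ in $[\cC_\rho]_{<1}$ forces $\delta_\rho(A^{(k)},A)\to 0$ — is the technical heart of the theorem, and I would argue it via the formula in Theorem 3.5. By Theorem 3.2(iii), $P_\rho(A,R)\ge a_0 I$ for some $a_0>0$; using the factorization $P_\rho(X,R)=(I-R_X^*)^{-1}\Delta_{\rho,X}^2(I-R_X)^{-1}$ together with $r(R_A)=r(A)<1$, the map $X\mapsto C_{\rho,X}$ is norm-continuous near $A$, and moreover $\|C_{\rho,A^{(k)}}^{-1}\|^2=\|P_\rho(A^{(k)},R)^{-1}\|\le 2/a_0$ for large $k$, so $C_{\rho,A^{(k)}}^{-1}\to C_{\rho,A}^{-1}$ as well. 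Consequently $\|C_{\rho,A^{(k)}}C_{\rho,A}^{-1}\|$ and $\|C_{\rho,A}C_{\rho,A^{(k)}}^{-1}\|$ both tend to $1$, and Theorem 3.5 yields $\delta_\rho(A^{(k)},A)\to 0$. The main obstacle here is exactly this uniform invertibility of $C_{\rho,A^{(k)}}$, supplied by the Poisson-type lower bound in Theorem 3.2(iii); without that explicit characterization one could not promote the pointwise convergence $C_{\rho,A^{(k)}}\to C_{\rho,A}$ to convergence of the inverses.

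For (iv), applying (i) with $\Delta=[\cC_\rho]_{<1}$ (the Harnack part of $0$ by Theorem 3.2) gives the $\delta_\rho$-completeness. For the failure of $d_K$- and norm-completeness on $[\cC_\rho]_{<1}$, I would pick any nonzero nilpotent $n$-tuple $X_0$ and set $X:=X_0/\omega_\rho(X_0)$, so that $\omega_\rho(X)=1$ while $r(X)=0$; then $T^{(k)}:=(1-1/k)X$ lies in $[\cC_\rho]_{<1}$ and is norm-Cauchy (hence $d_K$-Cauchy by (iii)), yet its norm-limit $X$ fails $\omega_\rho(X)<1$, so the sequence has no limit inside $[\cC_\rho]_{<1}$.
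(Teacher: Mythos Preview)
Your argument is correct, and for parts (i), (ii), and (iv) it follows essentially the paper's route: the same one-sided Harnack estimate leading to a bound of the form $d_K(A,B)\le C(B)\bigl(e^{2\delta_\rho(A,B)}-1\bigr)$ (the paper writes the constant as $\max\{\|P_\rho(A,R)\|,\|P_\rho(B,R)\|\}$ rather than your $1+|\rho-1|+d_K(B,0)$, but these are equivalent up to harmless constants), the same bootstrapping of a $\delta_\rho$-Cauchy sequence to a $d_K$-Cauchy one, the same limit-passage in the polynomial Harnack inequalities to place the limit in $\Delta$, and the same nilpotent boundary example for the incompleteness in (iv). Two small imprecisions are worth flagging: in (ii) your displayed norm estimate should carry the max over $A$ and $B$ (the one-sided operator inequality only bounds one direction), and in (iv) the $d_K$-Cauchy claim follows from Theorem~5.2(i) on $[\cC_\infty]_{<1}$ rather than from (iii), since the limit $X$ lies outside $[\cC_\rho]_{<1}$.

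Part (iii) is where you take a genuinely different path. The paper does \emph{not} use the closed formula of Theorem~3.5; instead it invokes Proposition~6.1 (the sup-over-$f$ expression for $\delta_\rho$) together with the lower bound $\Re f(0)\le \|P_\rho(B,R)^{-1}\|[\Re f(B)+(\rho-1)\Re f(0)]$ coming from $0\overset{H}{\prec}B$, and derives the explicit quantitative inequality
\[
2\delta_\rho(A,B)\le \ln\bigl(1+\max\{\|P_\rho(A,R)^{-1}\|,\|P_\rho(B,R)^{-1}\|\}\, d_K(A,B)\bigr),
\]
which it then records as a separate corollary. Your route---norm-continuity of $X\mapsto C_{\rho,X}$ and of inversion near the invertible $C_{\rho,A}$, together with the lower bound on $P_\rho(A,R)$ from Theorem~3.2(iii)---is cleaner and more direct for the topological statement, but it does not produce that quantitative by-product. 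Both approaches hinge on the same key input, namely the invertibility of $P_\rho(A,R)$ for $A\in[\cC_\rho]_{<1}$, which you correctly identify as the crux.
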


\begin{proof} Let   $A:=(A_1,\ldots, A_n)$ and $B:=(B_1,\ldots, B_n)$
be $n$-tuples  in a Harnack part $\Delta$ of  $[\cC_\rho]^{\prec\,
0}$. Then $A$ is Harnack equivalent to $B$ and \begin{equation*} \Re
f(A_1,\ldots, A_n)+(\rho -1)\Re f(0)\leq \Lambda_\rho(A,B)^2[\Re
f(B_1,\ldots, B_n)+(\rho -1)\Re f(0)]
\end{equation*}
 for any $f\in \cA_n\bar \otimes_{min} M_m$  with $\Re f\geq 0$, where $\Lambda_\rho(A,B)$ is defined by
 \eqref{La}.
 Hence, we deduce that
 \begin{equation}
 \label{Re-omega}
\Re f(A_1,\ldots, A_n)-\Re f(B_1,\ldots, B_n)\leq [\Lambda_\rho
(A,B)^2-1][\Re  f(B_1,\ldots, B_n)+(\rho -1)\Re f(0)].
\end{equation}
Since $B\overset{H}{{ \prec}}\, 0$, we have  the joint spectral
radius $r(B)<1$, so the $\rho$-pluriharmonic kernel  $P_\rho(B,R)$
makes sense. Due to the fact that the noncommutative Poisson
transform $\text{\rm id}\otimes P_{rR}$ is completely positive, and
$P_\rho(B,S)\leq \|P_\rho(B,R)\| I$, one can easily see that
\begin{equation*}
\begin{split}
P_\rho(rB,R)&=(\text{\rm id}\otimes P_{rR})[P_\rho(B, S)]\leq \|P_\rho(B,R)\| I\\
&=\frac{1}{\rho}\|P_\rho(B,R)\| P_\rho(0,R)
\end{split}
\end{equation*}
for any $r\in [0,1)$. Using the equivalence $(ii)\leftrightarrow
(iii)$ of Theorem \ref{equivalent}, when
$c^2=\frac{1}{\rho}\|P_\rho(B,R)\|$,   we obtain $ \Re
f(rB_1,\ldots, rB_n)+(\rho -1)\Re f(0)\leq \|P_\rho(B,R)\|\Re  f(0)
$ for any $r\in [0,1)$. Letting   $r\to 1$, in the operator norm
topology, we deduce that
\begin{equation*}
 \Re f(B_1,\ldots, B_n)+(\rho -1)\Re f(0)\leq \|P_\rho(B,R)\|\Re f(0).
\end{equation*}
Hence, and using relation  \eqref{Re-omega}, we obtain
$$
\Re f(A_1,\ldots, A_n)-\Re f(B_1,\ldots, B_n)\leq
[\Lambda_\rho(A,B)^2-1]\|P_\rho(B,R)\|\Re  f(0).
$$
We can obtain a similar inequality  if we interchange $A$ with $B$.
If, in addition, we assume that $\Re f(0)=I$, then we obtain
$$
-tI\leq \Re f(A_1,\ldots, A_n)-\Re f(B_1,\ldots, B_n)\leq tI,
$$
where
$t:=[\Lambda_\rho(A,B)^2-1]\max\{\|P_\rho(A,R)\|,\|P_\rho(B,R)\|\}$.
Since   $\Re f(A_1,\ldots, A_n)-\Re f(B_1,\ldots, B_n)$ is a
self-adjoint operator, we get $\| \Re f(A_1,\ldots, A_n)-\Re
f(B_1,\ldots, B_n)\|\leq t$. Hence, we deduce that
 $d_K(A,B)\leq s$.  As a consequence, we obtain
 \begin{equation}
 \label{ine-dh}
 d_K (A,B)\leq \max\{\|P_\rho(A,R)\|,\|P_\rho(B,R)\|\}
 \left(e^{2\delta_\rho(A,B)}-1\right).
\end{equation}

Let us prove that $\delta_\rho$ is a complete metric on $\Delta$. To
this end, let $\{A^{(k)}:=(A_1^{(k)},\ldots,
A_n^{(k)})\}_{k=1}^\infty\subset \Delta$ be a $\delta_\rho$-Cauchy
sequence. First, we prove that the sequence $\{\|P_\rho(A^{(k)},
R)\|\}_{k=1}^\infty$ is bounded.  Given $\epsilon>0$, there exists
$k_0\in \NN$ such that
\begin{equation}
\label{de} \delta_\rho(A^{(k)}, A^{(p)})<\epsilon\quad \text{ for
any } \ k,p\geq k_0.
\end{equation}

Let $f\in \cA_n\bar \otimes_{min} M_m$  with $\text{\rm Re}\,f\geq
0$. Since $A^{(k_0)}\overset{H}{{ \prec}}\, 0$ and
$$
P_\rho (rA^{(k_0)}, R)\leq \frac{1}{\rho} \|P_\rho (rA^{(k_0)}, R)\|
P_\rho(0,R),
$$
Theorem \ref{equivalent} implies
$$
 \Re
f(A^{(k_0)})+(\rho -1)\Re f(0)\leq \frac{1}{\rho} \|P_\rho
(rA^{(k_0)}, R)\|[{\Re f(0)}+(\rho -1)\Re f(0)].
$$
On the other hand, since
 $A^{(k)}\overset{H}{\sim}\, A^{(k_0)}$, Theorem  \ref{equivalent} implies
 $$
\Re f(A^{(k)})+(\rho -1)\Re f(0)\leq \Lambda_\rho(A^{(k)},
A^{(k_0)})^2[\Re f(A^{(k_0)})+(\rho -1)\Re f(0)].
$$
Combining these inequalities, we obtain
\begin{equation}
\label{an-ine} \begin{split}
 \Re f(A^{(k)})+(\rho
-1)\Re f(0) \leq c^2\frac{1}{\rho}[\Re f(0)+(\rho -1)\Re f(0)],
\end{split}
\end{equation}
where  $c:=\|P_\rho(A^{(k_0)},R)\|^{1/2}\Lambda_\rho (A^{(k)},
A^{(k_0)})$, for any   $f\in \cA_n\otimes M_m$  with $\Re f\geq 0$.

 Consequently, due to Theorem \ref{equivalent}, we have
  $ \|P_\rho(A^{(k)},R)\|\leq c^2$ for any $k\geq k_0$. Combining
this with  relation \eqref{de}, we obtain

$$
\|P_\rho(A^{(k)},R)\|\leq \|P_\rho(A^{(k_0)},R)\| e^{2\epsilon}
$$
for any $k\geq k_0$. This shows that the sequence
$\{\|P_\rho(A^{(k)}, R)\|\}_{k=1}^\infty$ is bounded. Consequently,
 inequality \eqref{ine-dh} implies that $\{A^{(k)}\}$ is a
$d_K$-Cauchy sequence. Due to Theorem \ref{ker-metric}, there exists
$A:=(A_1,\ldots, A_n)\in [\cC_\rho]^{\prec\, 0}$ such that
\begin{equation}
\label{conv1} d_K(A^{(k)}, A)\to 0\quad \text{ as } \ k\to \infty.
\end{equation}
In what follows, we prove that $A\in \Delta$.
 Let $f\in \cA_n\otimes
M_m$ with $\Re f\geq 0$ and $\Re f(0)=I$. Taking into account
relations \eqref{an-ine} and \eqref{de}, we have
\begin{equation}
\begin{split}
\label{Re-ine} \Re f(A^{(k)})+(\rho -1)\Re f(0)&\leq \Lambda_\rho
(A^{(k)}, A^{(k_0)})^2 [\Re
f(A^{(k_0)})+(\rho -1)\Re f(0)]\\
&\leq e^{2\epsilon}[\Re f(A^{(k_0)})+(\rho -1)\Re f(0)]
\end{split}
\end{equation}
for $k\geq k_0$. According to  relation \eqref{conv1} and the
definition of $d_K$, $\Re f(A^{(k)})\to \Re f(A)$, as $k\to\infty$,
in the operator norm topology. Consequently, relation \eqref{Re-ine}
implies
\begin{equation}
\label{ine-Re} \Re f(A)+(\rho -1)\Re f(0)\leq e^{2\epsilon}[\Re
f(A^{(k_0)})+(\rho -1)\Re f(0)].
\end{equation}

Such an  inequality can be deduced in  the more general case when
$f\in \cA_n\otimes M_m$  with $\Re f\geq 0$. Indeed, for each
$\epsilon'>0$ let $g:=\epsilon'I+f$, $Y:=\Re g(0)$, and
$\varphi:=Y^{-1/2} gY^{-1/2}$. Since
 $\Re \varphi\geq 0$ and $\Re
\varphi(0)=I$, we can apply inequality \eqref{ine-Re} to $\varphi$
and deduce that
$$
\rho\epsilon' I+\Re  f(A)+(\rho -1)\Re f(0)\leq
e^{2\epsilon}\left[\rho\epsilon'  I+\Re f(A^{(k_0)})+(\rho -1)\Re
f(0)\right]
$$
for any $\epsilon'>0$. Letting $\epsilon'\to 0$, we get

\begin{equation}
\label{ine-Re2} \Re f(A)+(\rho -1)\Re f(0)\leq e^{2\epsilon}[\Re
f(A^{(k_0)})+(\rho -1)\Re f(0)]
\end{equation}
for any $f\in \cA_n\otimes M_m$  with $\Re f\geq 0$. Therefore,
\begin{equation}
\label{<1} A\overset{H}{{ \prec}}\, A^{(k_0)}.
\end{equation}
On the other hand, since $A^{(k_0)}\overset{H}{{ \prec}}\, A^{(k)}$
for any $k\geq k_0$, Theorem \ref{equivalent} and relation
\eqref{de}, imply
\begin{equation*}
\begin{split}
\Re p(A^{(k_0)})+(\rho -1)\Re p(0)&\leq \Lambda_\rho (A^{(k_0)},
A^{(k)})^2 [\Re
p(A^{(k)})+(\rho -1)\Re p(0)]\\
&\leq e^{2\epsilon}[\Re  p(A^{(k)})+(\rho -1)\Re (0)]
\end{split}
\end{equation*}
for $k\geq k_0$ and any polynomial $p\in \CC[X_1,\ldots, X_n]\otimes
M_m$, $m\in \NN$, with $\Re p\geq 0$. According to Theorem
\ref{ker-metric}, the $d_K$-topology coincides with the norm
topology on $[\cC_\rho]^{\prec\, 0}$. Therefore, relation
\eqref{conv1} implies $A^{(k)}\to A\in [\cC_\rho]^{\prec\, 0}$ in
the operator norm topology. Taking  the limit, as $k\to\infty$,  in
the inequality above, we deduce that
\begin{equation}
\label{Re3} \Re p(A^{(k_0)})+(\rho -1)\Re p(0)\leq e^{2\epsilon}[\Re
p(A )+(\rho -1)\Re p(0)]
\end{equation}
for any  $p\in \CC[X_1,\ldots, X_n]\otimes M_m$  with $\Re p\geq 0$.
Consequently, we get $A^{(k_0)}\overset{H}{{ \prec}}\, A$. Hence,
and using  relation \eqref{<1}, we obtain $A\overset{H}{{ \sim}}\,
A^{(k_0)}$, which proves that $A\in \Delta$. The inequalities
\eqref{ine-Re2} and \eqref{Re3} imply  $\Lambda_\rho(A^{(k_0)},
A)\leq e^{2\epsilon}$. This shows that  $\delta_\rho(A^{(k_0)},
A)<\epsilon$, which together with  relation \eqref{de} imply
$\delta_\rho(A^{(k)}, A)<2\epsilon$ for any $k\geq k_0$. Therefore,
$\delta_\rho(A^{(k)}, A)\to 0$, as $k\to \infty$, which proves that
$\delta_\rho$ is a complete metric on the Harnack part  $\Delta$.
Note that we have also proved part (ii) of this theorem.

In what follows, we prove part (iii). To this end, assume that $A$
and $B$ are $n$-tuples of operators in $ [\cC_\rho]_{<1}$. Due to
Theorem \ref{foias2}, $P_\rho(B,R)$ is a positive invertible
operator. Since $P_\rho(B,R)^{-1}\leq \| P_\rho(B,R)^{-1}\|$, we
have $I\leq \| P_\rho(B,R)^{-1}\| P_\rho(B,R)$, which,  applying the
noncommutative Poisson transform, implies $I\leq \|
P_\rho(B,R)^{-1}\| P_\rho(rB,R)$ for any $r\in [0,1)$. By Theorem
\ref{equivalent}, we deduce that $ 0\overset{H}{{ \prec}}\, B$ and
$$
\Re f(0)\leq \| P_\rho (B,R)^{-1}\|\, [\Re  f(B)+(\rho -1)\Re f(0)]
$$
for any $f\in \cA_n\otimes M_m$  with $\Re f\geq 0$. If, in
addition,  $\Re f(0)=I$, then the latter inequality implies
\begin{equation*}
\begin{split}
\frac{\left<[\Re  f(A)+(\rho -1)\Re f(0)]x,x\right>}{\left<[\Re
f(B)+(\rho -1)\Re f(0)]x,x\right>}-1 &\leq
\frac{\|P_\rho(B,R)^{-1}\|}{\|x\|}\left<\left(\Re  f(A)-\Re
f(B)\right)x,x\right>\\
&\leq \|P_\rho(B,R)^{-1}\| d_K(A,B)
\end{split}
\end{equation*}
for any $x\in \cH\otimes \CC^m$, $x\neq 0$. Consequently, we have
$$
\ln\frac{\left<[\Re  f(A)+(\rho -1)\Re f(0)]x,x\right>}{\left<[\Re
f(B)+(\rho -1)\Re f(0)]x,x\right>}\leq \ln
\left(1+\|P_\rho(B,R)^{-1}\| d_K(A,B)\right).
$$
A  similar inequality   can be obtained interchanging $A$ with $B$.
Combining these two inequalities, we get
\begin{equation}
\label{ln1} \left|\ln\frac{\left<[\Re f(A)+(\rho -1)\Re
f(0)]x,x\right>}{\left<[\Re f(B)+(\rho -1)\Re
f(0)]x,x\right>}\right|\leq \ln \left(1+\max\{\|P_\rho(B,R)^{-1}\|,
\|P_\rho(A,R)^{-1}\|\} d_K(A,B)\right).
\end{equation}
Now,  we consider   the general case when $g\in \cA_n\otimes M_m$
with $\Re g> 0$.  Note that   $Y:=\Re  g(0)$ is a positive
invertible operator on $\cH\otimes \CC^m$  and $f:=Y^{-1/2}
gY^{-1/2}$ has the properties $\Re f\geq 0$ and $\Re f(0)=I$.
Applying inequality \eqref{ln1} to $f$ when $x:=Y^{-1/2}y$, $y\in
\cH\otimes \CC^m$, and $y\neq 0$, we obtain
\begin{equation}
\label{2de} 2\delta_\rho(A,B)\leq  \ln
\left(1+\max\{\|P_\rho(B,R)^{-1}\|, \|P_\rho(A,R)^{-1}\|\}
d_K(A,B)\right).
\end{equation}

Consider a sequence  $\{A^{(k)}\}_{k=1}^\infty$  of elements in
$[\cC_\rho]_{<1}$ and let $A \in [\cC_\rho]_{<1}$ be such that
$d_K(A^{(k)}, A)\to 0$, as $k\to \infty$. By Proposition \ref{d_K},
we deduce that $P_\rho(A^{(k)},R)\to P_\rho(A,R)$ in the operator
norm topology.  On the other hand,  due to Theorem \ref{foias2}, the
operators $P(A^{(k)},R)$ and $ P(A,R)$ are invertible. Hence, and
using the well-known fact that the map $Z\mapsto Z^{-1}$ is
continuous on the open set of all invertible operators, we deduce
that $P_\rho(A^{(k)},R)^{-1}\to P_\rho(A,R)^{-1}$ in the operator
norm topology, as $k\to\infty$. Hence, we deduce that the sequence
$\{\|P_\rho(A^{(k)},R)^{-1}\|\}_{k=1}^\infty$ is bounded.
Consequently, there exists $M>0$ with
$\|P_\rho(A^{(k)},R)^{-1}\|\leq M$ for any $k\in \NN$. Using
inequality \eqref{2de}, we obtain
$$
 2\delta_\rho(A^{(k)},A)\leq  \ln \left(1+ M d_K(A^{(k)},A)\right),\qquad
  k\in \NN.
 $$
Since $d_K(A^{(k)}, A)\to 0$, as $k\to \infty$,  the latter
inequality implies  that $\delta_\rho (A^{(k)},A)\to 0$. Therefore,
the $d_K$-topology on $[\cC_\rho]_{<1}$ is stronger than the
$\delta_\rho$-topology. Due to the first part of this theorem, the
two topologies coincide on $[\cC_\rho]_{<1}$. Using now Theorem
\ref{ker-metric}, we complete the proof of part (iii).

Now, we prove item (iv). Since $[\cC_\rho]_{<1}$ is the Harnack part
of $0$ (see Theorem \ref{foias2}), part (i) implies its completeness
with respect to the  hyperbolic metric. To prove that
$[\cC_\rho]_{<1}$ is not complete with respect to the  Carath\'
eodory metric $d_K$ and the operator metric, we consider the
following example. Let $(T_1,\ldots, T_n)\in B(\cP_1)^{n}$ be the
$n$-tuple of
 operators defined by $T_i:=P_{\cP_1} S_i|_{\cP_1}$, $i=1,\ldots,n$,
 where $\cP_1:=\text{\rm span} \{e_\alpha: \ |\alpha|\leq 1\}$.
Note that $\|[T_1,\ldots, T_n]\|=1$ and $T_\alpha=0$  for any
$\alpha\in \FF_n^+$ with  $|\alpha|\geq 2$. Set $X_i:=\rho T_i$, \
$i=1,\ldots, n$, and note that
$$X_\beta=\rho T_\beta=\rho P_{\cP_1} S_\beta|_{\cP_1},\qquad
 \beta\in \FF_n^+\backslash \{g_0\}.$$
 Therefore, $(X_1,\ldots, X_n)\in \cC_\rho$, i.e.,
$\omega_\rho(X_1,\ldots, X_n)\leq 1$, which implies
$\omega_{\rho}(T_1,\ldots, T_n)\leq \frac {1} {\rho}$. The reverse
inequality  is due to the fact that $
  \|[T_1,\ldots, T_n]\|\leq \rho \omega_\rho(T_1,\ldots, T_n).
  $
Consequently, we have
$$\omega_\rho(T_1,\ldots, T_n)=\frac{1}{\rho}, \quad \text{ for }  \rho\in
(0,\infty).
$$
On other hand,  the condition $T_\alpha=0$   if   $|\alpha|\geq 2$
 implies
 $r(T_1,\ldots, T_n)=0$.
 Therefore, we have
 $$
 \omega_\rho (X_1,\ldots, X_n)=1 \quad \text{ and } \quad r(X_1,\ldots,
 X_n)=0.
 $$

Now, let $c\in(0,1)$ and define $Y^{(k)}:= c^{1/k}(X_1,\ldots, X_n)$
for $k=1,2,\ldots.$  Since  $\omega_\rho (Y^{(k)})=c^{1/n}<1$,
Theorem \ref{foias2} implies $Y^{(k)}\overset{H}{\sim}\, 0$ in
$\cC_\rho$ and  $Y^{(k)}\in [\cC_\rho]_{<1}$. On the other hand,
since $\omega_\rho (X_1,\ldots, X_n)=1$, we have $X:=(X_1,\ldots,
X_n)\notin [\cC_\rho]_{<1}$. Now, note that
\begin{equation*}
\begin{split}
d_K(Y^{(k)}, X)&\leq 2\|(I-R_{Y^{(k)}})^{-1}-(I-R_X)^{-1}\|\\
&=2\|R_{Y^{(k)}}- R_X\|=2\|Y^{(k)}-X\|
=2\|X\|(1-c^{1/k}).
\end{split}
\end{equation*}
Consequently,  $Y^{(k)}\to X$ in the operator norm and $d_K(Y^{(k)},
X)\to 0$, as $k\to\infty$. This shows that $[\cC_\rho]_{<1}$ is
  not complete with
respect to the  Carath\' eodory metric $d_K$ and the operator
metric. The proof is complete.
\end{proof}

\begin{corollary} Let  $\delta_\rho$ be the  hyperbolic metric on
  a Harnack part $\Delta$ of
 $[\cC_\rho]^{\prec\, 0}$. Then
 $$
d_K (A,B)\leq \max\{\|P_\rho(A,R)\|,\|P_\rho(B,R)\|\}
 \left(e^{2\delta_\rho(A,B)}-1\right),\qquad A,B\in \Delta.
 $$
If, in addition  $A,B\in [\cC_\rho]_{<1}$, then
$$
2\delta_\rho(A,B)\leq  \ln \left(1+\max\{\|P_\rho(B,R)^{-1}\|,
\|P_\rho(A,R)^{-1}\|\} d_K(A,B)\right).
$$
\end{corollary}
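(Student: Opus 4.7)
The two stated inequalities are in fact already embedded in the proof of Theorem \ref{topology}, appearing there as displays \eqref{ine-dh} and \eqref{2de}; the corollary just extracts them. The plan is therefore to reproduce the two short chains of estimates that led to those displays, using only the machinery of Harnack domination from Theorem \ref{equivalent}, the description of $[\cC_\rho]_{<1}$ as the Harnack part of $0$ from Theorem \ref{foias2}, and the supremum formula for $\delta_\rho$ in Proposition \ref{delta-form}.

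For the first inequality I would begin with $A,B$ in the same Harnack part of $[\cC_\rho]^{\prec\,0}$, so $A\,\overset{H}{{\underset{c}\sim}}\,B$ with $c=\Lambda_\rho(A,B)$. By Theorem \ref{equivalent} this yields
\[
\Re f(A)+(\rho-1)\Re f(0)\leq \Lambda_\rho(A,B)^2\bigl[\Re f(B)+(\rho-1)\Re f(0)\bigr]
\]
for every $f\in \cA_n\bar\otimes_{min} M_m$ with $\Re f\geq 0$, and subtracting gives
\[
\Re f(A)-\Re f(B)\leq \bigl(\Lambda_\rho(A,B)^2-1\bigr)\bigl[\Re f(B)+(\rho-1)\Re f(0)\bigr].
\]
Now I would use that $B\,\overset{H}{\prec}\,0$: applying the Poisson transform $\mathrm{id}\otimes P_{rR}$ to $P_\rho(B,S)\leq \|P_\rho(B,R)\|I$ and Theorem \ref{equivalent} (with $r\to 1$, allowed since $r(B)<1$) gives $\Re f(B)+(\rho-1)\Re f(0)\leq \|P_\rho(B,R)\|\Re f(0)$. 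Imposing $\Re f(0)=I$ and combining with the symmetric estimate obtained by swapping $A$ and $B$ produces $\|\Re f(A)-\Re f(B)\|\leq (\Lambda_\rho(A,B)^2-1)\max\{\|P_\rho(A,R)\|,\|P_\rho(B,R)\|\}$. Taking the supremum over polynomials $f$ with $\Re f(0)=I$ and $\Re f\geq 0$ and using $\Lambda_\rho(A,B)^2=e^{2\delta_\rho(A,B)}$ delivers the first inequality.

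For the second inequality I would assume $A,B\in[\cC_\rho]_{<1}$, whereupon Theorem \ref{foias2} makes $P_\rho(A,R)$ and $P_\rho(B,R)$ strictly positive and thus invertible. From $I\leq \|P_\rho(B,R)^{-1}\|P_\rho(B,R)$, Theorem \ref{equivalent} yields $0\,\overset{H}{\prec}\,B$ and
\[
\Re f(0)\leq \|P_\rho(B,R)^{-1}\|\bigl[\Re f(B)+(\rho-1)\Re f(0)\bigr].
\]
Setting $\Re f(0)=I$ and estimating, for nonzero $x$,
\[
\frac{\langle [\Re f(A)+(\rho-1)\Re f(0)]x,x\rangle}{\langle [\Re f(B)+(\rho-1)\Re f(0)]x,x\rangle}-1\leq \|P_\rho(B,R)^{-1}\|\,d_K(A,B),
\]
together with the symmetric inequality (swapping $A,B$), controls $|\ln(\text{ratio})|$ by $\ln(1+\max\{\|P_\rho(A,R)^{-1}\|,\|P_\rho(B,R)^{-1}\|\}d_K(A,B))$. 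To pass to general $g\in \cA_n\otimes M_m$ with $\Re g>0$ I would use the standard normalization $Y:=\Re g(0)$, $f:=Y^{-1/2}gY^{-1/2}$, so $\Re f(0)=I$, evaluating at $x=Y^{-1/2}y$; taking the supremum over $(g,y)$ and invoking the formula of Proposition \ref{delta-form} gives the second inequality. No step is a real obstacle since both bounds are already present (with exactly these constants) inside the proof of Theorem \ref{topology}; the only point that warrants mention is the normalization trick needed to reduce the general case $\Re g>0$ to the case $\Re f(0)=I$.
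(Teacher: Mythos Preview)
Your proposal is correct and follows exactly the approach of the paper: the corollary has no separate proof in the paper because both inequalities are precisely the displays \eqref{ine-dh} and \eqref{2de} established inside the proof of Theorem \ref{topology}, and you have faithfully reproduced those derivations, including the normalization trick $f=Y^{-1/2}gY^{-1/2}$ needed to invoke Proposition \ref{delta-form}.
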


\begin{corollary}  Let $f:=(f_1,\ldots, f_m)$ be a
contractive  free holomorphic function with $\|f(0)\|<1$ such that
the boundary functions $\widetilde f_1,\ldots, \widetilde f_m$ are
in the noncommutative disc algebra $\cA_n$. If $\Delta$ is  a
Harnack part
 of
 $[\cC_\rho]^{\prec\, 0}$, then the map
$$
 \Delta\ni (T_1,\ldots, T_n)\mapsto f(T_1,\ldots, T_n)\in [\cC_{\rho_f}]^{\prec\, 0}
$$
is continuous  with respect to the  hyperbolic metric $\delta_\rho$
on $\Delta$ and  the Carath\' eodory metric $d_K$ on
$[\cC_{\rho_f}]^{\prec\, 0}$, where $\rho_f$ is defined by relation
\eqref{rof}. In particular, tha map
$$
  [\cC_\rho]_{<1}\ni (T_1,\ldots, T_n)\mapsto f(T_1,\ldots, T_n)\in
  [\cC_{\rho_f}]_{<1}
$$
is continuous  with respect to the  hyperbolic metric.
\end{corollary}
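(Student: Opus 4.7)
The plan is to combine the hyperbolic Schwarz inequality of Theorem \ref{Schwarz-Hyp} with the topology comparison of Theorem \ref{topology}, after first verifying that the image $f(\Delta)$ sits inside a single Harnack part of $[\cC_{\rho_f}]^{\prec\, 0}$. Specifically, for $T\in\Delta\subset\cC_\rho\cap[\cC_\infty]_{<1}$, Theorem \ref{rho-f} yields $f(T)\in\cC_{\rho_f}$, while the spectral von Neumann inequality (Theorem \ref{radius}), applicable because $r(T)<1$ and $\|f(0)\|<1$, yields $r(f(T))<1$; together these give $f(T)\in[\cC_{\rho_f}]^{\prec\, 0}$. Moreover, Lemma \ref{H<} shows that if $A\overset{H}{\sim}\, B$ in $\cC_\rho$ then $f(A)\overset{H}{\sim}\, f(B)$ in $\cC_{\rho_f}$, so $f(\Delta)$ is contained in a single Harnack part $\Delta'$ of $\cC_{\rho_f}$, and the above shows $\Delta'\subset[\cC_{\rho_f}]^{\prec\, 0}$.

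Next, Theorem \ref{Schwarz-Hyp} supplies the hyperbolic contraction
$$
\delta_{\rho_f}(f(A),f(B))\leq \delta_\rho(A,B), \qquad A,B\in\Delta,
$$
so the map $f:(\Delta,\delta_\rho)\to(\Delta',\delta_{\rho_f})$ is $1$-Lipschitz, hence continuous. To transfer continuity to the target metric $d_K$, I would invoke Theorem \ref{topology}(ii), which states that on any Harnack part of $[\cC_{\rho_f}]^{\prec\, 0}$ the $\delta_{\rho_f}$-topology is stronger than the $d_K$-topology; quantitatively, the proof of that theorem gives
$$
d_K(f(A),f(B))\leq \max\{\|P_{\rho_f}(f(A),R)\|,\|P_{\rho_f}(f(B),R)\|\}\bigl(e^{2\delta_{\rho_f}(f(A),f(B))}-1\bigr).
$$
Composing the Schwarz contraction with the inclusion $(\Delta',\delta_{\rho_f})\hookrightarrow (\Delta',d_K)\hookrightarrow([\cC_{\rho_f}]^{\prec\, 0},d_K)$ yields the desired continuity of $f:(\Delta,\delta_\rho)\to([\cC_{\rho_f}]^{\prec\, 0},d_K)$.

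For the particular case $\Delta=[\cC_\rho]_{<1}$, Theorem \ref{foias2} identifies $[\cC_\rho]_{<1}$ as the Harnack part of $\cC_\rho$ containing $0$, and Theorem \ref{Om<} ensures $f([\cC_\rho]_{<1})\subset[\cC_{\rho_f}]_{<1}$. The hyperbolic Schwarz inequality of Theorem \ref{Schwarz-Hyp} then applies verbatim to produce $1$-Lipschitz continuity of $f$ with respect to the hyperbolic metrics on source and target.

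I do not foresee a substantive obstacle: the statement is a direct packaging of Theorems \ref{rho-f}, \ref{radius}, \ref{Schwarz-Hyp}, \ref{Om<}, \ref{foias2}, \ref{topology} and Lemma \ref{H<}, all of which have been proved earlier. The only step requiring a small amount of care is verifying that $f(\Delta)$ remains in a single Harnack part of $\cC_{\rho_f}$ sitting inside $[\cC_{\rho_f}]^{\prec\, 0}$, so that both the Schwarz inequality and the topology comparison can be legitimately applied.
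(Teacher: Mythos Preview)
Your proposal is correct and is exactly the argument the paper intends: the corollary is stated without proof immediately after Theorem \ref{topology} and its accompanying inequality corollary, and your chain --- Theorem \ref{rho-f} and Theorem \ref{radius} to land in $[\cC_{\rho_f}]^{\prec\,0}$, Lemma \ref{H<} to stay in one Harnack part, Theorem \ref{Schwarz-Hyp} for the hyperbolic contraction, and Theorem \ref{topology}(ii) to pass to $d_K$ --- is precisely the packaging the paper leaves to the reader. The extra care you take to check that the entire Harnack part $\Delta'\supset f(\Delta)$ lies in $[\cC_{\rho_f}]^{\prec\,0}$ (via transitivity of $\overset{H}{\prec}$ and Theorem \ref{A<0}) is the only point not entirely trivial, and you handle it correctly.
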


\section{ Harnack domination and  hyperbolic metric for $\rho$-contractions (case $n=1$)}

In this section,  we consider the single variable case ($n=1$) and
show that  our Harnack domination  of $\rho$-contractions is
equivalent to the one introduced and studied  by Cassier and Suciu
in \cite{CaSu}. We recover some of their results and  obtain some
results which seem to be new  even in the single variable case.

In the particular case when $n=1$,   the free pluriharmonic Poisson
kernel $P_\rho(rY,R)$, $r\in [0,1)$,  coincides with
$$
Q_\rho(rY, U):=\sum_{k=1} r^k{Y^*}^k\otimes U^k + \rho I\otimes I+
\sum_{k=1}^\infty
 r^kY^k\otimes {U^*}^k,\qquad  Y\in \cC_\rho\subset B(\cH),
$$
where the convergence of the series is in the operator norm topology
and $U$ is the  unilateral shift acting on the Hardy space
$H^2(\TT)$. For each $\rho$-contraction $T\in B(\cH)$, consider the
operator-valued Poisson kernel  defined by
$$
K_\rho(z,T):= \sum_{k=1}^\infty z^k {T^*}^k+\rho I+\sum_{k=1}^\infty
\bar z^k T^k,\qquad z\in \DD,
$$
which was employed by Cassier and Fack in \cite{Ca}.
Using Theorem \ref{equivalent}, in the particular case when $n=1$,
we can prove  the following result.
\begin{proposition}\label{n=1}
Let $ T$ and $ T'$ be  two $\rho$-contractions in $B(\cH)$ and let
$c\geq 1$. Then the following statements are equivalent:
\begin{enumerate}
\item[(i)] $T\overset{H}{{\underset{c}\prec}}\, T'$;
\item[(ii)] $Q_\rho(rT,U)\leq c^2 Q_\rho(rT', U)$ for any $r\in [0,1)$;
\item[(iii)] $K_\rho(z,T)\leq c^2K_\rho(z,T')$ for any $z\in \DD$.
\end{enumerate}
\end{proposition}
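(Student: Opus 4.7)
My plan is to combine the general multivariable result of Theorem \ref{equivalent} (for $n=1$) with a scalar Poisson-kernel argument that transfers positivity between the operator-valued kernel $Q_\rho(rT, U)$ and the scalar-parameter kernel $K_\rho(z, T)$.

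First, I would observe that in the single-variable case the right creation operator $R_1$ on $F^2(H_1)$ is unitarily equivalent, under the natural identification $F^2(H_1) \cong H^2(\TT)$, to the unilateral shift $U$ on $H^2(\TT)$. Consequently, the multi-Toeplitz $\rho$-pluriharmonic kernel $P_\rho(rT, R)$ appearing in Theorem \ref{equivalent}(ii) coincides with $Q_\rho(rT, U)$. The equivalence (i) $\Leftrightarrow$ (ii) of the proposition is therefore just the specialization of the equivalence (i) $\Leftrightarrow$ (ii) of Theorem \ref{equivalent} to $n=1$.

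For (ii) $\Rightarrow$ (iii), the idea is to apply the noncommutative Poisson transform at scalar points of the disc. For each $z\in\DD$, the Poisson transform $P_z:C^*(U)\to\CC$ is a unital completely positive map with $P_z(U^k)=z^k$ and $P_z(U^{*k})=\bar z^k$, so the map $\text{\rm id}\otimes P_z : B(\cH)\bar\otimes_{\min} C^*(U)\to B(\cH)$ is completely positive and a term-by-term calculation gives
\begin{equation*}
(\text{\rm id}\otimes P_z)[Q_\rho(rT,U)] \;=\; \sum_{k=1}^\infty r^k z^k (T^*)^k + \rho I + \sum_{k=1}^\infty r^k \bar z^k T^k \;=\; K_\rho(rz,T).
\end{equation*}
Applying $\text{\rm id}\otimes P_z$ to both sides of the inequality in (ii) produces $K_\rho(rz,T)\leq c^2 K_\rho(rz,T')$. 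Since every point of $\DD$ is of the form $rz$ with $r\in[0,1)$ and $z\in\DD$, condition (iii) follows.

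The step (iii) $\Rightarrow$ (ii) is where I expect the main work. The plan is to establish the operator-valued Toeplitz integral representation
\begin{equation*}
\langle Q_\rho(rT,U)\xi,\xi\rangle \;=\; \frac{1}{2\pi}\int_0^{2\pi} \left\langle K_\rho(re^{i\theta},T)\,\widetilde\xi(e^{i\theta}),\,\widetilde\xi(e^{i\theta})\right\rangle d\theta,
\end{equation*}
valid for every finite sum $\xi=\sum_{j\geq 0} h_j\otimes z^j\in\cH\otimes H^2(\TT)$, where $\widetilde\xi(e^{i\theta}):=\sum_j h_j e^{ij\theta}\in L^2(\TT;\cH)$. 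This follows from the elementary identity $\langle U^k f,f\rangle=\int_0^{2\pi} e^{ik\theta}|f(e^{i\theta})|^2 d\theta/(2\pi)$ combined with the power-series expansion of $K_\rho(re^{i\theta},T)$ in $r e^{\pm i\theta}$. Once this representation is in hand, applying (iii) pointwise at $z=re^{i\theta}$ yields the inequality of the integrands, which integrates to the desired operator inequality on a dense subspace of $\cH\otimes H^2(\TT)$, and hence everywhere by continuity. The main obstacle is not the idea but verifying the integral formula carefully; everything else reduces to what is already proved in Theorem \ref{equivalent}.
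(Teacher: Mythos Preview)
Your proposal is correct and follows essentially the same approach as the paper: the equivalence (i)$\Leftrightarrow$(ii) is the $n=1$ case of Theorem~\ref{equivalent}, (ii)$\Rightarrow$(iii) is obtained by applying the Poisson transform at scalar points (the paper uses the unimodular points $e^{it}$ rather than $z\in\DD$, but this makes no difference), and (iii)$\Rightarrow$(ii) is deduced from the same integral representation of $\langle Q_\rho(rT,U)\xi,\xi\rangle$ over the circle that you outline.
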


\begin{proof} The equivalence $(i)\leftrightarrow (ii)$ follows from
Theorem \ref{equivalent},   when $n=1$. To prove the  implication
$(ii)\implies (iii)$, we apply the noncommutative Poisson transform
(when $n=1$) at $e^{it}I$
 to the inequality  of part $(ii)$.  Consequently, we obtain
 $$
 K_\rho(re^{it}, T)=(\text{\rm id}\otimes P_{e^{it}I} )[Q_\rho(rT, U)]\leq
 c^2( \text{\rm id}\otimes P_{e^{it}I} )[Q_\rho(rT', U)]= c^2 K_\rho(re^{it}, T')
 $$
for any $r\in[0,1)$ and $t\in \RR$. Now let us prove that
$(iii)\implies (ii)$. Since
$$
\left<( {T^*}^k\otimes U^k)( h_m \otimes e^{imt}), h_p\otimes
e^{ipt}\right>_{\cH\otimes H^2(\TT)}= \frac{1}{2\pi} \int_{-\pi}^\pi
\left<e^{ikt} {T^*}^k(e^{imt} h_m), e^{ipt} h_p\right>_\cH dt
$$
for any $h_m, h_p\in \cH$  and  $k,m,p\in \NN$, one can easily
obtain
\begin{equation*}
\left<\left(c^2 Q_\rho(rT',U)-Q_\rho(rT,U)\right) h(e^{it}),
h(e^{it})\right>_{\cH\otimes H^2(\TT)}= \frac{1}{2\pi}
\int_{-\pi}^\pi
\left<\left(c^2K_\rho(re^{it},T')-K_\rho(re^{it},T)\right)h(e^{it}),
h(e^{it})\right>_{\cH}
\end{equation*}
for any function $e^{it}\mapsto h(e^{it})$ in $\cH\otimes H^2(\TT)$.
Now, the implication  $(iii)\implies (ii)$ is clear. The proof is
complete.
\end{proof}

 Let $T,T'\in B(\cH)$
be  $\rho$-contractions
    such that
$T\overset{H}{\prec}\, T'$.  Due to  Proposition \ref{n=1} and
Corollary \ref{LBA-inf}, we  deduce that
\begin{equation*}
\begin{split}
\|L_{T',T}\|&=\inf \{ c> 1: \ Q_\rho(rT,U)\leq c^2 Q_\rho(rT', U)\
\text{ for any } \ r\in
[0,1)\}\\
&=
 \inf\{c> 1: \ K_\rho(z,T)\leq c^2K_\rho(z,T')\ \text{ for
any } \ z\in \DD\}\\
&=\inf\{c> 1: \ K_\rho(z,T^*)\leq c^2K_\rho(z,{T'}^*)\ \text{ for
any } \ z\in \DD\} =\|L_{{T'}^*,T^*}\|.
\end{split}
\end{equation*}
Therefore $T\overset{H}{\prec}\, T'$ if and only if
$T^*\overset{H}{\prec}\, {T'}^*$.

\begin{theorem} \label{suciu}Let $T,T'\in B(\cH)$ be such that $T,T'\in
[\cC_\rho]_{<1}$. Then
$$
\|L_{T',T}\|=\sup_{z\in \DD}\| \Delta_{\rho, {T'}^*}(z)^{-1}(I-\bar
z {T'}^*)(I-\bar zT^*)^{-1} \Delta_{\rho, T^*}(z)\|,
$$
where
$$\Delta_{\rho, T}(z):= [\rho I+(1-\rho)(zT^*+\bar zT)+
(\rho-2)TT^*]^{1/2},\qquad z\in \DD.
$$
Moreover,
\begin{equation*}
\begin{split}
\delta_\rho(T,T')&=
  \ln \max \left\{ \left\| L_{T,T'} \right\|,
  \left\| L_{T',T}\right\|\right\}.
\end{split}
\end{equation*}
\end{theorem}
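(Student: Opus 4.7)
The plan is to combine the single-variable specialization of Theorem~\ref{LCC} (which gives $\|L_{Y,X}\|=\|C_{\rho,X}C_{\rho,Y}^{-1}\|$) with Proposition~\ref{n=1} and the symmetry $\|L_{T',T}\|=\|L_{T'^*,T^*}\|$ recorded just before the statement. Applying Proposition~\ref{n=1} to the pair $(T^*,T'^*)$ gives
\[
\|L_{T',T}\|=\inf\{c>1:K_\rho(z,T^*)\leq c^2K_\rho(z,T'^*)\text{ for all }z\in\DD\}.
\]
Because $T,T'\in[\cC_\rho]_{<1}$, Theorem~\ref{foias2} forces $K_\rho(z,T^*)$ and $K_\rho(z,T'^*)$ to be positive invertible and continuous in $z\in\overline\DD$, so this infimum equals $\sup_{z\in\DD}\|K_\rho(z,T^*)^{1/2}K_\rho(z,T'^*)^{-1/2}\|$.

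Next I would invoke the pluriharmonic maximum principle: for each $h\in\cH$ the function $z\mapsto\langle[c^2K_\rho(z,T'^*)-K_\rho(z,T^*)]h,h\rangle$ is pluriharmonic on $\DD$ and continuous on $\overline\DD$, hence non-negativity on $\DD$ is equivalent to non-negativity on $\TT$, so the supremum is attained on $\TT$. A routine expansion (similar to the one in the proof of Theorem~\ref{A<0}) yields
\[
(I-\bar zT^*)K_\rho(z,T^*)(I-zT)=\rho I+(1-\rho)(zT+\bar zT^*)+(\rho-2)|z|^2T^*T,
\]
whose right-hand side coincides with $\Delta_{\rho,T^*}(z)^2$ precisely when $|z|=1$. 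Thus on $\TT$ we have $K_\rho(z,T^*)=C(z)^*C(z)$ with $C(z):=\Delta_{\rho,T^*}(z)(I-zT)^{-1}$ (invertible on $\overline\DD$ because $r(T)<1$ and $\Delta_{\rho,T^*}(z)$ is positive invertible by Theorem~\ref{foias2}), and likewise $K_\rho(z,T'^*)=C'(z)^*C'(z)$ with $C'(z):=\Delta_{\rho,T'^*}(z)(I-zT')^{-1}$. Consequently
\[
\|K_\rho(z,T^*)^{1/2}K_\rho(z,T'^*)^{-1/2}\|=\|C(z)C'(z)^{-1}\|=\|\Delta_{\rho,T^*}(z)(I-zT)^{-1}(I-zT')\Delta_{\rho,T'^*}(z)^{-1}\|,
\]
and taking the adjoint inside the norm produces the claimed integrand. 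Continuity on $\overline\DD$ then lets me pass from $\sup_{\TT}$ back to $\sup_{\DD}$.

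The moreover part follows at once from Section~3: the discussion after equation~\eqref{La} shows $\Lambda_\rho(T,T')=\max\{\|L_{T,T'}\|,\|L_{T',T}\|\}$ whenever $T\overset{H}{\sim}T'$, and the hyperbolic metric is defined in~\eqref{hyperbolic} by $\delta_\rho(T,T'):=\ln\Lambda_\rho(T,T')$, yielding $\delta_\rho(T,T')=\ln\max\{\|L_{T,T'}\|,\|L_{T',T}\|\}$.

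I expect the main technical obstacle to be the passage from $\sup_{z\in\TT}$ to $\sup_{z\in\DD}$ for the $\Delta$-expression: in the open disk the integrand is not identical with $\|K_\rho(z,T^*)^{1/2}K_\rho(z,T'^*)^{-1/2}\|$ (they differ through the $(1-|z|^2)(\rho-2)T^*T$ discrepancy between $\Delta_{\rho,T^*}(z)^2$ and the true middle factor), so establishing that the interior values never exceed the boundary values must go indirectly, via continuity of the $\Delta$-expression on $\overline\DD$ combined with the fact that it agrees with the $K_\rho$-expression on $\TT$, whose supremum is controlled by the pluriharmonic maximum principle.
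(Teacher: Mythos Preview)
Your route via Proposition~\ref{n=1} and the pointwise kernel $K_\rho(z,T^*)$ is different from the paper's one-line invocation of Theorem~\ref{LCC}, and it is more transparent up to the boundary: steps (1)--(6) correctly yield
\[
\|L_{T',T}\|=\sup_{z\in\TT}\bigl\|\Delta_{\rho,T'^*}(z)^{-1}(I-\bar z T'^*)(I-\bar z T^*)^{-1}\Delta_{\rho,T^*}(z)\bigr\|.
\]
The ``moreover'' part is fine and matches the paper's appeal to Theorem~\ref{P-B}.

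The gap is exactly where you placed it, and your proposed patch does not close it. Continuity of the $\Delta$-expression on $\overline\DD$ only gives $\sup_{\DD}=\sup_{\overline\DD}\geq\sup_{\TT}$, i.e., the \emph{trivial} direction. What you need is $\sup_{\DD}\leq\|L_{T',T}\|$, and none of your ingredients supply it: the maximum principle you invoked controls the \emph{harmonic} quantity $c^2K_\rho(z,T'^*)-K_\rho(z,T^*)$, not the $\Delta$-quantity, and on the interior these differ by the term $(\rho-2)(1-|z|^2)\bigl[c^2(I-\bar zT'^*)^{-1}T'^*T'(I-zT')^{-1}-(I-\bar zT^*)^{-1}T^*T(I-zT)^{-1}\bigr]$, which has no obvious sign. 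Moreover, $G(z)=\Delta_{\rho,T^*}(z)(I-zT)^{-1}(I-zT')\Delta_{\rho,T'^*}(z)^{-1}$ is \emph{not} holomorphic in $z$ (the square root of a harmonic operator function is neither analytic nor subharmonic in general), so one cannot simply apply a maximum principle to $\|G(z)\|$. Only in the case $\rho=1$, where $\Delta_{1,T^*}(z)=(I-T^*T)^{1/2}$ is constant in $z$, does $G$ become holomorphic and the passage to $\sup_\DD$ is immediate.

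In short, you have proved the formula with $\sup_{z\in\TT}$; the step to $\sup_{z\in\DD}$ requires an additional argument (a genuine maximum principle for the $\Delta$-expression, or a direct computation of the operator norm $\|C_{\rho,T^*}C_{\rho,T'^*}^{-1}\|_{B(\cH\otimes H^2)}$ as the paper's terse ``Theorem~\ref{LCC} implies'' seems to presuppose) that is not supplied by continuity alone.
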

\begin{proof}
 If $T,T'\in [\cC_\rho]_{<1}$,  Theorem \ref{LCC}
implies
\begin{equation*}
\begin{split}
\|L_{T',T}\|&=\|L_{{T'}^*, T^*}\| = \sup_{z\in \DD}\| \Delta_{\rho,
T^*}(z) (I-zT)^{-1}(I-z
T') \Delta_{\rho, {T'}^*}(z)^{-1}\|\\
&=\sup_{z\in \DD}\| \Delta_{\rho, {T'}^*}(z)^{-1}(I-\bar z
{T'}^*)(I-\bar zT^*)^{-1} \Delta_{\rho, T^*}(z)\|.
\end{split}
\end{equation*}
Using now Theorem \ref{P-B}, we complete the proof.
\end{proof}
We mention  that when $\rho=1$, we recover a result obtained by I.
Suciu \cite{Su2}, using different methods. However, if $\rho>0$ and
$\rho\neq 1$, the result of Theorem \ref{suciu} seems to be new. We
also  remark that Proposition \ref{invariant-curve} , Proposition
\ref{d_K}, and part (i) of Theorem \ref{ker-metric} are new even in
the single variable case ($n=1$).

The next result makes an interesting connection between the  Harnack
domination  for $n$-tuples of operators in $\cC_\rho$ and   and the
Harnack domination for $\rho$-contractions ($n=1$), via the
reconstruction operator.

\begin{theorem}
\label{equivalent4} Let $A:=(A_1,\ldots, A_n)$ and $B:=(B_1,\ldots,
B_n)$ be in $\cC_\rho$ and let $c>0$. Then the following statements
are equivalent:
\begin{enumerate}
\item[(i)]
$A\overset{H}{{\underset{c}\prec}}\, B$;
\item[(ii)] $R_A\overset{H}{{\underset{c}\prec}}\, R_B$, where
$R_X:=  X_1^*\otimes R_1+\cdots +   X_n^*\otimes R_n$ is the
reconstruction operator  associated with $X:=(X_1,\ldots, X_n)\in
\cC_\rho$ and the right creation operators $ R_1,\ldots, R_n$.
\item[(iii)] $R_A^*\overset{H}{{\underset{c}\prec}}\, R_B^*$.
\end{enumerate}
\end{theorem}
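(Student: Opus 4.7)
The strategy is to translate each of (i)--(iii) into an operator inequality involving the ``Poisson kernel'' function
\[
q_\rho(T) := (I-T)^{-1} + (\rho-2)I + (I-T^*)^{-1} \;=\; \rho I + \sum_{k\ge 1}\bigl(T^k + (T^*)^k\bigr),
\]
and to exploit two symmetries: the adjoint symmetry $q_\rho(T)=q_\rho(T^*)$, and the gauge rotation symmetry of the reconstruction operator. By the equivalence (i)$\leftrightarrow$(ii) in Theorem~\ref{equivalent}, combined with the factorization $P_\rho(rA,R)=q_\rho(rR_A)$ derived in the proof of Theorem~\ref{A<0}, condition (i) is equivalent to
\[
q_\rho(rR_A)\,\le\, c^2\, q_\rho(rR_B)\qquad\text{for all } r\in[0,1). \qquad (\ast)
\]

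Before invoking Proposition~\ref{n=1} for the single operators $R_A,R_B,R_A^*,R_B^*$, I must verify that these lie in $\cC_\rho$. Let $W_\theta\in B(F^2(H_n))$ be the gauge unitary defined by $W_\theta|_{H_n^{\otimes k}} := e^{ik\theta} I$; a direct computation gives $W_\theta R_i W_\theta^* = e^{i\theta} R_i$, whence
\[
(I\otimes W_\theta^*)\, R_A\, (I\otimes W_\theta)= e^{-i\theta} R_A,
\]
and therefore $q_\rho(\bar z R_A) = (I\otimes W_\theta^*)\, q_\rho(rR_A)\, (I\otimes W_\theta)$ for $z=re^{i\theta}\in\DD$. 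Since $A\in\cC_\rho$ yields $q_\rho(rR_A)=P_\rho(rA,R)\ge 0$ for $r\in[0,1)$, this identity gives $K_\rho(z,R_A) = q_\rho(\bar z R_A) \ge 0$ for every $z\in\DD$; together with $r(R_A)=r(A)\le 1$ and the single-variable case of Theorem~\ref{ro-contr}, this shows $R_A\in\cC_\rho$. The same argument, together with $q_\rho(T)=q_\rho(T^*)$, handles $R_A^*$, and likewise for $B$.

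By Proposition~\ref{n=1} (equivalence (i)$\leftrightarrow$(iii) there), condition (ii) is equivalent to
\[
q_\rho(\bar z R_A)\,\le\, c^2\, q_\rho(\bar z R_B)\qquad\text{for all } z\in\DD. \qquad (\ast\ast)
\]
The implication (ii)$\Rightarrow$(i) follows at once by specializing to $z=r\in[0,1)$; conversely, (i)$\Rightarrow$(ii) is obtained by conjugating both sides of $(\ast)$ by the unitary $I\otimes W_\theta$, using the displayed identity above to convert $q_\rho(rR_A),q_\rho(rR_B)$ into $q_\rho(\bar z R_A),q_\rho(\bar z R_B)$. Condition (iii) is handled identically: Proposition~\ref{n=1} rewrites it as $q_\rho(\bar z R_A^*)\le c^2\, q_\rho(\bar z R_B^*)$ for $z\in\DD$, and $q_\rho(T)=q_\rho(T^*)$ transforms this into $q_\rho(z R_A)\le c^2\, q_\rho(z R_B)$, which is $(\ast\ast)$ after the substitution $z\mapsto\bar z$.

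The most delicate step is the gauge symmetry identification: it simultaneously verifies that the operators appearing in (ii) and (iii) are genuine $\rho$-contractions (so that Proposition~\ref{n=1} applies) and supplies the unitary conjugation that bridges the real-parameter inequality $(\ast)$ with the complex-parameter inequality $(\ast\ast)$. Everything else reduces to bookkeeping of series and the elementary symmetry $q_\rho(T)=q_\rho(T^*)$.
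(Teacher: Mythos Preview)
Your proof is correct. Both your argument and the paper's hinge on the identification $P_\rho(rA,R)=q_\rho(rR_A)=K_\rho(r,R_A)$ and on Proposition~\ref{n=1}, and the directions (ii)$\Rightarrow$(i) and (ii)$\Leftrightarrow$(iii) are essentially identical to the paper's. The genuine difference lies in (i)$\Rightarrow$(ii): the paper applies the noncommutative Poisson transform at the row contraction $(R_1\otimes U^*,\ldots,R_n\otimes U^*)$ to push the inequality $P_\rho(rA,S)\le c^2 P_\rho(rB,S)$ down to $Q_\rho(rR_A,U)\le c^2 Q_\rho(rR_B,U)$, whereas you stay on $\cH\otimes F^2(H_n)$ and use the gauge unitary $W_\theta$ to rotate the real parameter $r$ into an arbitrary $\bar z\in\DD$. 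Your route is more elementary in that it avoids introducing the auxiliary space $H^2(\TT)$ and a second Poisson transform; the paper's route has the virtue of being a single application of an already-established complete positivity, with no new unitary to define. You are also more explicit than the paper about checking that $R_A,R_B\in\cC_\rho$ before invoking Proposition~\ref{n=1}; one small remark is that the phrase ``single-variable case of Theorem~\ref{ro-contr}'' does not literally list $K_\rho(z,T)\ge 0$ among its equivalent conditions, so strictly speaking you are also borrowing the integral identity from the proof of Proposition~\ref{n=1} (or, equivalently, observing that $Q_\rho(rR_A,U)$ is unitarily conjugate to $q_\rho(\bar z R_A)$ via $I\otimes W_\theta$ on the $H^2(\TT)$ factor) to close that loop.
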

\begin{proof}
First, assume that item (i) holds. Due to Theorem \ref{equivalent},
we have
\begin{equation}
\label{P<P} P_\rho(rA, S)\leq c^2P_\rho(rB,S)
\end{equation}
for any $r\in [0,1)$, where $S:=(S_1,\ldots, S_n)$ is the $n$-tuple
of left creation operators. Let $U$ be the unilateral shift on the
Hardy space $H^2(\TT)$. Since $R_i^* R_j=\delta_{ij} I$, the
$n$-tuple $( R_1\otimes U^*,\ldots,  R_n\otimes U^*)$ is a row
contraction acting from $[ F^2(H_n)\otimes H^2(\TT)]^{n}$ to $
F^2(H_n)\otimes H^2(\TT)$. Applying  the noncommutative Poisson
transform at $( R_1\otimes U^*,\ldots, R_n\otimes U^*)$  to
inequality \eqref{P<P}, we obtain
\begin{equation*}
\begin{split}
Q_\rho(rR_A,U)&= \left(
\text{\rm id}\otimes P_{( R_1\otimes U^*,\ldots,  R_n\otimes U^*)}\right)[P_\rho(rA,S)]\\
&\leq c^2\left( \text{\rm id}\otimes P_{( R_1\otimes U^*,\ldots,
R_n\otimes U^*)}\right)[P_\rho(rB,S)] =c^2Q_\rho(rR_B,U)
\end{split}
\end{equation*}
for any $r\in [0,1)$. Using Proposition \ref{n=1}, we obtain that
$R_A\overset{H}{{\underset{c}\prec}}\, R_B$.  Now, assume that (ii)
holds.  Proposition \ref{n=1} implies
\begin{equation}
\label{K<K}
 K_\rho(re^{it},R_A)\leq c^2K_\rho(re^{it},R_B),\qquad r\in [0,1)
 \text{ and }
t\in \RR.
\end{equation}
Taking $t=0$, we obtain  $P_\rho( rA, R)\leq c^2 P_\rho( rB,R)$ for
any $r\in [0,1)$, which, due to Theorem \ref{equivalent}, implies
$A\overset{H}{{\underset{c}\prec}}\, B$. The equivalence
$(ii)\leftrightarrow (iii)$ is  a consequence of  Proposition
\ref{n=1} and the fact that inequality \eqref{K<K} is equivalent to
\begin{equation*}
 K_\rho(re^{it},R_A^*)\leq c^2K_\rho(re^{it},R_B^*),\qquad r\in
 [0,1) \text{ and }
t\in \RR.
\end{equation*}
This completes the proof.
\end{proof}

We remark that, according  to Theorem \ref{LCC} and Corollary
\ref{LBA-inf}, we have
\begin{equation*}
 \|L_{B,A}\|=\|C_{\rho,A}C_{\rho,B}^{-1}\|
 =\inf\{c> 1:\  P_\rho(A, R)\leq c^2 P_\rho(B, R) \}
\end{equation*}
for any $A,B\in [\cC_\rho]_{<1}$, where $C_{\rho,A}$ is defined in
Theorem \ref{LCC}.
\begin{corollary}
\label{connection} If $A,B$ are $n$-tuples of operators in $
[\cC_\rho]_{<1}$, then $ \|L_{B,A}\|=\| L_{R_B, R_A}\|=\| L_{R_B^*,
R_A^*}\|$. Moreover,
$
\delta_\rho(A,B)=\delta_\rho(R_A, R_B).
$
\end{corollary}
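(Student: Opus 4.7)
\smallskip

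The plan is to deduce this corollary almost immediately from Theorem \ref{equivalent4} combined with Corollary \ref{LBA-inf}. First I would observe that since $A,B\in[\cC_\rho]_{<1}$, Theorem \ref{foias2} gives $A\overset{H}{\sim}0$ and $B\overset{H}{\sim}0$, so by transitivity $A\overset{H}{\sim}B$ in $\cC_\rho$. Hence both $A\overset{H}{\prec}B$ and $B\overset{H}{\prec}A$ hold, and Corollary \ref{LBA-inf} applies to give
\[
\|L_{B,A}\|=\inf\{c>1:\ A\,\overset{H}{\underset{c}\prec}\,B\},\qquad \|L_{A,B}\|=\inf\{c>1:\ B\,\overset{H}{\underset{c}\prec}\,A\}.
\]

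Next I would check that $R_A,R_B$ (and $R_A^*,R_B^*$) are themselves $\rho$-contractions so that the single-variable analogue of Corollary \ref{LBA-inf} (which follows from Corollary \ref{LBA-inf} with $n=1$, or equivalently from Proposition \ref{n=1}) can be applied to them. This is built into the proof of Theorem \ref{equivalent4}: applying the noncommutative Poisson transform at $(R_1\otimes U^*,\ldots,R_n\otimes U^*)$ to the inequality $P_\rho(rA,S)\geq 0$ (which holds by Theorem \ref{ro-contr} because $A\in\cC_\rho$) yields $Q_\rho(rR_A,U)\geq 0$ for all $r\in[0,1)$, so $R_A$ is a $\rho$-contraction by the $n=1$ version of Theorem \ref{ro-contr}; likewise for $R_B$, $R_A^*$, $R_B^*$. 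Moreover Theorem \ref{equivalent4} applied to the pair $(A,B)$ (which is Harnack equivalent) gives $R_A\overset{H}{\sim}R_B$ and $R_A^*\overset{H}{\sim}R_B^*$, so Corollary \ref{LBA-inf} again applies and yields
\[
\|L_{R_B,R_A}\|=\inf\{c>1:\ R_A\,\overset{H}{\underset{c}\prec}\,R_B\},\quad \|L_{R_B^*,R_A^*}\|=\inf\{c>1:\ R_A^*\,\overset{H}{\underset{c}\prec}\,R_B^*\}.
\]

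Now the equivalences $(i)\Leftrightarrow(ii)\Leftrightarrow(iii)$ in Theorem \ref{equivalent4} assert that the three sets of admissible constants $c$ coincide, so their infima coincide; this proves
\[
\|L_{B,A}\|=\|L_{R_B,R_A}\|=\|L_{R_B^*,R_A^*}\|.
\]
Interchanging $A$ and $B$ yields $\|L_{A,B}\|=\|L_{R_A,R_B}\|$. Finally, the definition of the hyperbolic distance (see the remarks preceding Proposition \ref{delta}),
\[
\delta_\rho(X,Y)=\ln\max\{\|L_{X,Y}\|,\|L_{Y,X}\|\},
\]
applied once to the pair $(A,B)$ in $[\cC_\rho]_{<1}\subset\cC_\rho\subset B(\cH)^n$ and once to the pair $(R_A,R_B)$ in $\cC_\rho\subset B(\cH\otimes F^2(H_n))$, immediately gives $\delta_\rho(A,B)=\delta_\rho(R_A,R_B)$. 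There is no real obstacle here; the only subtlety worth spelling out is confirming that $R_A,R_B$ satisfy the hypotheses of Corollary \ref{LBA-inf} in the single-variable setting, and this is exactly the content of the Poisson-transform step extracted from the proof of Theorem \ref{equivalent4}.
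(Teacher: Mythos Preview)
Your proof is correct and follows exactly the route the paper intends: the corollary is stated without proof immediately after Theorem \ref{equivalent4} and the remark recalling the formula $\|L_{B,A}\|=\inf\{c>1:\ P_\rho(A,R)\leq c^2 P_\rho(B,R)\}$ from Corollary \ref{LBA-inf}, so the intended argument is precisely to combine the equivalence of admissible constants in Theorem \ref{equivalent4} with Corollary \ref{LBA-inf} and the formula $\delta_\rho(X,Y)=\ln\max\{\|L_{X,Y}\|,\|L_{Y,X}\|\}$. Your extra care in verifying that $R_A,R_B\in\cC_\rho$ via the Poisson transform step is a helpful detail the paper leaves implicit.
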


      %\Refs
      %\widestnumber\key{BFPQR}
      %\def\n{\key}
       %

      \end{document}